\documentclass[a4paper]{amsart}
\usepackage[all]{xy}
\usepackage{hyperref, fullpage}
\usepackage{enumerate}
\usepackage{amssymb}
\usepackage{tikz}
\usepackage{pdfsync}
\usetikzlibrary{matrix,arrows,decorations.pathreplacing}

\makeatletter
\def\underbrace#1{%
  \@ifnextchar_{\tikz@@underbrace{#1}}{\tikz@@underbrace{#1}_{}}}
\def\tikz@@underbrace#1_#2{%
  \tikz[baseline=(a.base)] {\node[inner sep=2] (a) {\(#1\)};
  \draw[line cap=round,decorate,decoration={brace,amplitude=4pt}]
    (a.south east) -- node[pos=0.5,below,inner sep=7pt] {\(\scriptstyle #2\)} (a.south west);}}
\makeatother
\vfuzz12pt 
\hfuzz12pt 

\usepackage{graphicx}
\usepackage[all]{xy}
\usepackage{mathrsfs}
\usepackage{color}

\usepackage[english]{babel}
\usepackage{url}

\usepackage[utf8]{inputenc}
\usepackage[T1]{fontenc}
 
\usepackage{amsmath, amscd}
\usepackage{amsthm}
\usepackage{amsfonts}
\usepackage{amssymb}
\usepackage{stmaryrd}
\usepackage{hyperref}

\theoremstyle{plain}
\newtheorem{thm}{Theorem}[section]
\newtheorem{prop}[thm]{Proposition}
\newtheorem{lemma}[thm]{Lemma}
\newtheorem{crl}[thm]{Corollary}
\theoremstyle{definition}
\newtheorem{dfn}[thm]{Definition}
\newtheorem{example}[thm]{Example}

\theoremstyle{remark}
\newtheorem{rmk}[thm]{Remark}

\newtheorem*{claim*}{Claim}
\newtheorem{claim}{Claim}


\newcommand{\G}{\mathcal{G}}
\renewcommand{\H}{\mathcal{H}}

\renewcommand{\L}{\mathcal{L}}
\newcommand{\V}{\mathcal{V}}
\newcommand{\rmap}{\longrightarrow}
\newcommand{\lmap}{\longleftarrow}

\renewcommand{\epsilon}{\varepsilon}


\newcommand{\CG}{C_{\text{def}}^*(\mathcal{G})}
\newcommand{\CkG}{C_{\text{def}}^k(\mathcal{G})}
\newcommand{\Gk}{\mathcal{G}^{(k)}}

\newcommand{\R}{\mathbb{R}}
\newcommand{\g}{\mathfrak{g}}

\newcommand{\x}{\times}
\newcommand{\lx}{\ltimes}
\newcommand{\arrows}{\rightrightarrows}
\newcommand{\tto}{\rightrightarrows}


\newcommand{\dezero}{{\frac{d}{d\epsilon}\big{|}}_{\epsilon=0}}

\begin{document}
\title{Deformations of Lie groupoids}

\author{Marius Crainic}\email{crainic@math.uu.nl}
\address{Depart. of Math., Utrecht University, 3508 TA Utrecht,
The Netherlands}

\author{Jo\~ao Nuno Mestre}\email{jnmestre@gmail.com}
\address{Depart. of Math., Utrecht University, 3508 TA Utrecht,
The Netherlands}

\author{Ivan Struchiner}\email{ivanstru@ime.usp.br}
\address{Depart. of Math., University of S\~ao Paulo, Cidade Universit\'aria, S\~ao Paulo, Brasil}

\thanks{The first author was supported by ERC starting grant no 279729 and the NWO-Vici grant no. 639.033.312. The second author was supported by FCT grant  SFRH/BD/71257/2010 under the POPH/FSE programmes. The third author acknowledges the support of CNPq 304840/2013-0 and FAPESP 2013/16753-8}

\begin{abstract} We study deformations of Lie groupoids by means of the cohomology which controls them. This cohomology turns out to provide an intrinsic model for the cohomology of a Lie groupoid with values in its adjoint representation. We prove several fundamental properties of the deformation cohomology including Morita invariance, a van Est theorem, and a vanishing result in the proper case.  
Combined with Moser's deformation arguments for groupoids, we obtain several rigidity and normal form results.
\end{abstract}

\maketitle

\setcounter{tocdepth}{1}

\tableofcontents

\section{Introduction}

A central problem in geometry is that of understanding the behaviour of  geometric structures under deformations; each class of geometric structures comes with its deformation theory, including a 
cohomology theory that controls such deformations. The aim of this paper is to investigate the cohomology theory controlling deformations of a large class of geometric structures and to use it to prove several rigidity results. The geometric structures that 
we have in mind are those that can be modelled by Lie groupoids and it includes Lie groups (and bundles of such), Lie group actions on manifolds, foliations, the symplectic groupoids of Poisson geometry, etc. In other words, we study the deformation theory of Lie groupoids $\G$ and the resulting deformation cohomology $H^{*}_{\textrm{def}}(\G)$.
The cohomology is built in such a way that deformations of a groupoid $\G$ give rise to $2$-cocycles inducing elements in $H^{2}_{\textrm{def}}(\G)$; $1$-cochains that transgress these $2$-cocycles (when they exist) are then used to produce flows that allow us to prove rigidity results. Intuitively, this allows us to think of $H^{2}_{\textrm{def}}(\G)$ as the first order approximation (i.e. tangent space) of the moduli space of deformations of $\G$. We became aware of the existence of such a cohomology while searching for a geometric proof of Zung's linerization theorem for proper Lie groupoids \cite{ivan}; the same cohomology also arises naturally when looking at the VB-interpretation of the adjoint representation \cite{mehta}. 

Before we give more details on deformations and rigidity results, let us first describe some of the main properties/results regarding the deformation cohomology. The first one we would like to mention is Morita invariance:

\begin{thm}[(Morita invariance)]
If two Lie groupoids are Morita equivalent, then their deformation cohomologies are isomorphic.
\end{thm}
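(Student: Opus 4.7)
The plan is to reduce Morita invariance to the case of a Morita morphism coming from a surjective submersion, and then establish quasi-isomorphism via a Čech-style double complex. Every Morita equivalence $\mathcal{G} \sim \mathcal{H}$ can be factored as a zig-zag of Morita morphisms of the special form $\mathcal{G}[U] \to \mathcal{G}$, where $\epsilon : U \to M$ is a surjective submersion onto the base of $\mathcal{G}$ and $\mathcal{G}[U] \rightrightarrows U$ is the corresponding pullback groupoid. Assuming functoriality of $C^*_{\text{def}}$ under such morphisms, which I would verify as a preparatory step, it thus suffices to show that $\epsilon^* : C^*_{\text{def}}(\mathcal{G}) \to C^*_{\text{def}}(\mathcal{G}[U])$ is a quasi-isomorphism for every surjective submersion $\epsilon$. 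To set up the machinery, write $U^{[p]} = U \times_M \cdots \times_M U$ ($p$ factors), $\mathcal{G}^{[p]} = \mathcal{G}[U^{[p+1]}]$, and consider the double complex
$$D^{p,q} := C^q_{\text{def}}(\mathcal{G}^{[p]}), \qquad p,q \geq 0,$$
with vertical differential the deformation differential and horizontal differential the alternating sum of pullbacks along the simplicial face maps of $U^{[\bullet]}$; the map $\epsilon^*$ augments $D^{\bullet,\bullet}$ along column $p = -1$ by $C^*_{\text{def}}(\mathcal{G})$.

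The central computation is row acyclicity: for each fixed $q$ the augmented row
$$0 \to C^q_{\text{def}}(\mathcal{G}) \xrightarrow{\epsilon^*} D^{0,q} \to D^{1,q} \to \cdots$$
should be exact. Local sections $\sigma_\alpha : W_\alpha \to U$ of $\epsilon$ exist (because $\epsilon$ is a surjective submersion) and produce extra degeneracies on the Čech simplicial object $U^{[\bullet]}$ over $M$ in the standard way; patching by a partition of unity subordinate to the cover $\{W_\alpha\}$ yields a global contracting homotopy on $D^{\bullet,q}$.

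From row acyclicity and a standard spectral-sequence argument, $\epsilon^*$ extends to a quasi-isomorphism from $C^*_{\text{def}}(\mathcal{G})$ into the total complex of $D^{\bullet,\bullet}$. Running the same construction with $\mathcal{G}[U]$ in place of $\mathcal{G}$ and the projection $U \times_M U \to U$ in place of $\epsilon$ realises $C^*_{\text{def}}(\mathcal{G}[U])$ analogously, and the natural identification $(U \times_M U)^{[p+1]} \cong U^{[p+2]}$ (fixing the first coordinate) organises the two double complexes into a comparison from which the desired isomorphism $H^*_{\text{def}}(\mathcal{G}) \cong H^*_{\text{def}}(\mathcal{G}[U])$ drops out. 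The main obstacle I anticipate is the construction of the contracting homotopy: deformation cochains form a proper subspace of the smooth cochains on the nerve, cut out by a linearity/symbol condition, and one must verify that the operators built from local sections $\sigma_\alpha$ together with a partition of unity preserve this subspace. This is the step where the particular structure of the deformation complex (as opposed to arbitrary smooth cochains on the nerve) genuinely enters; once it is in place, the spectral-sequence argument delivers Morita invariance essentially formally.
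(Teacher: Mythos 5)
Your overall strategy (reduce to the pullback of $\mathcal{G}$ along a surjective submersion, then resolve \v{C}ech-style) starts the same way as the paper, but there is a genuine gap exactly at the point you defer to a ``preparatory step'': $C^*_{\mathrm{def}}$ is not functorial in the way your double complex requires. A deformation $k$-cochain on $\mathcal{G}$ takes values in $T_{g_1}\mathcal{G}$, whereas one on the pullback groupoid $\mathcal{G}[U]=U\times_M\mathcal{G}\times_M U$ takes values in $T_{(p_1,g_1,q_1)}\mathcal{G}[U]\subset T_{p_1}U\times T_{g_1}\mathcal{G}\times T_{q_1}U$; to define $\epsilon^*c$ you must lift $dt(c)$ and $ds(c)$ through $d\epsilon$, and there is no canonical way to do so (a choice of Ehresmann connection on $\epsilon$ gives a linear map, but it is not a chain map in general). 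The same problem kills every horizontal differential of $D^{p,q}=C^q_{\mathrm{def}}(\mathcal{G}[U^{[p+1]}])$: the simplicial face maps are submersions with positive-dimensional fibres, so ``pullback along face maps'' of deformation cochains is simply undefined. It also resurfaces in your contracting homotopy: a partition-of-unity average $\sum_\alpha\rho_\alpha(\cdots)$ only makes sense if all the values being summed lie in a single tangent space, which holds for an \'etale cover (this is precisely why the \v{C}ech-groupoid/Mayer--Vietoris step works) but fails for a general submersion unless the values are first pushed down. So, as written, the augmented double complex does not exist and the spectral-sequence argument cannot start.

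The paper gets around exactly this obstruction by introducing, for the morphism $F\colon f^*\mathcal{G}\to\mathcal{G}$, the relative complex $C^*_{\mathrm{def}}(F)$ of cochains on the nerve of $f^*\mathcal{G}$ with values in $T\mathcal{G}$. This receives natural maps from both sides: $F_*$ from $C^*_{\mathrm{def}}(f^*\mathcal{G})$ (pushforward of values, shown to be a quasi-isomorphism by analysing its kernel) and $F^*$ from $C^*_{\mathrm{def}}(\mathcal{G})$ (an honest pullback, since the coefficients do not change); the section and partition-of-unity homotopies are then carried out inside $C^*_{\mathrm{def}}(F)$, via a \v{C}ech-groupoid reduction to the case of a global section and a filtration by strongly normalized cochains. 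If you want to keep your bisimplicial picture, the repair is of the same nature: take the rows to consist of cochains on the nerves of $\mathcal{G}[U^{[p+1]}]$ with values in $T\mathcal{G}$ (i.e.\ deformation complexes of the projections to $\mathcal{G}$), for which the horizontal pullbacks and the extra-degeneracy homotopies are well defined; but then you still owe an argument that pushing forward the values identifies these rows, up to quasi-isomorphism, with the genuine deformation complexes $C^*_{\mathrm{def}}(\mathcal{G}[U^{[p+1]}])$ --- which is the content of the paper's first claim, not a formality.
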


Intuitively, this means that the deformation cohomology only depends on ``the transverse geometry of the groupoid''. This is very much related to Haefliger's philosophy/approach to the transverse geometry of foliations via the associated \'etale groupoids; the role of the groupoid(s) was to model (desingularize) leaf spaces of foliations; the notion of Morita equivalence of groupoids comes in for a simple reason: there is no canonical \'etale groupoid modelling a given leaf space, but several of them (e.g. any complete transversal to the foliation gives rise to one) - and the fact that two groupoids correspond to the same leaf space can be recognized by the fact that they are Morita equivalent. In other words, Haefliger's point of view is that the transverse geometry of foliations is the part of the geometry of \'etale groupoids which is Morita invariant. A slight generalization of this philosophy is the interpretation of Lie groupoids as ``atlases'' for differentiable stacks; again, two groupoids correspond to the same stack if and only if they are Morita equivalent. Hence the previous theorem allows us to talk about ``the deformation cohomology of differentiable stacks''.

The deformation cohomology $H^{*}_{\textrm{def}}(\G)$ is also related to differentiable cohomologies of $\G$. As we will recall in subsection \ref{sub-Differentiable cohomology}, $H^{*}_{\textrm{diff}}(\G, E)$ (simply denoted $H^{*}(\G, E)$ in this paper) makes sense as soon as we have a representation $E$ of $\G$. It is not true that, in general, $H^{*}_{\textrm{def}}(\G)$ is isomorphic to $H^*(\G, E)$ for some representation $E$ of $\G$ (and this is very much related to the fact that ``the adjoint representation of $\G$'' does not make sense as a representation in the usual sense - see also below). However, $H^{*}_{\textrm{def}}(\G)$ can often be related to differentiable cohomology. This is best illustrated in the regular case, i.e., when all the orbits of $\G$ have the same dimension. In this case, $\G$ comes with two natural representations in the classical sense: the normal representation $\nu$ (on the normal bundle of the orbits) and the isotropy representation $\mathfrak{i}$ (made of the Lie algebras $\mathfrak{i}_x$ of the Lie groups $\G_x$ consisting of arrows that start and end at $x\in M$). These are recalled in more detail in subsections \ref{sec-The isotropy representation} and \ref{sec-The normal representation}. We will show that $H^{*}_{\textrm{def}}(\G)$ is related to the standard differentiable cohomology with coefficients in these representations by a long exact sequence:

\begin{prop}[(The regular case)] The deformation cohomology of a regular Lie groupoid $\G$ fits into a long exact sequence 
\[ \cdots\longrightarrow H^{k}(\G;\mathfrak{i})\stackrel{r}{\rmap} H^k_{\operatorname{def}}(\G)\stackrel{\pi}{\rmap} H^{k-1}(\G;\nu)\stackrel{K}{\rmap} H^{k+1}(\G;\mathfrak{i})\rmap\cdots ,\]
where $\nu$ and $\mathfrak{i}$ are the normal and the isotropy representation of $\G$, respectively.
\end{prop}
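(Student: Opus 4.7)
The plan is to realize the stated sequence as the cohomology long exact sequence of a short exact sequence of cochain complexes
\[ 0 \rmap C^{*}(\G;\mathfrak{i}) \stackrel{r}{\rmap} \CG \stackrel{\pi}{\rmap} C^{*-1}(\G;\nu) \rmap 0. \]
The apparent two-degree jump in the connecting homomorphism $K$ is just the standard one-degree shift of a connecting homomorphism combined with the built-in shift of the rightmost term.

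First I would construct $\pi$ as the ``symbol'' map. Recall that a deformation $k$-cochain is a map $c:\Gk \to T\G$ with $c(g_{1},\dots,g_{k})\in T_{g_{1}}\G$ satisfying an $s$-projectability condition, so that $ds\circ c$ depends only on $(g_{2},\dots,g_{k})$ and thereby descends to a cochain with values in $s^{*}TM$. Composing with the projection $TM \twoheadrightarrow \nu$ --- a smooth bundle map precisely because $\G$ is regular --- yields $\pi(c)\in C^{k-1}(\G;\nu)$. A direct comparison of differentials, using that the action of $\G$ on $\nu$ is the one induced from right-translation on $T\G$ modulo source fibers, shows that $\pi$ is a chain map.

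Next I would identify $\ker\pi$ with $C^{*}(\G;\mathfrak{i})$. A deformation cochain with vanishing symbol takes values in $\ker ds \subset T\G$; by the same $s$-projectability and right-invariance properties, it is determined by its restriction to the unit section, yielding a cochain with values in the Lie algebroid $A$ of $\G$. Vanishing of the $\nu$-component further forces this $A$-valued cochain to land in $\mathfrak{i}=\ker\rho\subset A$, a smooth subbundle because $\G$ is regular. This identifies $\ker\pi$ as a complex with $C^{*}(\G;\mathfrak{i})$ and defines the inclusion $r$; matching the induced differential with the differentiable differential for the isotropy representation then reduces to matching the bracket of right-invariant vector fields with the conjugation action on $\mathfrak{i}$, which is standard.

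The main obstacle will be the surjectivity of $\pi$ at the cochain level. Given $\sigma\in C^{k-1}(\G;\nu)$, one must produce a deformation cochain $c$ with $\pi(c)=\sigma$. This requires two successive smooth lifts: first of $\sigma$ through the splitting $TM\cong T\mathcal{O}\oplus\nu$ (which exists because $\G$ is regular, so both bundles have constant rank) to a $TM$-valued cochain, and then of this $TM$-valued cochain through a splitting of $ds:T\G \twoheadrightarrow s^{*}TM$ to a $T\G$-valued map, made $s$-projectable by construction. Both splittings exist by standard partition-of-unity arguments once regularity is invoked. With the short exact sequence of complexes in hand, the long exact sequence in cohomology follows from the usual zig-zag.
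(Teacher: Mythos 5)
There is a genuine gap: your proposed three-term sequence is not exact in the middle, because the kernel of $\pi$ is strictly larger than the image of $r$. The condition $\pi(c)=0$ only says that the $s$-projection $s_c$ takes values in $\operatorname{Im}(\rho)=T\mathcal{F}$, not that it vanishes; for instance $c=\delta(\alpha)=\overrightarrow{\alpha}+\overleftarrow{\alpha}$ with $\rho(\alpha)\neq 0$ lies in $\ker\pi$ but not in $r\bigl(C^{1}(\G,\mathfrak{i})\bigr)$, since $ds\circ c=\rho(\alpha)\neq 0$. Your argument for the kernel identification also contains two incorrect steps: vanishing of the $\nu$-class of the symbol does not force the symbol itself to vanish, and a cochain with values in $\ker(ds)$ is not ``determined by its restriction to the unit section'' --- via right translation it corresponds to an arbitrary element of $C^{k}(\G,A)$, not to data on $M$. (The chain-map claim for $\pi$ is essentially fine, up to the usual sign coming from the degree shift, by Lemma \ref{invariance-nu}; and surjectivity of $\pi$ via the two splittings is fine in the regular case. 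The failure is exactness at $\CG$.)

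This is precisely why the paper does not use a single short exact sequence. Its proof interposes the acyclic ``cylinder'' complex $\mathcal{A}^{k}=C^{k}(\G,TM)\oplus C^{k-1}(\G,TM)$ and works with two short exact sequences,
\[ 0\rmap C^*(\G,\mathfrak{i})\stackrel{r}{\rmap} C^*_{\operatorname{def}}(\G)\stackrel{R}{\rmap}\mathcal{C}^*\rmap 0, \qquad 0\rmap \mathcal{C}^*\rmap\mathcal{A}^*\stackrel{S}{\rmap} C^*(\G,\nu)\rmap 0, \]
where $R(c)=(dt\circ c,\, ds\circ c)$ and $\mathcal{C}^*=\ker S$; acyclicity of $\mathcal{A}^*$ makes the connecting map $H^{k-1}(\G,\nu)\rmap H^{k}(\mathcal{C})$ an isomorphism, and splicing the two long exact sequences gives the statement. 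In other words, the quotient $C^*_{\operatorname{def}}(\G)/C^*(\G,\mathfrak{i})$ is only \emph{quasi-isomorphic} to $C^{*-1}(\G,\nu)$, not equal to it. Your approach could be salvaged by proving instead that the inclusion $C^*(\G,\mathfrak{i})\hookrightarrow\ker\pi$ is a quasi-isomorphism (the quotient is a cylinder-type complex built from $C^*(\G,T\mathcal{F})$, and an acyclicity argument as in the foliation-groupoid proposition would be needed), but as written that step is missing and the asserted identification of $\ker\pi$ is false.
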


The spaces of invariants
\[ \Gamma(\mathfrak{i})^{\textrm{inv}}:= H^{0}(\G;\mathfrak{i}),  \textrm{and} \ \Gamma(\nu)^{\textrm{inv}} := H^{0}(\G;\nu)\]
are of independent interest; with some extra care (explained in subsections \ref{sec-The isotropy representation} and \ref{sec-The normal representation}), they make sense even in the non-regular case. In our analysis in low degrees we will see that, in general, $H^{0}_{\textrm{def}}(\G)$ is always isomorphic to $\Gamma(\mathfrak{i})^{\textrm{inv}}$
and one has a low degrees exact sequence (Proposition \ref{the-LOS}):
\[ 0 \rmap  H^1(\G, \mathfrak{i}) \stackrel{r}{\rmap} H_{\text{def}}^1(\G) \stackrel{\pi}{\rmap}  H^0(\G, \nu)\stackrel{K}{\rmap} H^2(\G, \mathfrak{i}) \stackrel{r}{\rmap} H_{\text{def}}^2(\G).\]

Another important property of $H^{*}_{\textrm{def}}(\G)$, which will be essential also for proving rigidity results,  is its behaviour for proper groupoids. Recall that a Lie groupoid $\G$ over a manifold $M$ is said to be proper if $\G$ is Hausdorff and the map $(s, t): \G\rmap M\times M$, which associates to an arrow its source and target, is proper; this generalizes the compactness of Lie groups and the properness of Lie group actions. While proper groupoids are the first candidates for rigidity phenomena, an essential step in proving such rigidity theorems is to understand the behaviour of  $H^{*}_{\textrm{def}}(\G)$ in low degrees. Of particular importance will be the vanishing of $H^{2}_{\textrm{def}}(\G)$ which, due to its interpretation as the tangent space to the moduli space of deformations, could be called ``infinitesimal rigidity''. However, the understanding of $H^{1}_{\textrm{def}}(\G)$ (related to families of automorphisms) and even of $H^0_{\mathrm{def}}(\G)$, will be important as well. We will show: 

\begin{thm}[(Vanishing)] Let $\G$ be a proper Lie groupoid. Then 
\[ H_{\mathrm{def}}^k(\G)= 0 \ \textrm{for\ all}\ k\geq 2\]
while $H_{\mathrm{def}}^1(\G)\cong \Gamma(\nu)^{\mathrm{inv}}$, $H_{\mathrm{def}}^0(\G)\cong \Gamma(\mathfrak{i})^{\mathrm{inv}}$.
\end{thm}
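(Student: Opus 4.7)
For the low-degree identifications, I would feed the general low-degrees exact sequence from Proposition \ref{the-LOS} into the classical vanishing of differentiable cohomology for proper Lie groupoids in positive degrees. The isomorphism $H^0_{\mathrm{def}}(\G) \cong \Gamma(\mathfrak{i})^{\mathrm{inv}}$ is asserted in the text to hold for \emph{any} $\G$, so there is nothing extra to prove in degree zero. For $H^1$, applying the vanishing with coefficients in $\mathfrak{i}$ kills both $H^1(\G, \mathfrak{i})$ and $H^2(\G, \mathfrak{i})$, and the low-degrees sequence collapses to an isomorphism $\pi: H^1_{\mathrm{def}}(\G) \cong \Gamma(\nu)^{\mathrm{inv}}$. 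In the non-regular case extra care is needed to make sense of the differentiable cohomology with coefficients in the isotropy (which is not a vector bundle), but the paper has already set up the relevant framework.

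\textbf{Vanishing in degrees $\geq 2$.} The core of the argument is to produce an explicit contracting homotopy on $\CkG$ for $k \geq 2$ by averaging against a Haar system on $\G$. Properness supplies a smooth family of measures $\{\mu^x\}_{x \in M}$ along the source fibres, together with a cutoff function on $M$. A deformation $k$-cochain is a map $c \colon \Gk \to T\G$ whose value at $(g_1,\ldots,g_k)$ lies in a tangent space determined by $g_1$ alone, so the last entry can be integrated out into a fixed vector space:
\[
(hc)(g_1,\ldots,g_{k-1}) := \int c(g_1,\ldots,g_{k-1},g) \, d\mu^{s(g_{k-1})}(g),
\]
where $g$ ranges over the fibre of arrows composable after $g_{k-1}$. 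One then expands $\delta h c + h \delta c$ using the explicit formula for the deformation differential: thanks to right-invariance of $\mu$, almost all face contributions telescope in bar-complex fashion, and the last face returns $c$ itself. The cutoff function handles non-compactness of the fibres. The outcome is $\delta h + h \delta = \mathrm{id}$ on $\CkG$ for $k \geq 2$, which yields the stated vanishing.

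\textbf{Main obstacle.} The existence of Haar is not the issue; the delicate point is the interaction of the averaging with the tangent-bundle-valued nature of deformation cochains and with the ``symbol'' piece that makes $\delta$ on $\CG$ differ from a bar-complex differential with constant coefficients. One has to check that the symbol condition $ds \circ c = \text{(something depending only on } g_2, \ldots, g_k \text{)}$ is preserved by $h$, and that the extra symbol-related terms in $\delta$ also cancel in the homotopy identity. This is the deformation-complex analogue of Mostow's averaging argument for Lie group cohomology, but complicated by the fact that the coefficients are adjoint-like rather than an honest representation. A secondary bookkeeping issue is the degree $k = 2$ case, where there is least room to absorb error terms and the identity must hold on the nose; here one likely decomposes a 2-cochain into its ``groupoid'' and ``anchor'' parts and applies the averaging to each before recombining.
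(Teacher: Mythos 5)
Your proposal takes essentially the same route as the paper: the vanishing for $k\geq 2$ is obtained by averaging the last argument of a cochain against a normalized, left-invariant Haar system with cutoff (properness), and the low-degree statements follow from Proposition \ref{degree-0} together with the sequence of Proposition \ref{the-LOS} once $H^1(\G,\mathfrak{i})$ and $H^2(\G,\mathfrak{i})$ are killed by the same averaging (with smoothness handled by viewing $\mathfrak{i}$-valued cochains as $A$-valued). The only cosmetic differences are that the paper applies the averaging directly to a cocycle and checks $\delta X=c$ using $\delta c=0$ and left-invariance, rather than proving the full homotopy identity $\delta h+h\delta=\mathrm{id}$, and no special decomposition is needed in degree $k=2$ — the same computation works uniformly for all $k\geq 2$.
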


Besides the relevance to deformations, there is yet another guiding principle that is worth having in mind when thinking about $H_{\text{def}}^*(\G)$: it makes sense of the (differentiable) cohomology with coefficients in ``the adjoint representation'' (think e.g. about the case of Lie groups \cite{Copper}). This guiding principle is very useful when computing the deformation cohomology in terms of ordinary differentiable cohomology. 
The last quotes indicate the subtleties that arise when looking for ``the adjoint representation of a Lie groupoid''. 
Indeed, as remarked already in the early years of Lie groupoids, one of the subtleties which make Lie groupoids harder to handle than Lie groups is the fact that the notion of adjoint representation does not make sense when restricting to the classical notion of representation. With the more recent concept of ``representation up to homotopy'' \cite{camilo}, we now understand that:
\begin{itemize}
\item the adjoint representation $Ad$ of a Lie groupoid $\G$ makes sense 
intrinsically as an isomorphism class of representations up to homotopy of $\G$. 
\item to represent $Ad$ by an actual representation up to homotopy $Ad_{\sigma}$ one needs to choose an Ehresmann connection $\sigma$ on $\G$ (this will be recalled in Section \ref{sec-adjoint}).
\item by the very definition of representations up to homotopy, they serve as coefficients for differentiable cohomology. However, to have an explicit model for the cohomology with coefficients in the adjoint representation, one still has to choose a connection $\sigma$ and consider the associated $H^*(\G, Ad_{\sigma})$.
\end{itemize}

One of the main features of $H^{*}_{\textrm{def}}(\G)$ is that it provides an intrinsic model for these cohomologies, independent of any auxiliary choices:

\begin{thm}[(The adjoint representation)]
If $\G$ is a Lie groupoid, then for any connection $\sigma$ on $\G$ one has a canonical isomorphism of $H^{*}_{\mathrm{def}}(\G)$ with $H^*(\G, Ad_{\sigma})$.
\end{thm}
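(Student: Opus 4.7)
The plan is to construct, for each fixed connection $\sigma$, an explicit isomorphism $\Phi_\sigma : C^*_{\textrm{def}}(\G) \xrightarrow{\sim} C^*(\G, Ad_\sigma)$ at the level of cochains. The deformation complex is defined intrinsically, as cochains $c:\Gk \to T\G$ with $c(g_1,\ldots,g_k)\in T_{g_1}\G$ subject to a source-compatibility condition, whereas $Ad_\sigma$ is a representation up to homotopy living on the two-term graded bundle $A[1]\oplus TM$, where $A$ is the Lie algebroid of $\G$. The role of $\sigma$ will be precisely to identify the two.

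First, I would use $\sigma$ to split the short exact sequence $0 \to t^*A \to T\G \xrightarrow{ds} s^*TM \to 0$ of vector bundles over $\G$, producing a direct sum decomposition $T\G \cong t^*A \oplus s^*TM$. Given a deformation $k$-cochain $c$, define $D_c(g_1,\ldots,g_k) := c(g_1,\ldots,g_k) - \sigma_{g_1}\bigl(ds\,c(g_1,\ldots,g_k)\bigr) \in A_{t(g_1)}$ and $l_c(g_2,\ldots,g_k) := ds\,c(g_1,\ldots,g_k) \in T_{t(g_2)}M$, where the source-compatibility condition is exactly what is needed for $l_c$ to be well-defined on $\G^{(k-1)}$. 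The pair $(D_c, l_c)$ is exactly a $k$-cochain with values in the graded bundle $Ad_\sigma = A[1]\oplus TM$, and $c\mapsto(D_c,l_c)$ is a linear bijection $\Phi_\sigma$ at the cochain level.

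It then remains to verify that $\Phi_\sigma$ intertwines the deformation differential $\delta_{\textrm{def}}$ with the differential on $C^*(\G, Ad_\sigma)$ determined by the representation-up-to-homotopy structure on $Ad_\sigma$; this is the main obstacle. The key subtlety is that $\delta_{\textrm{def}}$ is built from genuine right-invariant extensions of sections of $A$ and the Lie bracket on vector fields on $\G$, whereas the target differential is built from the horizontal lift $\sigma$ together with the structure maps (anchor $\rho$, quasi-actions of $\G$ on $A$ and on $TM$, and curvature $\Omega_\sigma$) that define $Ad_\sigma$ as reviewed in Section~\ref{sec-adjoint}. Expanding $\delta_{\textrm{def}} c$ term by term under $\Phi_\sigma$ and matching the $t^*A$- and $s^*TM$-components against these formulas reconstructs precisely each structure map, with the mismatch between right-invariant extensions and $\sigma$-horizontal extensions outputting the curvature $\Omega_\sigma$. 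This simultaneously finishes the proof and conceptually explains why the adjoint representation requires the machinery of representations up to homotopy.
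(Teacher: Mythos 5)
Your plan is essentially the paper's own proof: the $\sigma$-splitting of $0\to t^*A\to T\G\stackrel{ds}{\to} s^*TM\to 0$ gives exactly the cochain isomorphism $I_\sigma$ of Lemma \ref{lemma-iso-ad} (your pair $(D_c,l_c)$ is, after right-translating the vertical part by $g_1^{-1}$ so that it actually lands in $A_{t(g_1)}$, the paper's $(u,v)$ up to sign), and the remaining check that it intertwines $\delta$ with the structure operator $D_{\sigma}(u,v)=(\delta_{\lambda}(u)+K\cdot v,\,-\delta_{\lambda}(v)+\rho(u))$ built from the anchor, the quasi-actions and the basic curvature of $\sigma$ is precisely the term-by-term computation the paper performs using the identities (\ref{aux-eq-1}) and (\ref{aux-eq-3}) for $d\bar{m}$ applied to right-invariant and $\sigma$-horizontal vectors. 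The only points your sketch leaves implicit are that explicit computation and the passage through $r_{g_1}^{-1}$, plus a small grading slip: $A$ sits in degree $0$ and $TM$ in degree $1$ (so the bundle is $A\oplus TM$, not $A[1]\oplus TM$), which is what makes $(D_c,l_c)\in C^k(\G,A)\oplus C^{k-1}(\G,TM)$ a total-degree-$k$ cochain.
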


We would like to point out that we have decided to write the paper in such a way that it does not assume prior knowledge of the adjoint representation as a representation up to homotopy. On the contrary, using the paradigm
that ``the adjoint representation serves as coefficients of the cohomology theory that controls deformations'' (i.e. of 
$H^{*}_{\textrm{def}}(\G)$), we will slowly guide the reader towards the final outcome. In particular, the reader will encounter along the way several comments on ``the adjoint representation'', slowly revealing its full structure.

There is yet another way to look at $H^{*}_{\textrm{def}}(\G)$, related to the fact that the theory of Lie groupoids comes with an infinitesimal counterpart - that of Lie algebroids. From this point of view, our cohomology is the global analogue of the deformation cohomology $H^{*}_{\textrm{def}}(A)$ of the Lie algebroid $A$ of $\G$, which was studied in \cite{marius_ieke}. In the same way that the differentiable cohomology of a Lie group(/oid) is related to the cohomology of the corresponding Lie algebra(/oid) by a van Est map, we will prove that:

\begin{thm}[(Van Est isomorphisms)] Let $\G$ be a Lie groupoid with Lie algebroid $A$. There exists a canonical 
chain map (the van Est map) 
\[ \V:C^*_{\mathrm{def}}(\G)\rmap C^*_{\mathrm{def}}(A).\]
Moreover, if $\G$ has $k$-connected $s$-fibres then the map induced in cohomology 
\[ \V:H^p_{\mathrm{def}}(\G)\rmap H^p_{\mathrm{def}}(A)\] 
is an isomorphism for all $p\leq k$.
\end{thm}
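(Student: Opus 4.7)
The plan is to adapt the classical van Est construction to the deformation setting. Recall that a deformation $p$-cochain on $\G$ is a suitable map $c\colon \Gk \rmap T\G$, while a deformation $p$-cochain on $A$ is essentially a skew-symmetric multiderivation $\Gamma(A)^{p}\rmap \Gamma(A)$. The first step is to write down $\V$ by a formula modelled on the usual van Est map,
\[
\V(c)(\alpha_{1},\dots,\alpha_{p}) \;=\; \sum_{\sigma\in S_{p}}(-1)^{\sigma}\,R_{\alpha_{\sigma(1)}}\cdots R_{\alpha_{\sigma(p)}}\,c\,\big|_{M},
\]
where $R_{\alpha}$ denotes differentiation along the right-invariant vector field associated with $\alpha \in \Gamma(A)$, acting on the appropriate argument of $c$. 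One must then check that the right-hand side really lands in $A$ (rather than in $T\G$) and that the Leibniz-type identity characterising $C^{*}_{\mathrm{def}}(A)$ holds; this comes from the fact that $R_{\alpha}$ is tangent to the source fibres and that iterated differentiation at identity arrows feeds the tangent-valued data back into $A$.

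The second step is to verify that $\V$ intertwines the deformation differentials. I would do this by a direct but careful computation, parallel to the classical van Est argument and tracking the extra source-direction terms carried by deformation cochains. An alternative route is to invoke the identification $H^{*}_{\mathrm{def}}(\G)\cong H^{*}(\G,Ad_{\sigma})$ stated above, together with its (analogous) algebroid counterpart, and thereby reduce the chain-map property to the known van Est theorem for representations up to homotopy; because the formula for $\V$ makes no reference to a connection, independence of $\sigma$ is automatic.

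For the isomorphism statement in degrees $p\leq k$, I would run the classical van Est spectral-sequence argument, now with deformation coefficients. Form a double complex whose horizontal direction is (a variant of) the deformation complex and whose vertical direction computes de Rham cohomology along the iterated fibred products of the source map $s\colon \G\rmap M$. Standard arguments identify the totalisation with $C^{*}_{\mathrm{def}}(\G)$ via an obvious augmentation, while $k$-connectedness of the $s$-fibres together with the Poincaré lemma kills the higher rows up to total degree $k$, forcing the remaining column to be $C^{*}_{\mathrm{def}}(A)$. A comparison of edge maps then identifies the resulting quasi-isomorphism with $\V$ in the claimed range.

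The main obstacle lies in steps one and two: both require delicate bookkeeping with the ``tangent-twisted'' nature of deformation cochains, ensuring that iterated directional derivatives at identity arrows produce genuine algebroid deformation cochains and intertwine the non-standard differentials on both sides. Once these are in place, the spectral-sequence argument in step three is essentially formal, modulo checking that the Poincaré lemma along $s$-fibres extends to tangent-valued coefficients without any loss of connectedness.
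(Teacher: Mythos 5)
Your overall plan overlaps substantially with the paper: the formula for $\V$ via signed sums of operators $R_{\alpha}$ is exactly the paper's definition (up to an overall sign coming from the order of composition), and your ``alternative route'' for the chain-map property --- identifying $C^*_{\mathrm{def}}(\G)$ with $C^*(\G,Ad_{\sigma})$ and $C^*_{\mathrm{def}}(A)$ with $C^*(A,ad_{\nabla})$ and reducing to the van Est theorem of Arias Abad and Sch\"atz for representations up to homotopy --- is precisely what the paper does, except that the paper uses this reduction for \emph{both} the chain-map property and the isomorphism in degrees $p\leq k$, after verifying (this is the real work: the two lemmas on $R_{\alpha}$ in degree one and in higher degrees, plus delicate sign bookkeeping between conventions) that $I_{\sigma}$ and $\Psi_{\nabla}$ intertwine $\V$ with $(\V^{A},\V^{TM})$. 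Two points you gloss over are genuinely needed: $\V$ is only defined on the normalized subcomplex $\widehat{C}^*_{\mathrm{def}}(\G)$, so one must also know that the inclusion $\widehat{C}^*_{\mathrm{def}}(\G)\hookrightarrow C^*_{\mathrm{def}}(\G)$ is a quasi-isomorphism; and a uniform definition of $R_{\alpha}$ as ``differentiation along $\overrightarrow{\alpha}$'' does not even typecheck at the last step: for a $1$-cochain $c$ the curve $\epsilon\mapsto c(\phi^{\alpha}_{\epsilon}(x)^{-1})$ lives in tangent spaces at moving points, so its naive derivative is undefined; the paper must take $R_{\alpha}(c)=[c,\overrightarrow{\alpha}]|_{M}$ in that degree, and it is exactly this choice that makes $\V(c)$ land in $\Gamma(A)$ and be a multiderivation with the correct symbol.

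The more serious issue is your step three. Declaring the double-complex/spectral-sequence argument ``essentially formal, modulo checking that the Poincar\'e lemma along $s$-fibres extends to tangent-valued coefficients'' hides the actual difficulty: deformation cochains are \emph{not} differentiable cochains with values in a representation (the values lie in $T_{g_1}\G$, and the differential uses $d\bar{m}$), so Crainic's classical double complex does not apply verbatim, and there is no off-the-shelf fibrewise Poincar\'e lemma for this kind of coefficients. Making a connection-free double complex whose totalization is $C^*_{\mathrm{def}}(\G)$, whose surviving column is $C^*_{\mathrm{def}}(A)$ (with its multiderivation structure), and whose edge map is $\V$ is precisely the missing content; it is why the paper routes the proof through $Ad_{\sigma}$ and the representation-up-to-homotopy van Est theorem, and why an independent direct proof (via the VB-groupoid picture of homogeneous cochains) required separate work by Cabrera and Drummond. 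So either carry out your reduction to the Arias Abad--Sch\"atz theorem in full --- which is the paper's proof and also yields the isomorphism statement, making your spectral sequence unnecessary --- or supply the construction of the tangent-valued double complex explicitly; as written, that construction is asserted rather than proved.
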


It is worth insisting a bit more on the similarities and differences with the corresponding infinitesimal theory $H^{*}_{\textrm{def}}(A)$ from \cite{marius_ieke}. As there, one of the main subtleties of $H^{*}_{\textrm{def}}(\G)$
comes from the fact that the adjoint representation is only defined as a representation up to homotopy. However, the setting of Lie groupoids, because of its non-linear nature, comes with even more subtleties - to the extent that we revisit even the very definition of Lie groupoids - giving rise to the Appendix of the paper. On the other hand, and in contrast with the infinitesimal theory, for 
$H^{*}_{\textrm{def}}(\G)$ one can prove the vanishing result mentioned above, with direct consequences to rigidity.

As we have already mentioned, one of the main motivations for studying the deformation cohomology $H^{*}_{\textrm{def}}(\G)$ comes from its relevance to deformations and rigidity results.
We are interested in general deformations:

\begin{dfn}\label{dfn-general-def} 
Let $\G\tto M$ be a groupoid over $M$, with structure maps denoted $s, t, m, i, u$ (the source, target, multiplication, inversion, unit map, respectively). A \emph{(smooth) deformation of $\G$}
is a family 
\[ \tilde{\G}= \{\G_{\epsilon}: \epsilon \in I\} \ \ \textrm{of\ groupoids}\ \G_{\epsilon}\tto M_{\epsilon} \]
smoothly parametrized by $\epsilon$ in an open interval $I$ containing $0$, such that $\G_{0}= \G$ as groupoids. We will denote the structure maps of $\G_{\epsilon}$ by $s_{\epsilon}, t_{\epsilon}, m_{\epsilon}, i_{\epsilon}, u_{\epsilon}$.

The deformation is called \emph{strict} if $\G_{\epsilon}= \G$ as manifolds; it is called \emph{$s$-constant} if, furthermore, $s_{\epsilon}$ does not depend on $\epsilon$. The \emph{constant deformation} is the one with $\G_{\epsilon}= \G$ as groupoids. 

Two deformations $\{\G_{\epsilon}: \epsilon\in I\}$ and $\{\G_{\epsilon}': \epsilon\in I'\}$ are called \emph{equivalent} if there exists a family of groupoid isomorphisms $\phi^{\epsilon}: \G_{\epsilon}\rmap \G_{\epsilon}'$, smoothly parametrized by $\epsilon$ in an open interval containing $0$, such that $\phi_0= \textrm{Id}$. We say that $\tilde{\G}$ is \emph{trivial} if it is equivalent to the constant deformation. 
\end{dfn}

In general, a family $\{M_{\epsilon}: \epsilon \in I\}$ of manifolds smoothly parametrized by $\epsilon$ can be understood as a manifold $\tilde{M}$ together with a submersion $\tilde{\pi}: \tilde{M}\rmap I$, so that $M_{\epsilon}$ is just the fiber of $\pi$ above $\epsilon$; similarly for families of groupoids - see Definition \ref{dfn-families-gpds}.

As we shall see, any $s$-constant deformation as above induces a deformation cocycle
\[ \xi_{0}\in C^{2}_{\textrm{def}}(\G)\]
whose cohomology class depends only on the equivalence class of the deformation. It is interesting to keep in mind that the {\it single}  cocycle $\xi_0$ encodes the variation of {\it all} the structure maps $t_{\epsilon}, m_{\epsilon},  i_{\epsilon}, u_{\epsilon}$; to be able to do that, we first have to revisit the very definition of groupoids and note that everything is encoded in the source map and the operation $\bar{m}(g, h)= gh^{-1}$; the resulting precise axioms are worked out in the Appendix. 

A similar construction applies to general deformations (i.e. not necessarily $s$-constant, and not even strict); the price to pay for the greater generality is that we will no longer have a $2$-cocycle that is canonical (i.e. independent of auxiliary choices), but only a canonical cohomology class in $H^{2}_{\textrm{def}}(\G)$. Using these cocycles/classes and the vanishing theorem stated above, we will deduce several rigidity theorems. We mention here:

\begin{thm}
Any strict deformation of a compact groupoid is trivial. 
\end{thm}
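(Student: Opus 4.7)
The plan is a Moser-type argument powered by the Vanishing Theorem. Let $\tilde\G=\{\G_\epsilon\}_{\epsilon\in I}$ be a strict deformation of a compact groupoid $\G=\G_0$ over $M$. Since the manifold $\G$ is compact, one may shrink $I$ so that each $\G_\epsilon$ remains compact, hence proper, and the Vanishing Theorem then gives $H^2_{\textrm{def}}(\G_\epsilon)=0$ for every $\epsilon\in I$.

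First I would extract a smoothly varying family of deformation cocycles: at each $\epsilon_0\in I$, the $\epsilon$-derivatives of $s_\epsilon,t_\epsilon,m_\epsilon,\ldots$ at $\epsilon_0$ assemble into a single 2-cocycle $\xi_{\epsilon_0}\in C^2_{\textrm{def}}(\G_{\epsilon_0})$, making use of the ``$\bar m$'' reformulation from the Appendix, which packages all structure maps into one object whose $\epsilon$-derivative is a single deformation cochain. By the vanishing above, each $\xi_{\epsilon_0}$ is a coboundary, and the crux is to find transgressions $X_\epsilon\in C^1_{\textrm{def}}(\G_\epsilon)$ with $\delta X_\epsilon=\xi_\epsilon$ depending smoothly on $\epsilon$. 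Since the Vanishing Theorem on proper groupoids is proved through an explicit averaging operator built from an invariant Haar system, and compactness of $\G_\epsilon$ furnishes a canonical normalized Haar system varying smoothly in $\epsilon$, the construction produces a smoothly varying contracting homotopy $h_\epsilon$, and $X_\epsilon:=h_\epsilon(\xi_\epsilon)$ is then the required smooth family of transgressions.

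The final step is to integrate. A 1-cochain $X_\epsilon$ is, by its very definition, a vector field on the manifold $\G$ that is $s_\epsilon$-projectable. Compactness of $\G$ ensures that the time-dependent vector field $X_\epsilon$ integrates, for all $\epsilon\in I$, to a smooth isotopy $\phi^\epsilon\colon\G\to\G$ with $\phi^0=\textrm{Id}$, covering an isotopy $\psi^\epsilon$ of $M$. The identity $\delta X_\epsilon=\xi_\epsilon$ is precisely the infinitesimal statement that the Lie-derivative action of $X_\epsilon$ on the structure maps of $\G_\epsilon$ matches $\partial_\epsilon$ of those maps; integrating this ODE on the (open) space of groupoid structures on the fixed manifold $\G$ yields $(\phi^\epsilon)^*\G_\epsilon=\G_0$. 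Hence each $\phi^\epsilon$ is a groupoid isomorphism $\G_0\to\G_\epsilon$, exhibiting $\tilde\G$ as equivalent to the constant deformation.

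The principal obstacle is the smooth-dependence step for $X_\epsilon$. Compactness, and not merely properness, enters here via the Haar system: on a general proper groupoid a normalized averaging operator requires a cutoff function whose smooth dependence on $\epsilon$ is not automatic, whereas in the compact case the canonical normalization $1/\mathrm{vol}(\G_x)$ varies smoothly. A secondary, more bookkeeping, difficulty is to check that the Moser equation really takes the form $\delta X_\epsilon=\xi_\epsilon$; this rests on the precise description of $C^*_{\textrm{def}}$ near a groupoid structure and on the Appendix's axiomatization, which guarantees that a single 1-cochain controls the variation of all structure maps simultaneously.
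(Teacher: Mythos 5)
Your outline reproduces the paper's argument for the \emph{$s$-constant} case: a smooth family of deformation cocycles $\xi_\epsilon$, smooth transgressions obtained by averaging, and integration of the resulting time-dependent vector field (this is exactly Corollary \ref{crl-vanishing-proper} combined with Proposition \ref{rigidity-lemma} in the proof of Theorem \ref{thm: compact rigid}). Your mechanism for smooth dependence is also essentially the paper's: rather than choosing Haar systems fiberwise and checking smoothness in $\epsilon$, the paper applies the vanishing theorem once to the groupoid $\tilde{\G}=\G\times I\tto M\times I$, which is proper precisely because the deformation is strict and $\G$ is compact; note Remark \ref{rmk-imp-proper}, which stresses that properness (or compactness) of each fibre alone is not enough, so properness of the \emph{family} is the point your Haar-system argument must, and in this strict compact setting does, secure.

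The genuine gap is in your first step, and it concerns exactly the generality the statement claims. A strict deformation allows $s_\epsilon$ (and $t_\epsilon$) to vary, and then the $\epsilon$-derivatives of the structure maps do \emph{not} assemble into a canonical $2$-cocycle $\xi_{\epsilon_0}\in C^2_{\mathrm{def}}(\G_{\epsilon_0})$: the domain of $\bar{m}_\epsilon$ is $\{(p,q): s_\epsilon(p)=s_\epsilon(q)\}$, which moves with $\epsilon$, so the expression $\frac{d}{d\epsilon}\bar{m}_\epsilon(gh,h)$ is simply undefined; the $\bar m$-reformulation cures the moving-domain problem only when $s_\epsilon$ is constant, and the paper states explicitly that for general (strict) deformations one has only a canonical cohomology class, not a canonical cocycle. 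Relatedly, your Moser equation $\delta X_\epsilon=\xi_\epsilon$ cannot by itself force the flow $\phi^\epsilon$ to intertwine $s_0$ with $s_\epsilon$: the variation of the source map is not recorded in any $2$-cochain, but must be absorbed as a constraint on the $1$-cochains themselves, namely $ds_\epsilon(X^\epsilon)+\frac{d}{d\epsilon}s_\epsilon=0$ (equation (\ref{strange-eq})). The paper handles this either by first gauging the deformation to an $s$-constant one using such $X^\epsilon$, or, more efficiently, by finding a \emph{multiplicative transverse} vector field on the proper groupoid $\tilde{\G}$ (Lemma \ref{exist-mult-transv1}) and flowing, which is how the theorem is actually deduced (Theorem \ref{thm-rigid-technical2}). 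As written, your argument proves triviality only for $s$-constant deformations of compact groupoids; to cover arbitrary strict deformations you need the additional step that deals with the varying source map.
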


This theorem can be seen as mutual generalization of the results of Palais on rigidity and deformations of actions of Lie groups \cite{palais}, \cite{palais3},  and those of Coppersmith \cite{Copper} on deformations of Lie groups. In fact, our proposition \ref{prop: action} shows that in the case of an {\em action groupoid}, our deformation cohomology sits in a long exact sequence which relates the deformation cohomology of a Lie group (as in \cite{Copper}), and the deformation cohomology of an action of a fixed group (as in \cite{palais3}).

As we already mentioned, the deformation cohomology $H^{*}_{\textrm{def}}(\G)$ arises naturally when studying a related phenomena, namely the linearization of Lie groupoids. In essence, this is due to a very simple remark: 
for any real function $f= f(x)$ which vanishes at $0$, its linearization around $0$ can be written as a limit of rescales of $f$:
\[ f'(0)x= \lim_{\epsilon \to 0} f_{\epsilon}\ \ \ \textrm{where} \ f_{\epsilon}(x)= \frac{1}{\epsilon} f(\epsilon x).\]
A groupoid version of this is that the linearization of a Lie groupoid around a fixed point comes with a canonical (strict) deformation whose members for $\epsilon\neq 0$ are isomorphic to the original groupoid (around the fixed point). Using this and a local version of the last theorem, we immediately deduce a generalization of "Zung's linearisability theorem", which was proven recently using different methods by del Hoyo and Fernandes in \cite{matias_rui_metrics}

\begin{thm}[(Linearisation theorem)] If $\G$ is an s-proper groupoid and $N\subset M$ is invariant, then $\G$ is linearizable around $N$. 
\end{thm}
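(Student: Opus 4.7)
The plan is to deduce the linearisation theorem from the rigidity result for compact (more generally, proper near $N$) groupoids, via the rescaling trick alluded to in the introduction.

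First I would set up the canonical rescaling deformation. Pick a tubular neighbourhood of $N$ in $M$ and, using a suitable Ehresmann connection on $\G$, also a compatible tubular neighbourhood of $\G_N$ in $\G$; this identifies a neighbourhood of $N$ in $M$ with a neighbourhood of the zero section of the normal bundle $\nu_N$, and analogously a neighbourhood of $\G_N$ in $\G$ with a neighbourhood of the zero section of the appropriate normal bundle over $\G_N$. For each $\epsilon\neq 0$, fibrewise scalar multiplication by $\epsilon$ on these normal bundles restricts $\G$ to a smaller neighbourhood of $N$ and then rescales it back, producing a groupoid $\G_\epsilon$ defined on a fixed neighbourhood $U$ of $N$ and isomorphic to $\G$ restricted to a (possibly smaller) neighbourhood. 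The crucial point is that this family extends smoothly to $\epsilon=0$, the limit $\G_0$ being exactly the \emph{linearisation} of $\G$ around $N$, namely the linear groupoid structure on the normal bundle. In this way, proving that $\G$ is linearisable around $N$ reduces to proving that the strict deformation $\{\G_\epsilon\}_{\epsilon\in I}$ over $U$ is trivial near $N$: such a triviality delivers an isomorphism $\phi^{\epsilon}:\G_\epsilon\to\G_0$, and evaluating at any $\epsilon\neq 0$ (after possibly shrinking $U$) yields an isomorphism between $\G$ in a neighbourhood of $N$ and its linearisation.

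Next I would invoke the appropriate relative version of the rigidity theorem for compact groupoids. The $s$-properness of $\G$ together with invariance of $N$ implies that the restriction $\G_N\tto N$ is proper, so the vanishing theorem applied to $\G_N$ gives $H^{2}_{\mathrm{def}}(\G_N)=0$. The strict deformation $\{\G_\epsilon\}$ produces a deformation $2$-cocycle $\xi_0\in C^{2}_{\mathrm{def}}(\G|_U)$; restricting to $N$ and transgressing it via the vanishing gives a $1$-cochain $c_0$ on $\G_N$ whose coboundary is $\xi_0|_{\G_N}$. One then extends $c_0$ to a germ of a deformation $1$-cochain defined near $\G_N$, trivialising $\xi_0$ on a neighbourhood of $\G_N$. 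Just as in the proof of the rigidity theorem for compact groupoids, this transgressing cochain is then interpreted as a time-dependent multiplicative vector field on $\G$ (an infinitesimal family of groupoid automorphisms), whose flow provides the desired family of isomorphisms $\phi^{\epsilon}:\G_\epsilon\to \G_0$.

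Finally, a careful shrinking argument is needed to make the Moser-type flow run. Here lies the main obstacle: unlike in the compact case, the time-dependent vector field produced by transgression is only defined on a neighbourhood of $\G_N$, and its flow need not exist up to time $1$ uniformly. One must use $s$-properness to find, for each compact $K\subset N$, a neighbourhood on which the flow exists and preserves enough of the groupoid structure, and then assemble the local integrations using the invariance of $N$ and standard flow-box arguments; restricting $U$ further if necessary absorbs the loss of domain. Once this local Moser flow is constructed, equivalence of $\G_\epsilon$ with $\G_0$ on a neighbourhood of $N$ follows, and the linearisation theorem is proved.
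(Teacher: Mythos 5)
Your rescaling set-up is essentially the paper's (it is the content of the lemma preceding the theorem), but the rigidity step contains a genuine gap. You transgress only the restriction $\xi_0|_{\G|_N}$, using the vanishing of $H^{2}_{\mathrm{def}}(\G|_N)$, and then propose to ``extend $c_0$ to a germ of a deformation $1$-cochain defined near $\G|_N$, trivialising $\xi_0$ on a neighbourhood of $\G|_N$''. That extension step is exactly what does not follow: an arbitrary extension of $c_0$ satisfies $\delta c=\xi_0$ only over $N$, not on any neighbourhood, and the vanishing theorem gives you nothing off $N$ unless it is applied to a groupoid that is proper over the whole neighbourhood. Moreover, a primitive of the cocycle at $\epsilon=0$ alone cannot drive the Moser argument: Proposition \ref{rigidity-lemma} (and its avatars) requires solutions $X^{\epsilon}$ of $\delta(X^{\epsilon})=-\xi_{\epsilon}$ for \emph{all} $\epsilon$, smooth in $\epsilon$ — equivalently a multiplicative transverse vector field on the total groupoid $\tilde{\G}$ of the family — and Remark \ref{rmk-imp-proper} stresses that properness (or compactness) of each fibre $\G_{\epsilon}$, let alone of $\G|_N$, does not suffice for this; one needs properness of the family as a whole. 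This is precisely what the paper's lemma supplies (the rescaling family $\tilde{\G}$ is proper as a family), and what Lemma \ref{exist-mult-transv2} and Theorem \ref{thm-rigid-technical3} then exploit: the multiplicative transverse vector field is built on all of $\tilde{\G}$ by averaging against a Haar system of $\tilde{\G}$, chosen to extend the tautological one along $\tilde{\G}|_{\tilde{N}}$; its flow is complete over $N$ because the deformation is constant there, and openness of flow domains yields the neighbourhoods on which the flow gives the desired isomorphisms. Your closing paragraph, which tries to patch the flow by exhausting $N$ by compacts, does not substitute for this (and $N$ is not assumed compact): the actual mechanism is that the vector field agrees over $N$ with the trivial transverse field, hence is defined there for all relevant times.

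A secondary inaccuracy: the rescaled groupoids $\G_{\epsilon}$ are not canonically all defined over one fixed neighbourhood $U$, because the topological argument only produces an embedding $i:\G|_{\mathcal{N}(N)}\hookrightarrow \mathcal{N}_N(\G)$ which need not be onto; so the deformation is not strict, contrary to what you assert. The paper sidesteps exactly this point by invoking the version of rigidity for general (non-strict) proper deformations with a trivialisation along $N$, namely Theorem \ref{thm-rigid-technical3}, rather than the strict statement. If you insist on a strict deformation you would first have to improve the tubular-neighbourhood argument to make $i$ an isomorphism, which is an extra (unproved) step in your write-up.
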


Zung's theorem corresponds to the special case when $N$ is a fixed point of $\G$ (see also \cite{ivan} for more on the relation to Zung's theorem).

The relationship between $H^{*}_{\textrm{def}}(\G)$ and deformations also give rise to variation maps for families of Lie groupoids, very much in the spirit of the Kodaira-Spencer map associated to a family of complex manifolds \cite{Kodaira} and other similar variation maps. We will be looking at families of groupoids in the following sense:

\begin{dfn}\label{dfn-families-gpds} A \emph{family of Lie groupoids parametrized by a smooth manifold $B$}, 
\[ \G \tto M \stackrel{\pi}{\rmap} B,\] 
consists of a Lie groupoid $\G$ over a manifold $M$ and a surjective submersion $\pi$ from $M$ to $B$ such that $\pi\circ s=\pi \circ t$. For $b\in B$ we will denote by $\G_{b}$ the resulting groupoid over the fiber $M_b= \pi^{-1}(b)$ of $\pi$ above $b$. We say that it is a \emph{proper family} if $\G$ is proper. 

Two families $\G \tto M \stackrel{\pi}{\rmap} B$ and $\G' \tto M' \stackrel{\pi'}{\rmap} B$ are said to be \emph{isomorphic} if 
there exists an isomorphism of groupoids $F: \G\rmap \G'$ with base map $f: M\rmap M'$ compatible with $\pi$ and $\pi'$ (i.e., $\pi'\circ f= \pi$). 
\end{dfn}

Looking at the variation of the groupoids $\G_b$ in directions of curves $\gamma$ in $B$ (i.e. applying the previous ideas to the deformations $\{\G_{\gamma(\epsilon)}\}$ of $\G_{\gamma(0)}$), we obtain the variation maps of the family, 
\[ \text{Var}_b: T_bB\rmap H^2_\text{def}(\G_b).\]
Again, it is possible to prove several rigidity results for families; we mention here the simplest one:

\begin{thm}[(Local triviality of compact families)] Any compact family of Lie groupoids is locally trivial i.e., with the previous notations, for any $b\in B$, there exists a neighbourhood $U$ of $b$ in $B$ such that the resulting family parametrized by $U$ is isomorphic to the trivial family $\G_{b}\times U$.  
\end{thm}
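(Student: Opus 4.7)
The plan is to run a Moser-type argument: build a multiplicative vector field on the total space $\G$ that lifts a prescribed vector field on $B$, and then integrate its flow to produce the trivialization. Since the issue is local around a chosen $b_0\in B$, after choosing a chart I may assume $B$ is a star-shaped open set around $0=b_0$ in $\R^n$, and it is enough to lift the Euler (radial) vector field $Y$ on $B$ to a multiplicative vector field $\tilde X$ on $\G$ whose flow is defined for time one. The flow will then consist of groupoid automorphisms that cover the radial contraction of $B$, producing an isomorphism between the family over $B$ and the trivial family $\G_{b_0}\times B$.

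To obstruct the lifting problem, I would proceed infinitesimally. Pick any smooth vector field $\tilde Y$ on $M$ with $\pi_*\tilde Y=Y$, and any lift $X$ of $\tilde Y$ to a vector field on $\G$ compatible with $s$ in the sense that $X$ projects to $\tilde Y$ via $s$. The failure of $X$ to be a multiplicative vector field is the usual multiplicativity defect, and restricted to each fibre groupoid $\G_b$ it gives a $2$-cocycle $\xi_b\in C^2_{\mathrm{def}}(\G_b)$ whose class is precisely the variation class $\mathrm{Var}_b(Y_b)\in H^2_{\mathrm{def}}(\G_b)$ discussed in the introduction; finding the correct lift amounts to solving $\delta\eta_b=\xi_b$ smoothly in $b$.

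Here the compactness hypothesis enters: each $\G_b$ is a closed subgroupoid of the compact manifold $\G$, hence is itself compact and in particular proper. By the vanishing theorem, $H^2_{\mathrm{def}}(\G_b)=0$ for every $b$, so each $\xi_b$ is individually exact. What is needed, however, is a smooth family of primitives $\eta_b\in C^1_{\mathrm{def}}(\G_b)$ with $\delta \eta_b=\xi_b$. I would construct this by the same averaging device that proves the vanishing theorem: choose a smoothly $b$-dependent normalized Haar system on the proper family $\G\to B$ and use it to write down an explicit homotopy operator $h$ on the deformation complex of each $\G_b$, depending smoothly on $b$. Setting $\eta_b=h(\xi_b)$ and modifying $X$ by a vertical correction built from $\eta_b$ produces the desired multiplicative vector field $\tilde X$ on $\G$ lifting $Y$.

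Finally, compactness of $\G$ ensures completeness of $\tilde X$ up to time one; since $\tilde X$ is multiplicative, its time-$t$ flow $\Phi_t$ is a groupoid automorphism of $\G$ covering the time-$t$ radial dilation on $M$. The map $(g,b)\mapsto \Phi_1(g)$ with suitable reparametrization of time along each radial ray then assembles into an isomorphism of families $\G_{b_0}\times B\to \G$, which is the required local trivialization. The main obstacle in this program is the third paragraph: upgrading pointwise exactness (guaranteed by the vanishing theorem) to a \emph{smooth} family of primitives. This is the step that forces one to work not just with each $\G_b$ individually but with a Haar system on the total groupoid $\G$, so that the homotopy operator on the deformation complex depends smoothly on $b$; all remaining steps are then formal consequences of multiplicativity of $\tilde X$ and compactness.
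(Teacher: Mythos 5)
Your middle step---producing a multiplicative lift of a prescribed base vector field by averaging with a Haar system on the proper total family, so that the primitives of the multiplicativity defect depend smoothly on the parameter---is sound, and is exactly the content of Lemma \ref{exist-mult-transv1} and Remark \ref{rmk-mult-lift-families} in the paper. The genuine gap is in your choice of base vector field and in the final assembly. The Euler field $Y=\sum_i x_i\,\partial_{x_i}$ vanishes at $b_0=0$ and its time-$t$ flow is $x\mapsto e^t x$, which fixes the origin for every finite $t$. Hence any multiplicative lift $\tilde X$ of $Y$ is tangent to the central fibre $\G_{b_0}$ (its base component vanishes over $b_0$), and its flow maps $\G_{b_0}$ into itself: it identifies $\G_x$ with $\G_{e^t x}$ for $x\neq 0$, but it never identifies $\G_{b_0}$ with $\G_b$ for $b\neq b_0$. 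Your proposed map $(g,b)\mapsto \Phi_1(g)$ therefore lands back in $\G_{b_0}$ rather than in $\G_b$, and no ``reparametrization of time along radial rays'' can repair this, because reaching the origin along the Euler flow takes infinite time; at best you would conclude that the fibres over a punctured ray are mutually isomorphic, and comparing them with the central fibre would require an extra (and nontrivial) limiting argument, with no control on smoothness in $b$.

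The paper's proof fixes precisely this point: instead of one vector field, it uses for each $b$ in a chart the \emph{constant} field $W^b=\sum_i x_i(b)\,\frac{d}{dx_i}$, whose time-one flow does carry $b_0$ to $b$, and takes multiplicative lifts $X^b$ of $W^b$ chosen smoothly in $b$ (possible because every step of the averaging construction is given by explicit formulas, after fixing Ehresmann connections for the initial lift, cf.\ Remark \ref{rmk-mult-lift-families}). The trivialization is then $(g,b)\mapsto \phi^1_{X^b}(g)$, the time-one flow being defined on all of $\G_{b_0}$ by compactness. So the repair to your argument is to replace the single Euler field by this $b$-parametrized family of base fields and lifts; the lifting machinery you set up in your third paragraph then applies verbatim.
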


\subsection*{Acknowledgements}
We would like to thank Rui Loja Fernandes and Florian Sch\"atz for helpful discussions related to this paper. We would also like to thank Rui Loja Fernandes for his interpretation and formulation of the example in Remark \ref{rmk-palais}, which made the exposition much clearer.

\section{The deformation complex}\label{sec:deformation_complex}

In this section we introduce the deformation complex of a Lie groupoid.

\subsection{Some notations/terminology}
We start by fixing some notations/terminology. For a Lie groupoid $\G \tto M$, we denote by $s,t,m,u,$ and $i$ its source, target, multiplication, unit, and inversion maps respectively. When there is no danger of ambiguity, we write $m(g, h)= gh$, $i(g)= g^{-1}$ and we identify $x\in M$ with the corresponding unit $u(x)\in \G$. We also write $g: x\rmap y$ to indicate that
$g\in \G$, $x= s(g)$, $y= t(g)$. The $s$ and $t$-fibers above $x\in M$ are denoted
\[ \G(x, -)= s^{-1}(x), \ \G(-, x)= t^{-1}(x).\]
For $g: x\rmap y$ in $\G$, we consider the corresponding right translation 
\[ R_g: \G(y, -)\rmap \G(x, -)\]
and similarly the left translation $L_g$ which maps $t$-fibers to $t$-fibers. Their differentials will be denoted by $r_g$ and $l_g$, respectively. 

Recall that the Lie algebroid $A$ of $\G$ is, as a vector bundle, the restriction of $T^s\G= \textrm{Ker}(ds)$ to $M$ (pulled-back via the unit map $u: M\hookrightarrow \G$), so that
\[ A_x= T_{x} \G(x, -) \ \ \textrm{for\ all} \ x\in M.\]
The anchor of $A$ is the vector bundle map $\rho: A\rmap TM$ given by the differential of $t$. Using right translations, any $\alpha\in \Gamma(A)$ induces a right invariant vector field $\overrightarrow{\alpha}$ on $\G$ (necessarily tangent to the $s$-fibers, so that right invariance makes sense): 
\[ \overrightarrow{\alpha}(g)=r_g \alpha_{t(g)}\]
This construction identifies $\Gamma(A)$ with the space $\mathfrak{X}^{s}_{\textrm{inv}}(\G)$ of right invariant vector fields on $\G$; in turn, this induces the Lie algebra bracket $[\cdot, \cdot]$ on $\Gamma(A)$. Similarly, any $\alpha\in \Gamma(A)$ induces a left invariant vector field $\overleftarrow{\alpha}$ on $\G$, given by
\begin{equation}
\label{left-inv}
\overleftarrow{\alpha}(g)=l_g\circ di(\alpha_{s(g)}).
\end{equation}

\subsection{The deformation complex} 
We are now ready to introduce the deformation complex of a Lie groupoid. We will need the the division map $\bar{m}$ of $\G$,
\[\bar{m}(p,q) = pq^{-1}, \text{ for all } p,q \in \G, \text{ such that } s(p) = s(q);\]
Its advantage over the multiplication map $m$, especially when it comes to deformations, is explained in subsection \ref{s-constant def}. We also consider the space of strings of $k$-composable arrows
$$\G^{(k)} = \{(g_1,\ldots,g_k) : s(g_i) = t(g_{i+1}) \text{ for all } 1\leq i\leq k-1\}.$$

\begin{dfn}
\label{dfn-diff-coh}
The {\it deformation complex} $(\CG,\delta)$ of the Lie groupoid $\G$, whose cohomology is denoted $H_{\text{def}}^*(\G)$, is defined as follows. For $k\geq 1$, the $k$-cochains $c\in \CkG$ are the smooth maps 
\[ c:\Gk\longrightarrow T\G, \ \ (g_1,\ldots,g_k)\mapsto c(g_1,\ldots,g_k)\in T_{g_1}\G,\] 
which are $s$-projectable in the sense that 
\[ ds\circ c(g_1,g_2,\ldots,g_k)= : s_{c}(g_2, \ldots, g_{k})\] 
does not depend on $g_1$; the resulting $s_{c}$ is called the $s$-projection of $c$. The differential of $c\in \CkG$ is defined by
\begin{align*}(\delta c)(g_1,\ldots,g_{k+1})  = &- d\bar{m}(c(g_1g_2,\ldots,g_{k+1}),c(g_2,\ldots,g_{k+1}))  \\
&+\sum_{i=2}^k(-1)^{i} c(g_1,\ldots, g_ig_{i+1},\ldots, g_{k+1}) +(-1)^{k+1}c(g_1,\ldots, g_k).
\end{align*}
For $k= 0$, $C_{\text{def}}^0(\mathcal{G}): = \Gamma(A)$ and the differential of $\alpha\in \Gamma(A)$ is defined by
\[ \delta(\alpha)=\overrightarrow{\alpha}+\overleftarrow{\alpha}\in C_{\text{def}}^1(\mathcal{G}).\]
\end{dfn}

Note that, for $k= 0$, one may think of a section of $A$ as a map $c:\G^{(0)}=M\longrightarrow T\G$, with $c(1_x)=c_x \in T_{1_x}\G$, such that $ds\circ c_x=0_x$.

\begin{lemma} $(\CG,\delta)$ is, indeed, a cochain complex.
\end{lemma}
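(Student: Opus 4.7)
The plan is to verify $\delta^2=0$ by a direct expansion, exploiting two algebraic properties of the division map. The first is $s\circ\bar m=s\circ\mathrm{pr}_2$, which implies $ds\circ d\bar m(X,Y)=ds(Y)$; this both confirms that $\delta c$ is itself $s$-projectable and tells us exactly which sub-pairs of tangent vectors are admissible inputs to $d\bar m$. The second is the identity $\bar m(\bar m(p,r),\bar m(q,r))=pr^{-1}rq^{-1}=pq^{-1}=\bar m(p,q)$, whose differential reads
\[ d\bar m(d\bar m(X,Z),d\bar m(Y,Z))=d\bar m(X,Y) \]
on compatible triples and will drive the cancellation of the doubly-$d\bar m$ terms in the generic case; the right-invariance $\bar m(ph,qh)=\bar m(p,q)$ will also appear in the base case.

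For $k=0$, the task reduces to verifying $d\bar m(\delta\alpha(g_1g_2),\delta\alpha(g_2))=\delta\alpha(g_1)$ with $\delta\alpha=\overrightarrow\alpha+\overleftarrow\alpha$. The naive four-term expansion is not legitimate because the cross pairs $(\overrightarrow\alpha,\overleftarrow\alpha)$ fail to be tangent to $\G^{(2)}_s$, but the two diagonal pairs $(\overrightarrow\alpha(g_1g_2),\overrightarrow\alpha(g_2))$ and $(\overleftarrow\alpha(g_1g_2),\overleftarrow\alpha(g_2))$ do have matching $s$-projections ($0$ and $\rho(\alpha)(s(g_2))$ respectively) and can be treated separately. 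Integrating the right-invariant flow as $\phi^{\overrightarrow\alpha}_\epsilon(g)=\gamma_{t(g)}(\epsilon)\,g$, I compute $\bar m(\phi_\epsilon(g_1g_2),\phi_\epsilon(g_2))=\gamma_{t(g_1)}(\epsilon)\,g_1\,\gamma_{t(g_2)}(\epsilon)^{-1}$ (the $g_2$-factors cancel), and differentiating at $\epsilon=0$ yields exactly $\overrightarrow\alpha(g_1)+\overleftarrow\alpha(g_1)$ after recognising the left-factor contribution as the definition $\overleftarrow\alpha(g_1)=l_{g_1}\circ di(\alpha_{s(g_1)})$. The left-invariant contribution vanishes: $\phi^{\overleftarrow\alpha}_\epsilon(g)=g\,\tilde\eta_{s(g)}(\epsilon)$ produces a common right factor that kills itself in $\bar m$, so $\bar m(\phi_\epsilon(g_1g_2),\phi_\epsilon(g_2))=g_1$ is constant in $\epsilon$. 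Summing the two contributions gives $\delta\alpha(g_1)$.

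For $k\geq 1$, I would expand $(\delta^2 c)(h_1,\ldots,h_{k+2})$ in full and classify each resulting summand by the type of each of the two $\delta$-applications: either \emph{simplicial} (merging two adjacent entries $h_i,h_{i+1}$ with $i\geq 2$, or dropping the last entry), or \emph{leading} (the $-d\bar m$ term involving the first two slots). Simplicial-simplicial pairs cancel as in the standard bar complex by associativity and the identity $\partial_i\partial_j=\partial_{j-1}\partial_i$ on faces. Leading-simplicial pairs cancel via associativity combined with the observation that merging or dropping a later entry commutes with the $d\bar m$ acting on the first two slots. The unique doubly-leading contribution
\[ d\bar m\bigl(d\bar m(c(h_1h_2h_3,\ldots),c(h_3,\ldots)),\ d\bar m(c(h_2h_3,\ldots),c(h_3,\ldots))\bigr) \]
collapses, by the $d\bar m$-identity recalled above, to $d\bar m(c(h_1h_2h_3,\ldots),c(h_2h_3,\ldots))$, and cancels against a specific leading-simplicial term.

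The main obstacle will be the combinatorial bookkeeping of signs and, more subtly, the check at each step that the pair fed into $d\bar m$ really lies in $T\G^{(2)}_s$. For diagonal pairs $(c(\ldots,h_j,\ldots),c(\ldots,h_j,\ldots))$ sharing their tail arguments this $s$-compatibility is automatic from the $s$-projectability of $c$, but as soon as one input is itself a $d\bar m$-expression one must invoke $ds\circ d\bar m=ds\circ\mathrm{pr}_2$ to see that the $s$-projections still match. The same identity also confirms that $\delta c$ is $s$-projectable, so $\delta$ is a well-defined map $C^k_{\mathrm{def}}(\G)\to C^{k+1}_{\mathrm{def}}(\G)$.
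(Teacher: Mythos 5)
Your argument for $k\geq 1$ is essentially the paper's: expand $\delta^2 c$, let the simplicial terms cancel as in the bar complex, and collapse the doubly-applied $d\bar m$ term via the differentiated associativity $d\bar m(d\bar m(X,Z),d\bar m(Y,Z))=d\bar m(X,Y)$ so that it kills the remaining leading--simplicial term; your explicit flow computation for the $k=0$ case (showing $\overrightarrow{\alpha}+\overleftarrow{\alpha}$ is multiplicative) is a nice addition that the paper leaves implicit. One slip to fix: since $\bar m(p,q)=pq^{-1}$ has source $t(q)$, the correct identity is $s\circ\bar m=t\circ\mathrm{pr}_2$, i.e.\ $ds\circ d\bar m(X,Y)=dt(Y)$, not $ds(Y)$; luckily every place you invoke it (the $s$-projectability of $\delta c$, and the matching of $s$-projections of the two $d\bar m$-outputs fed into the outer $d\bar m$) only needs that the result is independent of the first argument and agrees on the two components, so the conclusions survive, but the stated identity and the resulting formula for $s_{\delta c}$ should be corrected.
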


\begin{proof}
First of all, $\delta$ is well-defined, i.e., $\delta(c)\in C^{k+1}_{\textrm{def}}(\G)$  for $c\in C^{k}_{\textrm{def}}(\G)$; indeed, applying $ds$ to the formula for $\delta(c)(g_1, \ldots, g_{k+1})$ (and using $s(\bar{m}(a, b))= t(b)$) one finds that $\delta(c)$ is $s$-projectable with $s$-projection 
\begin{align} \label{s-projection}
s_{\delta c}(g_2, \ldots, g_{k+1})  = &- dt(c(g_2, \ldots, g_{k+1}))+  \\
& + \sum_{i= 2}^{k} (-1)^{i} s_{c}(g_2, \ldots, g_{i}g_{i+1}, \ldots, g_{k+1})+ (-1)^{k+1} s_{c}(g_2, \ldots, g_{k}). \nonumber \end{align}

For $\alpha\in \Gamma(A)$, it holds that $\delta(\alpha)$ is $s$-projectable to $\rho(\alpha)$,  the image of $\alpha$ by the anchor map,  since $ds\circ r_g=0$ and $ds\circ l_g \circ di=dt$.

To check that $\delta$ squares to zero, note that after cancelling the pairs of terms with opposite signs, the expression $\delta(\delta c)(g_1,\ldots,g_{k+2})$ becomes
\begin{align*} 
\delta(\delta c)(g_1, \ldots & , g_{k+2})= \\   &= d\bar{m}[d\bar{m}(c(g_1g_2g_3,\ldots,g_{k+2}),c(g_3,\ldots,g_{k+2})),d\bar{m}(c(g_2g_3,\ldots,g_{k+2}),c(g_3,\ldots,g_{k+2}))]\\
&+d\bar{m}\left(\sum_{i=3}^{k+1}(-1)^{i}c(g_1g_2,\ldots,g_ig_{i+1},\ldots,g_{k+2}),\sum_{i=3}^{k+1}(-1)^{i}c(g_2,\ldots,g_ig_{i+1},\ldots,g_{k+2})\right)\\
&-d\bar{m}(c(g_1g_2g_3,\ldots,g_{k+2}),c(g_2g_3,\ldots,g_{k+2}))\\
&+\sum_{i=3}^{k+1}(-1)^{i+1}d\bar{m}(c(g_1g_2,\ldots, g_ig_{i+1},\ldots, g_{k+2}),c(g_2,\ldots, g_ig_{i+1},\ldots, g_{k+2})). \\
\end{align*}
At this point, using the associativity axiom of the division map, i.e., $$\bar{m}(\bar{m}(g,k),\bar{m}(h,k))=\bar{m}(g,h)$$ in the first line of the expression, and linearity of $d\bar{m}$ in the second line, we see that the first and second lines become the symmetric of the third and fourth, respectively.
\end{proof}

\begin{example}
We will see several (classes of) examples throughout the paper. Let us mention here the simplest one: when $\G$ is a Lie group $G$ (hence $M = \{\ast\}$ is a point). Then, using the trivialization 
$TG\cong G\times \mathfrak{g}$ induced by right translations ($\mathfrak{g}$ being the Lie algebra of $G$) we obtain an identification of  $C^*_\mathrm{def}(G)$ with the complex $C^*(G, Ad)$ computing the differentiable cohomology
of $G$ with coefficients in the adjoint representation, hence
\[ H_{\text{def}}^*(G)\cong H^{*}(G, Ad) .\]
This is to be expected since the right hand side is the cohomology which controls deformations of Lie groups \cite{Copper}. 
\end{example}

\subsection{Differentiable cohomology} 
\label{sub-Differentiable cohomology}
For a better perspective, and for the later use, let us recall here the ordinary differentiable cohomology of Lie groupoids. Let $\G\tto M$ be a Lie groupoid; for a representation $E\rmap M$ of $\G$, the action $E_x\rmap E_y$ induced by an arrow $g: x\rmap y$ will be denoted $v\mapsto g\cdot v$.

\begin{dfn} The \emph{(differentiable) cohomology of the Lie groupoid $\G$ with coefficients in the representation $E$}, denoted $H^*(\G, E)$, is the cohomology of the complex $(C^*(\G, E), \delta)$, where 
$k$-cochains are the smooth maps
\[u:\Gk\longrightarrow E, \ \ (g_1,\ldots,g_k)\mapsto u(g_1,\ldots,g_k)\in E_{t(g_1)}\]
and the differential is given by  
\begin{align}\label{eq-diff-action}(\delta u)(g_1,\ldots,g_{k+1}) & =g_1\cdot u(g_2,\ldots,g_{k+1})  \\
&+\sum_{i=1}^k(-1)^{i} u(g_1,\ldots, g_ig_{i+1},\ldots, g_{k+1})\nonumber \\ &+(-1)^{k+1} u(g_1,\ldots, g_k).\nonumber
\end{align}
\end{dfn}

Note that, in degree zero, $H^0(\G, E)= \Gamma(E)^{\textrm{inv}}$ is the space of sections of $E$ that are invariant with respect to the action of $\G$. When $E$ is the trivial line bundle with the trivial action, the resulting complex is denoted by $C^*(\G)$. It comes together with a graded product - the cup product, given by
\begin{equation}\label{eq-cup-pr} 
(u\cdot v)(g_1, \ldots , g_{k+k'})= u(g_1, \ldots, g_k) v(g_{k+1}, \ldots , g_{k+ k'}) 
\end{equation}
for $u\in C^k(\G)$, $v\in C^{k'}(\G)$. The same formula makes $C(\G, E)$ into a right graded $C(\G)$-module; the fact that $E$ is a representation is encoded in the differential of $C(\G, E)$, which makes it into a (right)  $C(\G)$- DG-module. Note that, using precisely the same formulas for the cup-product and the same arguments, one has:

\begin{lemma}\label{def-DG-mod} $(C^{*}_{\mathrm{def}}(\G), \delta)$ is a (right) $(C^*(\G), \delta)$- DG-module. 
\end{lemma}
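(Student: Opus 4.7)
The plan is to prove the lemma by mimicking, almost verbatim, the standard argument showing that $(C^*(\G,E),\delta)$ is a DG-module over $(C^*(\G),\delta)$ via the cup product formula (\ref{eq-cup-pr}); the only novelty is that one must check that things behave well with respect to the $s$-projectability condition and that the first term in the deformation differential (the one involving $d\bar{m}$) plays the role of the ``action by $g_1$'' term in \eqref{eq-diff-action}.

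First I would check that $c\cdot v$ is a well-defined deformation cochain whenever $c\in C^k_{\mathrm{def}}(\G)$ and $v\in C^{k'}(\G)$. Here the product in (\ref{eq-cup-pr}) is scalar multiplication of the tangent vector $c(g_1,\ldots,g_k)\in T_{g_1}\G$ by the real number $v(g_{k+1},\ldots,g_{k+k'})$, so the result lies in $T_{g_1}\G$. For $s$-projectability, $ds\circ(c\cdot v)$ equals $v(g_{k+1},\ldots,g_{k+k'})\cdot s_c(g_2,\ldots,g_k)$, which is independent of $g_1$; thus $c\cdot v\in C^{k+k'}_{\mathrm{def}}(\G)$ with $s$-projection $s_c\cdot v$ (where on the right we use the obvious cup product formula on the level of $s$-projections). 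Associativity $(c\cdot v)\cdot w=c\cdot(v\cdot w)$ is then immediate from associativity of scalar multiplication.

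The main step is the Leibniz identity
\[\delta(c\cdot v)=\delta(c)\cdot v+(-1)^k c\cdot\delta(v).\]
I would expand both sides using the explicit formulas in Definitions~\ref{dfn-diff-coh} and the paragraph above. In $\delta(c\cdot v)(g_1,\ldots,g_{k+k'+1})$, the leading term $-d\bar{m}\bigl((c\cdot v)(g_1g_2,\ldots),(c\cdot v)(g_2,\ldots)\bigr)$ simplifies, by linearity of $d\bar{m}$ in each argument and the fact that both sides carry the same scalar factor $v(g_{k+2},\ldots,g_{k+k'+1})$, to $v(g_{k+2},\ldots,g_{k+k'+1})$ times the leading term of $\delta(c)\cdot v$. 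The internal sum $\sum_{i=2}^{k+k'}(-1)^i(c\cdot v)(g_1,\ldots,g_ig_{i+1},\ldots)$ splits according to whether the contracted pair $g_ig_{i+1}$ falls inside the $c$-block ($2\le i\le k$), straddles the boundary ($i=k+1$), or lies inside the $v$-block ($k+2\le i\le k+k'$); the first range reproduces the internal sum of $\delta(c)\cdot v$, while the latter two ranges reproduce, up to the sign $(-1)^k$, the internal sum in $c\cdot\delta(v)$ corresponding to $j=1,\ldots,k'$. Finally, the ``$(-1)^{k+k'+1}$''-boundary term of $\delta(c\cdot v)$ matches $(-1)^k$ times the ``last face'' term of $\delta(v)$, while the spurious ``last face of $c$'' term $(-1)^{k+1}c(g_1,\ldots,g_k)\cdot v(g_{k+2},\ldots,g_{k+k'+1})$ coming from $\delta(c)\cdot v$ cancels against the ``first face'' term $(-1)^k c(g_1,\ldots,g_k)\cdot v(g_{k+2},\ldots,g_{k+k'+1})$ coming from $(-1)^k c\cdot\delta(v)$ (recall that for the trivial line bundle the first face in the differential of $v$ is simply $v(g_2,\ldots,g_{k'+1})$, with no action factor).

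The only place where one might worry is that $d\bar{m}$ is a nonlinear operation globally, yet the argument treats it ``as if'' it were bilinear. The honest statement is that $d\bar{m}$ is fibrewise linear as a map between tangent spaces, which is exactly the bilinearity used above on the first term; this is therefore the one technical point that I would highlight but which is straightforward. Everything else is bookkeeping identical to the classical Hochschild / groupoid cup-product computation, and no additional input is needed.
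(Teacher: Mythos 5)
Your proof is correct and follows exactly the route the paper intends: the paper states the lemma by remarking that the cup-product formula (\ref{eq-cup-pr}) and ``the same arguments'' as for $C^*(\G,E)$ apply, and your write-up simply carries out that bookkeeping, with the one genuinely new point (that the $d\bar{m}$-term respects simultaneous scaling of both entries, which holds because $c$ is $s$-projectable so the scaled pair stays tangent to $\G\times_s\G$ where $d\bar{m}$ is linear) correctly identified and handled. The only nitpick is terminological: what you use is linearity of $d\bar{m}$ on that tangent space, not ``bilinearity'', but your argument uses it in exactly the right way.
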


\begin{rmk}\label{for-later-use} For the later use note also that the spaces $C^k(\G, E)$ make sense for any vector bundle $E\rmap M$:
\[ C^k(\G, E)= \Gamma(\G^{(k)}, t^*E) \]
there $t$ on $\G^{(k)}$ picks the target of the first arrow. Also, whenever we have a quasi-action of $\G$ on $E$, i.e., a smooth operation that associates to any arrow $g: x\rmap y$ a linear map $\lambda_g: E_x\rmap E_y$
depending smoothly on $g$, one has an induced operator $\delta_{\lambda}$ on $C^*(\G, E)$ defined by exactly the same formulas as $\delta$, but using the quasi-action; $\delta_{\lambda}$ is still a graded derivation on the $C(\G)$-module $C(\G, E)$ (actually, any graded derivation is of type $\delta_{\lambda}$ for some quasi-action).  The associativity of the quasi-action is equivalent to $\delta_{\lambda}^{2}= 0$.
\end{rmk}

\begin{rmk}It will be often useful to consider a smaller complex computing deformation cohomology. The \emph{normalized deformation complex} of a Lie groupoid $\G$ is the subcomplex $\widehat{C}_\textrm{def}^*(\G)$ of the deformation complex $C_\textrm{def}^*(\G)$, which in degree $k\geq 2$ is composed of those cochains $c\in C_\textrm{def}^k(\G)$ satisfying $$c(1_x,g_2,\ldots,g_k)=s_c(g_2,\ldots,g_k)\qquad \text{and}\qquad c(g_1,\ldots,1_x,\ldots,g_k)=0.$$ In degree $1$, the only condition is that $c(1_x)=s_c(x)$, and in degree $0$ there is no condition, i.e., $\widehat{C}_\textrm{def}^0(\G)=\Gamma(A)$. It is a simple computation to check that $\widehat{C}_\textrm{def}^*(\G)$ is, indeed, a subcomplex - one only has to remember the expression for $s_{\delta c}$ (equation \eqref{s-projection}). 

The proof that $\widehat{C}_\textrm{def}^*(\G)$ is quasi-isomorphic to $C_\textrm{def}^*(\G)$ can be seen as a particular case of the proof of theorem \ref{thm: morita} (see remark \ref{rmk: normalized} and proposition \ref{prop: normalized}), so we postpone the discussion until then.
\end{rmk}

\begin{rmk}[(Related to the adjoint representation)]\label{1-adj} As we have already mentioned in the introduction, another guiding principle that is worth having in mind when thinking about $H_{\text{def}}^*(\G)$ is that it plays the role of ``differentiable cohomology with coefficients in the adjoint representation''. The reason for the quotes is that there is no adjoint representation in the classical sense. Actually, one of the main problems with the very notion of representation is that there are only very few representations that come for free and make sense intrinsically for all Lie groupoids. The situation is slightly better in the regular case, when one has at hand the normal and the isotropy representations, denoted $\nu$ and $\mathfrak{i}$ and recalled in the next two sections. However, even in this case, and although ``the adjoint representation'' is closely related to $\nu$ and $\mathfrak{i}$ (it contains them!), its structure is still subtle and requires the ``up to homotopy'' version of representations. This remark is the first one of a series of remarks that will guide the reader towards the full structure of the adjoint representation.
\end{rmk}

\section{First examples}

We now look at some basic examples. The order we choose is based on the simplicity of the structure of the adjoint representations involved. 

\subsection{Gauge groupoids}

Recall that any principal $G$-bundle $\pi: P\rmap M$ ($G$ is a Lie group) has an associated gauge groupoid, which is the quotient of the pair groupoid $P\times P\rmap P$ (with source and target the two projections) modulo the diagonal action of $G$:
\[ \G= P\times_{G}P \tto M .\] 
Recall also that the adjoint bundle of $P$ is defined as the vector bundle
\[ P[\mathfrak{g}]= (P\times_{G} \mathfrak{g}) \cong \textrm{Ker}(d\pi: TP/G \rmap TM).\]
This bundle will also be be discussed later on, in the general context, when it will show up as the kernel of the anchor map. In this case the Lie algebroid of $\G$ is $TP/G$, its space of sections is $\mathfrak{X}(P)^G$, the Lie algebroid bracket of $TP/G$ comes from the Lie bracket of vector fields on $P$ and the anchor is induced by the differential of $\pi$. Important for us is the fact that $\mathrm{Ker}(\rho) = P[\mathfrak{g}]$ is a representation of $\G$: indeed, any class $[p, q]\in \G$ viewed as an arrow from $x= \pi(p)$ to $y= \pi(q)$ induces the action
\[ P[\mathfrak{g}]_x\rmap P[\mathfrak{g}]_y, [p, u]\mapsto [q, u].\]

\begin{prop} If $G$ is a Lie group and $\G$ is the gauge groupoid associated to a principal $G$-bundle $P$, then 
there are canonical isomorphisms
\[ H^{*}_{\mathrm{def}}(\G)\cong H^{*}(\G, P[\mathfrak{g}]) \cong H^{*}(G, \mathfrak{g})\]
(where the last group is the differentiable cohomology of the Lie group $G$ with coefficients in its adjoint representation).
\end{prop}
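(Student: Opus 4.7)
The whole statement is naturally a Morita-invariance calculation. The principal bundle $P\to M$ is itself the canonical Morita bibundle between the gauge groupoid $\G = P\times_G P \tto M$ and the structure group $G \tto \{*\}$: the principal right $G$-action and the natural left $\G$-action on $P$ commute, are free and proper, and each has the other's base as orbit space, so $P$ exhibits $\G$ and $G$ as Morita equivalent.

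Applying the Morita invariance theorem stated in the introduction gives $H^*_{\mathrm{def}}(\G)\cong H^*_{\mathrm{def}}(G)$, and combining this with the preceding example $H^*_{\mathrm{def}}(G)\cong H^*(G,\g)$ yields the outer isomorphism of the proposition. For the middle identification $H^*(\G, P[\g])\cong H^*(G,\g)$ I would invoke the analogous, classical Morita invariance of differentiable groupoid cohomology with coefficients, after checking that the representation $P[\g] = P\times_G \g$ of $\G$ is precisely the transport of the adjoint representation $\g$ of $G$ across the bibundle $P$; this is tautological from the associated-bundle construction, the $\G$-action $[p,q]\cdot [p,u]=[q,u]$ already recorded in the paragraph preceding the proposition matching $\mathrm{Ad}$ in any local trivialisation of $P$ coming from a section of $\pi$.

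The step I expect to be the most delicate is tracking canonicity: the proposition asserts \emph{canonical} isomorphisms, so each of the three isomorphisms in the chain $H^*_{\mathrm{def}}(\G)\cong H^*_{\mathrm{def}}(G)\cong H^*(G,\g)\cong H^*(\G, P[\g])$ must depend only on the data of $P$ and no auxiliary choice, which reduces to the naturality of the Morita-equivalence functors in the relevant bicategories. If a more direct, connection-theoretic argument is preferred, one can alternatively choose a principal connection on $P$, use it to split the Atiyah sequence $0\to P[\g]\to TP/G\to TM\to 0$, and build an explicit comparison map $\Phi:C^*(\G, P[\g])\to C^*_{\mathrm{def}}(\G)$ by $\Phi(u)(g_1,\ldots,g_k) = r_{g_1}u(g_1,\ldots,g_k)\in T_{g_1}\G$, whose image sits in the subcomplex of cochains with vanishing $s$-projection; since $\G$ is transitive the normal representation $\nu$ vanishes, and the regular-case long exact sequence stated in the introduction then forces $\Phi$ to be a quasi-isomorphism, recovering the first isomorphism independently of the Morita route.
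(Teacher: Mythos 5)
Your proposal follows essentially the same route as the paper: the paper also obtains $H^{*}_{\mathrm{def}}(\G)\cong H^{*}(G,\mathfrak{g})$ from Morita invariance of deformation cohomology (via the bibundle $P$ and the Lie group example $H^{*}_{\mathrm{def}}(G)\cong H^{*}(G,Ad)$), the identification $H^{*}(\G,P[\mathfrak{g}])\cong H^{*}(G,\mathfrak{g})$ from Morita invariance of differentiable cohomology, and the first isomorphism from the regular-case long exact sequence, where transitivity gives $\nu=0$ and $\mathfrak{i}=\mathrm{Ker}(\rho)=P[\mathfrak{g}]$, exactly as in your alternative argument (note that your comparison map $\Phi$ is just the canonical inclusion $r$ of the paper and needs no choice of connection). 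So your write-up is correct and matches the paper's intended proof, only spelled out in more detail.
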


There are various ways to look at this result. The first isomorphism will follow from our later general results (e.g. on the regular case); the isomorphism between the first and last groups follows directly from the Morita invariance of deformation cohomology (theorem \ref{thm: morita}); the last isomorphism can also be seen as an immediate consequence of Morita invariance of differentiable cohomology \cite{VanEst}.

\begin{rmk}[(Related to the adjoint representation)]\label{2-adj}
In the spirit of Remark \ref{1-adj}, we see that the candidate for ``the adjoint representation'' of the gauge groupoid is given by the adjoint bundle $P[\mathfrak{g}]$.  
\end{rmk}

\subsection{Foliation groupoids}
Next we look at the Lie groupoids that arise from foliation theory (such as holonomy or homotopy groupoids of foliations), i.e. which integrate foliations; here we identify a foliation with its tangent bundle and we interpret it as a Lie algebroid with the inclusion as anchor. It then makes sense to talk about the integrability of a foliation by a groupoid. We see that a foliation groupoid is a Lie groupoid $\G\tto M$ with the property that the anchor of the associated Lie algebroid is injective or, in a global formulation, with the property that the isotropy groups of $\G$ are discrete \cite{marius_ieke2}. They come with a regular foliation
$\mathcal{F}$ on $M$ (the image of the anchor); the resulting normal bundle
\[ \nu:= TM/\mathcal{F} \]
is then a representation of $\G$, where the action is given by linear holonomy (see \cite{marius_ieke2} but also our general discussion from subsection \ref{sec-The normal representation} below) which, in turn, is a global manifestation of the (foliated) Bott connection on $\nu$ (cf. e.g. \cite{Hei}). 
The resulting cohomology $H^{*}(\G, \nu)$ is the groupoid counterpart of the foliated cohomology $H^*(\mathcal{F}, \nu)$ which, in turn, was shown by Heitsch \cite{Hei} to control deformations of the foliation (the two are related by a 
Van Est map - see \cite{VanEst}). Therefore the expectation that $H^{*}(\G, \nu)$ is related to deformations of foliation groupoids, i.e. to $H^{*}_{\textrm{def}}(\G)$. Moreover, while 
deformations of Lie groupoids give rise to $2$-cocycles in deformation cohomology (think e.g. of Lie groups and see also below), deformations of foliations give rise to degree $1$ classes in the cohomology with coefficients in $\nu$; therefore one
also expects a degree shift. And, indeed:

\begin{prop} For any foliation groupoid $\G\tto M$ one has canonical isomorphisms:
\[ H^{*}_{\mathrm{def}}(\G) \cong H^{*-1}(\G, \nu)\]
where the isomorphism sends a cocycle $c\in C^{k}_{\mathrm{def}}(\G)$ into $[s_{c}]$ - the class modulo $\mathcal{F}$ of the $s$-projection of $c$ (see Definition \ref{dfn-diff-coh}).
\end{prop}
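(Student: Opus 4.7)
My plan is to realise $\Phi(c) := (-1)^{k}\, \pi \circ s_{c}$ for $c \in C^{k}_{\mathrm{def}}(\G)$ (where $\pi\colon TM \to \nu$ is the projection) as a surjective chain map, and then prove that its kernel $K^{\bullet}$ is acyclic, so that the induced long exact sequence gives the claimed isomorphism $H^{k}_{\mathrm{def}}(\G) \cong H^{k-1}(\G, \nu)$ sending $[c]$ to $[s_{c}]$. That $\Phi$ is a chain map follows by direct comparison of formula~\eqref{s-projection} with the differential on $C^{\bullet}(\G, \nu)$, after invoking the identity $\pi(dt(X)) = g\cdot \pi(ds(X))$ for every $X \in T_{g}\G$ --- which is precisely the defining formula of the linear holonomy action of $\G$ on $\nu$, valid because the anchor is injective so $\nu$ is a genuine representation. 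Surjectivity at the cochain level is elementary: choose an Ehresmann connection $H \subset T\G$ (a splitting of $ds$) and a splitting $\sigma\colon \nu \to TM$ of $\pi$; then $u \in C^{k-1}(\G,\nu)$ admits the preimage $c(g_{1},\ldots,g_{k}) := (\sigma u(g_{2},\ldots,g_{k}))^{H}_{g_{1}}$, i.e., the $H$-horizontal lift at $g_{1}$.

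The heart of the matter is the acyclicity of $K^{\bullet} = \ker \Phi$. Because the anchor is injective, any $c \in K^{k}$ has $s_{c} \in C^{k-1}(\G, \mathcal{F})$, which lifts uniquely to $\alpha_{c} \in C^{k-1}(\G, A)$ via $\rho(\alpha_{c}) = s_{c}$. Using \eqref{left-inv} together with $ds \circ di|_{A} = dt|_{A} = \rho$, the left-invariant cochain $\Xi(\alpha)(g_{1},\ldots,g_{k}) := \overleftarrow{\alpha(g_{2},\ldots,g_{k})}(g_{1})$ has $s$-projection $\rho(\alpha)$, so $c - \Xi(\alpha_{c})$ takes values in $T^{s}\G$ and, via right translation, corresponds to a unique $\beta_{c} \in C^{k}(\G, A)$. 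This yields a canonical isomorphism
\[ K^{k} \;\cong\; C^{k-1}(\G, A) \oplus C^{k}(\G, A), \qquad c \longleftrightarrow (\alpha_{c}, \beta_{c}). \]

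Under this identification I would compute $\delta$ by plugging $\Xi(\alpha) + \Lambda(\beta)$ (with $\Lambda(\beta)(g_{1},\ldots,g_{k}) := \overrightarrow{\beta(g_{1},\ldots,g_{k})}(g_{1})$) into Definition~\ref{dfn-diff-coh}. The crucial simplification is the identity
\[ d\bar{m}\bigl(\overleftarrow{\alpha}(p),\, \overleftarrow{\alpha}(q)\bigr) = 0 \qquad \text{whenever } s(p)=s(q), \]
which holds because the flow $\phi_{\tau}$ of $\overleftarrow{\alpha}$ is left-equivariant and tangent to $t$-fibres, so $\phi_{\tau}(p) = p\cdot \psi_{\tau}(s(p))$ with $\psi_{\tau}(x) := \phi_{\tau}(1_{x}) \in t^{-1}(x)$, whence $\phi_{\tau}(p)\phi_{\tau}(q)^{-1} = p\,\psi_{\tau}(s(p))\psi_{\tau}(s(p))^{-1}q^{-1} = pq^{-1}$ is constant in $\tau$. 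Using this to kill the $\Xi$-$\Xi$ cross-term produced by the $d\bar{m}$-term, and simplifying the other $d\bar{m}$-term via $d\bar{m}(r_{g_{1}g_{2}}X,\, r_{g_{2}}Y) = \overrightarrow{X}(g_{1}) + \overleftarrow{Y}(g_{1})$, the resulting formula takes the shape
\[ \delta(\alpha, \beta) \;=\; \bigl(-F(\alpha) - \beta,\; F(\beta)\bigr), \]
where $F\colon C^{\bullet}(\G, A) \to C^{\bullet+1}(\G, A)$ is a purely combinatorial ``reduced'' differential with $F^{2}=0$ (forced by $\delta^{2}=0$). One recognises $(K^{\bullet}, \delta)$ as the mapping cone of the identity on $(C^{\bullet}(\G, A), F)$, hence acyclic: every cocycle has the form $(\alpha, -F\alpha)$ and equals $\delta(0, -\alpha)$. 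The main obstacle is the sign-bookkeeping in this explicit computation of $\delta(\alpha, \beta)$ and the systematic use of the identity for $d\bar{m}$ on left-invariant vector fields; all the remaining steps are essentially formal.
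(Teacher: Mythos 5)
Your proposal is correct, and its skeleton (a surjective chain map onto $C^{*-1}(\G,\nu)$ whose kernel is acyclic) is the same as the paper's; in fact the primitive you ultimately produce for a cocycle in the kernel, namely $\delta(0,-\alpha_c)=\delta(-\Lambda(\alpha_c))$, is literally the cochain $c'(g_1,\ldots,g_{k-1})=-r_{g_1}(s_c(g_1,\ldots,g_{k-1}))$ used in the paper. Where you genuinely diverge is in how the identity $c=\delta(c')$ is verified. The paper exploits a feature specific to foliation groupoids --- that $(ds,dt):T\G\to TM\times TM$ is injective, since $(s,t)$ is an immersion --- so it suffices to check the $ds$- and $dt$-projections of both sides, which collapses the computation to formula \eqref{s-projection}. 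You instead never use this injectivity: you split the whole kernel as $C^{k-1}(\G,A)\oplus C^k(\G,A)$ via left- and right-translation (using only injectivity of $\rho$ to lift $s_c$), compute the differential in these coordinates with the help of the identities $d\bar{m}(\overleftarrow{\alpha}(p),\overleftarrow{\alpha}(q))=0$ and $d\bar{m}(r_{gh}X,r_hY)=\overrightarrow{X}(g)+\overleftarrow{Y}(g)$ (the latter is equation \eqref{aux-eq-1} of the paper, proved there in Section \ref{sec-adjoint}), and recognize the kernel as the mapping cone of the identity on $(C^{\bullet}(\G,A),F)$. This costs more sign-bookkeeping (and note the ad hoc degree-$0$ differential $\delta\alpha=\overrightarrow{\alpha}+\overleftarrow{\alpha}$ produces the $\alpha$-component with the opposite sign to your general formula, which is harmless for acyclicity but worth flagging), but it buys a more structural statement --- the kernel is contractible as a complex, in the same spirit as the mapping-cone descriptions given in the paper for action groupoids and bundles of Lie groups --- and it isolates exactly where the foliation hypothesis enters. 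You also correctly pin down the sign $(-1)^k$ needed to make $c\mapsto[s_c]$ a strict chain map, a point the paper leaves implicit.
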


\begin{proof}
A simple computation shows that $c\mapsto [s_c]$ is a chain map. Since it is also surjective, it suffices to show that its kernel, call it $C^{*}$, is acyclic. So, assume that $c\in C^k$, i.e. $c\in C^{k}_{\textrm{def}}(\G)$  has the property that $s_c$ takes values in $\mathcal{F}$. We show that $c= \delta(c')$ for some $c'$ with the property that $s_{c'}$ takes values in $\mathcal{F}$ (we will actually achieve $s_{c'}= 0$). Namely we set
\[ c'(g_1, \ldots, g_{k-1}):= - r_{g_1}(s_c(g_1, g_2, \ldots, g_{k-1})\]
where we identify $\mathcal{F}$ with the Lie algebroid of $\G$ to make sense of right translations. It is clear that 
$s_{c'}= 0$. We are left with showing that $c= \delta(c')$. Using the fact that the map $(ds, dt): T\G\rmap TM\times TM$ is injective, it suffices to show that $ds\circ c= ds\circ \delta(c')$ and similarly for $dt$. For the first one
use again that $ds$ kills $c'$ and that $dt(r_g(\alpha))= \rho(\alpha)\cong \alpha$ for $\alpha\in \mathcal{F}$
and we see that, after applying $(ds)$ to the formula for $\delta(c')(g_1, \ldots, g_k)$ we are left with 
\[  -dt(c'(g_2, \ldots, g_k))= s_c(g_2, \ldots, g_k)= ds(c(g_1, \ldots , g_k)).\]
Hence we are left with showing that $dt\circ c= dt\circ \delta(c')$. Applying $dt$ to the formula for $\delta(c')(g_1, \ldots, g_k)$ we find 
\[ -dt(c'(g_1g_2, \ldots, g_k))+ \sum_{i=2}^{k-1}(-1)^{i} dt(c'(g_1,\ldots, g_ig_{i+1},\ldots, g_{k}))
+ (-1)^{k}dt(c'(g_1,\ldots, g_k))\]
which, by the previous arguments, is
\[ s_c(g_1g_2, g_3, \ldots, g_k))+ \sum_{i=2}^{k-1}(-1)^{i+1} s_c(g_1, g_2,\ldots, g_ig_{i+1},\ldots, g_{k})
+ (-1)^{k+1}s_c(g_1, g_2,\ldots, g_{k-1})).\]
Comparing with the formula (\ref{s-projection}) for the $s$-projection of $\delta(c)$ (which vanishes because $\delta(c)$ does), we find precisely $dt(c(g_1, \ldots, g_k))$. 
\end{proof}

\begin{rmk}[(Related to the adjoint representation)]\label{3-adj}
In the spirit of Remarks \ref{1-adj} and \ref{2-adj} we see that the candidate for the adjoint representation of $\G$ in this case is $\nu[1]$- viewed as a graded representation of $\G$ concentrated in degree $1$ (to make up for the shift in the proposition).
\end{rmk}

\subsection{Action groupoids}
One of the first classes of examples in which the deformation cohomology can be understood in terms of (differentiable) cohomology with coefficients in representations is that of action groupoids. 
So, let us assume that $G$ is a Lie group acting on a manifold $M$. Recall that the action groupoid $\G=G\lx M\arrows M$
is the product $G\times M$, with $s(g, x)= x$, $t(g, x)= gx$ and $(g, x)(h, y)= (gh, y)$. The corresponding Lie algebroid is 
\[ A= \mathfrak{g}_M= \mathfrak{g}\x M,\]
the trivial vector bundle over $M$ with fiber the Lie algebra $\mathfrak{g}$ of $G$, the anchor given by the infinitesimal action of $\mathfrak{g}$ on $M$
\[ \rho: \mathfrak{g}_{M} \rmap TM, \ \rho(v, x)= \frac{d}{d\epsilon}|_{\epsilon= 0} \exp(\epsilon v) x ,\]
and the bracket is uniquely determined by the Leibniz identity and the condition that, on constant sections $c_v$ with $v\in \mathfrak{g}$, it restricts to the bracket of $\mathfrak{g}$:
\[ [c_u, c_v]= c_{[u, v]} .\]
As is the convention with every Lie algebroid in this paper, we identify the Lie algebra of $G$ with the space of right invariant vector fields on $G$. Note that a representation $E$ of $\G$ is the same thing as an equivariant bundle over $M$; then $\Gamma(E)$ is naturally a $G$-module and $H^{*}(\G, E)$ can be interpreted as the resulting differentiable cohomology $H^{*}(G, \Gamma(E))$. 
The action groupoid $\G$ has two natural representation:
\begin{itemize} 
\item $\mathfrak{g}_M$ itself, with the action induced by the adjoint action of $G$ on $\mathfrak{g}$.
Note that 
\[ H^{*}(\G,\mathfrak{g}_M)= H^{*} (G, C^{\infty}(M, \mathfrak{g}))\] 
is a bundle-like version of $H^{*}(G;\mathfrak{g})$, which controls deformations of the Lie group $G$, as explained by Coppersmith \cite{Copper}.
\item $TM$, using the induced action of $G$ on $TM$.
Note that 
\[ H^{*}(\G,TM)= H^{*}(G, \mathfrak{X}(M))\]
arises in the work of Palais  \cite{palais3} as the cohomology controlling deformations of the Lie group action (keeping $G$ fixed). 
\end{itemize}

Therefore it is not surprising that, in this case, these cohomologies are closely related to the deformation cohomology of the groupoid: 

\begin{prop}\label{prop: action}The deformation cohomology of an action Lie groupoid $\G=G\lx M$ fits into a long exact sequence \[\cdots\longrightarrow H^{k-1}(\G,TM)\longrightarrow H^k_{\operatorname{def}}(\G)\longrightarrow H^k(\G, \mathfrak{g}_M)\stackrel{\rho_*}{\longrightarrow} H^k(\G,TM)\longrightarrow\cdots\]
where $\rho_{*}$ is induced by the infinitesimal action $\rho: \mathfrak{g}_{M} \rmap TM$. 
\end{prop}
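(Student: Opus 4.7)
The plan is to construct a short exact sequence of complexes
\[ 0 \longrightarrow C^{*-1}(\G, TM) \stackrel{\iota}{\longrightarrow} C^*_\text{def}(\G) \stackrel{\pi}{\longrightarrow} C^*(\G, \mathfrak{g}_M) \longrightarrow 0 \]
and, via the associated long exact sequence, to identify the connecting map with $\rho_*$. The essential point is the product structure $T\G = TG\x TM$ for the action groupoid: trivializing $TG$ by right translation, every tangent vector $V\in T_{(\gamma,x)}\G$ splits canonically as $(V_\mathfrak{g},V_M)\in \mathfrak{g}\oplus T_xM$. Accordingly, every $k$-cochain $c\in C^k_\text{def}(\G)$ decomposes into components $c_\mathfrak{g}:\G^{(k)}\to\mathfrak{g}$ and $c_M$ with $c_M(g_1,\ldots,g_k)\in T_{s(g_1)}M$; the $s$-projectability of $c$ is precisely the condition that $c_M$ be independent of $g_1$, so that $c_M$ descends to a $(k-1)$-cochain in $C^{k-1}(\G,TM)$.

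With $\pi(c):=c_\mathfrak{g}$ and $\iota$ the inclusion of cochains with vanishing $\mathfrak{g}$-part, exactness as vector spaces is immediate: $\pi$ is surjective because any $\alpha\in C^k(\G,\mathfrak{g}_M)$ is lifted by $c(g_1,\ldots,g_k):=(r_{g_1}\alpha(g_1,\ldots,g_k),0)$, and $\ker\pi=\operatorname{im}\iota$ by construction. To verify that $\iota$ and $\pi$ are chain maps (up to an overall sign), the main input is the differential of the division map at the points appearing in $\delta c$. A direct computation with curves through $(p,q)=((\gamma_1\gamma_2,x_2),(\gamma_2,x_2))$ gives
\[ d\bar m\bigl((V_\mathfrak{g},v),(W_\mathfrak{g},v)\bigr)=\bigl(V_\mathfrak{g}-\operatorname{Ad}(\gamma_1)W_\mathfrak{g},\ \rho(W_\mathfrak{g})|_{\gamma_2 x_2}+\gamma_2\cdot v\bigr). \]
Feeding this into the formula for $\delta$ and extracting the $\mathfrak{g}$-component recovers exactly the differential of $C^*(\G,\mathfrak{g}_M)$ with the adjoint action applied to $c_\mathfrak{g}$; applying it to cochains with $V_\mathfrak{g}=W_\mathfrak{g}=0$ (i.e.\ on $\operatorname{im}\iota$) recovers the differential of $C^*(\G,TM)$ on the $M$-component, up to a sign that can be absorbed in $\iota$.

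To identify the connecting homomorphism with $\rho_*$, take a cocycle $\alpha\in C^k(\G,\mathfrak{g}_M)$ and use the lift $c=(r_{g_1}\alpha,0)$. The computation above shows $\pi(\delta c)=\delta_{\mathfrak{g}_M}\alpha=0$, so $\delta c\in\operatorname{im}\iota$. Since $c_M=0$, the only contribution to $(\delta c)_M$ comes from the $d\bar m$-term in the formula for $\delta$, and the formula above immediately yields
\[ (\delta c)_M(g_1,\ldots,g_{k+1})=\pm\rho\bigl(\alpha(g_2,\ldots,g_{k+1})\bigr), \]
so $\iota^{-1}(\delta c)=\pm\rho_*\alpha$, as desired. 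The main obstacle is purely bookkeeping: keeping consistent sign and right-translation conventions throughout, and verifying that the differential of $\bar m$ really does generate the adjoint and infinitesimal actions on the two components; once these are in place the snake lemma delivers the stated long exact sequence.
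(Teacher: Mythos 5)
Your proposal is correct and follows essentially the same route as the paper: both use the right-translation splitting $T_{(\gamma,x)}\G\cong\mathfrak{g}\oplus T_xM$ to get the decomposition $C^k_{\mathrm{def}}(\G)\cong C^k(\G,\mathfrak{g}_M)\oplus C^{k-1}(\G,TM)$, observe that the projection onto the $\mathfrak{g}_M$-part and the inclusion of the $TM$-part are (anti-)chain maps, and read off the connecting homomorphism as $\rho_*$ from the $d\bar m$-term (the paper packages this as saying $C^*_{\mathrm{def}}(\G)$ is the mapping cone of $\rho_*$). Your explicit formula for $d\bar m$ in the split coordinates is the computation the paper leaves to the reader, and it checks out.
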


\begin{proof}
It suffices to remark that there is a short exact sequence
\[ 0\longrightarrow C^{k-1}(\G,TM)\stackrel{j}{\longrightarrow} \CkG\stackrel{\pi}{\longrightarrow} C^k(\G,\mathfrak{g}_M)\longrightarrow 0\]
compatible with the differentials and to identify the boundary map of the long exact sequence with the map $\rho_*$. The deformation cochains of $\G$ take a composable string $(\gamma_1, \ldots, \gamma_k)$ of $\G$, with 
$\gamma_1= (g, x)$, to 
\[ T_{(g, x)}(G\times M)= T_gG\times T_xM \cong \mathfrak{g}\times T_{x}M= \mathfrak{g}\times T_{t(\gamma_2)}M\]
where the isomorphism is the one induced by the right translations of $G$. Moreover, the $TM$-component of the cochain, since it is the projection by the source map, depends only on $(\gamma_2, \ldots, \gamma_k)$; in other words, we obtain a decomposition 
\[ C^{k}_{\textrm{def}}(\G)\cong C^k(\G,\mathfrak{g}_M)\oplus C^{k-1}(\G,TM) .\]
This is not compatible with the differentials, but the natural inclusion on the first component (called $j$) and projection on the second component (called $\pi$) are (a simple computation) - therefore the desired short exact sequence. The identification of $\rho_*$ with the boundary map is straightforward. Or, a bit more conceptually: 
computing the differential of $C^{k}_{\textrm{def}}(\G)$ with respect to the previous decomposition, one finds the formula $(c_1, c_2)\mapsto (\partial(c_1), - \rho_*(c_1)- \partial(c_2))$, i.e. $C^{*}_{\textrm{def}}(\G)$ is isomorphic to the mapping cone of $\rho_*$.  
\end{proof}

\begin{rmk}[(Related to the adjoint representation)]\label{4-adj}
Note that the previous proof shows that $H^{*}_{\operatorname{def}}(\G)$ is isomorphic to the differentiable cohomology of $\G$ with coefficients in the cochain complex of representations:
\begin{equation}\label{adj-action}
 \mathfrak{g}_M\stackrel{\rho}{\rmap} TM,
\end{equation}
($\mathfrak{g}_M$ in degree $0$ and $TM$ in degree $1$). Hence, a remark on the adjoint representation similar to Remarks \ref{2-adj} and \ref{3-adj} shows that we have to look at even more general structures: cochain complexes of representations. Then (\ref{adj-action}) becomes the natural candidate for the adjoint representation in this case. 
\end{rmk}

\subsection{Bundles of Lie groups}

We now take a closer look at the case of bundles of Lie groups parametrized by $M$; these correspond to Lie groupoids $\G$ over $M$ for which the source map coincides with the target map; we denote them by 
\[ \pi: \G\rmap M.\]
Its Lie algebroid is just the bundle of Lie algebras $\mathfrak{g}$ consisting of the Lie algebras $\mathfrak{g}_x$ of the Lie groups $\G_x$ (and the anchor is zero). A representation of $\G$ is then just a collection of representations of each of the groups $\G_x$ which are smoothly parametrized by $x\in M$ and fit into a vector bundle over $M$.There are two such representations of $\G$ that will be relevant for us: $\mathfrak{g}$ itself, endowed with the (fiberwise) adjoint actions and $TM$ with the trivial action. The resulting cohomologies are related by a certain ``curvature map'' 
\[ K : H^*(\G, TM) \rmap H^{*+2}(\G, \mathfrak{g})\]
which arises by evaluating a cohomology class with values in the $\textrm{Hom}$-bundle:
\[ \textrm{Var}\in H^2(\G, \textrm{Hom}(TM, \mathfrak{g})) .\]
$\textrm{Var}$ arises as the obstruction to the existence of a (Ehresmann) connection on $\G$ which is compatible with the multiplication- in the sense that the induced parallel transport respects the group structure on the fibers. We call such connections multiplicative. To construct the class $\textrm{Var}$ one starts with an arbitrary connection
on $\pi: \G\rmap M$, interpreted as a splitting $\sigma_g: T_{\pi(g)}M\rmap T_g\G$ of $d\pi$ (smooth in $g\in \G$). The fact that $\sigma$ is multiplicative is equivalent to the condition that, for any $x\in M$, $g, h\in \G_x$, $X_x\in T_xM$, one has 
\[ \sigma_{gh}(X_x)= (dm)(\sigma_g(X_x), \sigma_h(X_x)) .\]
For our general $\sigma$ (multiplicative or not), the difference between the two terms lives in the vertical space of $\pi$ at $gh$, hence it is obtained by right translations (in $\G_x$) of an element in $\mathfrak{g}_x$; that element defines
\[ \textrm{Var}_{\sigma}(g, h)(X_x)\in \mathfrak{g}_x.\]
It is not difficult to check now that $\textrm{Var}_{\sigma}$ is a differentiable cocycle, whose cohomology class does not depend on $\sigma$ and whose vanishing is equivalent to the existence of a multiplicative connection. This defines $\textrm{Var}$.

Note also that $\textrm{Var}$ induces at each $x\in M$ a linear map
\[ \textrm{Var}_{x}: T_xM\rmap H^2(\G_x, \mathfrak{g}_x) \ (\ldots = H^{2}_{\mathrm{def}}(\G_x))\]
which encodes the variations of the group structure along directions in $M$ (to be explained in subsection \ref{subsec-var_map} for general Lie groupoids).

\begin{prop} For any bundle of Lie groups $\pi: \G\rmap M$, interpreted as a groupoid with source and target equal to $\pi$, the deformation cohomology fits into a long exact sequence
\[ \cdots \rmap H^{k}(\G, \mathfrak{g})\stackrel{r}{\rmap} H^{k}_{\mathrm{def}}(\G) \rmap H^{k-1}(\G, TM)
\stackrel{K}{\rmap} H^{k+1}(\G, \mathfrak{g}) \rmap \cdots .\]
\end{prop}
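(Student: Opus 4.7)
The strategy mirrors that of Proposition \ref{prop: action}: exhibit a short exact sequence of cochain complexes relating $\CG$ with $C^*(\G,\mathfrak{g})$ and $C^{*-1}(\G,TM)$, and then identify the resulting connecting homomorphism with $K$. Since $s=t=\pi$ for a bundle of Lie groups, right translation and $d\pi$ fit together into a canonical fibrewise short exact sequence
\[ 0 \rmap \mathfrak{g}_{\pi(g)} \stackrel{r_g}{\rmap} T_g\G \stackrel{d\pi}{\rmap} T_{\pi(g)}M \rmap 0. \]
Applied at the first arrow of each composable string, this yields the short exact sequence of graded spaces
\[ 0 \rmap C^k(\G,\mathfrak{g}) \stackrel{r}{\rmap} \CkG \stackrel{\pi}{\rmap} C^{k-1}(\G,TM) \rmap 0, \]
with $r(u)(g_1,\ldots,g_k)=\overrightarrow{u(g_1,\ldots,g_k)}(g_1)$ and $\pi(c)=s_c$. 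Surjectivity of $\pi$ is witnessed by the lift $\tilde u(g_1,\ldots,g_k):=\sigma_{g_1}(u(g_2,\ldots,g_k))$ for any Ehresmann connection $\sigma$ on $\pi$.

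Next, I would verify that $r$ and $\pi$ intertwine the differentials, when $\mathfrak{g}$ carries the fibrewise adjoint action and $TM$ the trivial action. The key fibrewise calculation
\[ d\bar m(r_{ab}(v), r_b(w)) = r_a\bigl(v-\mathrm{Ad}_a(w)\bigr) \qquad (v,w\in\mathfrak{g}_{\pi(a)}), \]
plugged into the $-d\bar m$ term of $\delta_{\text{def}}$, identifies $\delta_{\text{def}}\circ r$ with the adjoint coboundary applied to $u$, so $r$ is a chain map. For $\pi$ the compatibility with differentials (up to an overall sign, absorbed by the degree shift of the $TM$-complex) is immediate from formula \eqref{s-projection} together with $ds=dt=d\pi$ and the triviality of the $TM$-action. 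The associated long exact sequence then has exactly the shape claimed, with a connecting homomorphism $\partial:H^{k-1}(\G,TM)\rmap H^{k+1}(\G,\mathfrak{g})$.

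To identify $\partial$ with $K$, take a closed $u\in C^{k-1}(\G,TM)$; closedness and the chain-map property of $\pi$ force $\pi(\delta\tilde u)=0$, so $\delta\tilde u=r(\xi)$ for a unique $\xi\in C^{k+1}(\G,\mathfrak{g})$ representing $\partial[u]$. Computing $\xi$ reduces to extracting the $\mathfrak{g}$-component of $-d\bar m(\sigma_{g_1g_2}(X),\sigma_{g_2}(X))$ with $X=u(g_3,\ldots,g_{k+1})$. Writing $\bar m=m\circ(\mathrm{id},i)$, decomposing $di(\sigma_{g_2}(X))$ into its $\sigma$-horizontal and vertical parts, and repeatedly invoking the defining equation $dm(\sigma_g(X),\sigma_h(X))=\sigma_{gh}(X)-r_{gh}(\mathrm{Var}_\sigma(g,h)(X))$ expresses $\xi$ as a linear combination of values of $\mathrm{Var}_\sigma$ at the pairs $(g_1,g_2)$, $(g_1g_2,g_2^{-1})$, and $(g_2,g_2^{-1})$. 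The cocycle identity $\delta\mathrm{Var}_\sigma=0$ then collapses this combination, modulo an explicit coboundary, to $\mathrm{Var}_\sigma(g_1,g_2)(u(g_3,\ldots,g_{k+1}))$, which is precisely a representative of $K[u]$.

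The main obstacle I foresee is exactly the bookkeeping in this final step: tracking the sign introduced by the inversion $i$ inside $\bar m$, the adjoint twists $\mathrm{Ad}_g$ arising when commuting horizontal lifts past fibrewise multiplication, and the degree-shift sign on the $TM$-complex, all at once, so that the resulting expression can be recognized as $K(u)$ at the cohomology level. Once this identification is in place, the long exact sequence follows by standard homological algebra applied to the short exact sequence above.
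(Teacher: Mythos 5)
Your proposal is correct and follows essentially the same route as the paper: the same short exact sequence $0\to C^k(\G,\mathfrak{g})\to C^k_{\mathrm{def}}(\G)\to C^{k-1}(\G,TM)\to 0$, the same connection-dependent lift $\sigma_{g_1}(u(g_2,\ldots,g_k))$, and the same identification of the connecting map with cup product by $\mathrm{Var}_\sigma$. The only difference is in the final computation: rather than factoring $\bar m$ through the inversion and invoking the cocycle identity for $\mathrm{Var}_\sigma$ to collapse three terms, the paper substitutes the defining relation $\sigma_{gh}(X)=dm(\sigma_g(X),\sigma_h(X))+r_{gh}(\mathrm{Var}_\sigma(g,h)(X))$ directly into $d\bar m(\sigma_{gh}(X),\sigma_h(X))$ and uses the differentiated identities $\bar m(m(a,b),b)=a$ and $\bar m(agh,h)=ag$, which yields $\sigma_g(X)+r_g\mathrm{Var}_\sigma(g,h)(X)$ in one step and avoids the sign and normalization bookkeeping you flag as the main obstacle.
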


\begin{proof} Again, one has a short exact sequence
\[ C^k(\G, \mathfrak{g})\stackrel{r}{\rmap} C^{k}_{\textrm{def}}(\G) \stackrel{\pi}{\rmap} C^{k-1}(\G, TM)\]
where $\pi$ takes the base component of a cochain, so that the kernel of $\pi$ consists of $c\in C^{k}_{\textrm{def}}(\G)$ which take values in the vertical spaces of $\pi$ i.e., modulo right translations, come from $C^k(\G, \mathfrak{g})$. It is straightforward to check that $\pi$ and $r$ are chain maps (and it will be discussed in the more general context also later on). Hence we are left with identifying the boundary map in the 
induced long exact sequence,
\[ \partial: H^{k-1}(\G, TM)\rmap H^{k+1}(\G, \mathfrak{g}).\]
Consider a cocycle $u\in C^{k-1}(\G, TM)$. By the definition of $\partial$, one has to write $u= \pi(c)$ for some $c$, $\delta(c)$ comes, via $r$, from a cocycle $v\in C^{k+1}(\G, \mathfrak{g})$ and then $\partial([u])= [v]$. Fixing a connection $\sigma$ on $\G$ there is a canonical choice for $c$:
\[ c(g_1, \ldots, g_{k})= \sigma_{g_1}(u(g_2, \ldots, g_k)).\]
Compute now $\delta(c)(g_1, \ldots, g_{k+1})$. The first component in the resulting sum is
\[ - d\bar{m}(\sigma_{g_1g_2}(u(g_3, \ldots, g_{k+1}), \sigma_{g_2}(u(g_3, \ldots, g_{k+1})).\]
To handle this, note that, in general, for any pair of composable arrows $(g, h)$ and $X\in T_{s(h)}M$, 
\begin{align*}d\bar{m}(\sigma_{gh}(X), \sigma_h(X)) & = d\bar{m}(dm(\sigma_{g}(X), \sigma_h(X))+ r_{gh}(\textrm{Var}_{\sigma}(g, h)(X), \sigma_h(X))  \\
&= d\bar{m}(dm(\sigma_{g}(X), \sigma_h(X)), \sigma_h(X))+ d\bar{m}(r_{gh}(\textrm{Var}_{\sigma}(g, h)(X), 0_h)\\
&= \sigma_{g}(X)+ r_g\textrm{Var}_{\sigma}(g, h)(X)
\end{align*}
where for the first equality we have used the definition of $\textrm{Var}_{\sigma}$ and for the last one 
the differentiated identities $\bar{m}(m(a, b), b)= b$ and $\bar{m}(agh, h)= ag= R_g(a)$. Hence the first in the  
sum from $\delta(c)$ is 
\[- \sigma_{g_1}(u(g_3, \ldots, g_{k+1}))- r_{g_1}\textrm{Var}_{\sigma}(g_1, g_2)(u(g_3, \ldots, g_{k+1})).\]
The other terms are 
\[ \sigma_{g_1}(u(g_2g_3, \ldots, g_{k+1})+ \ldots+ (-1)^{k+1}u(g_1, \ldots , g_k)).\]
Hence, adding up and using that $\delta(u)$ is zero, we find that
\[ v(g_1, \ldots, g_{k+1})= r_{g_1^{-1}}(\delta(c)(g_1, \ldots, g_{k+1})= - \textrm{Var}_{\sigma}(g_1, g_2)(u(g_3, \ldots, g_{k+1})).\]
\end{proof}

\begin{rmk}[(Related to the adjoint representation)] \label{5-adj}
It is not so clear anymore how to continue the series of remarks \ref{2-adj}, \ref{3-adj} and \ref{4-adj} on the adjoint representation, so that it applies also to bundles of Lie groups. The trouble comes from the presence of variation (curvature). Indeed, while it is clear that the relevant graded representation is $\mathfrak{g}[0]\oplus TM[1]$ (with the zero differential), it is not so clear how to interpret $K$. Already at this point, for this very simple class of examples, we need the notion of representation up to homotopy (still to be recalled!). 
\end{rmk}

\subsection{Relation to Poisson geometry}\label{poisson}

Recall that a Poisson manifold is a manifold $M$ equipped with a bivector $\pi \in \Lambda^2(T^*M)$ such that the Poisson bracket defined by $\{f,g\}=\pi(df,dg)$ satisfies the Jacobi identity on $C^\infty(M)$. It gives rise to a Lie algebroid structure on $T^*M$ with anchor $\pi^\sharp$ given by $\beta(\pi^\sharp(\alpha))=\pi(\alpha,\beta)$ for all 1-forms $\alpha$ and $\beta$ and Lie bracket given by \[[\alpha,\beta]_\pi=L_{\pi^\sharp(\alpha)}(\beta)-L_{\pi^\sharp(\beta)}(\alpha)-d(\pi(\alpha,\beta)).\] The global counterpart of a Poisson manifold (whenever it exists) is a symplectic groupoid, i.e., a Lie groupoid $\Sigma$ equipped with a symplectic form $\omega\in \Omega^2(\Sigma)$ which is multiplicative: \[m^*\omega=pr_1^*\omega+pr_2^*\omega.\]

On the other hand, given any vector bundle $A$, there is a 1-1 correspondence between Lie algebroid structures on $A$ and Poisson structures on the dual vector bundle $A^*$ which are linear along the fibres. A Lie groupoid $\G$ with Lie algebroid $A$ gives rise to a symplectic groupoid integrating $A^*$ - namely $T^*\G$ (see \cite{GS} for the groupoid structure on $T^*\G$). Moreover, deformations of $\G$ give rise to deformations of the Poisson structure of $A^*$, which are controlled by the Poisson cohomology $H^*_\pi(A^*)$, which is closely related to the differentiable cohomology $H^*(T^*\G)$ via a van Est map (see \cite{marius_ieke}). Hence one expects a relation between $C^{*}_{\textrm{def}}(\G)$ and the differentiable cohomology complex $C^{*}(T^*\G)$ (an inclusion!). We obtain in this way, the complex described by Gracia-Saz and Mehta in \cite{mehta}.

Indeed, $C^{*}_{\textrm{def}}(\G)$ can be identified with the subcomplex $C_\mathrm{proj}^{*}(T^*\G)$ of $C^{*}(T^*\G)$ consisting of \textit{left-projectable linear cochains}. This subcomplex arises in \cite{mehta} by looking at the VB-groupoid interpretation of the adjoint representation, where $C_\mathrm{proj}^{*}(T^*\G)$ is introduced as the VB-groupoid complex of the groupoid $T\G$. Let us now look in more detail at the relation between these complexes.


First of all, there is a natural subcomplex $C^*_\mathrm{lin}(T^*\G)$ of $C^{*}(T^*\G)$, where $k$-cochains are those which are linear in $(T^*\G)^{(k)}$. Inside $C^*_\mathrm{lin}(T^*\G)$ there is the space of left-projectable linear cochains $C^*_\mathrm{proj}(T^*\G)$, where a linear $k$-cochain $u$ is called left-projectable if it satisfies 2 conditions: 

\begin{enumerate}

\item{the value of $u(\xi_1,\ldots,\xi_k)$ depends only on $\xi_1$ and on the base points $g_2,\ldots, g_k$, i.e.,  $$u(0_h,\xi_1,\ldots,\xi_{k-1})=0;$$}
\item{the cochain $u$ is left-invariant in the first argument, in the sense that $$u(0_h\cdot \xi_1,\ldots,\xi_k)=u(\xi_1,\ldots,\xi_k),$$}
\end{enumerate}
for any $(\xi_1,\ldots,\xi_k)\in (T^*\G)^{(k)}$ such that $\xi_i\in T^*_{g_i}\G$ and $h\in \G$ such that $(0_h,\xi_1)\in (T^*\G)^{(2)}$,

It is not hard to check from the definitions that the differential of $C^{*}(T^*\G)$ restricts to $C^*_\mathrm{lin}(T^*\G)$ and to $C^*_\mathrm{proj}(T^*\G)$ making them into subcomplexes. Putting together proposition 5.5 and theorem 5.6 of \cite{mehta} (in the case where the VB-groupoid of {\it loc. cit.} is $T\G$), one obtains:


\begin{prop} There is an isomorphism of right $(C^*(\G),\delta)$-DG-modules $$\phi: C^{*}_{\mathrm{def}}(\G) \rmap C_\mathrm{proj}(T^*\G)$$ given by \begin{equation*}\phi(c)(\xi_1,\ldots,\xi_k)=\xi_1(c(g_1,\ldots,g_k))
\end{equation*} for any $\xi_1,\ldots,\xi_k\in T^*\G^{(k)}$ such that $\xi_i\in T^*_{g_i}\G$.
\end{prop}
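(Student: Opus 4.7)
The proof proceeds in three stages: verifying $\phi$ is well-defined and bijective, checking the chain-map property (the main point), and confirming the DG-module compatibility.

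First I would show $\phi$ is well-defined and admits an explicit two-sided inverse. That $\phi(c)$ is linear in $\xi_1$ and constant along the fibers of $\xi_2, \ldots, \xi_k$ (since $c(g_1, \ldots, g_k)$ depends only on the base points) gives the first left-projectability condition. The second condition — left-invariance of the first slot under the symplectic-groupoid product — reduces, via the defining relation $(\xi\cdot\eta)(dm(V, W)) = \xi(V) + \eta(W)$ on composable tangent pairs in $T^*\G$, to the $s$-projectability of $c$. Conversely, given $u \in C^k_{\mathrm{proj}}(T^*\G)$, projectability condition (1) means that $\xi_1 \mapsto u(\xi_1, 0_{g_2}, \ldots, 0_{g_k})$ is linear, so defines a unique smooth $c(g_1, \ldots, g_k) \in T_{g_1}\G$; left-invariance (2) of $u$ precisely translates into $s$-projectability of $c$. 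The resulting map $\psi: u \mapsto c$ is, by construction, a two-sided inverse of $\phi$.

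The main technical step is $\phi(\delta c) = \delta\phi(c)$. Writing both sides out and pairing everything through $\xi_1$, the summands indexed by $i \geq 2$ match directly, since $\xi_i \cdot \xi_{i+1}$ sits over $g_i g_{i+1}$ and the pairing is unaffected. The crux is matching the $d\bar m$-term of $\delta c$ with the first two summands of $\delta\phi(c)$ (the one obtained by deleting $\xi_1$ and the one indexed by $i = 1$, with opposing signs). Differentiating the identity $m(\bar m(a,b), b) = a$ yields $dm(d\bar m(X, Y), Y) = X$ for composable $(X, Y)$, and then the defining formula for the $T^*\G$-product produces the key identity
\[ \xi_1(d\bar m(X, Y)) = (\xi_1 \cdot \xi_2)(X) - \xi_2(Y). \]
Plugging in $X = c(g_1 g_2, g_3, \ldots, g_{k+1})$ and $Y = c(g_2, g_3, \ldots, g_{k+1})$ — whose $s$-projections agree because $c$ is $s$-projectable — converts $-\xi_1(d\bar m(X, Y))$ into $\phi(c)(\xi_2, \ldots, \xi_{k+1}) - \phi(c)(\xi_1 \cdot \xi_2, \xi_3, \ldots, \xi_{k+1})$, which are exactly the two outermost terms of $\delta\phi(c)$. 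This identity is the heart of the calculation; everything else is bookkeeping of signs.

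Finally, compatibility with the right $C^*(\G)$-module structure is a one-line verification using the cup product formula \eqref{eq-cup-pr}: pairing $\phi(c \cdot v)$ with $\xi_1$ factors out the scalar $v(g_{k+1}, \ldots, g_{k+k'})$, yielding $\phi(c)(\xi_1, \ldots, \xi_k) \cdot v(g_{k+1}, \ldots, g_{k+k'})$, which is the corresponding cup product on $C^*_{\mathrm{proj}}(T^*\G)$. Combined with the preceding two steps, this establishes $\phi$ as an isomorphism of right $(C^*(\G), \delta)$-DG-modules.
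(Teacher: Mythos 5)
Your proof is correct, but it takes a different route from the paper: the paper gives no direct argument at all, instead deducing the proposition from Proposition 5.5 and Theorem 5.6 of Gracia-Saz--Mehta \cite{mehta}, applied to the VB-groupoid $T\G$ (whose dual is $T^*\G$, and whose VB-groupoid complex is exactly $C^*_{\mathrm{proj}}(T^*\G)$). That citation-based route buys conceptual placement -- the statement becomes an instance of a general duality between a VB-groupoid and the linear, projectable cochains on its dual -- at the cost of importing the whole VB-groupoid formalism. Your direct verification is self-contained and, usefully, isolates the entire content of the chain-map property in the single identity $\xi_1(d\bar m(X,Y)) = (\xi_1\cdot\xi_2)(X) - \xi_2(Y)$, obtained by pairing the linearized relation $dm(d\bar m(X,Y),Y)=X$ against the defining property of the cotangent multiplication; the $s$-projectability of $c$ is exactly what makes the pair $(X,Y)$ composable so that $d\bar m$ applies, and the same mechanism (writing $c(hg_1,g_2,\ldots) = dm\bigl(d\bar m(c(hg_1,\ldots),c(g_1,\ldots)),\, c(g_1,\ldots)\bigr)$ and applying $0_h\cdot\xi_1$) is what underlies your reduction of the left-invariance condition (2) to $s$-projectability -- you might spell that one step out, since as written it is the tersest link in the argument. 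The identification of the $i\geq 2$ terms, the inverse $\psi$, and the one-line module compatibility are all as you say.
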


\section{Low degrees}

Here we look at the deformation cohomology in low degrees (mainly $0$ and $1$). This will already bring into the discussion the two natural representations of $\G$, namely the isotropy and the normal representation (both part of the adjoint representation!), and will also reveal the presence of curvature. Throughout this section we fix a Lie groupoid $\G\tto M$ with Lie algebroid denoted by $A$. 

\subsection{The isotropy representation} 
\label{sec-The isotropy representation}
The discussion in degree zero will require the isotropy representation of $\G$. First of all, consider the isotropy bundle 
 \[ \mathfrak{i} = \mathfrak{i}_A: = \textrm{Ker}(\rho: A\rmap TM).\]
The bracket of $A$ induces a Lie algebra bracket on each fiber $\mathfrak{i}_x$ and, using the groupoid $\G$,  $ \mathfrak{i}_x$ is just the Lie algebra of the isotropy group at $x$ (arrows starting and ending at $x$). Moreover, conjugation by $g: x\rmap y$ in $\G$ induces, after differentiation at units, the ``action'' of $\G$ on  $\mathfrak{i}$
\[ ad_g: \mathfrak{i}_x\rmap \mathfrak{i}_y.\]
When $\G$ is regular (in the sense that its orbits have the same dimension or, equivalently, that $\rho$ has constant rank), $\mathfrak{i}$ is a (smooth) representation of $\G$. In general, it  is only a 
set-theoretic representation of $\G$. However, one can still make sense of the space of its smooth (invariant) sections:  define $\Gamma(\mathfrak{i})$ by requiring the smoothness as
sections of $A$:
\[ \Gamma(\mathfrak{i})= \textrm{Ker}(\rho: \Gamma(A)\rmap \Gamma(TM)),\]
and then
\[ H^0(\G, \mathfrak{i})= \Gamma(\mathfrak{i})^{\textrm{inv}}:= \{ \alpha\in \Gamma(\mathfrak{i}): ad_g(\alpha(x))= \alpha(y) \ \ \forall \ g: x\rmap y\ \textrm{in}\ \G \}.\]

\begin{prop}\label{degree-0}
For any Lie groupoid $\G$, $H_{\mathrm{def}}^0(\G)\cong H^0(\G, \mathfrak{i})= \Gamma(\mathfrak{i})^{\mathrm{inv}}$.
\end{prop}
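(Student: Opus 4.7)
The plan is a direct unwinding of definitions: a $0$-cocycle is a section $\alpha\in\Gamma(A)$ with $\delta(\alpha)=\overrightarrow{\alpha}+\overleftarrow{\alpha}=0$ in $C^1_{\mathrm{def}}(\G)$, so I want to show that this vanishing condition forces $\alpha$ to lie in $\Gamma(\mathfrak{i})$ and, on $\Gamma(\mathfrak{i})$, reduces to pointwise $\G$-invariance under $ad$.

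First, I will use the two projections $ds, dt\colon T\G\to TM$ to split the condition. Since $\overrightarrow{\alpha}$ is tangent to $s$-fibers one has $ds\circ\overrightarrow{\alpha}=0$ and $dt\circ\overrightarrow{\alpha}(g)=\rho(\alpha)_{t(g)}$; symmetrically, $\overleftarrow{\alpha}$ is tangent to $t$-fibers so $dt\circ\overleftarrow{\alpha}=0$, while $ds\circ l_g=ds$ and $ds\circ di=dt$ at units give $ds\circ\overleftarrow{\alpha}(g)=\rho(\alpha)_{s(g)}$. Applying $dt$ (or $ds$) to $\overrightarrow{\alpha}+\overleftarrow{\alpha}=0$ therefore yields $\rho(\alpha)=0$, so any $0$-cocycle is automatically a section of $\mathfrak{i}=\ker\rho$.

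Next, assuming $\alpha\in\Gamma(\mathfrak{i})$, I will use that $i\colon\G\to\G$ restricts to group inversion on each isotropy group $\G_x$, so its differential at the unit $1_x$ acts as $-\mathrm{Id}$ on $\mathfrak{i}_x\subset T_{1_x}\G$. Formula \eqref{left-inv} then gives $\overleftarrow{\alpha}(g)=-l_g(\alpha_{s(g)})$, and the cocycle condition becomes the identity
\[ r_g(\alpha_{t(g)}) \;=\; l_g(\alpha_{s(g)}) \qquad\text{for every } g\colon x\to y.\]
Using that for $v\in\mathfrak{i}_x$ the adjoint action is given by $ad_g(v)=r_{g^{-1}}\circ l_g(v)$ (since $c_g=L_g\circ R_{g^{-1}}$ fixes $1_x\mapsto 1_y$), applying $r_{g^{-1}}$ to both sides transforms the displayed equation into the pointwise invariance $\alpha_y=ad_g(\alpha_x)$ for all arrows $g\colon x\to y$, i.e.\ $\alpha\in\Gamma(\mathfrak{i})^{\mathrm{inv}}$. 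Conversely the same computation, read backwards, shows that any $\alpha\in\Gamma(\mathfrak{i})^{\mathrm{inv}}$ satisfies $\delta(\alpha)=0$.

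The identification $H^0(\G,\mathfrak{i})=\Gamma(\mathfrak{i})^{\mathrm{inv}}$ is the one explicitly adopted in subsection \ref{sec-The isotropy representation} (which works even in the non-regular case because $\Gamma(\mathfrak{i})$ is defined via smoothness inside $\Gamma(A)$), so combining it with the chain of equivalences above finishes the proof. The only mildly delicate point is bookkeeping: making sure that $di|_{\mathfrak{i}_x}=-\mathrm{Id}$ and that $ad_g=r_{g^{-1}}\circ l_g$ on $\mathfrak{i}_x$ are applied with matching base points; once these two ingredients are in place, the rest is formal.
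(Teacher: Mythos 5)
Your argument is correct and is essentially the paper's own proof: apply $ds$ or $dt$ to the cocycle equation to force $\rho(\alpha)=0$, use $di|_{\mathfrak{i}_x}=-\mathrm{Id}$ to rewrite $\overrightarrow{\alpha}+\overleftarrow{\alpha}=0$ as $r_g(\alpha_{t(g)})=l_g(\alpha_{s(g)})$, and recognize this as $ad$-invariance. Your version just spells out the bookkeeping (the identities $ds\circ l_g=ds$, $ds\circ di=dt$, $ad_g=r_{g^{-1}}\circ l_g$) that the paper leaves implicit.
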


\begin{proof}
We have to show that $\alpha\in \Gamma(A)$ satisfies $\overrightarrow{\alpha}+\overleftarrow{\alpha}= 0$ if and only if $\alpha$ is an invariant section of $\textrm{Ker}(\rho)$. After applying $dt$ to the equation we see that $\rho(\alpha)$ must be $0$. This implies that $(di)(\alpha_x)= - \alpha_x$ for all $x\in M$ and then the condition $r_g(\alpha_y)+ l_g(di)(\alpha_x)= 0$ for $g: x\rmap y$ becomes the invariance of $\alpha$. 
\end{proof}

\begin{rmk} The action of $\G$ on $ \mathfrak{i}$ has as infinitesimal counterpart the action of $A$ given by the Lie bracket. The elements of  $\Gamma( \mathfrak{i})$ that are invariant with respect to the infinitesimal action (i.e. $\alpha\in \Gamma( \mathfrak{i})$ with $[u, \alpha]= 0$ for all $u\in \Gamma(A)$) are precisely the ones in the center $Z(\Gamma(A))$ of the Lie algebra $\Gamma(A)$.  Hence, by the standard arguments, we have $\Gamma( \mathfrak{i})^{\textrm{inv}}\subseteq Z(\Gamma(A))$ and equality holds when  $\G$ has connected s-fibers.
\end{rmk}

\subsection{$H^*(\G, \mathfrak{i})$ and its contribution to $H^{*}_{\textrm{def}}(\G)$} 
Even when $\mathfrak{i}$ is not of constant rank (i.e. $\G$ is not regular), one can still make sense of the differentiable cohomology $H^*(\G, \mathfrak{i})$. The defining complex $C^*(\G, \mathfrak{i})$ is defined as
in Subsection \ref{sub-Differentiable cohomology}, where the smoothness of the cochains is obtained by interpreting them as $A$-valued. The fact that the differential of $C^{*}(\G, \mathfrak{i})$ preserves smoothness follows e.g. from the relationship with the deformation complex: one has an inclusion (compatible with the differentials!): 
\begin{equation}\label{inj0}
r: C^{*}(\G, \mathfrak{i}) \rmap C_{\text{def}}^*(\G)
\end{equation}
which associates to a differentiable cochain $u$ with values in 
$\mathfrak{i}$ the deformation cochain $c_u$ given by
\[ c_u(g_1, \ldots, g_k):= r_{g_1}(u(g_1, \ldots , g_k)).\]
The induced map in cohomology,
\begin{equation}\label{inj}
r: H^{*}(\G, \mathfrak{i}) \rmap H_{\text{def}}^*(\G)
 \end{equation}
will be shown to be injective in degree $1$, but may fail to be so in higher degrees.

\subsection{Degree $1$ and multiplicative vector fields}
We now start looking at degree $1$ by re-interpreting $H_{\text{def}}^1(\G)$ in terms of multiplicative vector fields. Recall that, for any groupoid $\G\tto M$, $T\G\tto TM$ is canonically a groupoid with structure maps the differentials of the structure maps of $\G$; with this, a vector field $X\in \mathcal{\G}$ is called \textbf{multiplicative} if, as a map $X: \G\rmap T\G$, it is a morphism of groupoids. In other words,
$X$ must be projectable both along $s$ as well as along $t$ to some vector field $V\in \mathfrak{X}(M)$, $du(V_x) = X_{u(x)}$ and 
\[ X_{gh} = dm(X_g, X_h)\]
for any pair $(g, h)$ of composable arrows. A particular class of multiplicative vector fields are those of type
$\overrightarrow{\alpha}+\overleftarrow{\alpha}$ with $\alpha \in \Gamma(A)$ - whose flows give rise to inner automorphisms of $\G$; therefore they are called \textbf{inner multiplicative vector fields}.
  
\begin{prop} For any Lie groupoid $\G$ one has
\[H_{\mathrm{def}}^1(\G) = \frac{\text{\emph{multiplicative vector fields on} }\G}{\text{\emph{inner multiplicative vector fields on }}\G}.\]
\end{prop}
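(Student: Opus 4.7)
The plan is to identify the 1-cocycles of $C^{*}_{\text{def}}(\G)$ directly with multiplicative vector fields and the 1-coboundaries with inner multiplicative vector fields; the claim is then immediate. First, unpacking the definitions: a $1$-cochain $c\in C^1_{\text{def}}(\G)$ is exactly a smooth section $c:\G\to T\G$ of $T\G\to\G$ that is $s$-projectable to some $V\in \mathfrak{X}(M)$, and the cocycle condition $\delta c=0$ reads
\[ c(g_1)= d\bar{m}(c(g_1g_2),c(g_2)) \quad \text{for every composable pair } (g_1,g_2). \]

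The central step is to show this cocycle identity is equivalent to the strict multiplicativity condition $c(gh)=dm(c(g),c(h))$. For this I use the two division-map identities
\[ \bar{m}(m(a,b),b)=a, \qquad m(\bar{m}(p,q),q)=p, \]
valid whenever the terms make sense, whose differentials give that the maps $(X,Y)\mapsto (dm(X,Y),Y)$ and $(U,Y)\mapsto (d\bar{m}(U,Y),Y)$ are mutual inverses on the appropriate fibred products in $T\G$. Substituting $c(g_1g_2)=dm(c(g_1),c(g_2))$ into $d\bar{m}(c(g_1g_2),c(g_2))$ yields $c(g_1)$, and conversely, so the cocycle condition and strict multiplicativity on composable pairs are equivalent.

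Next, I upgrade strict multiplicativity to the statement that $c$ is a groupoid morphism $\G\to T\G$ over some $V\in\mathfrak{X}(M)$, i.e.\ a multiplicative vector field. The $s$-projectability is built into being a $1$-cochain. Applying $dt$ to $c(gh)=dm(c(g),c(h))$ and using $t(gh)=t(g)$ gives $dt(c(gh))=dt(c(g))$; hence $dt\circ c$ factors through $t$ and $c$ is also $t$-projectable. Setting $g=h=u(x)$ in the multiplicativity equation gives
\[ c(u(x))=dm(c(u(x)),c(u(x))), \]
which says $c(u(x))$ is an idempotent of the tangent groupoid $T\G\tto TM$; but the only idempotents in a groupoid are its units, so $c(u(x))=du(V_x)$ for some $V_x\in T_xM$, which is unit-compatibility. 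Conversely, any multiplicative vector field is trivially $s$-projectable and satisfies the cocycle identity, so $1$-cocycles in $C^{*}_{\text{def}}(\G)$ are precisely the multiplicative vector fields on $\G$.

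Finally, the $1$-coboundaries are, by the very definition of $\delta$ in degree zero, the vector fields of the form $\overrightarrow{\alpha}+\overleftarrow{\alpha}$ with $\alpha\in \Gamma(A)$, i.e.\ the inner multiplicative vector fields. Passing to the quotient yields the stated description of $H^{1}_{\text{def}}(\G)$. The only step that requires some care is the cocycle $\Leftrightarrow$ multiplicativity equivalence, since one has to keep careful track of the base-point conditions when differentiating $m$ and $\bar{m}$; once set up, the inverse-pair identities above make it routine.
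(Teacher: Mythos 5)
Your argument is correct, and it reaches the same identification (cocycles $=$ multiplicative vector fields, coboundaries $=$ inner ones) by a more self-contained route. The paper's proof is a one-liner: the cocycle equation $c(g)=d\bar{m}(c(gh),c(h))$ is, verbatim, the $\bar{m}$-compatibility of the map $c:\G\to T\G$, and Corollary \ref{crl: m bar} of the Appendix says that compatibility with $s$ and $\bar{m}$ alone already characterizes groupoid morphisms -- so no unit, target, or $m$-compatibility conditions need to be checked separately. You instead reprove the relevant special case by hand: you pass from the $\bar{m}$-equation to the $m$-equation via the differentiated inverse-pair identities $\bar{m}(m(a,b),b)=a$ and $m(\bar{m}(p,q),q)=p$, and then derive $t$-projectability and unit-compatibility (via the idempotent argument) as consequences. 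What the paper's route buys is brevity and reuse of the $(s,\bar{m})$ axiomatics; what yours buys is an explicit explanation of why the seemingly missing morphism axioms come for free. Two points in your ``central step'' deserve to be made explicit rather than waved at: (a) in the direction cocycle $\Rightarrow$ multiplicativity, the expression $dm(c(g_1),c(g_2))$ only makes sense once you know $ds(c(g_1))=dt(c(g_2))$, and (b) the $t$-projection of $c$ must be shown to equal the $s$-projection $V$ (not merely some vector field $W$). Both follow by applying $ds$ to the cocycle identity and using $s(\bar{m}(a,b))=t(b)$, which gives $ds(c(g_1))=dt(c(g_2))$ for every composable pair, hence (taking $g_1$ a unit) $dt\circ c=V\circ t$; with that supplied, your proof is complete.
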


\begin{proof} One has to check that a vector field $X\in \mathfrak{X}(\G)$ is a cocycle in $C^{1}_{\textrm{def}}(\G)$ if and only if, as a map $\G\rmap T\G$, it is a groupoid morphism; but this is an immediate consequence of the characterization of groupoid morphisms from the Appendix (Corollary \ref{crl: m bar}). 
\end{proof}

Note also that the multiplicativity of $X$ is equivalent with the fact that the flow of $X$ is compatible with the groupoid structure - and this is how multiplicativity (and $H^{1}_{\textrm{def}}$) will come in the proof of rigidity results. So, for later use, here is a more precise statement. We will use the following general notations:  
for a vector field $X\in \mathfrak{X}(M)$ we denote by $\phi_{X}$ its flow, $\phi_{X}^{\epsilon}(x)= \phi_{X}(x, \epsilon)$ and by 
\[ \mathcal{D}(X)= \{(x, \epsilon)\in M\times \mathbb{R}: \phi_{X}(x, \epsilon) \ \textrm{is\ defined}\}\subset M\times\mathbb{R} \]
its domain. For each $\epsilon> 0$, we consider the
$\epsilon$-slice $\mathcal{D}_{\epsilon}(V)\subset M$ ($x\in M$ with the property that
the integral curve of $V$ through $x$ is defined at time $\epsilon$). The following will be needed later on.

\begin{lemma}\label{lemma-on-flows} If $\G \tto M$ is a Lie groupoid and $X\in \mathfrak{X}(\G)$ is multiplicative, then the flow of $X$ preserves the groupoid structure, wherever defined. 
More precisely, denoting by $V\in \mathfrak{X}(M)$ the base field of $X$, then, for any $\epsilon \geq 0$,  $\mathcal{D}_{\epsilon}(X)\subset \G$ is an open subgroupoid of $\G$ with base $\mathcal{D}_{\epsilon}(V)$, and 
\[ \phi_{X}^{\epsilon}: \mathcal{D}_{\epsilon}(X)\rmap \G\]
 is a morphism of groupoids covering the flow of $V$.

If $\G$ is proper then, moreover,  
\[ \mathcal{D}_{\epsilon}(X)= \G|_{\mathcal{D}_{\epsilon}(V)},\]
i.e. the flow $\phi_{X}^{\epsilon}(g)$ is defined precisely when $\phi_{V}^{\epsilon}(s(g))$ and $\phi_{V}^{\epsilon}(t(g))$ are (and this holds under the weaker hypothesis on $X$ that it is $s$ and $t$-projectable to some $V\in \mathfrak{X}(M)$).

\end{lemma}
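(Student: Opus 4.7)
The plan is to reduce everything to three facts about a multiplicative $X$: it is $s$- and $t$-related to $V$, it is $u$-related to $V$, and it is $m$-related to itself (with the obvious interpretation on $\G^{(2)}$). The first two facts are part of the very definition; the third is the multiplicativity identity $X_{gh}=dm(X_g,X_h)$. From standard naturality of flows under vector-field relatedness, these three facts will hand us the compatibility of $\phi_{X}^{\epsilon}$ with $s,t,u$ and $m$ respectively, which is essentially what has to be proved.

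For the first part, I would begin by checking that $\mathcal{D}_{\epsilon}(X)$ is an open subgroupoid with base $\mathcal{D}_{\epsilon}(V)$. Openness of $\mathcal{D}_{\epsilon}(X)$ is a general property of flows; the fact that its base is $\mathcal{D}_{\epsilon}(V)$ follows because $X$ is $s$-related to $V$, so $\phi_{X}^{\epsilon}$ covers $\phi_{V}^{\epsilon}$ via $s$ and likewise via $t$. For the subgroupoid property: if $g,h\in\mathcal{D}_{\epsilon}(X)$ are composable, then the curve $\tau\mapsto m(\phi_{X}^{\tau}(g),\phi_{X}^{\tau}(h))$ is defined on $[0,\epsilon]$, passes through $gh$ at $\tau=0$, and has tangent $dm(X,X)=X$ by multiplicativity — so by uniqueness of integral curves it equals $\phi_{X}^{\tau}(gh)$, giving both that $gh\in\mathcal{D}_{\epsilon}(X)$ and the morphism identity $\phi_{X}^{\epsilon}(gh)=\phi_{X}^{\epsilon}(g)\,\phi_{X}^{\epsilon}(h)$. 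Unit- and inversion-compatibility are handled analogously: $X$ is $u$-related to $V$, so $\phi_{X}^{\epsilon}\circ u=u\circ\phi_{V}^{\epsilon}$; and inversion-compatibility follows either directly (from $X$ being $i$-related to itself, which is a formal consequence of multiplicativity applied to $(g,g^{-1})$ and to units) or from the fact that a morphism of groupoids automatically intertwines inversions.

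For the second part, the one place properness genuinely enters, I would use the standard escape-from-compact-sets criterion for maximal flows. Fix $g\in\G$ with $s(g)=x$, $t(g)=y$, and assume both $x,y\in\mathcal{D}_{\epsilon}(V)$. Because $X$ is $s$- and $t$-related to $V$, any integral curve $\gamma(\tau):=\phi_{X}^{\tau}(g)$ that exists on $[0,T)$ satisfies $s(\gamma(\tau))=\phi_{V}^{\tau}(x)$ and $t(\gamma(\tau))=\phi_{V}^{\tau}(y)$, so $(s,t)(\gamma(\tau))$ stays in the compact set $K:=\{\phi_{V}^{\tau}(x):\tau\in[0,\epsilon]\}\times\{\phi_{V}^{\tau}(y):\tau\in[0,\epsilon]\}\subset M\times M$. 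Properness of $(s,t):\G\to M\times M$ then forces $\gamma([0,T))$ to lie in the compact set $(s,t)^{-1}(K)$, so $\gamma$ extends past $T$; hence the maximal existence time of $\gamma$ is at least $\epsilon$, proving $\G|_{\mathcal{D}_{\epsilon}(V)}\subseteq\mathcal{D}_{\epsilon}(X)$. The reverse inclusion was already observed in the first part.

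The main obstacle I anticipate is bookkeeping rather than depth: one must be careful that the morphism $\phi_{X}^{\epsilon}:\mathcal{D}_{\epsilon}(X)\to\G$ really lands where claimed and that the compatibility identities hold wherever both sides make sense, not just formally. The properness argument itself is robust and only needs $s,t$-projectability of $X$, which is why the parenthetical weakening at the end of the statement is automatic from the same proof.
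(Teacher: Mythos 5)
Your proposal is correct and takes essentially the same route as the paper: the first part is the standard uniqueness-of-integral-curves argument via $s$-, $t$-, $u$- and $m$-relatedness (which the paper simply cites as well-known), and the second part is the same escape-from-compact-sets argument, confining $(s,t)\circ\gamma$ to a compact set and invoking properness of $(s,t)$ to extend the integral curve. Nothing to add.
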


\begin{proof}
The first part is well-known (see \cite{macxu}). For the second part, the inclusion $``\subset$'' is clear; we prove the reverse one. Let $g\in \G$, consider  the maximal integral curve $\gamma_s$
of $V$ through $s(g)$, similarly $\gamma_t$, and let $(a, b)$ be the intersection of the domains of $\gamma_s$ and $\gamma_t$. Denoting by $I$ the domain of the maximal curve $\gamma$ of $X$ through $g$, we know that $I\subset (a, b)$ and we have to show that equality holds. We show that for all $(u, v)\subset I$ with $a< u\leq v< b$, one must have $[u, v]\subset I$: indeed, for such $u$ and $v$, 
\[ \gamma((u, v))\subset \{ a\in \G: s(a)\in \gamma_s([u, v]), t(a)\in \gamma_t([u, v]) \} \] 
where the last subspace of $\G$ is compact because $\G$ is proper. Since $\gamma((u, v))$ is relatively compact, it follows that $[u, v]$ is contained in $I$.
\end{proof}

\subsection{The normal representation $\nu$ and $H^{0}(\G, \nu)$} 
\label{sec-The normal representation}
Next we have a closer look at $H_{\text{def}}^1(\G)$ in a way similar to that from Proposition \ref{degree-0}. The discussion will bring into the picture another important ``representation'' of $\G$ - the normal one (another piece of the adjoint representation!). Consider the normal bundle 
\[ \nu:= \textrm{Coker}(\rho)= TM/\rho(A).\]
As in the case of $\mathfrak{i}$, this is a smooth vector bundle only in the regular case, but, in general, we can still talk about its fibers and, as we shall see, make sense of ``its space of smooth (invariant) sections''. First of all, still as for $\mathfrak{i}$, any arrow $g: x\rmap y$ of $\G$ induces an action
\[ ad_g: \nu_x\rmap \nu_y .\]
Explicitly, for $v\in \nu_x$, one chooses a curve $g(\epsilon): x(\epsilon)\rmap y(\epsilon)$ in $\G$ with $g(0)= g$ and such that $\dot{x}(0)\in T_{x}M$ represents $v$, and then $ad_g(v)$ is the class of
 $\dot{y}(0)\in T_{y}M$. One can check that this construction does not depend on the choices involved. The following is immediate:

\begin{lemma}\label{invariance-nu} For any $g: x\rmap y$, the action $ad_g: \nu_x\rmap \nu_y$ is uniquely determined by the condition that,  for any vector $X_g\in T_g\G$, it sends the class of $ds(X_g)$ modulo $\rho(A)$ to that of  $dt(X_g)$. 
In particular, for $V\in \mathfrak{X}(M)$, the condition that 
\[ M\ni x \mapsto [V_x]\in \nu_x \]
is invariant is equivalent to the fact that for a (any) $s$-lift $X\in \mathfrak{X}(\mathcal{\G})$ of $V$ and any $g: x\rmap y$, there exists $\eta(g)\in A_{t(g)}$ such that
\begin{equation}\label{inv-eqt}
V_{t(g)}= dt(X_g)+ \rho(\eta(g)).
\end{equation}
\end{lemma}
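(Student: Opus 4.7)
My plan is to reduce both assertions to the very definition of $ad_g$ (via curves $g(\epsilon): x(\epsilon)\to y(\epsilon)$) together with the fact that $s$ is a submersion and the basic anchor identity $dt\circ r_g = \rho$, which is what produced the anchor in the first place.

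For part (1), I would prove existence of the stated characterization by unpacking the definition. Given $X_g \in T_g\G$, realize it as $\dot{g}(0)$ for some smooth curve $\epsilon \mapsto g(\epsilon)$ through $g$ (every tangent vector on a manifold is a velocity). Setting $x(\epsilon) := s(g(\epsilon))$ and $y(\epsilon) := t(g(\epsilon))$, the chain rule gives $\dot{x}(0) = ds(X_g)$ and $\dot{y}(0) = dt(X_g)$, and the recipe defining $ad_g$ in the paragraph just above reads exactly $ad_g([ds(X_g)]) = [dt(X_g)]$. Uniqueness of a map $\nu_x \to \nu_y$ subject to this condition is automatic because $ds: T_g\G \to T_xM$ is onto ($s$ being a submersion), hence every class in $\nu_x$ is of the form $[ds(X_g)]$. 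Well-definedness of the prescription $[ds(X_g)] \mapsto [dt(X_g)]$ is built into the well-definedness of $ad_g$; if pressed, I would verify it by subtracting a suitable $\overleftarrow{\tilde{\alpha}}(g)$ (whose $ds$-value at $g$ is $\rho(\alpha)$ while its $dt$-value is $0$, using $t\circ L_g = \mathrm{const}$) to reduce to the case $ds(X_g) = ds(X_g')$, in which $X_g - X_g' \in T_g s^{-1}(x) = r_g(A_y)$, and then the identity $dt\circ r_g = \rho$ places the difference in $\rho(A_y)$.

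For part (2), I would apply part (1) pointwise to any $s$-lift $X \in \mathfrak{X}(\G)$ of $V$ (which exists since $s$ is a submersion and one can lift by a partition of unity or locally). By construction $ds(X_g) = V_{s(g)} = V_x$, so part (1) yields $ad_g([V_x]) = [dt(X_g)]$. Consequently the invariance condition $ad_g([V_x]) = [V_y]$ translates verbatim into $V_y - dt(X_g) \in \rho(A_{t(g)})$, i.e.\ into the existence of $\eta(g) \in A_{t(g)}$ satisfying equation \eqref{inv-eqt}. The parenthetical ``(any)'' in the statement is justified by part (1), which shows that $[dt(X_g)]$ depends only on $[ds(X_g)] = [V_x]$ and not on the chosen lift.

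There is no real obstacle here; the only delicate point is the well-definedness of $ad_g$ discussed above, which is the infinitesimal echo of the fact that $ad_g$ is a well-defined linear map on $\nu_x$. Everything else is the direct unwinding of definitions.
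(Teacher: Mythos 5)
Your proof is correct; the paper offers no argument at all here (the lemma is labelled "immediate"), and your unwinding of the definition of $ad_g$ via curves, together with the surjectivity of $ds$ and the identities $dt\circ r_g=\rho$ and $ds\circ l_g\circ di=dt$ for the well-definedness check, is exactly the intended reasoning. Nothing is missing.
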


In the general (i.e. the possibly non-regular) case, since $\nu$ is a quotient, making sense of (the space of) smooth sections of $\nu$ is more subtle than for $\mathfrak{i}$; and the same for defining  $H^*(\G, \nu)$ - and here we will only take care of degree zero, i.e. making sense of ``invariant sections''.
First of all, one defines
\[ \Gamma(\nu):= \mathfrak{X}(M)/\textrm{Im}(\rho) .\]
Note that any $[V]\in \Gamma(\nu)$ induces a set theoretic section $M\ni x\mapsto [V_x]\in \nu_x$, but the two objects are now different. Similarly, the invariance of $[V]\in \Gamma(\nu)$ is a stronger (or better: a smooth version) of the pointwise invariance
of the induced set theoretical section. More precisely, with the last part of the previous lemma in mind, it is natural to define the invariance of $[V]\in \Gamma(\nu)$ by requiring that for a/any $s$-lift $X\in\mathfrak{X}(\G)$ of $V$, the invariance equation (\ref{inv-eqt}) holds for some smooth section 
$\eta$ over $\G$ of $t^*A$. Replacing $X$ by $X'$ given by $X_{g}'= X_g+ r_g(\eta(g))$, we arrive at the following:

\begin{dfn} We say that $[V]\in \Gamma(\nu)$ is invariant if there exists a vector field $X\in \mathfrak{X}(\G)$ which is both $s$ as well as $t$-projectable to $V$ - in which case we say that $X$ is an $(s, t)$-lift of $V$. The resulting space of invariant elements is denoted
\[ H^0(\G, \nu)= \Gamma(\nu)^{\mathrm{inv}}\subset \Gamma(\nu).\]
\end{dfn}

From the previous discussion, it is clear that, in the regular case, one recovers the usual space of sections of the smooth bundle $\nu$ and its invariant sections. In general, we obtain:

\begin{lemma} One has a natural linear map
\[ \pi: H^{1}_\mathrm{def}(\G)\rmap \Gamma(\nu)^{\mathrm{inv}}\] 
which associates to a multiplicative vector field $X$ on $\G$ the class modulo $\textrm{Im}(\rho)$ of the vector field on $M$ associated with $X$.
\end{lemma}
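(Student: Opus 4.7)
The approach is to build $\pi$ at the cochain level and then show it descends to cohomology, using the preceding identification of $H^{1}_{\mathrm{def}}(\G)$ with multiplicative vector fields modulo inner ones.

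First, given a cocycle $X\in C^{1}_{\mathrm{def}}(\G)$, i.e.\ a multiplicative vector field, I would observe that multiplicativity forces $X$ to be both $s$- and $t$-projectable to the \emph{same} vector field $V\in \mathfrak{X}(M)$. Indeed, regarding $X:\G\to T\G$ as a morphism of groupoids $T\G\tto TM$ whose source and target are $ds$ and $dt$, the base map $V=ds\circ X\circ u=dt\circ X\circ u$ is common. Hence $X$ is an $(s,t)$-lift of $V$, and by the very definition of invariance introduced just above, $[V]\in \Gamma(\nu)^{\mathrm{inv}}$. Setting $\pi(X):=[V]$ defines a linear map from multiplicative vector fields to $\Gamma(\nu)^{\mathrm{inv}}$.

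Next, I would check that $\pi$ vanishes on inner multiplicative vector fields $\overrightarrow{\alpha}+\overleftarrow{\alpha}$, $\alpha\in\Gamma(A)$. Since $\overrightarrow{\alpha}$ is tangent to the $s$-fibres we have $ds\circ\overrightarrow{\alpha}=0$. For the left-invariant summand, the identities $s\circ L_g=s$ (giving $ds\circ l_g=ds$) and $s\circ i=t$ (giving $ds\circ di=dt=\rho$ on $A$) yield
\[ ds\bigl(\overleftarrow{\alpha}(g)\bigr)=ds\bigl(l_g\, di(\alpha_{s(g)})\bigr)=dt(\alpha_{s(g)})=\rho(\alpha)\bigl(s(g)\bigr). \]
Thus $\overrightarrow{\alpha}+\overleftarrow{\alpha}$ is $s$-projectable to $\rho(\alpha)\in\mathrm{Im}(\rho)$, whose class in $\Gamma(\nu)=\mathfrak{X}(M)/\mathrm{Im}(\rho)$ vanishes. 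Combining these two steps produces the desired linear map
\[ \pi: H^{1}_{\mathrm{def}}(\G)\longrightarrow \Gamma(\nu)^{\mathrm{inv}},\qquad [X]\longmapsto [V].\]

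The argument is almost entirely bookkeeping with source and target maps; the only conceptual input is the equality of the $s$- and $t$-projections forced by multiplicativity, which I expect to be the only point worth spelling out carefully. No serious obstacle is anticipated.
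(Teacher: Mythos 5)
Your proof is correct and is essentially the argument the paper intends: the lemma is stated there without proof as an immediate consequence of the preceding identification of $H^{1}_{\mathrm{def}}(\G)$ with multiplicative vector fields modulo inner ones and of the definition of invariance via $(s,t)$-lifts. Both of your steps (a multiplicative $X$ is an $(s,t)$-lift of its base field $V$, and $\delta(\alpha)=\overrightarrow{\alpha}+\overleftarrow{\alpha}$ projects to $\rho(\alpha)\in\mathrm{Im}(\rho)$) match the computations already recorded in the paper.
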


\begin{rmk}[(Related to the adjoint representation)]
It is instructive to keep in mind the (intuitive for now) interpretations of this discussion in terms of the adjoint representation. As indicated by the previous examples, both $\mathfrak{i}$ as well as $\nu$ contribute to the adjoint representation. The fact that $H_{\text{def}}^1(\G)$ is related to $H^0(\G, \nu)$ indicates that the contribution of $\nu$ involves a degree shift by one; so, a first guess would be that the adjoint representation is represented by $\mathfrak{i}[0]\oplus \nu[1]$ (the first one in degree $0$, the second one in degree $1$). There are two remarks to have in mind here:
\begin{itemize}
\item  in order to stay within smooth vector bundles also in the non-regular case, one should think of $\mathfrak{i}[0]\oplus \nu[1]$ as the cohomology of the length two complex 
\[ 0\rmap A\stackrel{\rho}{\rmap} TM\rmap 0\]
with $A$ in degree zero and $TM$ in degree $1$. The idea is to think of such complexes of vector bundles as being smooth representatives of their (possibly non-smooth) cohomology bundles, and then work with such complexes ``up to homotopy'' (quasi-isomorphisms).
\item however, even in the regular case when no smoothness issues arise and one could (try to) use the graded representation $\mathfrak{i}[0]\oplus \nu[1]$, the resulting cohomology (in degree $1$) would surject onto $H^0(\G, \nu)$ - which is certainly not the case for the deformation cohomology and $\pi$. This indicates a more subtle structure of the adjoint representation, related to the cokernel of $\pi$.
\end{itemize}
\end{rmk}

\subsection{The first manifestation of curvature}\label{The first manifestation of curvature}
Next, we look closer at the kernel and cokernel of $\pi$. As we shall see, this will bring the cohomology $H^{*}(\G, \mathfrak{i})$ back to our attention. Looking for the cokernel reveals the presence of ``curvature''. The precise meaning of ``curvature'' will become clear later on (see also the next remark); here we note its manifestation on the cohomology of lower degrees.

\begin{lemma}
For $[V]\in \Gamma(\nu)^{\mathrm{inv}}$, choosing an $(s, t)$-lift $X\in \mathfrak{X}(\G)$, $\delta(X) \in C^{2}_{\mathrm{def}}(\G)$ takes values in the subcomplex (see (\ref{inj0}))
\[ C^2(\G, \mathfrak{i})\stackrel{r}{\hookrightarrow} C^{2}_{\mathrm{def}}(\G),\]
and it defines a cocycle in $H^2(\G, \mathfrak{i})$ whose cohomology class does not depend on the choice of $X$; hence one has an induced linear map (the cohomological curvature in degree $0$)
\[ K: \Gamma(\nu)^{\mathrm{inv}}\rmap H^{2}(\G, \mathfrak{i}).\]
\end{lemma}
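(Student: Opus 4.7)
The plan is to carry out three steps: first show that $\delta(X)$ actually takes values in $r(C^{2}(\G, \mathfrak{i}))$; next observe that cocycle-hood of the resulting $\eta \in C^{2}(\G, \mathfrak{i})$ is automatic; finally, establish that a change of $(s,t)$-lift modifies $\eta$ only by a coboundary.

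For the first step, I would write out the formula
\[
(\delta X)(g_1, g_2) = - d\bar{m}(X(g_1g_2), X(g_2)) + X(g_1) \in T_{g_1}\G,
\]
and use the elementary identities $s \circ \bar{m}(p,q) = t(q)$ and $t \circ \bar{m}(p,q) = t(p)$ (following from the definition of $\bar{m}$). A vector $Y \in T_{g_1}\G$ is of the form $r_{g_1}(\alpha)$ for some $\alpha \in \mathfrak{i}_{t(g_1)}$ precisely when $ds(Y) = 0$ and $dt(Y) = 0$ (the first condition puts $Y$ into $\ker(ds_{g_1}) = r_{g_1}(A_{t(g_1)})$, the second then puts the corresponding $\alpha$ into $\ker \rho = \mathfrak{i}$). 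Using that $X$ is $(s,t)$-projectable to $V$, a direct computation gives
\[
ds\bigl((\delta X)(g_1, g_2)\bigr) = -dt(X(g_2)) + ds(X(g_1)) = -V_{t(g_2)} + V_{s(g_1)} = 0,
\]
\[
dt\bigl((\delta X)(g_1, g_2)\bigr) = -dt(X(g_1g_2)) + dt(X(g_1)) = -V_{t(g_1)} + V_{t(g_1)} = 0,
\]
so $(\delta X)(g_1, g_2) = r_{g_1}(\eta(g_1, g_2))$ with $\eta(g_1, g_2) \in \mathfrak{i}_{t(g_1)}$. This defines the cochain $\eta \in C^{2}(\G, \mathfrak{i})$ with $r(\eta) = \delta X$.

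For the second step, since $r$ is an injective chain map and $\delta^{2} = 0$ in $C^{*}_{\mathrm{def}}(\G)$, one has $r(\delta_{\mathfrak{i}} \eta) = \delta_{\mathrm{def}}(r(\eta)) = \delta_{\mathrm{def}}(\delta X) = 0$, forcing $\delta_{\mathfrak{i}} \eta = 0$. For the third step, if $X'$ is another $(s,t)$-lift of $V$, then $Y := X - X' \in \mathfrak{X}(\G)$ satisfies $ds(Y) = dt(Y) = 0$ pointwise, so the same right-translation argument as above writes $Y = r(\beta)$ for a unique $\beta \in C^{1}(\G, \mathfrak{i})$. Then $\delta X - \delta X' = \delta Y = r(\delta_{\mathfrak{i}} \beta)$, so the cocycles $\eta, \eta'$ in $C^{2}(\G, \mathfrak{i})$ differ by the coboundary $\delta_{\mathfrak{i}}\beta$, yielding the same class $K([V])$.

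The only genuinely substantive step is the first: one must recognize that the twofold $s$- and $t$-verticality of $\delta(X)$ at $g_1$ is exactly what allows the identification of $\delta(X)$ with a right-translate of a section of $\mathfrak{i}$. Everything else is formal bookkeeping built on $\delta^{2} = 0$ and the injectivity of $r$.
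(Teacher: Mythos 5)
Your three steps are carried out correctly and follow the same route as the paper: you apply $ds$ and $dt$ to $\delta(X)(g_1,g_2)$ using $s\circ\bar{m}=t\circ\mathrm{pr}_2$ and $t\circ\bar{m}=t\circ\mathrm{pr}_1$, identify vectors in $\mathrm{Ker}(ds)\cap\mathrm{Ker}(dt)$ with right-translates of elements of $\mathfrak{i}$, get cocycle-hood for free from $\delta^2=0$ and the injectivity of $r$, and observe that the difference of two $(s,t)$-lifts of $V$ lies in the subcomplex.

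There is, however, one missing step. The map $K$ is asserted to be defined on $\Gamma(\nu)^{\mathrm{inv}}$, whose elements are classes $[V]$ modulo $\mathrm{Im}(\rho)$, so one must also check that the class $[\delta(X)]$ is unchanged when the \emph{representative} $V$ of $[V]$ is changed, not only when the lift of a fixed $V$ is changed. Your argument in the third step assumes that $X$ and $X'$ both project to the same $V$: only then is $Y=X-X'$ killed by $ds$ and $dt$. If instead $X'$ is an $(s,t)$-lift of another representative $V'=V+\rho(\alpha)$ with $\alpha\in\Gamma(A)$, then $ds(Y)=-\rho(\alpha)\neq 0$ in general and your argument does not apply. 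The fix (which is the last step of the paper's proof) is short: $X'':=X+\delta(\alpha)=X+\overrightarrow{\alpha}+\overleftarrow{\alpha}$ is an $(s,t)$-lift of $V'$, because $\delta(\alpha)$ is both $s$- and $t$-projectable to $\rho(\alpha)$ (using $ds\circ r_g=0$, $ds\circ l_g\circ di=dt$ and $dt\circ l_g=0$, $dt\circ r_g=dt$), and $\delta(X'')=\delta(X)+\delta^2(\alpha)=\delta(X)$. Combining this with your fixed-$V$ argument gives the full independence needed for $K$ to be well defined on $\Gamma(\nu)^{\mathrm{inv}}$.
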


\begin{proof} Note that the inclusion (\ref{inj0}) identifies $C^*(\G, \mathfrak{i})$ with the subcomplex of $C^{*}_{\textrm{def}}(\G)$ consisting of deformation cochains that take values in $\textrm{Ker}(ds)\cap \textrm{Ker}(dt)$; therefore we will work only inside the deformation complex. Computing $ds(\delta(X)(g, h))$ we find
\[ ds(d\bar{m}(X_{gh}, X_{h}))- ds(X_g)=  dt(X_h)- ds(X_g)= V_{t(h)}- V_{s(g)}= 0\]
and similarly for $dt(\delta(X)(g, h))$; hence $\delta(X)$ lives in the subcomplex. Moreover, if $X'$ is another $(s, t)$-lift, then $c:= X'- X$ is in the subcomplex hence $\delta(X')= \delta(X)+ \delta(c)$ represent the same class in  
$H^{2}(\G, \mathfrak{i})$. Finally, this class only depends on the class $[V]$. Indeed, if $[V]= [V']$, then we find $\alpha\in \Gamma(A)$ such that $V'= V+ \rho(\alpha)$; then, if $X$ is an $(s, t)$-lift of $V$, we can use $X'= X+ \overrightarrow{\alpha}+\overleftarrow{\alpha}= X+ \delta(\alpha)$ as the $(s, t)$-lift of $V'$ but then $\delta(X')= \delta(X)$. 
\end{proof}

\begin{rmk}[(Related to the adjoint representation)] Continuing the previous remark on the adjoint representation, the presence of $K$  is a manifestation of the more subtle structure of the adjoint representation: it shows that there is a certain interaction between 
$\nu$ and $\mathfrak{i}$, that allows one to move from $\nu$ to $\mathfrak{i}$, via a differentiable 2-cocycle (hence a 2-cocycle with values in $\textrm{Hom}(\nu, \mathfrak{i})$ in the regular case, and some kind of cocycle
with values in $\textrm{Hom}(TM, A)$ in general).
\end{rmk}

Putting all the maps together, we have:

\begin{prop}
\label{the-LOS}
One has an exact sequence
\[ 0 \rmap  H^1(\G, \mathfrak{i}) \stackrel{r}{\rmap} H_{\mathrm{def}}^1(\G) \stackrel{\pi}{\rmap}  \Gamma(\nu)^{\mathrm{inv}} \stackrel{K}{\rmap} H^2(\G, \mathfrak{i}) \stackrel{r}{\rmap} H_{\mathrm{def}}^2(\G).\]
\end{prop}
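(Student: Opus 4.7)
The plan is to verify exactness at each of the four positions in turn. The key tool throughout is that $r$ in (\ref{inj0}) identifies $C^*(\G,\mathfrak{i})$ with the subcomplex of $C^*_\mathrm{def}(\G)$ whose cochains take values in $\textrm{Ker}(ds)\cap\textrm{Ker}(dt)$ (after right translation), together with the fact that $r$ is a chain map.

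First I would establish injectivity of $r$ at $H^1(\G,\mathfrak{i})$. Given a cocycle $u\in C^1(\G,\mathfrak{i})$ with $r(u)=\delta(\alpha)=\overrightarrow{\alpha}+\overleftarrow{\alpha}$ for some $\alpha\in\Gamma(A)$, applying $ds$ to both sides and using $ds\circ l_g\circ di=dt$ forces $\rho(\alpha)=0$, so $\alpha\in\Gamma(\mathfrak{i})$; since $r$ is a chain map, $u$ is then already a coboundary in $C^*(\G,\mathfrak{i})$.

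Next, for exactness at $H^1_\mathrm{def}(\G)$, the composition $\pi\circ r$ vanishes since elements in the image of $r$ project to zero along both $s$ and $t$. Conversely, if a multiplicative $X$ satisfies $\pi([X])=0$, then its base field has the form $\rho(\alpha)$; since $\delta(\alpha)$ is itself $(s,t)$-projectable to $\rho(\alpha)$, the $1$-cocycle $X-\delta(\alpha)$ has vanishing base field and hence takes values in $\textrm{Ker}(ds)\cap\textrm{Ker}(dt)$, placing it in the image of $r$. For exactness at $\Gamma(\nu)^\mathrm{inv}$: if $[V]=\pi([X])$ for a multiplicative $X$, then $X$ itself is an $(s,t)$-lift of $V$ with $\delta(X)=0$, so $K([V])=0$; conversely, if $K([V])=0$ and $X$ is any $(s,t)$-lift, then $\delta(X)=\delta(r(c))$ for some $c\in C^1(\G,\mathfrak{i})$, so $X-r(c)$ is a multiplicative vector field still $(s,t)$-projecting to $V$, giving $\pi([X-r(c)])=[V]$.

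The most delicate step is exactness at $H^2(\G,\mathfrak{i})$. The inclusion $\textrm{Im}(K)\subseteq\ker(r)$ is tautological, since $K$ is realized by $\delta(X)$ for the $1$-cochain $X$. For the reverse inclusion, suppose a cocycle $c\in C^2(\G,\mathfrak{i})$ satisfies $r(c)=\delta(X)$ for some $X\in\mathfrak{X}(\G)$. The condition that $\delta(X)$ takes values in $\textrm{Ker}(ds)$, combined with the identity $s(\bar{m}(a,b))=t(b)$ applied to the cocycle formula, forces $ds(X_g)=dt(X_h)$ for every composable pair $(g,h)$; specializing $g=1_{t(h)}$ upgrades the automatic $s$-projectability of $X$ to full $(s,t)$-projectability to a common $V\in\mathfrak{X}(M)$. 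Then $X$ witnesses $[V]\in\Gamma(\nu)^\mathrm{inv}$, and by construction $K([V])=[\delta(X)]=[c]$. The subtlety in this final step lies precisely in extracting the invariant normal section directly from the constraint imposed on $\delta(X)$, and will be the hard part of the argument.
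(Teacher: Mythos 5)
Your proposal is correct and follows essentially the same route as the paper's own proof: the same identification of $C^*(\G,\mathfrak{i})$ with the $(ds,dt)$-killed subcomplex, the same correction terms ($X-\delta(\alpha)$, $X-r(c)$) at the middle positions, and the same observation that $ds(\delta X(g,h))=ds(X_g)-dt(X_h)$ upgrades $s$-projectability to $(s,t)$-projectability in the final step. The part you flag as "the hard part" is in fact already complete as you sketched it, and is no harder than in the paper.
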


\begin{proof} For the injectivity of the first map $r$, assume that $c\in C^{1}_{\textrm{def}}(\G)$ comes from 
$C^1(\G, \mathfrak{i})$ (i.e. $c(g)$ is killed by both $ds$ and $dt$, for all $g$) and it is also exact. Hence $c= \delta(\alpha)$ for some $\alpha\in \Gamma(A)$; since $dt$ sends $\delta(\alpha)= \overrightarrow{\alpha}+\overleftarrow{\alpha}$ to $\rho(\alpha)$, we see that $\alpha\in \Gamma(\mathfrak{i})$ hence
$c$ is a coboundary in the sub-complex $C^*(\G, \mathfrak{i})$. 

For the exactness at $H_{\text{def}}^1(\G)$, compute the kernel of $\pi$: it consists of classes $[X]$, where $X$ is a multiplicative vector field on $\G$ with the property that its base field $V$ is zero in $\Gamma(\nu)$, i.e. it is of type $V= \rho(\alpha)$ for some $\alpha\in \Gamma(A)$; replacing $X$ by $X- \delta(\alpha)$, we deal with classes $[X]$ with the property that the base field is zero, i.e. coming from the inclusion (\ref{inj0}). 

Next, we take care of the kernel of $K$: it consists of classes $[V]\in \Gamma(\nu)^{\textrm{inv}}$ with the property that, choosing $X\in \mathfrak{X}(\G)$ an $(s, t)$-lift of $V$, one has $\delta(X)= \delta(c)\in C^{2}_{\textrm{def}}(\G)$, for some $c$ that is killed by $ds$ and $dt$; replacing $X$ by $X- c$, we see that we deal with classes $[V]$ which admit an $(s, t)$-lift $Y$ that is closed in the deformation complex, i.e. which is multiplicative as a vector field. Hence $\textrm{Ker}(K)= \textrm{Im}(\pi)$. 

Finally, the kernel of the last map $r$: by the definition of $K$, it is clear that $r\circ K= 0$; conversely, if
$[c]\in H^2(\G, \mathfrak{i})$ is in the kernel of $r$, we have $r(c)= \delta(X)\in C^{2}_{\textrm{def}}(\G)$ for some $X\in C^{1}_{\textrm{def}}(\G)$; however, the fact that $r(c)$ (hence $\delta(X)$) takes values in the kernels of $ds$ and $dt$, implies that $X$ will be $(s, t)$-projectable to some $V\in \mathfrak{X}(M)$, hence, by the construction of $K$,  $[c]= K([V])$. 
\end{proof}

\section{Degree $2$ and deformations}
\label{degree2}

In this section we indicate the relevance of deformation cohomology to deformations of Lie groupoids by explaining how such deformations give rise to $2$-cocycles.

\subsection{The case of $(s, t)$-constant deformations} 
\label{sec-s-t-const}
Let $\{\G_{\epsilon}: \epsilon\in I\}$ be a strict deformation of $\G$ (see the introduction); we would like to study  the variation of the groupoid structure. This variation is, at least intuitively, measured by the variation of the structure maps, such as of the expressions of type $m_{\epsilon}(g, h)$ around $\epsilon= 0$. As mentioned in the appendix, to make sense of this, one encounters the problem that if $(g, h)$ are composable with respect to the original groupoid structure, they may fail to be composable for $\G_{\epsilon}$. Although the appendix indicates the way to proceed (using $\bar{m}$), let us first assume first that $s_{\epsilon}$ and $t_{\epsilon}$ do not depend on $\epsilon$ and proceed in a more classical way. 

In this case
\[ - \frac{d}{d\epsilon}|_{\epsilon= 0} m_{\epsilon}(g, h) \in T_{gh}\G \]
is well-defined for any $(g, h)\in \G^{(2)}$ and is killed by $ds$ and $dt$ (the choice of the sign will soon become clear). Being killed by $ds$ means that it lives in the image of the right translation $r_{gh}: A_{t(g)}\rmap T_{gh}\G$, while being killed also by $dt$ means that it comes from the isotropy $\mathfrak{i}_{t(g)}$. Hence we end up with a differential cochain
\[ u_0\in C^{2}(\G, \mathfrak{i})\]
with the defining property 
\[ \frac{d}{d\epsilon}|_{\epsilon= 0} m_{\epsilon}(g, h)= - r_{gh}(u_0(g, h)) \in T_{gh}\G.\]
Moreover, differentiating the associativity equation $m_{\epsilon}( m_{\epsilon}(g, h), k)=  m_{\epsilon}(g,  m_{\epsilon}(h, k))$ with respect to $\epsilon$ at $0$, we find that $u_0$ is a cocycle. As it will follow from the more general discussion, or checked directly (but see also \cite{alan} where strict deformations were first discussed): 

\begin{lemma} The resulting cohomology class 
\[ [u_0]\in H^{2}(\G, \mathfrak{i}) \]
only depends on the equivalence class of the deformation. 
\end{lemma}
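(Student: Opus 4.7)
The plan is to differentiate the equivalence at $\epsilon = 0$ to extract a cochain in $C^1(\G, \mathfrak{i})$ that transgresses $u_0' - u_0$. Let $\phi^\epsilon \colon \G_\epsilon \to \G_\epsilon'$ be the family of groupoid isomorphisms with $\phi^0 = \mathrm{Id}$ and base $f^\epsilon \colon M \to M$, and set
\[
X := \frac{d}{d\epsilon}\Big|_{\epsilon=0}\phi^\epsilon \in \mathfrak{X}(\G), \qquad V := \frac{d}{d\epsilon}\Big|_{\epsilon=0} f^\epsilon \in \mathfrak{X}(M).
\]
Because $\phi^\epsilon$ intertwines the (common) source and target maps of the two $(s,t)$-constant deformations, $X$ is simultaneously $s$- and $t$-projectable to $V$. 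In the base-preserving case $V \equiv 0$, $X$ takes values in $\ker(ds) \cap \ker(dt)$, so $X = r(\alpha)$ for a unique $\alpha \in C^1(\G, \mathfrak{i})$; this $\alpha$ will be the candidate transgressing cochain.

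The central computation differentiates the homomorphism identity $\phi^\epsilon(m_\epsilon(g,h)) = m_\epsilon'(\phi^\epsilon(g), \phi^\epsilon(h))$ at $\epsilon = 0$. Using $\phi^0 = \mathrm{Id}$ and the defining relations for $u_0$ and $u_0'$, the chain rule yields the key identity
\[
r_{gh}\bigl(u_0'(g,h) - u_0(g,h)\bigr) \;=\; dm(X_g, X_h) - X_{gh}
\]
in $T_{gh}\G$, whose right-hand side is killed by $ds$ and $dt$, matching the image of $r_{gh}$. To recast this as a statement about $\delta X$, differentiate the groupoid identity $\bar m(m(a,b),b) = a$ in the direction $(X_g, X_h)$ to obtain $d\bar m(dm(X_g,X_h), X_h) = X_g$, and subtract $d\bar m(X_{gh}, X_h)$ from both sides. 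The pair $(dm(X_g,X_h) - X_{gh},\, 0_h)$ lies in $T_{(gh,h)}\operatorname{dom}(\bar m)$ because its first component is killed by $ds$, so bilinearity of $d\bar m$ gives
\[
(\delta X)(g,h) \;=\; X_g - d\bar m(X_{gh}, X_h) \;=\; d\bar m\bigl(dm(X_g, X_h) - X_{gh},\,0_h\bigr).
\]
A short curve computation shows $d\bar m(r_{gh}(\xi), 0_h) = r_g(\xi)$ for $\xi \in \mathfrak{i}_{t(g)}$: for a curve $\eta(t) \in \G(t(g),-)$ representing $\xi$, the curve $(\eta(t)\cdot gh,\,h)$ lies in $\operatorname{dom}(\bar m)$ and $\bar m(\eta(t)\cdot gh, h) = \eta(t)\cdot g$, whose derivative at $t=0$ is $r_g(\xi)$. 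Combining,
\[
(\delta X)(g,h) \;=\; r_g\bigl(u_0'(g,h) - u_0(g,h)\bigr) \;=\; r(u_0' - u_0)(g,h)
\]
as an identity in $C^2_\mathrm{def}(\G)$.

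In the base-preserving case, substituting $X = r(\alpha)$ and using that $r$ commutes with differentials turns the identity into $r(\delta\alpha) = r(u_0' - u_0)$; injectivity of $r$ on cochains then forces $\delta\alpha = u_0' - u_0$ in $C^*(\G, \mathfrak{i})$, proving the claim. The main obstacle I expect is the reduction of a general equivalence to a base-preserving one when $V \neq 0$: the natural strategy is to post-compose $\phi^\epsilon$ with a smooth family of groupoid automorphisms of $\G_\epsilon'$ whose base derivative at $\epsilon = 0$ equals $-V$, bringing us into the case above. This ``gauge'' manoeuvre is exactly what is handled systematically by the more general framework alluded to in the statement, where the cocycle naturally lives in $H^2_\mathrm{def}(\G)$ and the same computation above produces the coboundary relation directly.
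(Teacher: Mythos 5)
Your core argument is the same one the paper has in mind: the paper gives no separate proof of this lemma, but points to the $s$-constant discussion, where the equivalence $\phi^{\epsilon}$ is differentiated against the structure maps to give $\xi_0'-\xi_0=\delta X$ with $X=\frac{d}{d\epsilon}|_{\epsilon=0}\phi^{\epsilon}$; your identity $\delta X=r(u_0'-u_0)$, obtained from the homomorphism equation for $m_{\epsilon}$ together with $\bar{m}(m(a,b),b)=a$ and $d\bar{m}(r_{gh}\xi,0_h)=r_g\xi$, is exactly that computation, and your refinement in the base-preserving case ($X$ killed by $ds$ and $dt$, hence $X=r(\alpha)$ with $\alpha\in C^1(\G,\mathfrak{i})$, and then $\delta\alpha=u_0'-u_0$ inside $C^{*}(\G,\mathfrak{i})$ by injectivity of $r$ on cochains) is precisely the ``checked directly'' version and is complete and correct.

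The point you flag for $V\neq 0$ is a real one, but your proposed gauge manoeuvre cannot be carried out in general, and you should not expect it to: a smooth family of automorphisms $\psi^{\epsilon}$ of $\G'_{\epsilon}$ with $\psi^0=\mathrm{Id}$ has, as its derivative at $\epsilon=0$, a \emph{multiplicative} vector field on $\G$ which is an $(s,t)$-lift of its base field, so already at first order such a family with base derivative $-V$ exists only if $[V]$ admits a multiplicative $(s,t)$-lift, i.e.\ only if $K([V])=0$ in the sequence of Proposition \ref{the-LOS}. In fact your own identity computes exactly what goes wrong: when the base family moves, $X$ is an $(s,t)$-lift of $V$, so $\delta X=r(u_0'-u_0)$ says (by the very definition of the curvature map) that $[u_0']-[u_0]=K([V])$ in $H^2(\G,\mathfrak{i})$, and this class need not vanish -- for a bundle of Lie groups whose fibrewise group structure genuinely varies, pulling back the constant deformation along a lift of the flow of $V$ gives an equivalent $(s,t)$-constant deformation whose class is $K([V])\neq 0$. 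So under the unrestricted notion of equivalence of Definition \ref{dfn-general-def} only the image class $r[u_0]=[\xi_0]\in H^2_{\mathrm{def}}(\G)$ is invariant (which is what the paper's ``more general discussion'' establishes), while the refined statement in $H^2(\G,\mathfrak{i})$ is to be read for equivalences covering the identity on $M$ -- and for those your proof settles it. In short: keep your base-preserving argument as the proof, and drop the attempted reduction of the moving-base case rather than trying to repair it.
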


Before we pass to the more general case of $s$-constant deformations, we consider the image $\xi_0$ of $u_0$ by the inclusion 
\[ r: C^{2}(\G, \mathfrak{i}) \hookrightarrow C^{2}_{\textrm{def}}(\G), \]
(see (\ref{inj0})) or, explicitly, 
\[ \xi_0(g, h)= r_g(u_0(g, h))= - r_{h^{-1}} \frac{d}{d\epsilon}|_{\epsilon= 0} m_{\epsilon}(g, h) .\]
For a more convenient formula, differentiate at $\epsilon= 0$ the identity $m_{\epsilon}(\bar{m}_{\epsilon}(m_0(g, h), h),h)= m_0(g, h)$:
\[ \frac{d}{d\epsilon}|_{\epsilon= 0} m_{\epsilon}(\bar{m}_0(gh, h), h)+ (dm_0)_{g, h}\left( \frac{d}{d\epsilon}|_{\epsilon= 0}  \bar{m}_{\epsilon}(gh, h), 0_h\right)= 0;\]
using also the fact that in any groupoid $(dm)_{g, h}(X_g, 0_h)= r_{h}(X_g)$ and applying $r_{h^{-1}}$ we find 
\[ \xi_0(g, h)=  \frac{d}{d\epsilon}|_{\epsilon= 0} \bar{m}_{\epsilon}(gh, h) .\]

\subsection{The case of $s$-constant deformations I: the direct approach} \label{s-constant def}
The advantage of the last expression is that it makes sense for all $s$-constant deformations. Of course, this is also very much in the spirit of the appendix, which teaches us that we should look at the variation of 
$\bar{m}_{\epsilon}$ (and only at it).

\begin{dfn} 
Given an $s$-constant deformation $\{\G_{\epsilon}: \epsilon\in I\}$ of $\G$, the associated deformation cocycle is  defined as
\begin{equation*}
\xi_0\in  C^{2}_{\textrm{def}}(\G) \ \textrm{given\ by}\ :\ \    \xi_0(g, h)= \frac{d}{d\epsilon}|_{\epsilon= 0} \bar{m}_{\epsilon}(gh, h) \in T_g\G .
\end{equation*}
\end{dfn}

\begin{lemma} $\xi_0$ is indeed a cocycle and its cohomology class $[\xi_0]\in H^{2}_{\mathrm{def}}(\G)$ depends only on the equivalence class of
the deformation.
\end{lemma}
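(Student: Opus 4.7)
The plan is to verify both assertions by differentiating structural identities for $\bar{m}_\epsilon$ at $\epsilon=0$, in the spirit of the computation in the $\delta^2=0$ proof. First, I check that $\xi_0$ actually lies in $C^{2}_{\mathrm{def}}(\G)$. Since $\bar{m}_0(gh,h)=g$, one has $\xi_0(g,h)\in T_g\G$; and because the deformation is $s$-constant,
\[ ds\bigl(\xi_0(g,h)\bigr)=\frac{d}{d\epsilon}\Big|_{\epsilon=0} s\bigl(\bar{m}_\epsilon(gh,h)\bigr)=\frac{d}{d\epsilon}\Big|_{\epsilon=0} t_\epsilon(h), \]
which depends only on $h$, so $\xi_0$ is $s$-projectable.

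For the cocycle property, I differentiate the associativity axiom of $\bar{m}_\epsilon$ from the Appendix,
\[ \bar{m}_\epsilon\bigl(\bar{m}_\epsilon(p,c),\bar{m}_\epsilon(q,c)\bigr)=\bar{m}_\epsilon(p,q), \]
applied to $(p,q,c)=(ghk,hk,k)$. The right-hand side contributes $\xi_0(g,hk)$. For the left-hand side, set $\phi_\epsilon=\bar{m}_\epsilon(ghk,k)$ and $\psi_\epsilon=\bar{m}_\epsilon(hk,k)$, so that $\phi_0=gh$, $\psi_0=h$, $\dot\phi_0=\xi_0(gh,k)$ and $\dot\psi_0=\xi_0(h,k)$. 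Composability of $(\dot\phi_0,\dot\psi_0)$ for $d\bar{m}$ is immediate from $s$-constancy, since both $ds(\dot\phi_0)$ and $ds(\dot\psi_0)$ equal $\frac{d}{d\epsilon}|_{\epsilon=0} t_\epsilon(k)$. The chain rule then gives
\[ \frac{d}{d\epsilon}\Big|_{\epsilon=0}\bar{m}_\epsilon(\phi_\epsilon,\psi_\epsilon)=\xi_0(g,h)+d\bar{m}\bigl(\xi_0(gh,k),\xi_0(h,k)\bigr), \]
where the first summand is the variation of $\bar{m}_\epsilon$ at the fixed pair $(gh,h)$. Equating the two sides yields exactly $(\delta\xi_0)(g,h,k)=0$.

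For independence of the equivalence class, let $\phi^\epsilon\colon\G_\epsilon\rmap\G_\epsilon'$ be an equivalence with $\phi^0=\mathrm{Id}$ (both deformations being $s$-constant with $s_\epsilon=s=s_\epsilon'$), and set $\eta(g)=\frac{d}{d\epsilon}|_{\epsilon=0}\phi^\epsilon(g)\in T_g\G$. Differentiating the intertwining of source maps shows $\eta\in C^{1}_{\mathrm{def}}(\G)$. Applying the same chain-rule computation to
\[ \phi^\epsilon\bigl(\bar{m}_\epsilon(gh,h)\bigr)=\bar{m}_\epsilon'\bigl(\phi^\epsilon(gh),\phi^\epsilon(h)\bigr) \]
at $\epsilon=0$ produces
\[ \xi_0(g,h)+\eta(g)=\xi_0'(g,h)+d\bar{m}\bigl(\eta(gh),\eta(h)\bigr), \]
which, comparing with the formula $(\delta\eta)(g,h)=\eta(g)-d\bar{m}(\eta(gh),\eta(h))$, reads $\xi_0'-\xi_0=\delta\eta$; hence $[\xi_0]=[\xi_0']$ in $H^{2}_{\mathrm{def}}(\G)$.

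The work is essentially bookkeeping: the only thing to watch is that the chain rule decomposes $\frac{d}{d\epsilon}|_{\epsilon=0}\bar{m}_\epsilon(\phi_\epsilon,\psi_\epsilon)$ cleanly into the ``parameter'' variation (giving a $\xi_0$-term at the limit pair) plus the base variation $d\bar{m}(\dot\phi_0,\dot\psi_0)$, and that the composability required for $d\bar{m}$ holds throughout, both of which follow from $s$-constancy. This is what makes the $s$-constant hypothesis essential for the construction, as already emphasised in the discussion preceding the definition of $\xi_0$.
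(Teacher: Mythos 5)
Your proof is correct and follows essentially the same route as the paper: you differentiate the associativity identity $\bar{m}_\epsilon(\bar{m}_\epsilon(p,c),\bar{m}_\epsilon(q,c))=\bar{m}_\epsilon(p,q)$ at the triple $(ghk,hk,k)$ to get $\delta\xi_0=0$, and differentiate the intertwining relation $\phi^\epsilon(\bar{m}_\epsilon(gh,h))=\bar{m}'_\epsilon(\phi^\epsilon(gh),\phi^\epsilon(h))$ to get $\xi_0'-\xi_0=\delta\eta$, exactly as in the paper. The extra bookkeeping you supply (the $s$-projectability of $\xi_0$ and the composability of the pairs fed to $d\bar{m}$) is accurate and only makes explicit what the paper leaves implicit.
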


\begin{proof}
The first part follows again from the associativity, but written in terms of the division: 
\[ \bar{m}_\epsilon(\bar{m}_\epsilon(u,w),\bar{m}_\epsilon(v,w))=\bar{m}_\epsilon(u,v)\]
Indeed, by differentiating at $\epsilon = 0$ we we obtain 
\begin{align*}&d\bar{m}_0\left(\dezero\bar{m}_\epsilon(u,w),\dezero\bar{m}_\epsilon(v,w)\right) +\\ &+\left(\dezero\bar{m}_\epsilon\right)(uw^{-1},vw^{-1})-\left(\dezero\bar{m}_\epsilon\right)(u,v)=0,
\end{align*}
and by choosing $u=g_1g_2g_3$, $v=g_2g_3$ and $w=g_3$, this means precisely that
\[\delta\xi_0(g_1,g_2,g_3)= d\bar{m}_0(\xi_0(g_1g_2,g_3), \xi_0(g_2,g_3))-\xi_0(g_1,g_2g_3)+\xi_0(g_1,g_2)=0.\]

The second part follows similarly. If $\phi^\epsilon: \G_\epsilon \rmap \G'_\epsilon$ is an equivalence of deformations let $$ X = \dezero \phi^\epsilon \in C^1_{\mathrm{def}}(\G).$$ Then, by differentiating the expression 
$$\bar{m}'_\epsilon(\phi^\epsilon(gh),\phi^\epsilon(h)) = \phi^\epsilon \bar{m}_\epsilon(gh,h)$$
we obtain that $$\xi'_0(g,h) - \xi_0(g,h) = \delta X(g,h).$$  
\end{proof}

\begin{example}
Let $X\in \mathfrak{X}(\G)$ be a vector field on a groupoid $\G\tto M$ and assume it is $s$-projectable to some $V\in \mathfrak{X}(M)$ (hence $X\in C^{1}_{\textrm{def}}(\G)$). To avoid irrelevant technicalities, we assume $X$ to be complete and let $\phi_{X}^{\epsilon}$ be its flow. Then one obtains a new family of groupoid structures on $\G$ by pulling-back the original structure along $\phi_{X}^{\epsilon}$; hence
\[ s_{\epsilon}= \phi_{V}^{-\epsilon}\circ s\circ \phi_{X}^{\epsilon}, \quad m_{\epsilon}= \phi^{-\epsilon}\circ m\circ (\phi_{X}^{\epsilon}, \phi_{X}^{\epsilon}), \quad \textrm{etc} .\]
This deformation is $s$-constant since $X$ is $s$-projectable, and it is trivial by the very construction. Hence we
know that the associated cocycle is exact; and, computing it, one finds precisely $\delta(X)\in C^{2}_{\textrm{def}}(\G)$. 
\end{example}

\begin{rmk}
One can remove the condition that $s_{\epsilon}$ is constant and treat general strict deformations; the price to pay for this generality is that we will no longer have a canonical $2$-cocycle, but only a canonical $2$-cohomology class.
Let us indicate how to proceed in a direct fashion (an alternative route will be described in detail, for general deformations, a bit later). On follows the obvious idea: given an arbitrary strict deformation $\G_{\epsilon}$ of $\G$, replace it by an equivalent one, $\G_{\epsilon}'$, which is $s$-constant.
That amounts to finding a family of diffeomorphisms $\phi^{\epsilon}: \G\rmap \G$ so that, when we pull-back the groupoid structure of $\G_{\epsilon}$ via $\phi^{\epsilon}$, the new structure $\G_{\epsilon}'$ has constant $s$. Assume for simplicity that $\phi^{\epsilon}$ is the identity on units, so the condition is that $s_{\epsilon}\circ \phi^{\epsilon}$ does not depend on $\epsilon$, and we look for $\phi^{\epsilon}$ of type $\phi^{\epsilon}_{X}$ - the ``flow'' of a time-dependent vector field $\tilde{X}= \{X^{\epsilon}\}$ on $\G$ (see the next paragraph for the notations). Differentiating with respect to $\epsilon$, the desired equation becomes
\begin{equation}\label{strange-eq}
(ds_{\epsilon})(X^{\epsilon}(g))+ \frac{d}{d\epsilon} s_{\epsilon}(g)= 0. 
\end{equation}
Since each $(ds_{\epsilon})$ is surjective, it is clear that such a family $X^{\epsilon}$ exists. Going backwards the only problem is the possible lack of completeness; however, the local flow is all that is needed to make sense of the $2$-cocycle associated to the resulting $m_{\epsilon}'$. The resulting cohomology class will only depend on the deformation we started with.
\end{rmk}

Next we indicate how the deformation cocycles can be used to establish rigidity results. But first some generalities on flows. By a time dependent vector field on a manifold $M$ (or 1-parameter family of vector fields on $M$) we mean a family $\tilde{X}= \{X^{\epsilon}: \epsilon\in I\}$ consisting of vector fields $X^{\epsilon}\in \mathfrak{X}(M)$ depending smoothly on $\epsilon$ in an open interval $I\subset \mathbb{R}$ containing $0$. Such a time-dependent vector field $\tilde{X}$ has a flow $\phi_{\tilde{X}}^{t, s}$ with a double dependence in the parameters; it consists of (local) diffeomorphisms
\[ \phi_{\tilde{X}}^{t, s}: M\rmap M,\]
the solutions to the system
\[ \frac{d}{dt} \phi_{\tilde{X}}^{t, s}(x)= X(t, \phi_{\tilde{X}}^{t, s}(x)),
\ \ \phi_{\tilde{X}}^{s, s}(x)= x .\]
The flow relations are $\phi_{\tilde{X}}^{s, s}= \textrm{Id}$, $\phi^{t, u}_{\tilde{X}}\phi^{u, s}_{\tilde{X}}= \phi^{t, s}_{\tilde{X}}$ - valid modulo the usual issues on the domains of definitions.
When interested only on what happens for parameters close to $0$, it suffices to consider the family of (local) diffeomorphisms of $M$ given by 
\[ \phi^{\epsilon}_{\tilde{X}}:= \phi^{\epsilon, 0}_{\tilde{X}} \]
and then, for small parameters, 
 \[ \phi_{\tilde{X}}^{s+ \epsilon, s}= \phi^{s+ \epsilon}_{\tilde{X}}\circ (\phi^{s}_{\tilde{X}})^{-1}. \]
When $X$ is autonomous $\phi^{\epsilon}_{\tilde{X}}$ is the usual flow, and $\phi_{\tilde{X}}^{s+ \epsilon, s}= \phi_{\tilde{X}}^{\epsilon}$ only depends on $\epsilon$.

The flows of type $\phi_{\tilde{X}}^{t, s}$ are suited to relate the members $\G_{s}$ and $\G_{t}$ of $s$-constant deformations:

\begin{prop}\label{rigidity-lemma} Let $\{\G_{\epsilon}: \epsilon\in I\}$ be an $s$-constant deformation of $\G$, and consider the induced deformation cocycles $\xi_{\epsilon} \in C^{2}_{\mathrm{def}}(\G_{\epsilon})$. Assume that for all $\epsilon$ small enough, one finds $X^{\epsilon}$ such that
\begin{equation}\label{cocy-eq} 
\delta(X^{\epsilon})= \xi_{\epsilon} \ \ \textrm{in}\ C^{2}_{\mathrm{def}}(\G_{\epsilon}). 
\end{equation}
and assume that the resulting time dependent vector field $\tilde{X}= \{X^{\epsilon}\}$ is smooth. Then, for $t$ and $s$ close to $0$, $\phi_{\tilde{X}}^{t, s}$ is a locally defined morphism from $\G_{s}$ to $\G_{t}$,
covering the similar flow of $V^{\epsilon}= ds(X^{\epsilon})$. 

Moreover, if $\G$ is proper, then $\phi_{\tilde{X}}^{t, s}(g)$ is defined whenever $\phi_{\tilde{V}}^{t, s}(s(g))$  and $\phi_{\tilde{V}}^{t, s}(t(g))$ are.
\end{prop}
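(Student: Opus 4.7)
The plan is a Moser-type ODE argument. By the characterization of groupoid morphisms via the source map and the division map $\bar{m}$ (Corollary~\ref{crl: m bar} of the Appendix), $\phi := \phi^{t,s}_{\tilde X}$ will be a morphism $\G_s \to \G_t$ provided it is compatible with both $s$ and $\bar{m}$ between these structures. Compatibility with $s$ is immediate from naturality of flows: each $X^\epsilon$ is $s_\epsilon$-projectable to $V^\epsilon$, and since $s_\epsilon = s$ does not depend on $\epsilon$, the flow of $\tilde X$ covers the flow of $\tilde V$.

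For $\bar{m}$-compatibility, fix $(g,h)$ with $s(g)=s(h)$ and compare the two curves
\[
F(t) := \bar{m}_t\bigl(\phi^{t,s}(g),\,\phi^{t,s}(h)\bigr), \qquad G(t) := \phi^{t,s}\bigl(\bar{m}_s(g,h)\bigr),
\]
which agree at $t=s$. Since $\phi^{t,s}(p)$ is an integral curve of $X^t$ for fixed $p$, one has $\dot G(t) = X^t(G(t))$ immediately. The derivative of $F$ splits by the chain rule into (i) the partial of $\bar{m}_\epsilon$ in $\epsilon$ at frozen arguments, which equals $\xi_t(F(t),\phi^{t,s}(h))$ via the identity $\partial_\epsilon|_{\epsilon=t}\bar{m}_\epsilon(p,q) = \xi_t(\bar{m}_t(p,q),q)$, itself immediate from the defining formula for $\xi_t$ and the groupoid identity $m_t(\bar{m}_t(p,q),q)=p$; and (ii) the joint differential $d\bar{m}_t$ applied to $X^t$ at the two flowed arguments. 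The cocycle equation $\delta(X^\epsilon)=-\xi_\epsilon$, evaluated at the pair $(F(t),\phi^{t,s}(h))$ (whose groupoid product in $\G_t$ is precisely $\phi^{t,s}(g)$), relates (ii) to $X^t(F(t))$ and $\xi_t(F(t),\phi^{t,s}(h))$ in exactly such a way that substituting it into $\dot F(t)$ collapses the two contributions to $X^t(F(t))$. Hence $F$ and $G$ satisfy the same ODE with the same initial data at $t=s$, and uniqueness forces $F\equiv G$, completing the $\bar{m}$-compatibility.

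For the properness statement, the morphism property just proved implies that each $X^\epsilon$ is simultaneously $s$- and $t$-projectable to $V^\epsilon$, since multiplicative vector fields always project along both. The second half of Lemma~\ref{lemma-on-flows}, which bounds integral curves in a proper groupoid by a compact slice sitting over the flowed source and target orbits, carries over verbatim from the autonomous to the time-dependent setting: the relative-compactness argument only uses the behaviour of $s$- and $t$-images, and the same conclusion holds for $\phi^{t,s}_{\tilde X}$ with respect to $\phi^{t,s}_{\tilde V}$.

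The main technical delicacy is in the cocycle substitution: one must carefully match indices so that $\delta(X^t)$ applied to $(F(t),\phi^{t,s}(h))$ produces exactly the $d\bar{m}_t$-term appearing naturally in $\dot F(t)$ (rather than a shifted variant), relying on the identity $m(\bar{m}(a,b),b)=a$ to pin down the first argument. Keeping track of the signs in this step — so that the $\xi_t$-contribution from (i) cancels against the one hidden inside $\delta(X^t)$ — is the most error-prone point of the argument.
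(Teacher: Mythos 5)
Your proposal is the ``direct proof'' that the paper alludes to but does not write out: the paper instead obtains the proposition as a corollary of Proposition \ref{reinterpret-cocycle-eqs} (the cocycle equations for $\{X^{\epsilon}\}$ are equivalent to multiplicativity of the autonomous vector field $\tilde{X}=X^{\epsilon}+\partial/\partial\epsilon$ on the big groupoid $\tilde{\G}=\G\times I$) together with Lemma \ref{lemma-on-flows} on flows of multiplicative vector fields. Your fibrewise Moser/ODE comparison of $F(\tau)=\bar{m}_{\tau}(\phi^{\tau,s}(g),\phi^{\tau,s}(h))$ with $G(\tau)=\phi^{\tau,s}(\bar{m}_{s}(g,h))$ is a legitimate alternative and is really the multiplicativity of $\tilde{X}$ unpacked in the time-dependent picture; it avoids the $\tilde{\G}$-formalism at the price of redoing, rather than quoting, the flow argument. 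The two ingredients you isolate -- the identity $\tfrac{d}{d\epsilon}\big|_{\epsilon=\tau}\bar{m}_{\epsilon}(p,q)=\xi_{\tau}(\bar{m}_{\tau}(p,q),q)$ and the evaluation of the cocycle equation at the pair $(F(\tau),\phi^{\tau,s}(h))$, whose product in $\G_{\tau}$ is $\phi^{\tau,s}(g)$ -- are exactly the right ones, and the conclusion by uniqueness of integral curves is fine, as is the $s$-compatibility via projectability.

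Two caveats. First, the sign you wave at is a real one: with the paper's conventions, $(\delta X)(g,h)=X(g)-d\bar{m}(X(gh),X(h))$ and $\xi_{\tau}(a,b)=\tfrac{d}{d\epsilon}\big|_{\epsilon=\tau}\bar{m}_{\epsilon}(m_{\tau}(a,b),b)$, one gets $\dot F(\tau)=\xi_{\tau}(F,q)+d\bar{m}_{\tau}(X^{\tau}(p),X^{\tau}(q))$, and the cancellation collapsing this to $X^{\tau}(F)$ requires $\delta(X^{\tau})=+\xi_{\tau}$; with the sign $-\xi_{\epsilon}$ of equation (\ref{cocy-eq}) as you quote it, the two $\xi$-terms add instead of cancelling. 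This tension is already present inside the paper (the displayed computation in the proof of Proposition \ref{reinterpret-cocycle-eqs} also yields $+\xi_{\epsilon}$ before the final line), and it is harmless -- replace $X^{\epsilon}$ by $-X^{\epsilon}$ or fix the convention -- but since you declare this cancellation the crux, it must be carried out explicitly rather than asserted. Second, the properness part does not carry over ``verbatim'': in the time-dependent setting the target $t_{\tau}$ moves with $\tau$, so the compactness needed is that of $\{a\in\G:\ s(a)\in K_{1},\ t_{\tau}(a)\in K_{2},\ \tau\in[u,v]\}$, i.e.\ properness of the family $(a,\tau)\mapsto(s(a),t_{\tau}(a))$ over compact parameter intervals (properness of $\tilde{\G}$), not of $\G_{0}$ alone -- precisely the distinction stressed in Remark \ref{rmk-imp-proper}, and the reason the paper applies Lemma \ref{lemma-on-flows} to the autonomous $\tilde{X}$ on the proper family $\tilde{\G}$ in Theorem \ref{thm: compact rigid}. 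The cleanest repair of your last paragraph is to do the same.
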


This lemma can be proved directly but our reinterpretations from the next subsection (Proposition \ref{reinterpret-cocycle-eqs}) will show that it is a consequence of Lemma \ref{lemma-on-flows} on flows of multiplicative vector fields.

\subsection{The case of $s$-constant deformations II: reinterpretation using the groupoid $\tilde{\G}$} 
Here we provide another way of looking at the deformation cocycles, less intuitive but easier to work with. 
This is based on the reinterpretation of strict deformations of $\G$ as (germs of) families parametrized by an interval $I$: one can identify a strict deformation $\tilde{\G}= \{\G_{\epsilon}: \epsilon\in I\}$ with the groupoid 
\[ \tilde{\G}:= \G\times I\ \ \textrm{over}\ \tilde{M}:= M\times I,\]
with structure maps
\[ \tilde{s}(g, \epsilon)= (s_{\epsilon}(g), \epsilon),\ \tilde{m}((g, \epsilon), (h, \epsilon))= (m_{\epsilon}(g,h), \epsilon), \ \textrm{etc}.\]
This allows us to reinterpret the deformation cocycle associated to an $s$-constant deformation as follows.  

\begin{prop}
\label{re-write-s-def}
Let $\tilde{\G}= \{\G_{\epsilon}\}$ be an $s$-constant deformation of the Lie groupoid $\G$. Then, interpreting $\frac{\partial}{\partial \epsilon}$ as an element of $C^{1}_{\mathrm{def}}(\tilde{\G})$ and considering 
\[ \xi= -\delta\left(\frac{\partial}{\partial \epsilon}\right)\in C^{2}_{\mathrm{def}}(\tilde{\G}),\]
one has 
\[ \xi_{0}= \xi|_{\G_0} \in C^{2}_{\mathrm{def}}(\G_0).\]
\end{prop}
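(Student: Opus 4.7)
The proof is a direct unpacking of the coboundary formula in $C^{*}_{\mathrm{def}}(\tilde{\G})$ compared with the definition of $\xi_0$. I would carry it out in three short steps.

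First, one must check that $\tfrac{\partial}{\partial \epsilon}$ is a legitimate $1$-cochain in $C^{1}_{\mathrm{def}}(\tilde{\G})$, i.e.\ that it is $\tilde{s}$-projectable. This is the single place where $s$-constancy is used: under that hypothesis $\tilde{s}(g,\epsilon)=(s(g),\epsilon)$, and so $d\tilde{s}\bigl(\tfrac{\partial}{\partial \epsilon}|_{(g,\epsilon)}\bigr)=\tfrac{\partial}{\partial \epsilon}|_{(s(g),\epsilon)}$ depends only on the base point, the $s$-projection being the coordinate vector field on $\tilde{M}=M\times I$.

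Second, I would evaluate $\xi = \delta\bigl(\tfrac{\partial}{\partial \epsilon}\bigr)$ on a pair $((g,0),(h,0))$ with $(g,h)\in\G^{(2)}$. Their product in $\tilde{\G}$ is $(m_0(g,h),0)=(gh,0)$, and applying the degree-$1$ coboundary formula gives
\[
\xi\bigl((g,0),(h,0)\bigr) = -\,d\bar{\tilde m}\Bigl(\tfrac{\partial}{\partial \epsilon}\big|_{(gh,0)},\,\tfrac{\partial}{\partial \epsilon}\big|_{(h,0)}\Bigr) + \tfrac{\partial}{\partial \epsilon}\big|_{(g,0)}.
\]
From the explicit form $\bar{\tilde m}((p,\epsilon),(q,\epsilon))=(\bar{m}_\epsilon(p,q),\epsilon)$ one sees that the $d\bar{\tilde m}$-term is exactly the velocity at $\epsilon=0$ of the curve $\epsilon\mapsto(\bar{m}_\epsilon(gh,h),\epsilon)$.

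The third step is to identify this velocity using the product structure $\tilde{\G} = \G\times I$. Under the resulting splitting $T_{(g,0)}\tilde{\G}\cong T_g\G\oplus T_0 I$, the above velocity decomposes as $\bigl(\xi_0(g,h),\tfrac{\partial}{\partial \epsilon}|_0\bigr)$ by the very definition of $\xi_0$, while $\tfrac{\partial}{\partial \epsilon}|_{(g,0)}$ is $(0,\tfrac{\partial}{\partial \epsilon}|_0)$. The $T_0 I$-components cancel, and what remains in the $T_g\G$-component is precisely $\xi_0(g,h)$, which gives the desired identification $\xi|_{\G_0}=\xi_0$.

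The only real obstacle is the tangent-space bookkeeping: one has to be explicit about the natural identification $T_{(g,0)}\tilde{\G} = T_g\G\oplus T_0 I$ coming from the Cartesian-product structure, and to interpret the restriction $\xi|_{\G_0}$ as the $T\G$-component under this splitting. With that in place the calculation is entirely mechanical.
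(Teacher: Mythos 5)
Your argument is correct in substance and is essentially the paper's own proof: you realize $\frac{\partial}{\partial\epsilon}$ at $(g,0)$ as the velocity of $\epsilon\mapsto(g,\epsilon)$, evaluate $d\bar{\tilde{m}}$ on the pair of such velocities as the velocity of $\epsilon\mapsto(\bar{m}_\epsilon(gh,h),\epsilon)$, and read off the $T_g\G$-component under the product splitting $T_{(g,0)}\tilde{\G}\cong T_g\G\oplus T_0I$, with $s$-constancy used exactly where the paper uses it, namely to make $\frac{\partial}{\partial\epsilon}$ an $\tilde{s}$-projectable $1$-cochain. The only wrinkle is sign bookkeeping: with the minus sign you (correctly) carried over from Definition \ref{dfn-diff-coh}, your displayed formula actually leaves $-\xi_0(g,h)$ in the $T_g\G$-component rather than $+\xi_0(g,h)$, whereas the paper's proof writes $\xi((g,0),(h,0))=d\bar{\tilde{m}}(\ldots)-\frac{\partial}{\partial\epsilon}(g,0)$, i.e.\ works with the convention $\delta X(g,h)=d\bar{m}(X_{gh},X_h)-X_g$; so this discrepancy reflects a sign-convention inconsistency already present in the paper rather than a substantive gap in your argument.
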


Note that, implicit in this statement, is also the fact that the restriction of $\xi$ to $\G_0\subset \tilde{\G}$
takes values in $T\G_0\subset T\tilde{\G}$ (warning: the operation of restricting elements of $C^{*}_{\textrm{def}}(\tilde{\G})$ to $\G_{0}$, even when it produces elements in $C^{*}_{\textrm{def}}(\G_0)$, is not compatible with the differentials).

\begin{proof}
To check the previous identity we use that, as a vector field on 
$\tilde{\G}$,
\begin{equation}\label{fr-fr-eps}
\frac{\partial}{\partial \epsilon}(g, 0)= \frac{d}{d\epsilon}|_{\epsilon= 0} (g, \epsilon)\in T_{(g, 0)}\tilde{\G}
\end{equation}
hence on elements $g\equiv (g, 0)$, $h \equiv (h, 0)$, 
\begin{align*}
\xi((g, 0), (h, 0)) & = (d\bar{\tilde{m}})\left(\frac{\partial}{\partial \epsilon}(gh, 0), \frac{\partial}{\partial \epsilon}(h, 0)\right)- \frac{\partial}{\partial \epsilon}(g, 0)\\
 & = \frac{d}{d\epsilon}|_{\epsilon= 0} \bar{\tilde{m}}((gh, \epsilon), (h, \epsilon))- \frac{\partial}{\partial \epsilon}(g, 0)\\
 & = \frac{d}{d\epsilon}|_{\epsilon= 0} (\bar{m}_{\epsilon}(gh, h), \epsilon)- \frac{\partial}{\partial \epsilon}(g, 0)\\
 & = \frac{d}{d\epsilon}|_{\epsilon= 0} \bar{m}_{\epsilon}(gh, h)= \xi_{0}(g, h)
\end{align*}
\end{proof}

\begin{rmk} The interpretation given in the proposition makes substantial use of the assumption that the deformation is $s$-constant, not only in the proof, but already right from the start when we interpreted $\frac{\partial}{\partial \epsilon}$ as an element of $C^{1}_{\textrm{def}}(\tilde{\G})$. Indeed, when the source is not constant, this vector field on $\tilde{\G}$ is not even $\tilde{s}$-projectable. This is very much related to the choice of the family of vector fields $X^{\epsilon}$ in the previous subsection; actually, the equation (\ref{strange-eq}) precisely means that the resulting vector field on $\tilde{\G}$:
\[ \tilde{X}(g, \epsilon)= X^{\epsilon}(g)+ \frac{\partial}{\partial \epsilon}(g, 0)\]
is $s$-projectable (... to $\frac{\partial}{\partial \epsilon}\in \mathfrak{X}(\tilde M)$).
\end{rmk}

Next, we reinterpret Proposition \ref{rigidity-lemma} in terms of the groupoid $\tilde{\G}$. But first we need another general remark on time-dependent vector fields  $\tilde{X}= \{X^{\epsilon}: \epsilon\in I\}$ on a manifold $M$; such an $\tilde{X}$ can be identified with the vector field on $M\times I$:
\[ \tilde{X}(x, \epsilon)= X^{\epsilon}(x) + \frac{\partial}{\partial \epsilon} ;\]
then the flow $\phi_{\tilde{X}}^{t, s}$ mentioned above is related to the standard flow of $\tilde{X}$ by 
\[ \phi_{\tilde{X}}^{\epsilon}(x, s)=  (\phi_{\tilde{X}}^{s+ \epsilon, s}(x), s+ \epsilon) \]
so that  $\phi_{\tilde{X}}^{s+ \epsilon, s}$ is seen as moving $M\times \{s\}$ to $M\times \{s+\epsilon \}$. We will apply this to the time dependent vector fields that arise in Lemma \ref{rigidity-lemma}.

\begin{prop}\label{reinterpret-cocycle-eqs} Consider an $s$-constant deformation as in Proposition \ref{rigidity-lemma} and the associated groupoid $\tilde{\G}$. Then a smooth family $X^{\epsilon}$ of vector fields on $\G$
satisfies the cocycle equations (\ref{cocy-eq}) if and only the induced vector field on $\tilde{\G}$
\[ \tilde{X}(g, \epsilon)= X^{\epsilon}(g)+ \frac{\partial}{\partial \epsilon}\in \mathfrak{X}(\tilde{\G})\]
is multiplicative.
\end{prop}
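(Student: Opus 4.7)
The plan is to compute $\delta\tilde X$ directly in $C^{2}_{\mathrm{def}}(\tilde\G)$ and show that its vanishing is equivalent to $\delta(X^{\epsilon})=-\xi_{\epsilon}$ in $C^{2}_{\mathrm{def}}(\G_{\epsilon})$ for every $\epsilon\in I$. The first observation is structural: since $\tilde s$ and $\tilde t$ preserve the $\epsilon$-component of $\tilde\G=\G\times I$, a pair of arrows in $\tilde\G$ is composable only if it lies in a single slice, so $\tilde\G^{(2)}=\bigsqcup_{\epsilon\in I}\G_{\epsilon}^{(2)}$. Testing $\delta\tilde X=0$ therefore reduces to testing on each slice, and the resulting value at $(g,h)\in \G_\epsilon^{(2)}$ lives in $T_{(g,\epsilon)}\tilde\G = T_g\G \oplus \mathbb{R}\,\partial/\partial\epsilon$.

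Next I would split $\tilde X = \tilde X_{\mathrm{h}} + \partial/\partial\epsilon$, where $\tilde X_{\mathrm{h}}(g,\epsilon)=X^{\epsilon}(g)$ is slice-tangent, and use the $\mathbb{R}$-linearity of $\delta$ in its cochain argument to obtain $\delta\tilde X = \delta(\tilde X_{\mathrm{h}}) + \delta(\partial/\partial\epsilon)$. The second summand is exactly what Proposition \ref{re-write-s-def} computes at $\epsilon=0$; applied verbatim at an arbitrary $\epsilon_0\in I$ it yields $\delta(\partial/\partial\epsilon)|_{\G_{\epsilon_0}}=\xi_{\epsilon_0}$. For the first summand I would use that $\bar{\tilde m}$ restricted to a common slice is $\bar m_{\epsilon}$ in the groupoid factor and the identity in the $\epsilon$-factor, so that, applied to slice-tangent inputs, it produces $d\bar m_{\epsilon}$ horizontally and $0$ vertically. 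Substituting $\tilde X_{\mathrm{h}}$ into the defining formula for $\delta$, the output lies in $T\G_\epsilon$ and coincides with $\delta_{\G_\epsilon}(X^{\epsilon})(g,h)$.

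Combining the two computations gives $\delta\tilde X|_{\G_\epsilon}(g,h)=\delta_{\G_\epsilon}(X^{\epsilon})(g,h)+\xi_{\epsilon}(g,h)$, so $\delta\tilde X$ vanishes identically on $\tilde\G^{(2)}$ if and only if the cocycle equations \eqref{cocy-eq} hold for every $\epsilon$, proving the equivalence. The main care required is in checking that the $\partial/\partial\epsilon$-contributions cancel cleanly: the slice-tangent inputs to $\delta(\tilde X_{\mathrm{h}})$ produce no vertical output, whereas in $\delta(\partial/\partial\epsilon)$ the vertical part of $d\bar{\tilde m}(\partial/\partial\epsilon,\partial/\partial\epsilon)$ is again $\partial/\partial\epsilon$, which exactly cancels against the standalone $\partial/\partial\epsilon$ at the end of the $\delta$-formula. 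This bookkeeping of the splitting $T\tilde\G\cong T\G\oplus \mathbb{R}\,\partial/\partial\epsilon$ under $d\bar{\tilde m}$, together with the slicewise extension of Proposition \ref{re-write-s-def}, is the real content of the proof; everything else is direct substitution.
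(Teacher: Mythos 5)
Your proof is correct and follows essentially the same route as the paper: there, too, the multiplicativity identity for $\tilde X$ is split into the slice-tangent contribution, which reduces to $d\bar m_\epsilon(X^\epsilon(m_\epsilon(g,h)),X^\epsilon(h))$ because $\bar{\tilde m}$ restricts to $\bar m_\epsilon$ on each slice, and the $\partial/\partial\epsilon$-contribution, which is computed (inline, rather than by invoking Proposition \ref{re-write-s-def} at general $\epsilon$) to be $\xi_\epsilon(g,h)+\partial/\partial\epsilon$, with the extra $\partial/\partial\epsilon$ cancelling exactly as you describe. The only difference is packaging --- you organize the calculation as $\delta\tilde X=\delta(\tilde X_{\mathrm h})+\delta(\partial/\partial\epsilon)$ and appeal to the multiplicative-iff-cocycle characterization, while the paper expands the multiplicativity equation directly --- but the underlying computation is identical.
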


\begin{proof}
Start from the multiplicativity equation for $\tilde{X}$:
\[ \tilde{X}(g,\epsilon)= (d\bar{\tilde{m}})_{(g, \epsilon), (h, \epsilon)} (\tilde{X}(m_{\epsilon}(g, h), \epsilon), \tilde{X}(h, \epsilon)),\]
where $\bar{\tilde{m}}$ is the division map associated to $\tilde{\G}$. Writing $\tilde{X}$ in terms of $X^{\epsilon}$, we see that the expression on the right-hand side is the sum of two expressions:
\[ (d\bar{\tilde{m}})_{(g, \epsilon), (h, \epsilon)}(X^{\epsilon}(m_{\epsilon}(g, h)), X^{\epsilon}(h)) \ \textrm{and} \ (d\bar{\tilde{m}})_{(g, \epsilon), (h, \epsilon)}\left(\frac{\partial}{\partial \epsilon}, \frac{\partial}{\partial \epsilon}\right).\]
Since $X^{\epsilon}$ is tangent to the fiber $\G_{\epsilon}$ where $\bar{\tilde{m}}$ restricts to $\bar{m}_{\epsilon}$, the first expression is just
\[ (d\bar{m}_{\epsilon})_{(g, h)} (X^{\epsilon}(m_{\epsilon}(g, h)), X^{\epsilon}(h)).\]
For the second expression we use again (\ref{fr-fr-eps}) (and the obvious analogue at $\epsilon\neq 0$) and we find
\begin{align*}
(d\bar{\tilde{m}})_{(g, \epsilon), (h, \epsilon)}\left(\frac{\partial}{\partial \epsilon}, \frac{\partial}{\partial \epsilon}\right) & = \frac{d}{ds}|_{s= 0}  \bar{\tilde{m}}((m_{\epsilon}(g, h), \epsilon+ s), (h, \epsilon+ s))\\
 & = \frac{d}{ds}|_{s= 0}  (\bar{m}_{\epsilon+ s} (m_{\epsilon}(g, h), h), \epsilon+ s)\\
 & = \xi_{\epsilon}(g, h)+ \frac{\partial}{\partial \epsilon} .
\end{align*}
Putting everything together, the multiplicativity equation for $\tilde{X}$ becomes:
\[ X^{\epsilon}(g)= (d\bar{m}_{\epsilon})_{(g, h)} (X^{\epsilon}(m_{\epsilon}(g, h)), X^{\epsilon}(h))+ \xi_{\epsilon}(g, h)\]
i.e. the equations (\ref{cocy-eq}).
\end{proof}

\subsection{General deformations} 
The previous subsection indicates how to proceed in the case of general deformations. Let us first reformulate Definition \ref{dfn-general-def} in terms of families of groupoids (Definition \ref{dfn-families-gpds}).

\begin{dfn}
A deformation of a Lie groupoid $\G$ is a family of Lie groupoids 
\[  \tilde{\G}\tto \tilde{M} \stackrel{\pi}{\rmap} I\]
parametrized by an open interval $I\subset \mathbb{R}$ containing $0$ such that $\G_{0}= \G$ (as Lie groupoids). Two such deformations $\tilde{\G}$ and $\tilde{\G}'$ are said to be equivalent if, after eventually restricting them to smaller open intervals around the origin, they are isomorphic by an isomorphism that is the identity on $\G_{0}$. 

A deformation is said to be proper if $\tilde{\G}$ is a proper groupoid. 
\end{dfn}

We see that strict deformations of $\G$ correspond to deformations $\tilde{\G}$ with the property that, as manifolds, $\tilde{\G}= \G\times I$, $\tilde{M}= M\times I$ and $\pi$ is the projection in the second factor. To construct the deformation class of a deformation, we use the reinterpretation given in Proposition \ref{re-write-s-def}. First we  
need an analogue of $\frac{\partial}{\partial \epsilon}$, which makes sense as an element of $C^{1}_{\textrm{def}}(\tilde{\G})$.

\begin{dfn}\label{dfn-transversal} Let $\tilde{\G}$ be a deformation of $\G$. A transverse vector field for $\tilde{\G}$ is 
any vector field $\tilde{Y}\in \mathfrak{X}(\tilde{\G})$ which is $s$-projectable to a vector field $V\in \mathfrak{X}(\tilde{M})$ which, in turn, is $\pi$-projectable to $\frac{\partial}{\partial\epsilon}$. 
\end{dfn}

With this, we can generalize the construction from Proposition \ref{re-write-s-def} as follows. 

\begin{prop} \label{prop: deformation class} Let $\tilde{\G}$ be a deformation of $\G$. Then:
\begin{enumerate}
\item[(i)] there exist transverse vector fields for $\tilde{\G}$. 
\item[(ii)] for any $\tilde{Y}\in \mathfrak{X}(\G)$ transverse, $-\delta(\tilde{Y})\in C^{2}_\textrm{def}(\tilde{\G})$, when restricted to 
$\G_0$, induces a cocycle
\[ \xi_0\in C^{2}_{\mathrm{def}}(\G_0).\]
\item[(iii)] the cohomology class of $\xi_0$ does not depend on the choice of $\tilde{Y}$.
\end{enumerate}
\end{prop}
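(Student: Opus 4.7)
My plan is to tackle (i)--(iii) in order, the first being a standard submersion-lift argument and the other two routine verifications that the $\delta$-calculus on $\tilde{\G}$ restricts cleanly to $\G_{0}$.

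For (i), I would first lift $\tfrac{d}{d\epsilon}$ to a vector field $V\in\mathfrak{X}(\tilde{M})$: because $\pi:\tilde{M}\rmap I$ is a surjective submersion, local lifts exist in any adapted chart, and at each point of $\tilde{M}$ the set of lifts of $\tfrac{d}{d\epsilon}$ forms an affine subspace of the tangent space, so a partition of unity glues the local lifts to a global $V$. Applying the same argument to the surjective submersion $\tilde{s}:\tilde{\G}\rmap\tilde{M}$, I would then lift $V$ to a vector field $\tilde{X}\in\mathfrak{X}(\tilde{\G})$, which by construction satisfies all the conditions of Definition~\ref{dfn-transversal}.

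For (ii), observe that $\tilde{X}\in C^{1}_{\mathrm{def}}(\tilde{\G})$, so $\delta(\tilde{X})\in C^{2}_{\mathrm{def}}(\tilde{\G})$ is already a cocycle in the deformation complex of $\tilde{\G}$. The main point is to show that for $(g_{1},g_{2})\in\G_{0}^{(2)}$ the vector $\delta(\tilde{X})(g_{1},g_{2})$ actually lies in $T_{g_{1}}\G_{0}\subset T_{g_{1}}\tilde{\G}$, and that its $\tilde{s}$-projection restricted to $\G_{0}$ takes values in $TM_{0}$. Using formula~\eqref{s-projection} one gets
\[
s_{\delta(\tilde X)}(g_{2})=-d\tilde{t}(\tilde{X}(g_{2}))+V(\tilde{t}(g_{2})),
\]
and both $d\tilde{t}\circ\tilde{X}$ and $V$ map via $d\tilde\pi$ to $\tfrac{d}{d\epsilon}$, so the difference is killed by $d\tilde\pi$ and hence lies in $TM_{0}$. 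From $\tilde\pi\circ\tilde{s}=\tilde\pi\circ\tilde{t}$ I then conclude that $\delta(\tilde{X})(g_{1},g_{2})$ itself is killed by $d\tilde\pi$, hence is tangent to $\G_{0}$. Consequently $\xi_{0}\in C^{2}_{\mathrm{def}}(\G_{0})$ is well defined, and the cocycle equation follows from $\delta_{\tilde{\G}}^{2}=0$: all products, divisions, and tangent vectors appearing in $\delta_{\tilde{\G}}(\delta\tilde{X})(g_{1},g_{2},g_{3})$ at an element $(g_{i})\in\G_{0}^{(3)}$ remain inside $\G_{0}$, so this expression coincides with $\delta_{\G_{0}}(\xi_{0})(g_{1},g_{2},g_{3})$ and therefore vanishes.

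For (iii), given two transverse vector fields $\tilde{X},\tilde{X}'$ with $\tilde{s}$-projections $V,V'$, their difference $Y=\tilde{X}-\tilde{X}'$ is $\tilde{s}$-projectable to the $\pi$-vertical vector field $V-V'$. Therefore $(V-V')|_{M_{0}}\in\mathfrak{X}(M_{0})$, and (using again $\tilde\pi\circ\tilde{s}=\tilde\pi\circ\tilde{t}$) $Y|_{\G_{0}}$ is tangent to $\G_{0}$ and defines an element of $C^{1}_{\mathrm{def}}(\G_{0})$. By linearity and the same restriction principle used in (ii),
\[
\xi_{0}-\xi_{0}'=\delta_{\tilde{\G}}(Y)\big|_{\G_{0}}=\delta_{\G_{0}}(Y|_{\G_{0}}),
\]
so $[\xi_{0}]=[\xi_{0}']$ in $H^{2}_{\mathrm{def}}(\G_{0})$. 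The only bit of bookkeeping where I expect subtleties to hide is the verification that the tangent vectors produced by differentiating $\bar{\tilde m}$ actually land in $T\G_{0}$ along strings in $\G_{0}^{(k)}$; once this is established, everything else is a direct consequence of the functoriality of the formulas defining $\delta$ under restriction to the subgroupoid $\G_{0}\subset\tilde{\G}$.
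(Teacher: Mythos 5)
Your proof is correct and follows essentially the same route as the paper: for (i) lift $\frac{d}{d\epsilon}$ through $\pi$ and then through $\tilde{s}$; for (ii) show $\delta(\tilde{X})$ is killed by $d(\pi\circ\tilde{s})$ along $\G_0$ using $\pi\circ\tilde{s}=\pi\circ\tilde{t}$ and the $\tilde{s}$-projectability of $\tilde{X}$; for (iii) observe that the difference of two transverse lifts is tangent to the fibre groupoids and transgresses the difference of the cocycles. The extra details you supply (the partition-of-unity gluing, the explicit $s$-projection computation, and the check that $\delta$ commutes with restriction to the subgroupoid $\G_0$ for cochains that are already $T\G_0$-valued — which is exactly the case where the paper's general warning about restriction does not apply) are all sound and merely make explicit what the paper leaves implicit.
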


\begin{proof}
To produce transverse vector fields one chooses any $\tilde{V}\in \mathfrak{X}({\tilde{M}})$ that is $\pi$-projectable to $\frac{\partial}{\partial \epsilon}$ and any $\tilde{Y}\in \mathfrak{X}(\mathcal{\tilde{\G}})$ that is $\tilde{s}$-projectable to $\tilde{V}$. For (ii), given $\tilde{Y}$, we have to show that for $(g, h)\in \G_{0}$, $\delta(\tilde{Y})(g, h)\in T_g\tilde{\G}$ is tangent to $\G_{0}$ i.e. that it is killed by the differential of $\pi\circ \tilde{s}$; but
\[ d\pi  (d\tilde{s} (\delta(\tilde{Y})(g, h)))= d\pi(d\tilde{t}(\tilde{Y}(h))- d\tilde{s}(\tilde{Y}(g)))\]
and using that $d\pi\circ d\tilde{t}= d\pi \circ d\tilde{s}$ and that $\tilde{X}$ is $\tilde{s}$-projectable, the desired vanishing follows. Finally, assume that $\tilde{Y}'$ is another transverse vector field. Then $Z:= \tilde{Y}'- \tilde{Y}$ is killed by $d\pi \circ d\tilde{s}$, hence the values $Z(g)$ are already tangent to the fiber groupoids; in particular, on $\G= \G_0$, one has $Z_{0}\in C^{1}_{\textrm{def}}(\G_0)$ and the cocycles associated to $\tilde{Y}'$ and $\tilde{Y}$ are related by $\xi_{0}^{'}- \xi_{0}= \delta(Z_0)$.
\end{proof}

\begin{dfn} The resulting cohomology class $[\xi_0]\in H^{2}_{\textrm{def}}(\G)$ is called the \textit{deformation class} associated to the deformation $\tilde{\G}$ of $\G$. 
\end{dfn}

\begin{rmk}\label{rmk-to-proceed-general}
We see that, in terms of the groupoids $\tilde{\G}$, the natural approach to rigidity from the previous subsections takes a more algebraic but simpler form: one searches for vector fields $\tilde{X}$ on $\tilde{\G}$ which are both transverse as well as multiplicative, then one uses their flows $\phi_{\tilde{X}}^{\epsilon}$ to obtain isomorphisms between $\G_0$ and $\G_{\epsilon}$. Of course, there is the usual issue regarding the domain of definition of the flows; and an even more serious issue is the existence of multiplicative transverse vector fields $\tilde{X}$. By the long exact sequence of Proposition \ref{the-LOS}, the last issue is equivalent to the existence of $[V]\in \Gamma(\nu)^{\textrm{inv}}$
which is in the kernel of the map $K$ there; we see that the vanishing of cohomology in degree $2$ greatly reduces this issue. 
\end{rmk}

\subsection{Families of groupoids and the variation map}\label{subsec-var_map}

Consider a family of Lie groupoids parametrized by a manifold $B$,
\[  \G\tto M \stackrel{\pi}{\rmap} B\]
(see the introduction). Using the induced groupoids $\G_b$ with $b\in B$, any curve $\gamma : I \rmap B$, induces a deformation $\gamma^{\ast}\G$ of $\G_{\gamma(0)}$. 

\begin{prop}
Let $\G \tto M \rmap B$ be a family of Lie groupoids, $b\in B$. Then, for any curve $\gamma: I\rmap B$ with $\gamma(0)=b$, the deformation class of $\gamma^{\ast}\G$ at time $0$ only depends on $\dot{\gamma}(0)$, and this defines a linear map
\[ \textrm{Var}_b: T_bB \longrightarrow H^2_{\mathrm{def}}(\G_b)\]
\end{prop}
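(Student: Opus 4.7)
The plan is to unpack Proposition \ref{prop: deformation class} for the pullback family $\tilde\G := \gamma^*\G \subset \G \times I$ and identify the essential data as a section along $\G_b$ that depends only on $v := \dot\gamma(0)$.

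A transverse vector field $\tilde X$ on $\tilde\G$ has the form $\tilde X(g,\epsilon) = (X(g,\epsilon), \partial/\partial\epsilon)$ where $X(g,\epsilon) \in T_g\G$ satisfies $d(\pi\circ s)X(g,\epsilon) = \dot\gamma(\epsilon)$ and $X(\cdot,\epsilon)$ is $s$-projectable. Since $\bar{\tilde m}((u,\epsilon),(v,\epsilon)) = (\bar m(u,v),\epsilon)$, a direct computation of $d\bar{\tilde m}$ on such vectors shows that the restriction of $\delta(\tilde X)$ to $\G_b^{(2)}$ takes the form
\[ \xi_0(g_1,g_2) = X_0(g_1) - d\bar m(X_0(g_1 g_2), X_0(g_2)), \]
where $X_0 := X(\cdot, 0)|_{\G_b}$ is a section of $T\G$ along $\G_b$ and $\bar m$ is the ambient division of $\G$. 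Applying $d(\pi\circ s)$ to $\xi_0$ and using $\pi\circ s\circ \bar m = \pi\circ s\circ \mathrm{pr}_2$ yields $-v+v = 0$, so $\xi_0$ takes values in $T\G_b$; it is closed in $C^*_\mathrm{def}(\G_b)$ by $\delta^2 = 0$ applied to any smooth extension of $X_0$ to $\G$.

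The key observation is that $X_0$ is determined purely by: (i) $X_0(g) \in T_g\G$ for $g \in \G_b$, (ii) $X_0$ is $s$-projectable, and (iii) $d(\pi\circ s)X_0 \equiv v$ on $\G_b$ -- conditions depending only on $v$, not on $\gamma$. If $X_0, X_0'$ are two such sections, then $Y := X_0 - X_0'$ has $d(\pi\circ s)Y = 0$ and is $s$-projectable, hence tangent to $\G_b$ and defining an element of $C^1_\mathrm{def}(\G_b)$; bilinearity of $d\bar m$ on the space of composable tangent pairs (preserved by $Y$ since the two choices share the same $s$-projection modulo tangent-to-$\G_b$) then gives $\xi_0 - \xi_0' = \delta_{\G_b}(Y)$. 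Hence $[\xi_0] \in H^2_\mathrm{def}(\G_b)$ depends only on $v$, which both proves well-definedness of $\mathrm{Var}_b(v)$ and identifies it with the deformation class of $\gamma^*\G$ at time $0$ for any curve with $\dot\gamma(0) = v$.

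Linearity is then immediate: given $v_1, v_2$ with sections $X_0^1, X_0^2$, the sum $X_0^1 + X_0^2$ satisfies (i)--(iii) for $v_1 + v_2$, and bilinearity of $d\bar m$ yields $\xi_0^{1+2} = \xi_0^1 + \xi_0^2$; scalar multiplication is analogous. The main subtlety I expect is the careful pullback calculation of $d\bar{\tilde m}$: one must verify that the $\partial/\partial\epsilon$ components appearing in $\tilde X(g_1 g_2, 0)$, $\tilde X(g_2,0)$ and $\tilde X(g_1,0)$ cancel cleanly against the ``$\epsilon$-component'' of $d\bar{\tilde m}$, leaving the formula above purely in terms of the ambient $\bar m$ of $\G$.
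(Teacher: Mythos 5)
Your argument is correct and is essentially the paper's: in both cases the class is represented by restricting $\delta$ of a lift to the central fibre, independence of all choices follows because the difference of two admissible lifts is killed by $d(\pi\circ s)$, hence tangent to $\G_b$ and a $1$-cochain whose coboundary accounts for the discrepancy (linearity of $d\bar m$ on $T(\G\times_s\G)$), and the comparison with the deformation class of $\gamma^{*}\G$ is the same $d\bar{\tilde{m}}$ computation in which the $\partial/\partial\epsilon$ components cancel. The only real difference is packaging: the paper extends $v$ to $\tilde{v}\in\mathfrak{X}(B)$ and takes a global $s$-projectable lift $\tilde{X}\in\mathfrak{X}(\G)$ with $d(\pi\circ s)\tilde{X}=\tilde{v}$, which is what forces their ``assume $\gamma$ is an embedding'' reduction, whereas you phrase everything through the restriction $X_0$ along $\G_b$ subject to your conditions (i)--(iii), avoiding both the extension of $v$ and the embedding step at no cost (this is close in spirit to the paper's alternative description of $\mathrm{Var}_b$ via equation \eqref{eq: alternative var}). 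The one loose end is the closedness claim via ``$\delta^2=0$ applied to any smooth extension of $X_0$'': for this the extension must be $s$-projectable to be a cochain of $\G$ (such extensions do exist, e.g.\ extend the $s$-projection of $X_0$ and correct by an $s$-vertical extension along the closed submanifold $\G_b$), or one can simply quote Proposition \ref{prop: deformation class}(ii) for $\gamma^{*}\G$, which you already invoke at the start.
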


\begin{proof}
Let $v = \dot{\gamma}(0)$ and choose an arbitrary extension $\tilde{v} \in \mathfrak{X}(B)$ of $v$, and an  $s$-projectable $\tilde{Y} \in \mathfrak{X}(\G)$ such that $d(\pi\circ s)(\tilde{Y})=\tilde{v}$. Define $[\tilde{\xi}_b] \in H^2_{\mathrm{def}}(\G_b)$ as the cohomology class of
\begin{equation}\label{eq: alternative var}
\tilde{\xi_b} = -(\delta\tilde{Y})\big{\vert}_{\G_b^{(2)}}.
\end{equation}
We will show that:
\begin{itemize}
\item The cohomology class $[\tilde{\xi}_b] \in H^2_{\mathrm{def}}(\G_b)$ does not depend on the choices of $\tilde{v}$ and $\tilde{Y}$;
\item Under the canonical identification of $(\gamma^\ast\G)_0$ with $\G_b$, the deformation class $[\xi_0]$ coincides with $[\tilde{\xi}_b]$. 
\end{itemize}

For the first statement, the proof that the class does not depend on the lift $\tilde{X}$ is identical to the proof of proposition \ref{prop: deformation class} (iii). To show that $[\tilde{\xi}_b]$ does not depend on the extension $\tilde{v}$, let $\tilde{v}'$ be another extension of $v$ and choose a lift $\tilde{Y}'$ of $\tilde{v}'$ such that $\tilde{Y}(g) = \tilde{Y}'(g)$ for all $g \in \G_b$. Then, for this choice of lift it follows from the linearity of $\delta$ that
\[\delta\tilde{Y}(g,h) = \delta\tilde{Y}'(g,h) \text{ for all } g,h\in \G_b^{(2)}.\]

Next we prove the second statement. Since the statement is local (in $\epsilon$), we can assume without loss of generality the $\gamma: I \rmap B$ is an embedding. We choose $\tilde{v}$ to coincide with $\dot{\gamma}$ on all points of the curve, and we take a lift $\tilde{Y}$ of $\tilde{v}$ as above. Then the vector field
\[Y_{(\epsilon, g)} = \frac{\partial}{\partial \epsilon} + \tilde{Y}_g \in T_{(\epsilon, g)}\gamma^\ast\G \]
is transverse, and thus $\xi_0$ is the restriction to $(\gamma^\ast\G)_0^{(2)}$ of $-\delta Y$. 

However, since the multiplication and inversion on $\gamma^\ast\G$ are given by
\[m_{\gamma}(\epsilon, g),(\epsilon, h) = (\epsilon, gh), \quad (\epsilon, g)^{-1} = (\epsilon, g^{-1})\]
we obtain that 
\[d\bar{m}_{\gamma}(Y_{(0,gh)}, Y_{(0,h)}) = \frac{\partial}{\partial \epsilon} + d\bar{m}(\tilde{Y}_{gh},\tilde{Y}_h),\]
from where it follows that (for these choices of lift, and transverse vector field) $\tilde{\xi}_b = \xi_0$.
\end{proof}

\begin{rmk}
Equation \ref{eq: alternative var} in the proof above gives an alternative description of the variation map.
\end{rmk}

\section{The proper case}

\begin{thm}\label{vanish} Let $\G$ be a proper Lie groupoid. Then 
\[ H_{\mathrm{def}}^0(\G)\cong \Gamma(\mathfrak{i})^{\mathrm{inv}}, \ \ H_{\mathrm{def}}^1(\G)\cong \Gamma(\nu)^{\mathrm{inv}}, \ \ \textrm{and} \ \
H_{\mathrm{def}}^k(\G)= 0 \ \textrm{for\ all}\ k\geq 2.\]
\end{thm}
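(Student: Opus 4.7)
The $H^0$ identification is already established in Proposition \ref{degree-0} without any properness hypothesis, so the task reduces to proving the vanishing in degrees $\geq 2$ and the identification in degree $1$. My plan is to handle the vanishing directly by an explicit chain contraction built from a Haar system, and then deduce the degree~$1$ statement from the long exact sequence of Proposition \ref{the-LOS}.

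For the vanishing, I would fix a cutoff function $c: M \to [0,1]$ and a left-invariant Haar system $\{\mu^x\}_{x \in M}$ on $\G$; both exist by properness. For each $k \geq 1$, define $h: C^{k}_{\mathrm{def}}(\G) \to C^{k-1}_{\mathrm{def}}(\G)$ by averaging over the last argument, schematically
\[
(h\xi)(g_1,\ldots,g_{k-1}) := \int_{t^{-1}(s(g_{k-1}))} c(s(g))\, \xi(g_1,\ldots,g_{k-1}, g)\, d\mu^{s(g_{k-1})}(g) \in T_{g_1}\G,
\]
with an obvious modification in degree $1$. Since the integrand lies in the fixed vector space $T_{g_1}\G$, the integral is well-defined; since $ds$ is linear and $s$-projectability of $\xi$ is independence from $g_1$, one checks directly that $h\xi \in C^{k-1}_{\mathrm{def}}(\G)$.

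The core step is then to verify $\delta h + h \delta = \mathrm{id}$ in degrees $\geq 2$. This is patterned on the classical chain-contraction proof of the vanishing of differentiable cohomology of proper groupoids with coefficients in a smooth representation (as in Crainic's van Est paper). Expanding $\delta(h\xi)$ and $h(\delta \xi)$, the interior face terms cancel in pairs; the term of $h(\delta \xi)$ coming from the boundary face $g_k g$ in $\delta\xi$ is converted via a change of variables $g \mapsto g_k^{-1} g$, using left-invariance of $\mu^{s(g_{k-1})}$; and the leading $d\bar{m}$ term, combined with the cutoff normalization $\int c(s(g))\, d\mu^x = 1$, reconstitutes $\xi(g_1,\ldots,g_k)$. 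I expect the main technical obstacle to be bookkeeping in this last computation: the integrands are $T\G$-valued, the base point $g_1$ controls the ambient tangent space, and one has to track how $d\bar{m}$ interacts with the change of variables. This is manageable because $d\bar{m}$ is fibrewise linear and the integration takes place in a fixed fibre of $T\G \to \G$.

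Finally, the same averaging argument applied to $C^*(\G, \mathfrak{i})$, where $\mathfrak{i}$-valued cochains are interpreted via the inclusion $\mathfrak{i} \hookrightarrow A$ (so that smoothness makes sense even in the non-regular case), yields $H^k(\G, \mathfrak{i}) = 0$ for all $k \geq 1$. Inserting the vanishings $H^1(\G,\mathfrak{i}) = H^2(\G,\mathfrak{i}) = 0$ into the long exact sequence of Proposition \ref{the-LOS} collapses it to
\[
0 \longrightarrow 0 \longrightarrow H^1_{\mathrm{def}}(\G) \stackrel{\pi}{\longrightarrow} \Gamma(\nu)^{\mathrm{inv}} \longrightarrow 0,
\]
so $\pi$ is an isomorphism, completing the proof.
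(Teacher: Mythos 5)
Your proposal is correct and follows essentially the same route as the paper: averaging over the last argument with a Haar system and cutoff to kill $H^k_{\mathrm{def}}$ for $k\geq 2$, running the identical argument on $C^*(\G,\mathfrak{i})$, and then reading off the degree-$1$ statement from the exact sequence of Proposition \ref{the-LOS} (degree $0$ being Proposition \ref{degree-0}). The only cosmetic differences are that the paper transgresses cocycles directly (substituting $\delta c=0$ mid-computation) rather than writing out the full homotopy identity, and that your $h$ needs the sign $(-1)^k$ in degree $k$ for $\delta h + h\delta = \mathrm{id}$ to come out on the nose rather than $\delta h - h\delta = (-1)^k\,\mathrm{id}$.
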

\begin{proof}
We proceed similarly to the proof of the vanishing of differentiable cohomology from \cite{VanEst}. As there we appeal to the fact that for any proper Lie groupoid $\G$ over $M$ one can use a Haar system and a cut-off function to construct a family of linear maps 
\[ \int_{t^{-1}(x)}: C^\infty(t^{-1}(x))\rmap \mathbb{R},\ \ f\mapsto \int_{t^{-1}(x)} f\stackrel{\text{notation}}{=}\int_x f(g)dg\] which depends smoothly on $x\in M$, (i.e., applying them to a smooth function on $\G$ produces a smooth function on $M$), is normalized (it sends the constant function 1 on $\G$ to 1 on $M$) and left-invariant (the integral does not change under composition with $L_h$, for any $h\in\G$).

For $k\geq 2$, let $c\in C_\text{def}^k(\G)$ be a cocycle and define a map $X:\G^{k-1}\longrightarrow T\G$ by 
\begin{equation}\label{transgr-form} 
 X(g_1,\ldots,g_{k-1})= (-1)^k\int_{s(g_{k-1})}c(g_1,\ldots,g_{k-1},h)dh.
\end{equation}
The map $X$ is an element of $C_\text{def}^{k-1}(\G)$ and we will now show that $\delta X = c$:

\begin{align*}\delta X(g_1,\ldots,g_k) & = - d\bar{m}(X(g_1g_2,\ldots,g_k),X(g_2,\ldots, g_k)) \\
& +\sum_{i=2}^{k-1}(-1)^{i}X(g_1,\ldots,g_ig_{i+1},\ldots,g_k) \\
& + (-1)^kX(g_1,\ldots,g_{k-1})\\
& = (-1)^{k} \int_{s(g_k)}-d\bar{m}(c(g_1g_2,\ldots,g_k,h),c(g_2,\ldots, g_k,h)) \\
& +\sum_{i=2}^{k-1}(-1)^{i}c(g_1,\ldots,g_ig_{i+1},\ldots,g_k,h) dh \\
& + \int_{s(g_{k-1})}c(g_1,\ldots,g_{k-1},h) dh \\
& \stackrel{\delta c=0}{=} \int_{s(g_k)} - c(g_1,\ldots,g_{k-1},g_kh)+ c(g_1,\ldots, g_k) dh \\
& + \int_{s(g_{k-1})}c(g_1,\ldots,g_{k-1},h) dh \\
& \stackrel{\text{left-inv.}}{=} c(g_1,\ldots,g_k).
\end{align*}

Note that exactly the same formulas applied to cocycles in $C^{*}(\G, \mathfrak{i})$ imply that $H^k(\G, \mathfrak{i})= 0$ for all $k\geq 0$ - and this is basically the proof of the vanishing of differentiable cohomology with coefficients in $\mathfrak{i}$ from \cite{VanEst}, with the extra remark that the integrals still define smooth cochains: indeed, if $u\in C^{k}(\G, \mathfrak{i})$, it is smooth as a map 
\[ (g_1, \ldots, g_k) \mapsto u(g_1, \ldots, g_k)\in A_{t(g_1)}\]
hence so is 
\[ (g_1, \ldots, g_{k-1}) \mapsto \int_{s(g_{k-1})}u(g_1,\ldots,g_{k-1},h)dh \in A_{t(g_1)} \]
($t(g_1)$ stays constant in the integration process). The vanishing of $H^{*}(\G, \mathfrak{i})$ in degrees $1$ and $2$ combined with the exact sequence from Proposition \ref{the-LOS}, implies the desired isomorphism 
$H_{\textrm{def}}^1(\G)\cong \Gamma(\nu)^{\textrm{inv}}$. The isomorphism $H_{\textrm{def}}^0(\G)\cong \Gamma(\mathfrak{i})^{\textrm{inv}}$ holds by Proposition \ref{degree-0}.
\end{proof}

\begin{rmk}
Alternatively, the vanishing part of Theorem \ref{vanish} can be immediately obtained by using 
Theorem 3.35 from \cite{camilo}, after the isomorphism of the deformation cohomology and the cohomology with values in the adjoint representation is established (Lemma \ref{lemma-iso-ad}).
\end{rmk}

\subsection{Some consequences (relevant for rigidity results)} 
Here are some consequences of the previous result (and its proof).

\begin{crl}\label{crl-vanishing-proper} If $\tilde{\G}= \{\G_{\epsilon}: \epsilon\in I\}$ is a proper family of groupoids, $k\geq 2$ and 
and $u^{\epsilon}\in C^{k}_{\mathrm{def}}(\G_{\epsilon})$ is a smooth family of deformation cocyles, then the 
equations
\[ \delta(X^{\epsilon})= u^{\epsilon} \]
admit solutions $X^{\epsilon}\in C^{k-1}_{\mathrm{def}}(\G_\epsilon)$ that are smooth with respect to $\epsilon$.
\end{crl}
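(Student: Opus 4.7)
The plan is to directly adapt the transgression formula \eqref{transgr-form} used in the proof of Theorem \ref{vanish} to the family setting. Since $\tilde{\G}$ is itself a proper Lie groupoid (by hypothesis), it admits a Haar system together with a cut-off function, giving a family of integrals $\int_{\tilde{t}^{-1}(x)}$ that is smooth in $x\in \tilde{M}$, normalized, and left-invariant.

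The first thing I would check is that this Haar system on $\tilde{\G}$ restricts fiberwise to Haar systems on each $\G_{\epsilon}$. This is automatic: because $\pi \circ \tilde{s} = \pi \circ \tilde{t}$, every $\tilde{t}$-fiber of $\tilde{\G}$ is contained in a single $\G_{\epsilon}$, so $\tilde{t}^{-1}(x) = t_{\epsilon}^{-1}(x)$ whenever $x \in M_{\epsilon}$. Thus the integrals $\int_x$ defined for $\tilde{\G}$ agree, for points $x$ over $\epsilon$, with integrals against a Haar system on $\G_{\epsilon}$, and these vary smoothly as $x$ moves transversely to the fibers.

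With this in hand, I would define the proposed primitives by exactly the formula \eqref{transgr-form}, applied parameterwise:
\[
X^{\epsilon}(g_1,\ldots,g_{k-1}) = (-1)^k \int_{s(g_{k-1})} u^{\epsilon}(g_1,\ldots,g_{k-1},h)\, dh.
\]
For each fixed $\epsilon$, the computation in the proof of Theorem \ref{vanish} applies verbatim (the Haar system being left-invariant and the cocycle condition $\delta u^{\epsilon} = 0$ being used identically), giving $\delta(X^{\epsilon}) = u^{\epsilon}$ in $C^{k}_{\mathrm{def}}(\G_{\epsilon})$. The smoothness in $\epsilon$ then reduces to smoothness of integration with parameters: the integrand $u^{\epsilon}(g_1,\ldots,g_{k-1},h)$ is, by hypothesis, jointly smooth as a section of $s^{*}T\tilde{\G}$ over $\tilde{\G}^{(k)}$ (this is what it means for $\{u^{\epsilon}\}$ to be a smooth family of deformation cochains), and the integration is against the smooth Haar system on $\tilde{\G}$.

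The only subtlety — and the step I would treat most carefully — is the precise interpretation of ``smooth family of deformation cocycles''. The cochains $u^{\epsilon}$ take values in $T\G_{\epsilon}$, not in $T\tilde{\G}$, so one should view $\{u^{\epsilon}\}$ as a map $\tilde{\G}^{(k)} \to T\tilde{\G}$ that is tangent to the fibers of $\pi \circ \tilde{s}$. Once this is made explicit, the integration clearly preserves fiberwise-tangency (since the fibers $\tilde{t}^{-1}(x)$ lie in a single $\G_{\epsilon}$ and the integrand is a section of a vector bundle over $\tilde{\G}^{(k)}$ with values in a constant fiber $T_{g_1}\G_{\epsilon}$ during the integration in $h$), and the resulting $X^{\epsilon}$ is again a smooth family of elements in $C^{k-1}_{\mathrm{def}}(\G_{\epsilon})$.
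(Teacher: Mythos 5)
Your proposal is correct and is essentially the paper's own argument: the paper disposes of this corollary in one line by applying the vanishing Theorem \ref{vanish} to the proper groupoid $\tilde{\G}$ itself, and what you have written is exactly the unpacking of that line — the Haar system of $\tilde{\G}$ restricts to the fibers $\G_{\epsilon}$ because $\pi\circ\tilde{s}=\pi\circ\tilde{t}$ forces each $\tilde{t}$-fiber to lie in a single $\G_{\epsilon}$, and the transgression formula \eqref{transgr-form} run on $\tilde{\G}$ produces a primitive tangent to the fibers whose restrictions are the desired smooth family $X^{\epsilon}$. Your explicit attention to the fiberwise restriction of the Haar system and to the interpretation of ``smooth family'' supplies precisely the details the paper leaves implicit.
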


\begin{proof} This follows from the vanishing part of the theorem, applied to the full groupoid $\tilde{\G}$. 
\end{proof}

\begin{rmk}\label{rmk-imp-proper} It is important to realise that assuming only that each $\G_{\epsilon}$ is proper (which ensures that the equation for each $\epsilon$ has solution) does not suffice. One can actually find families of compact groupoids for which the previous Corollary fails (because, as a family, it is not proper).
\end{rmk}

In order to handle general deformations, we know from the previous section (see e.g. Remark \ref{rmk-to-proceed-general}) that it is important to ensure the existence of multiplicative transverse vector fields.

\begin{lemma}\label{exist-mult-transv1} Any proper family $\G\tto M\stackrel{\pi}{\rmap} I$ of Lie groupoids parametrized by an interval $I$, admits a multiplicative vector field $X\in \mathfrak{X}(\G)$ that is transverse (in the sense of Definition \ref{dfn-transversal}).
\end{lemma}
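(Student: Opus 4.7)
The plan is to start from an arbitrary transverse vector field $X_0$ and kill its obstruction to multiplicativity using the vanishing theorem, while keeping transversality. The existence of $X_0$ follows from the usual partition-of-unity argument in two steps: first lift $\tfrac{d}{d\epsilon}\in\mathfrak{X}(I)$ along the submersion $\pi$ to some $V_0\in\mathfrak{X}(M)$, and then lift $V_0$ along the submersion $s$ to $X_0\in\mathfrak{X}(\G)$.

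The key observation that makes the argument work is that, although $\delta X_0\in C^{2}_{\mathrm{def}}(\G)$ is typically nonzero, its $s$-projection is annihilated by $d\pi$. Indeed, formula \eqref{s-projection} yields
\[s_{\delta X_0}(g) = -dt(X_0(g)) + V_0(t(g)),\]
and since $\pi\circ t = \pi\circ s$ and $d\pi(V_0) = \tfrac{d}{d\epsilon}$, both summands $\pi$-project to $\tfrac{d}{d\epsilon}$ at $\pi(t(g))$, and cancel.

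By properness of $\G$, Theorem \ref{vanish} gives $H^{2}_{\mathrm{def}}(\G) = 0$, and the explicit transgression \eqref{transgr-form} from its proof furnishes a primitive
\[Z(g) := \int_{t^{-1}(s(g))} (\delta X_0)(g, h)\, dh \in C^{1}_{\mathrm{def}}(\G), \qquad \delta Z = \delta X_0,\]
built from a left-invariant Haar system. Pulling $ds$ inside the integral, the $s$-projection of $Z$ at $x\in M$ equals $\int_{x} s_{\delta X_0}(h)\, dh$, so by the previous cancellation and normalization of the Haar system one obtains $d\pi\circ ds(Z) = 0$. Setting $X := X_0 - Z$ then yields a vector field that is multiplicative (since $\delta X = 0$) and still transverse (since $d\pi\circ ds(X) = d\pi(V_0) - 0 = \tfrac{d}{d\epsilon}$).

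I expect the only delicate point to be the cancellation $d\pi(s_{\delta X_0}) = 0$ in the second paragraph: without it, applying the vanishing theorem to $\delta X_0$ would in general produce a primitive whose correction to $X_0$ destroys transversality, so this identity is exactly what makes multiplicativity and transversality compatible.
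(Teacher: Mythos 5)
Your proof is correct, and it takes a genuinely more direct route than the paper's. The paper first reduces to degree $\leq 1$: invoking $H^1_{\mathrm{def}}(\G)\cong\Gamma(\nu)^{\mathrm{inv}}$ from Theorem \ref{vanish} (which rests on the vanishing of $H^1$ and $H^2$ with coefficients in $\mathfrak{i}$ and the sequence of Proposition \ref{the-LOS}), it suffices to produce $V\in\mathfrak{X}(M)$ that is $\pi$-projectable to $\frac{\partial}{\partial\epsilon}$ and whose class in $\Gamma(\nu)$ is invariant; this is done by two averaging steps against the Haar system, $V_x=\int_x dt(X_a)\,da$ and the $(s,t)$-lift $X'_g=\int_{t(g)}d\bar{m}(X(ga),X(a))\,da$, after which the multiplicative representative differs from an $(s,t)$-lift of $V$ only by a correction whose base contribution lies in $\mathrm{Im}(\rho)$ and is therefore killed by $d\pi$ (a point the paper leaves implicit). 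You instead stay entirely at the cochain level: fix one transverse $s$-lift $X_0$, transgress the exact cocycle $\delta X_0$ by the explicit homotopy \eqref{transgr-form}, and observe — correctly identified by you as the crux — that $d\pi$ annihilates $s_{\delta X_0}(g)=-dt(X_0(g))+V_0(t(g))$ because $\pi\circ s=\pi\circ t$, so the primitive $Z$ satisfies $d\pi\circ ds(Z)=0$ and $X_0-Z$ is simultaneously multiplicative (being a $1$-cocycle, by the characterization via Corollary \ref{crl: m bar}) and still transverse. Both arguments use exactly the same input — properness of the total groupoid, Haar system plus cutoff, as stressed in Remark \ref{rmk-imp-proper} — but yours needs only a single averaging step and makes the compatibility of multiplicativity with transversality explicit, whereas the paper's formulation through $\Gamma(\nu)^{\mathrm{inv}}$ produces along the way the averaging formula (\ref{change-X-int}) that it then reuses verbatim in the relative Lemma \ref{exist-mult-transv2}; your construction would also adapt to that relative setting (invariance of $N$ forces $Z$ to vanish on $\G|_{N}$ when $X_0$ extends a multiplicative $X_N$), but the extension and $t$-projectability steps there would still have to be carried out as in the paper.
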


\begin{proof}
By the isomorphism $H^{1}_{\textrm{def}}(\G)\cong \Gamma(\nu)^{\textrm{inv}}$ it suffices to show that one can find 
$V\in \mathfrak{X}(M)$ which is $\pi$-projectable to $\frac{\partial}{\partial \epsilon}$ and with the property that the induced $[V]\in \Gamma(\nu)$ is invariant. Start with any transverse vector field $X\in \mathfrak{X}(\G)$. We define the vector field $V$ on $M$ by
\[ V_{x}:= \int_{x} dt(X_a) da .\]
We have
\[ d\pi (V_x)= \int_x d\pi(dt(X_a)) da= \int_x d\pi(ds(X_a)) da= \int_x \frac{\partial}{\partial \epsilon} da= \frac{\partial}{\partial \epsilon} .\]
To check that $[V]$ is invariant, it suffices to find a vector field $X^{'}$ on $\G$ which is both $s$ and $t$-projectable to $V$. We claim that 
\begin{equation}\label{change-X-int} 
X_{g}^{'}:= \int_{s(g)} d \bar{m}(X(ga), X(a)) da \in T_g\G 
\end{equation}
does the job. Indeed,
\[ ds(X_{g}^{'})= \int_{s(g)} ds(d \bar{m}(X(ga), X(a))) da= \int_{s(g)} dt(X(a)) da= V_{s(g)}\]
and a similar computation combined with the invariance of the integral shows that also $dt(X_{g}^{'})= V_{t(g)}$. 
\end{proof}

Moreover, in order to handle deformations semi-locally (relative versions), one needs a relative version of the previous lemma. 

\begin{lemma}\label{exist-mult-transv2}
With the same notations as in the previous lemma, if $N\subset M$ is an invariant submanifold so that $\pi|_{N}: N\rmap I$ is still a submersion and $X_{N} \in \mathfrak{X}(\G|_{N})$ is a given multiplicative transverse vector field, then $X$ can be chosen so that it extends $X_{N}$. 
\end{lemma}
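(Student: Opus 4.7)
The plan is to mimic the non-relative argument of Lemma \ref{exist-mult-transv1}, but to produce multiplicativity via the transgression formula from the proof of Theorem \ref{vanish} in a way that leaves $X_N$ untouched on $\G|_N$.

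First I would extend $X_N$ to \emph{some} transverse (but not yet multiplicative) vector field $\tilde{X} \in \mathfrak{X}(\G)$. Since $\pi|_N$ is a submersion, the $s$-projection $V_N \in \mathfrak{X}(N)$ of $X_N$ extends to a vector field $\tilde V\in \mathfrak{X}(M)$ with $d\pi(\tilde V)= \partial/\partial\epsilon$ (use a tubular neighbourhood of $N$ in $M$ together with a partition of unity, patching with an arbitrary local lift of $\partial/\partial\epsilon$ far from $N$). Then, using any Ehresmann connection for the submersion $s:\G\rmap M$ to get an initial $s$-lift of $\tilde V$, and correcting by an $s$-vertical vector field obtained by extending $X_N - \sigma(\tilde V)|_{\G|_N}$ from a tubular neighbourhood of $\G|_N\subset\G$ via partition of unity, I obtain the desired $\tilde{X}$ with $ds(\tilde X)=\tilde V$ and $\tilde X|_{\G|_N}=X_N$.

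Next, I consider the cocycle $c:=\delta\tilde{X}\in C^2_{\textrm{def}}(\G)$. Because $X_N$ is already multiplicative on $\G|_N$, for any $(g,h)\in \G|_N^{(2)}$ one has $c(g,h)=\delta X_N(g,h)=0$, so $c$ vanishes on $\G|_N^{(2)}$. I then apply the transgression formula from the proof of Theorem \ref{vanish}: set
\[ Y(g):=\int_{s(g)} c(g,h)\,dh\in T_g\G,\qquad Y\in C^1_{\textrm{def}}(\G).\]
The computation in that proof gives $\delta Y=c=\delta\tilde X$. Crucially, for $g\in\G|_N$ invariance of $N$ forces $t^{-1}(s(g))\subset\G|_N$, so the integrand is of the form $c(g,h)$ with $(g,h)\in\G|_N^{(2)}$, hence identically zero. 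Thus $Y$ vanishes on $\G|_N$.

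Finally, I set $X:=\tilde{X}-Y$. Then $\delta X=\delta\tilde X-\delta Y=0$, so $X$ is multiplicative; and $X|_{\G|_N}=X_N-0=X_N$. Transversality of $X$ is a bookkeeping check: applying the formula \eqref{s-projection} for the $s$-projection of a coboundary shows that $ds(Y)$ differs from $\tilde V$ by the vector field $V_x:=\int_x dt(\tilde X_a)\,da$ produced in the proof of Lemma \ref{exist-mult-transv1}, and both $\tilde V$ and $V$ project under $d\pi$ to $\partial/\partial\epsilon$; hence $ds(X)$ does as well. The only genuinely non-formal step is the extension problem in the first paragraph, but this is a standard submersion-plus-tubular-neighbourhood argument; all subsequent computations are dictated by the integration formulas already used in the proofs of Theorem \ref{vanish} and Lemma \ref{exist-mult-transv1}.
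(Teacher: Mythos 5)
Your proof is correct, and it reaches the conclusion by a more compressed route than the paper's. The paper proceeds in stages: after extending $X_N$ to an $s$-projectable extension (your first paragraph; the paper does it by correcting an arbitrary $s$-lift by a right-translated section of $t^*A$), it first averages over $t$-fibres via the formula (\ref{change-X-int}) to make the field $(s,t)$-projectable without changing it on $\G|_{N}$ (using multiplicativity of $X_N$), and only then transgresses the remaining defect, which by construction lies in the subcomplex $C^*(\G,\mathfrak{i})$, by a primitive $\eta\in C^1(\G,\mathfrak{i})$ vanishing on $\G|_{N}$, so that the final correction $r_g(\eta(g))$ leaves the base field untouched. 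You instead transgress the full coboundary $\delta\tilde{X}\in C^2_{\mathrm{def}}(\G)$ in a single step using (\ref{transgr-form}); this is legitimate, since a coboundary is in particular a cocycle and the Haar-system computation in the proof of Theorem \ref{vanish} applies verbatim, and the vanishing of the primitive $Y$ on $\G|_{N}$ follows from invariance of $N$ exactly as the vanishing of $\eta$ does in the paper. The price of correcting in the full complex is that the base field changes, so transversality must be rechecked a posteriori; your check is right, and it is worth noting that the $s$-projection of $\tilde{X}-Y$ comes out to be precisely the averaged field $x\mapsto\int_x dt(\tilde{X}_a)\,da$ that the paper constructs by hand in Lemma \ref{exist-mult-transv1}, so the two arguments perform the same averaging, packaged once through the differential rather than in separate steps. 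The extension step in your first paragraph is handled at the same level of rigour as the paper's own extension step, so I see no gap.
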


\begin{proof}
The proof is just a careful analysis of the previous proofs; here are the details. Start with $X_{N}$ and its base vector field $V_N$. Choose an extension $V$ of $V_{N}$ that is $\pi$-projectable to $\frac{\partial}{\partial \epsilon}$ and choose $X\in \mathfrak{X}(\G)$ which is $s$-projectable to $V$. We modify $X$ in steps. First we make sure that $X$ also extends $X_{N}$: since $X_{N}(g)- X(g)$ is killed by $ds$, it is of type $r_g(\eta(g))$ with $\eta(g)\in A_{t(g)}$ defined for $g\in \G|_{N}$; choosing a smooth extension $\tilde{\eta}\in \Gamma(\G, t^*A)$ of $\eta$ one then replaces $X$ by $g\mapsto X(g)+ r_g(\tilde{\eta}(g))$.

Hence we may assume that $X$ extends $X_{N}$ and is $s$-projectable. One can also arrange it so that it is also $t$-projectable: the replacement is the one given by formula (\ref{change-X-int}) - indeed, when applied to $g\in \G|_{N}$, the integration variable $a$ stays in $\G|_{N}$, hence one deals with 
\[ \int_{t(g)} d \bar{m}(X_N(ga), X_N(a)) da= \int_{t(g)} X_N(g) da= X_{N}(g)   \]
where the multiplicativity of $X_{N}$ was used. 

Hence we may assume that $X$ extends $X_{N}$ and is $s$ and $t$-projectable to some $V$. Then the expressions
\[ d\bar{m}(X(gh), X(h))- X(g) \in T_g\G\]
are killed by $ds$ and $dt$ hence they arise by right translations with respect to $g$ of some elements
\[ \zeta(g, h)\in \mathfrak{i}_{t(g)} .\]
As before one can find $\eta\in C^{1}(\G, \mathfrak{i})$ so that $\zeta= \delta(\eta)$. However, the integral formula for $\eta$ (of type (\ref{transgr-form})) shows that $\eta$ vanishes on elements of $\G|_{N}$ (because the multiplicativity of $X_{N}$ implies that $\zeta$ does). We can now change $X$ to 
\[ X'(g)= X(g)+ r_g(\eta(g)) \]
which has the same properties as $X'$ and is also multiplicative; the multiplicativity is implicit in the last part of the proof of Theorem \ref{vanish} but also follows easily by a direct computation. 
\end{proof}

\begin{rmk}\label{rmk-mult-lift-families} Note that the last two lemmas apply (basically with no changes in the proof) to general proper families $\G\tto M\stackrel{\pi}{\rmap} B$ parametrized by a manifold $B$; the conclusion is that any vector field $W$ on $B$ admits a multiplicative lift $X$ to $\G$ (lift in the sense that $X$ is projectable, via $\pi \circ s= \pi\circ t$ to $W$). 

Moreover, using the explicit argument from the last proof (applied to $N= \emptyset$ and with $I$ replaced by $B$) shows that the choice of $X$ can be made smooth in $W$. Indeed, all the steps involved to produce $X$ are given by explicit formulas that are clearly smooth, except maybe for the starting step which starts with the choice of a vector field $V$ on $M$ that is $\pi$-projectable to $W$ and $X\in \mathfrak{X}(\G)$ that is $s$-projectable to $V$. But that can be done smoothly as well, by fixing smooth splittings of $d\pi$ and $ds$ (i.e. Ehresmann connections on $\pi$ and $s$). 

As analogue of Lemma \ref{exist-mult-transv2} one can start with any sub-family $\G|_{N}\tto N\stackrel{\pi}{\rmap} B_0$ with $B_0\subset B$, $N\subset M$ submanifolds and $N$ invariant and then, if 
$W\in \mathfrak{X}(B)$ is tangent to $B_0$ and one is given a multiplicative lift $X_{N}$ of $W|_{B_0}$ to $\G|_{N}$, then one can find a multiplicative lift $X\in \mathfrak{X}(\G)$ that extends $X_{N}$.
\end{rmk}

\section{Applications to rigidity}

As explained in the previous section, the main idea to obtain rigidity results is to use the vanishing of the deformation cohomology and the flows of the resulting vector fields. Here are some immediate illustrations of this idea. We start with the cases of $(s, t)$-constant and $s$-constant deformations.

\begin{thm} \label{thm: compact rigid} The following hold true:
\begin{itemize}
\item Any $(s, t)$-constant deformation of a proper Lie groupoid is trivial.
\item Any $s$-constant deformation of a compact Lie groupoid is trivial.
\end{itemize}
\end{thm}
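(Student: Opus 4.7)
Both statements follow the same scheme: produce a primitive $X^\epsilon$ of the deformation cocycle $\xi_\epsilon$ that is smooth in $\epsilon$, and then integrate its flow to an equivalence with the constant deformation. The two ingredients needed are (1) a smooth transgression, which comes from the vanishing theorem (Theorem \ref{vanish}) and its parametric version (Corollary \ref{crl-vanishing-proper}), and (2) completeness of the resulting flow on $\G$, which comes from Proposition \ref{rigidity-lemma} together with the properness clause of Lemma \ref{lemma-on-flows}.

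For the first statement, let $\{\G_\epsilon\}$ be an $(s,t)$-constant deformation of a proper Lie groupoid $\G$. Since both $s$ and $t$ are independent of $\epsilon$, applying $ds$ and $dt$ to the defining formula $\xi_\epsilon(g,h) = \tfrac{d}{d\eta}|_{\eta=\epsilon}\bar{m}_\eta(gh,h)$ shows that $\xi_\epsilon$ takes values in $\ker(ds)\cap\ker(dt)$; equivalently, $\xi_\epsilon = r(u_\epsilon)$ for a smooth family of cocycles $u_\epsilon \in C^2(\G,\mathfrak{i})$. The vanishing of $H^2(\G,\mathfrak{i})$ established in the proof of Theorem \ref{vanish} is witnessed by the explicit Haar-integration formula \eqref{transgr-form}, which depends smoothly on parameters, so it produces a smooth family $v_\epsilon \in C^1(\G,\mathfrak{i})$ with $\delta v_\epsilon = -u_\epsilon$. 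Setting $X^\epsilon = r(v_\epsilon)$ gives a smooth time-dependent vector field $\tilde{X}$ on $\G$ with $\delta X^\epsilon = -\xi_\epsilon$ and, crucially, $ds(X^\epsilon) = dt(X^\epsilon) = 0$. By Proposition \ref{rigidity-lemma}, $\phi^{t,s}_{\tilde{X}}$ is wherever defined a groupoid morphism $\G_s \to \G_t$ covering the flow of $V^\epsilon = ds(X^\epsilon)=0$; but that base flow is the identity on $M$, so the properness part of Lemma \ref{lemma-on-flows} (applied in a parametric way) guarantees that the flow is defined for all $\epsilon$, yielding groupoid isomorphisms $\phi^{\epsilon,0}_{\tilde{X}} \colon \G \to \G_\epsilon$.

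For the second statement, let $\{\G_\epsilon\}$ be an $s$-constant deformation of a compact Lie groupoid $\G$. I first observe that the associated family groupoid $\tilde{\G} = \G \times I \tto M\times I$ is proper: any compact $K \subset (M\times I)^2$ is contained in $(M\times J)^2$ for some compact $J \subset I$, and its preimage under $(\tilde{s},\tilde{t})$ lies in the compact set $\G \times J$. Corollary \ref{crl-vanishing-proper} then produces a smooth family $X^\epsilon \in C^1_{\mathrm{def}}(\G)$ with $\delta X^\epsilon = -\xi_\epsilon$. Applying Proposition \ref{rigidity-lemma}, $\phi^{t,s}_{\tilde{X}}$ gives local groupoid morphisms $\G_s \to \G_t$; the base field $V^\epsilon$ is a time-dependent vector field on the compact manifold $M$, so its flow $\phi^{t,s}_{\tilde{V}}$ is defined for all $s,t \in I$, and the properness clause of Lemma \ref{lemma-on-flows} then forces $\phi^{t,s}_{\tilde{X}}$ to be defined for all such $s,t$ as well. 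The maps $\phi^{\epsilon,0}_{\tilde{X}} \colon \G \to \G_\epsilon$ assemble into an equivalence with the constant deformation.

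The only non-routine point in this plan is the completeness of the flows: Proposition \ref{rigidity-lemma} gives only local morphisms in general, and one must rule out the flow escaping to infinity before reaching time $\epsilon$. In the $(s,t)$-constant case this is finessed by choosing $X^\epsilon$ so that $V^\epsilon = 0$, which makes the base flow trivially complete and leaves only the properness of $\G$ to do the work. In the $s$-constant case the base flow cannot be killed, and completeness is instead bought by the compactness of $M$ (for the base flow) together with the compactness of $\G$ (for the properness input in Lemma \ref{lemma-on-flows}); this explains why the second statement requires the full strength of compactness, not merely properness, of $\G$.
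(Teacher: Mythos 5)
Your proposal is correct and follows essentially the same route as the paper: smooth transgression of the deformation cocycles via Corollary \ref{crl-vanishing-proper} (solving inside the subcomplex $C^2(\G_\epsilon,\mathfrak{i})$ in the $(s,t)$-constant case so that $V^\epsilon=0$), then Proposition \ref{rigidity-lemma} together with the properness clause of Lemma \ref{lemma-on-flows} to integrate the flows, with compactness of $M$ supplying completeness of the base flow in the second case. The only point to phrase slightly more carefully in the first case is that the smoothness in $\epsilon$ of the Haar transgression is obtained by working on the proper family groupoid $\tilde{\G}$ (a single Haar system for $\G_0$ need not be left-invariant for the varying $m_\epsilon$), which is exactly what the cited corollary provides.
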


\begin{proof}
It is clear that any deformation $\tilde{\G}= \{\G_{\epsilon}\}$ as in the statement is proper. Then we follow the general plan:
\begin{itemize}
\item we appeal to Corollary \ref{crl-vanishing-proper} to obtain $\tilde{X}= \{X^{\epsilon}\}$ smoothly depending on $\epsilon$ transgressing the deformation cocycles $\xi_{\epsilon}$.
\item we use the first part of Lemma \ref{rigidity-lemma} and the resulting flows $\phi_{\tilde{X}}^{t, s}$ as candidates for isomorphisms between $\G_{s}$ and $\G_{t}$. 
\item to make sure that $\phi_{\tilde{X}}^{u, v}$ is defined on the entire $\G_{v}$ for $u$ and $v$ small enough, we use the second part of Lemma \ref{rigidity-lemma}; we are left with proving 
that $\phi_{\tilde{V}}^{u, v}$ is defined on the entire $M$ for $u$ and $v$ small enough.
\end{itemize}
The very last part is clear in the second case since $M$ is compact. For $(s, t)$-constant deformations, we have seen that the resulting deformation cocycles $\xi_{\epsilon}$ live in the subcomplex $C^{2}(\G_{\epsilon}, \mathfrak{i})$ 
and we can solve the equations (\ref{cocy-eq}) inside this subcomplex; i.e. we can arrange that  $V^{\epsilon}= 0$ hence, again, there are no problems with the flow. 
\end{proof}

\begin{rmk}\label{rmk-palais}
It is not true that $s$-constant deformations of proper Lie groupoids are trivial. An example of this can be obtained by carefully analysing a construction that Palais gave in \cite{palais2}. In {\it loc. cit} it is shown that for any non-trivial compact Lie group $G$, there exists an uncountable family of inequivalent $G$ actions on an euclidean space $\mathbb{R}^n$, such that any of them have isomorphic linearised actions at a fixed point. More specifically:
\begin{itemize}
\item It is shown in \cite{mcmillan} that there exists an open 3-manifold W which is not diffeomorphic to euclidean 3-space, and such that $W\times \R^n$ is diffeomorphic to $n+3$-euclidean space (this is really the important point!).
\item If one considers the diagonal action on $W\times \R^{n}$  formed from a faithful representation of a compact group $G$ on $\R^n$ and the trivial action of $G$ on $W$, the corresponding (non-linear) $G$-action on $\R^{n+3}$ has a fixed point set diffeomorphic to $W$.
\item  If one takes a fixed point of the action (which without loss of generality can be assumed to be the origin) the homotety deformation to the linear action is a non-trivial deformation: the fixed point set of the linear action is a vector space of dimension 3 (i.e., $\R^3$) and hence cannot be diffeomorphic to the fixed point set of the the non-linear action (which is diffeomorphic to $W$).
\item In fact, in \cite{mcmillan} it is shown that there is an uncountable family of non-diffeomorphic manifolds $W_\alpha$ with the property above. We thus obtain uncountably many inequivalent $s$-constant deformations of the proper action groupoid associated to the linearised action.
\end{itemize}
\end{rmk}

The last part of the theorem \ref{thm: compact rigid} also has relative version; moreover, passing to semi-local statements, one can deal with general proper groupoids. The first one in this direction is the following:

\begin{thm}\label{thm-rigid-technical} Let $\G\tto M$ be a proper groupoid, let $N\subset M$ be an invariant submanifold.

Then for any $s$-constant deformation $\tilde{\G}= \{\G_{\epsilon}: \epsilon\in I\}$ of $\G$ which is constant on $N$ 
($\G_{\epsilon}|_{N}= \G|_N$ for all $\epsilon$) and any compact interval $I_0\subset I$ one can find a smooth family 
of groupoid isomorphisms
\[ F_{\epsilon}: \G|_{U_0}\rmap \G_{\epsilon}|_{U_{\epsilon}}, \ \ (\epsilon\in I_0).\]
which restrict to the identity on $\G|_{N}$. Moreover, if $M$ is compact then one may choose $U_{\epsilon}= M$.
\end{thm}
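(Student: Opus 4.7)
The plan is a Moser-type argument, using the family-groupoid interpretation of the deformation together with the relative multiplicative-lift provided by Lemma \ref{exist-mult-transv2}. First, reformulate the deformation as the family groupoid $\tilde{\G} = \G \times I \tto M \times I \stackrel{\pi}{\to} I$. The $s$-constancy hypothesis says exactly that $\partial/\partial \epsilon$ on $\tilde{\G}$ is transverse in the sense of Definition \ref{dfn-transversal} (it is $\tilde{s}$-projectable to $\partial/\partial \epsilon$ on $M\times I$, which in turn projects to $\partial/\partial \epsilon$ on $I$). The assumption $\G_\epsilon|_N = \G|_N$ identifies $\tilde{\G}|_{N \times I}$ canonically with the trivial family $\G|_N \times I$, so on this sub-family $\partial/\partial \epsilon$ is already a multiplicative transverse vector field. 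Hence we are given a multiplicative transverse field on an invariant sub-family that we wish to extend to all of $\tilde{\G}$.

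Restrict to the compact interval $I_0$. Since $\G$ is proper and $I_0$ is compact, $\tilde{\G}|_{M \times I_0}$ is itself a proper groupoid, and hence a proper family over $I_0$. By Lemma \ref{exist-mult-transv2} applied to the invariant submanifold $N \times I_0 \subset M \times I_0$ and the given multiplicative transverse vector field $\partial/\partial \epsilon$ on $\tilde{\G}|_{N \times I_0}$, we obtain a multiplicative transverse vector field $\tilde{X} \in \mathfrak{X}(\tilde{\G}|_{M \times I_0})$ that extends $\partial/\partial \epsilon$ on the sub-groupoid. Let $V$ denote its base vector field on $M \times I_0$; by construction $V = \partial/\partial \epsilon$ along $N \times I_0$ and $V$ is $\pi$-projectable to $\partial/\partial \epsilon$ on $I_0$.

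By Lemma \ref{lemma-on-flows}, applied to the proper groupoid $\tilde{\G}|_{M \times I_0}$ and the multiplicative vector field $\tilde{X}$, the flow $\phi_{\tilde{X}}^{\epsilon}$ is a groupoid morphism wherever defined, covering the base flow $\phi_V^{\epsilon}$, and by properness it is defined on precisely those arrows whose source and target admit time-$\epsilon$ base flow. For each $x \in N$ the base-flow trajectory from $(x,0)$ is $\{x\} \times I_0$, hence defined for all $\epsilon \in I_0$; standard ODE compactness then yields an open neighborhood $U_0 \subset M$ of $N$ such that $\phi_V^{\epsilon}(x, 0)$ is defined for every $(x, \epsilon) \in U_0 \times I_0$. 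Set $U_\epsilon := \mathrm{pr}_M\bigl(\phi_V^{\epsilon}(U_0 \times \{0\})\bigr)$ and define
\[ F_\epsilon := \phi_{\tilde{X}}^{\epsilon}\bigl|_{\G|_{U_0} \times \{0\}} : \G|_{U_0} \longrightarrow \G_\epsilon|_{U_\epsilon} ,\]
identifying $\G|_{U_0}$ with its slice at $\epsilon = 0$ and its image with the corresponding slice of $\tilde{\G}$. This is a smooth family of groupoid isomorphisms, and it restricts to the identity on $\G|_N$ because $\tilde{X}$ agrees with $\partial/\partial \epsilon$ there, whose flow is the pure translation in $\epsilon$. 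When $M$ is compact, $V$ is complete on the compact base $M \times I_0$, so the base flow is defined globally and one takes $U_\epsilon = M$ for all $\epsilon \in I_0$.

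The main obstacle — once the ingredients are set up — is the relative extension step: producing a multiplicative lift that agrees prescribed-ly with $\partial/\partial \epsilon$ on $N \times I_0$. This is exactly where Lemma \ref{exist-mult-transv2} is essential, and it in turn rests on the properness of $\tilde{\G}|_{M \times I_0}$, which is why the restriction to a compact subinterval $I_0$ is needed. Everything else — reading $s$-constancy as transversality, constancy on $N$ as multiplicativity of $\partial/\partial\epsilon$, and controlling the flow domain via properness together with compactness of $I_0$ — is formal given the tools already in place.
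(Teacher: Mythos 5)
Your route is genuinely different from the paper's proof of this theorem, and most of it is sound. The paper argues by transgressing the deformation cocycles directly: since the deformation is constant on $N$, the cocycles $\xi_{\epsilon}$ vanish on pairs from $\G|_{N}$, and the explicit integral formula (\ref{transgr-form}) shows that the primitives $X^{\epsilon}$ also vanish on $\G|_{N}$; the time-dependent flow of Proposition \ref{rigidity-lemma} (equivalently, of the multiplicative field $\tilde{X}=X^{\epsilon}+\partial/\partial\epsilon$ on $\tilde{\G}$) then gives the isomorphisms, with $I_0$-compactness used only for the tube-lemma extraction of $U_0$. You instead pass to the family groupoid and invoke the relative extension Lemma \ref{exist-mult-transv2} to extend the multiplicative transverse field $\partial/\partial\epsilon$ from $\tilde{\G}|_{N\times I}$ to all of $\tilde{\G}$ — which is precisely the strategy the paper uses for the more general Theorem \ref{thm-rigid-technical3}, specialized to this situation. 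Your reading of $s$-constancy as transversality of $\partial/\partial\epsilon$, of constancy on $N$ as its multiplicativity over $N\times I$, the tube/flow-domain argument, the identity on $\G|_{N}$, and the compact-$M$ case are all correct.

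The gap is the sentence ``Since $\G$ is proper and $I_0$ is compact, $\tilde{\G}|_{M\times I_0}$ is itself a proper groupoid.'' This implication is false in general, and it is exactly the hypothesis that Lemma \ref{exist-mult-transv2} and the second half of Lemma \ref{lemma-on-flows} need. Properness of $\G_0$ does not propagate to the family: on $\G=\mathbb{R}^2\tto\mathbb{R}$ consider the $s$-constant family of action groupoids of $(\mathbb{R},+)$ acting by $v\cdot x=x+(1-\epsilon)v$; at $\epsilon=0$ this is the (proper) translation groupoid, but at $\epsilon=1$ it is the non-proper bundle of groups, so the family over $I_0=[0,1]$ cannot be proper (its restriction over $\epsilon=1$ would be). Taking a disjoint union with a constant proper groupoid that carries an invariant submanifold produces an example satisfying all hypotheses of the theorem with $N\neq\emptyset$, yet with a non-proper family; so compactness of $I_0$ buys you the flow-domain control, not properness. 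To be fair, the paper's own proof of this theorem leans on the same implicit assumption (the smooth-in-$\epsilon$ transgression requires Corollary \ref{crl-vanishing-proper}, i.e.\ properness of the family — compare Remark \ref{rmk-imp-proper}), but as written your argument derives this key hypothesis from an invalid deduction. To close the gap you should either assume properness of the deformation (as Theorems \ref{thm-rigid-technical2} and \ref{thm-rigid-technical3} do explicitly) or give a separate argument producing the needed Haar-system/cut-off data, e.g.\ after restricting to a suitable invariant neighbourhood of $N\times I_0$.
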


\begin{proof}
We need to look a bit closer at the previous arguments. First of all, because of the hypothesis, the deformation cocycles $\xi_{\epsilon}$ vanish on all pairs $(g, h)$ coming from $\G|_{N}$. Looking at the formulas (\ref{transgr-form}) defining the transgressing cochains we see that also $X^{\epsilon}$ vanishes at points of $\G|_{N}$. Let us now move to the product $M\times I$ and interpret $\tilde{V}$ as a vector field there. Since $\tilde{V}$ agrees with $\frac{\partial}{\partial \epsilon}$ on $N\times I$, we see that  $\phi^{\epsilon}_{\tilde{V}}(x, v)= \phi^{v+ \epsilon, v}_{\tilde{V}}(x), v+ \epsilon)$ is defined for all $x\in N$ as long as $v+ \epsilon, v\in I$. In particular, $\phi^{\epsilon, 0}_{\tilde{V}}(x)$ is defined for all $x\in N$, $\epsilon\in I$. With $I_0$ as in the statement, since any open in $M\times I$ containing $N\times I_0$ 
also contains $U_0\times I_0$ for some open neighbourhood $U_0$ of $N$ in $M$, we find such an $U_0$ such that 
$\phi^{\epsilon, 0}_{\tilde{V}}(x)$ is defined for all $x\in U_0$ and $\epsilon\in I_0$. Set 
\[ U_{\epsilon}= \phi^{\epsilon, 0}_{\tilde{V}}(U_0).\]
Of course, $\phi^{0, \epsilon}_{\tilde{V}}$ is defined on $U_{\epsilon}$. All together, using again Lemma \ref{lemma-on-flows} we find that 
\[ \phi_{\tilde{X}}^{\epsilon, 0}: \G|_{U_0} \rmap \G_{\epsilon}|_{U_{\epsilon}} \]
is a groupoid isomorphism, with inverse $\phi_{\tilde{X}}^{0, \epsilon}$ defined on the entire $\G_{\epsilon}|_{U_{\epsilon}}$. Since $X^{\epsilon}$ is zero on $\G|_{N}$, this isomorphism is the identity on $\G|_{N}$. 

We are left with proving the last part. But this follows from a general property of flows of vector fields $\tilde{V}$ on $M\times I$ when $M$ is compact:  $\phi^{\epsilon, 0}_{\tilde{V}}(x)$ is defined for all $x\in M$ and $\epsilon\in I$.
\end{proof}

The following shows that proper deformations are trivial locally around compact invariant submanifolds.

\begin{thm}\label{thm-rigid-technical2} Let $\G\tto M$ be a proper groupoid and let $N\subset M$ be a compact invariant submanifold. Then for any proper deformation $\tilde{\G}= \{\G_{\epsilon}\}$ of $\G$, there exists a smooth family $\tilde{U}= \{U_{\epsilon}\}$ of opens $U_{\epsilon}\subset M_{\epsilon}$ such that $N$ is contained in $U_0$ and such that the deformation $\tilde{\G}|_{\tilde{U}}= \{\G_{\epsilon}|_{U_{\epsilon}}\}$ is trivial. 

In particular, any proper deformation of a compact groupoid is trivial.
\end{thm}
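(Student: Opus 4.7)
The plan is to follow the strategy indicated in Remark \ref{rmk-to-proceed-general}: produce a multiplicative transverse vector field $\tilde{X}$ on $\tilde{\G}$ and use its flow to trivialise the deformation on a neighbourhood of $N$. The entire content of the theorem lies in the fact that, thanks to compactness of $N$ and properness of $\tilde{\G}$, the flow admits a common domain of definition across nearby fibres.

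Concretely, the first step would be to apply Lemma \ref{exist-mult-transv1} to the proper family $\tilde{\G}\tto\tilde{M}\stackrel{\pi}{\rmap} I$, producing a multiplicative vector field $\tilde{X}\in \mathfrak{X}(\tilde{\G})$ whose common $(s,t)$-projection $\tilde{V}\in \mathfrak{X}(\tilde{M})$ in turn projects via $\pi$ to $\tfrac{\partial}{\partial\epsilon}$. Since $N\subset M_0$ is compact and $\tilde{V}$ is nowhere tangent to the fibres of $\pi$, a standard ODE/compactness argument yields an open interval $I_0\subset I$ around $0$ and an open neighbourhood $U_0\subset M_0$ of $N$ on which the base flow $\phi_{\tilde{V}}^\epsilon$ is defined for all $\epsilon\in I_0$. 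Setting $U_\epsilon := \phi_{\tilde{V}}^\epsilon(U_0)\subset M_\epsilon$ and $\tilde{U}:=\bigsqcup_{\epsilon\in I_0} U_\epsilon$, the second (properness) part of Lemma \ref{lemma-on-flows} then guarantees that $\phi_{\tilde{X}}^\epsilon$ is defined on the entire $\tilde{\G}|_{U_0}$, while the first (multiplicativity) part says that it restricts to a groupoid isomorphism
\[ \phi_{\tilde{X}}^\epsilon : \G|_{U_0}\rmap \G_\epsilon|_{U_\epsilon}\qquad (\epsilon\in I_0),\]
reducing to the identity at $\epsilon=0$. Packaging these together provides a groupoid isomorphism $\G|_{U_0}\times I_0 \rmap \tilde{\G}|_{\tilde{U}}$ covering $U_0\times I_0\rmap \tilde{U}$, which is precisely an equivalence with the constant deformation — the desired local trivialisation.

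For the final statement, the hypothesis that $\G$ is compact forces $M_0=M$ to be compact, so we may take $N=M$ in the first part. Then $U_0\supset M$ is necessarily all of $M$, and by Ehresmann's theorem applied to the submersion $\pi$ (which is proper in a neighbourhood of the compact $M_0$), after shrinking $I$ to a smaller interval $I'\ni 0$ we may arrange $U_\epsilon = M_\epsilon$ for every $\epsilon\in I'$; the isomorphism above then trivialises the entire deformation. The principal obstacle in the proof is controlling the domain of the flow uniformly on a neighbourhood of $N$: compactness of $N$ handles this at the level of the base $\tilde{M}$, and properness of $\tilde{\G}$ lifts it to the groupoid level by way of the second clause of Lemma \ref{lemma-on-flows} — without that clause one would only get an isomorphism on some open subgroupoid of $\G|_{U_0}$ rather than on all of it.
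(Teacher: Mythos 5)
Your argument is essentially the paper's own proof: apply Lemma \ref{exist-mult-transv1} to the large groupoid $\tilde{\G}$ to get a multiplicative transverse vector field $\tilde{X}$, use compactness of $N$ to obtain a uniform domain $U_0\times I_0$ for the base flow, set $U_{\epsilon}=\phi_{\tilde{V}}^{\epsilon}(U_0)$, and invoke the two parts of Lemma \ref{lemma-on-flows} to conclude that $\phi_{\tilde{X}}^{\epsilon}:\G|_{U_0}\rmap \G_{\epsilon}|_{U_{\epsilon}}$ are groupoid isomorphisms. The only divergence is your treatment of the ``in particular'' clause, where the paper simply takes $N=M$ and says nothing more; your appeal to Ehresmann's theorem rests on the claim that $\pi$ is proper near the compact fibre $M_0$, which is not automatic (nearby fibres $M_{\epsilon}$ could a priori have components away from the flow image of $M_0$), but this side remark does not affect the main statement.
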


\begin{proof} 
We work on the large groupoid $\tilde{\G}\tto \tilde{M}$ to which we apply Lemma \ref{exist-mult-transv1} to obtain a multiplicative transverse vector field $\tilde{X}$ with base field denoted $\tilde{V}$. Since $N$ is compact we can choose a smaller $I_0\subset I$ and an open neighbourhood $U_0$ of $N$ in $M_0$ such that $\phi_{\tilde{V}}^{\epsilon}(y)$ is defined for all $y\in U_0$, $\epsilon\in I_0$. We set 
\[  U_{\epsilon}= \phi_{\tilde{V}}^{\epsilon}(U_0) \ \ (\epsilon\in I_0).\]
The second part of Lemma \ref{lemma-on-flows} insures that $\phi_{\tilde{X}}^{\epsilon}$ is defined on the entire 
$\G|_{U}= \G_{0}|_{U_0}$, while the multiplicativity of $\tilde{X}$ implies (cf. the first part of Lemma \ref{lemma-on-flows}) that
\[ \phi_{\tilde{X}}^{\epsilon}: \G|_{U} \rmap \G_{\epsilon}|_{U_{\epsilon}} \]
is a groupoid isomorphism (note that the inverse $\phi_{\tilde{X}}^{-\epsilon}$ is defined on the entire $\G_{\epsilon}|_{U_{\epsilon}}$ again because of Lemma \ref{lemma-on-flows} and the fact that $\phi_{\tilde{X}}^{-\epsilon}$ is defined on $U_{\epsilon}$). 
\end{proof}

Theorem \ref{thm-rigid-technical} can also be extended to general proper deformations.

\begin{thm}\label{thm-rigid-technical3}
Assume that $\tilde{\G}= \{\G_{\epsilon}: \epsilon\in I\}$ is a proper family of Lie groupoids and 
$\tilde{N}= \{N_{\epsilon}\}$ is a smooth family of submanifolds $N_{\epsilon}\subset M_{\epsilon}$  invariant with respect to $\G_{\epsilon}$ and such that $\G|_{\tilde{N}}= \{\G|_{N_{\epsilon}}\}$ is trivial, with given trivializing diffeomorphisms
\[ \psi_{\epsilon}: \G_{0}|_{N_0}\rmap \G_{\epsilon}|_{N_{\epsilon}}.\]
Then for any $I_0$ which is the interior of a compact interval contained in $I$, there exists a smooth family $\tilde{U}= \{U_{\epsilon}: \epsilon\in I_0\}$ of open neighbourhoods of $N_{\epsilon}$ in $M_{\epsilon}$ and a smooth family 
of groupoid isomorphisms
\[ F_{\epsilon}: \G_{0}|_{U_0}\rmap \G_{\epsilon}|_{U_{\epsilon}}, \ \ (\epsilon\in I_0).\]
extending $\psi_{\epsilon}$.
\end{thm}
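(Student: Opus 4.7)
The plan is to run the flow argument of Theorem~\ref{thm-rigid-technical2} relative to the prescribed trivialization $\psi_\epsilon$ along $\tilde{N}$. First I would package the family $\psi_\epsilon$ (which we may assume satisfies $\psi_0=\mathrm{Id}$, after composing with $\psi_0^{-1}$) into a diffeomorphism
\[ \Psi\colon \G_{0}|_{N_0}\times I \rmap \tilde{\G}|_{\tilde{N}}, \qquad \Psi(g,\epsilon)=\psi_\epsilon(g), \]
and push the vector field $\partial/\partial\epsilon$ from the product forward via $\Psi$ to obtain a vector field $X_{\tilde{N}}\in\mathfrak{X}(\tilde{\G}|_{\tilde{N}})$. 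Since each $\psi_\epsilon$ is a groupoid isomorphism, $X_{\tilde{N}}$ is multiplicative, and since $\Psi$ covers the identity on $I$, $X_{\tilde{N}}$ is transverse in the sense of Definition~\ref{dfn-transversal}. By construction, the flow of $X_{\tilde{N}}$ at time $\epsilon$ starting from $\G_{0}|_{N_0}$ is exactly $\psi_\epsilon$, and it is defined for all $\epsilon\in I$.

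The main step---and the one I expect to be the real obstacle---is to extend $X_{\tilde{N}}$ to a multiplicative transverse vector field on all of $\tilde{\G}$. This is precisely the relative existence result in Lemma~\ref{exist-mult-transv2}, applied to the proper groupoid $\tilde{\G}\tto \tilde{M}$ with invariant submanifold $\tilde{N}\subset \tilde{M}$ (the hypothesis that $\pi|_{\tilde{N}}\colon \tilde{N}\rmap I$ be a submersion is automatic from the fact that $\tilde{N}$ is a smooth family of submanifolds). The lemma produces a multiplicative $\tilde{X}\in\mathfrak{X}(\tilde{\G})$ which is transverse and restricts to $X_{\tilde{N}}$ on $\tilde{\G}|_{\tilde{N}}$. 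Let $\tilde{V}\in\mathfrak{X}(\tilde{M})$ denote its base vector field; then $d\pi(\tilde{V})=\partial/\partial\epsilon$, and $\tilde{V}|_{N_0}$ integrates, for every $\epsilon\in I$, to the base map of $\psi_\epsilon$.

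With $\tilde{X}$ in hand, the proof closes by the flow argument of Theorem~\ref{thm-rigid-technical2}. Fix a compact interval $[a,b]\subset I$ with $I_0\subset(a,b)$. The domain of the flow of $\tilde{V}$ is an open subset of $M_0\times\mathbb{R}$ containing the slab $N_0\times[a,b]$; a pointwise tube-lemma argument using compactness of $[a,b]$ (but not of $N_0$) then produces an open neighbourhood $U_0$ of $N_0$ in $M_0$ on which $\phi^\epsilon_{\tilde{V}}$ is defined for every $\epsilon\in I_0$. Set $U_\epsilon:=\phi^\epsilon_{\tilde{V}}(U_0)$, which is an open neighbourhood of $N_\epsilon=\phi^\epsilon_{\tilde{V}}(N_0)$. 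Since $\tilde{\G}$ is proper, Lemma~\ref{lemma-on-flows} guarantees that $\phi^\epsilon_{\tilde{X}}$ is defined on all of $\G_{0}|_{U_0}$ and that
\[ F_\epsilon := \phi^\epsilon_{\tilde{X}}\colon \G_{0}|_{U_0}\rmap \G_\epsilon|_{U_\epsilon} \]
is a smooth family of groupoid isomorphisms. Because $\tilde{X}$ restricts to $X_{\tilde{N}}$, whose time-$\epsilon$ flow is $\psi_\epsilon$, the family $F_\epsilon$ extends $\psi_\epsilon$, which is the desired conclusion.
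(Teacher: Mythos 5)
Your proposal is correct and follows essentially the same route as the paper: differentiating the family $\psi_\epsilon$ in $\epsilon$ to get a multiplicative transverse vector field on $\tilde{\G}|_{\tilde{N}}$ (your pushforward of $\partial/\partial\epsilon$ via $\Psi$ is exactly the paper's $\tilde{X}_0$), extending it by Lemma \ref{exist-mult-transv2}, and then running the flow argument of Theorems \ref{thm-rigid-technical} and \ref{thm-rigid-technical2} via Lemma \ref{lemma-on-flows}, with the tube-lemma step over the compact interval that the paper leaves implicit.
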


\begin{proof} We proceed as in the proof of Theorem \ref{thm-rigid-technical} but working directly on the large groupoid $\tilde{\G}$; in particular, 
we interpret the smooth family $\{\psi_{\epsilon}\}$ as a map
\[ \psi: \G|_{N}\times I\rmap \tilde{\G},\ \ (g, \epsilon)\mapsto \psi(g, \epsilon):= \psi_{\epsilon}(g)\]
and similarly for the base map, denoted $f$. 
For $g\in \tilde{\G}|_{\tilde{N}}$, consider $\epsilon= \pi\circ s(g)$ i.e. with the property that $g\in \G_{\epsilon}|_{N_{\epsilon}}$. Move $g$ back to $\epsilon= 0$, i.e. consider $g_0= \psi_{\epsilon}^{-1}(g)\in \G|_{N}$ and consider 
\[ \tilde{X}_0(g):= \frac{d}{dv}|_{v= \epsilon} \psi(g_0, v) .\]
Since the values of the curve $v\mapsto \psi(g_0, v)$ belong to $\G_{v}|_{N_{v}}\subset \tilde{\G}|_{\tilde{N}}$, $\tilde{X}_0$ is
a vector field on $\tilde{\G}|_{\tilde{N}}$ which is $s$ and $t$-projectable to the similar vector field 
$\tilde{V}_0$ on $\tilde{N}$ (constructed using $f$ instead of $F$). Also, it is clear that $\tilde{V}_{0}$ is $\pi$-projectable to $\frac{\partial}{\partial \epsilon}$. Since each $\psi_{\epsilon}$ is a groupoid homomorphism, we deduce that $\tilde{X}_0$ is a multiplicative transverse vector field on $\tilde{\G}|_{\tilde{N}}$. Use now  
Lemma \ref{exist-mult-transv2} to extend $\tilde{X}_0$ to a similar vector field $\tilde{X}$ on $\tilde{\G}$.  Note also that, for $g_0\in \G|_{N}$, the curve
$\gamma(\epsilon)= \psi(g_0, \epsilon)$ has 
\[ \dot{\gamma}(\epsilon)= \frac{d}{dv}|_{v= \epsilon} \psi(g_0, v)= \tilde{X}_0(\psi_{\epsilon}(g_0))= \tilde{X}(\gamma(\epsilon))\]
hence the resulting flows $\phi_{\tilde{X}}^{\epsilon}$ satisfy $\phi_{\tilde{X}}^{\epsilon}(g_0)= \psi(g_0, \epsilon)$ for all $\epsilon$ and for $g_0\in \G|_{N}$. Then one proceeds like in the previous two proofs, 
using the flow of $\tilde{X}$. 
\end{proof}

Finally, let us present an application to the linearisation problem. However, we would like to emphasize that our aim  here is not so much to prove linearisation results but to show that, for proper groupoids, the linearisation follows from a stronger property: rigidity.

In general, for any Lie groupoid $\G\tto M$ and any $N\subset M$ invariant submanifold, one can make sense of the linear normal form of $\G$ around $N$, denoted $\mathcal{N}_{N}(\G)$. For instance, when $N= \{x\}$ is a fixed point of $\G$ (i.e. all the arrows that start at $x$ also end at $x$), the action of $\G$ on $\nu$ restricts to a linear action of the isotropy group $\G_x$ at $x$ on the tangent space $T_xM$ and the linear normal form of $\G$ around $x$, $\mathcal{N}_{N}(\G)$,  is the resulting action groupoid. For an arbitrary invariant $N\subset M$, similar to the action of $\G$ on $\nu$, one has a canonical action of $\G|_{N}$ on the normal bundle $\mathcal{N}(N)$ of $N$ in $M$ and the linearisation of $\G$ around $N$, denoted $\mathcal{N}_{N}(\G)$, is defined as the resulting action groupoid. A more conceptual description is obtained by considering the normal bundle $\mathcal{N}(\G|_{N})$ of $\G|_{N}$ in $\G$; 
as for $T\G\tto TM$ (and as a quotient of it), the differentials of the structure maps of $\G$ make $\mathcal{N}(\G|_{N})$ into a groupoid over $\mathcal{N}(N)$, canonically isomorphic to $\mathcal{N}_{N}(\G)$. Note that  $N$, identified with the zero section of $\mathcal{N}_N(M)$, is invariant with respect to $\mathcal{N}_{N}(\G)$. The linearisation problem asks whether $\G$ and $\mathcal{N}_{N}(\G)$ are isomorphic when restricted to neighbourhoods of $N$. When this happens one says that $\G$ is linearisable around $N$. The relationship with deformations is provided by the following type of topological remarks:

\begin{lemma}
If $\G$ is an $s$-proper Lie groupoid and $N\subset M$ is an invariant submanifold, then there exists an open neighbourhood $W$ of $N$ in $M$ and a smooth proper family $\{\G_{\epsilon}\}$ of groupoids over $W$ such that:
\begin{itemize}
\item $\G_1= \G|_{W}$.
\item $\G_{0}$ is (isomorphic to) the linear model $\mathcal{N}_{N}(\G)$. 
\item $\G_{\epsilon}|_{N}= \G|_{N}$ as groupoids for all $\epsilon$. 
\end{itemize}
(even more: one can ensure that $\G_{\epsilon}$ is isomorphic to $\G|_{W}$ for all $\epsilon \neq 0$).
\end{lemma}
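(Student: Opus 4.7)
The plan is a classical rescaling (``deformation to the normal cone'') construction, adapted to Lie groupoids. First I would choose a compatible pair of tubular neighbourhoods: an open embedding $\phi$ of an open neighbourhood of the zero section in $\mathcal{N}(N)$ onto an open neighbourhood of $N$ in $M$, together with an open embedding $\Phi$ of an open neighbourhood of the zero section in $\mathcal{N}(\G|_N)$ onto an open neighbourhood of $\G|_N$ in $\G$, both restricting to the identity on the respective zero section and intertwining source, target, unit and inversion. Such a compatible pair exists by a standard argument (for instance via the geodesic exponential of a groupoid-compatible Riemannian metric, or via the exponential of an Ehresmann connection on $\G$ tangent to the subgroupoid $\G|_N$). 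Transporting structure along $(\phi, \Phi)$, we may henceforth work on the normal bundle itself, with $\G|_N$ sitting as the zero section and the fibrewise linearisation of the structure maps along the zero section being exactly the linear model $\mathcal{N}_N(\G)$.

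Now choose a fibrewise star-shaped open neighbourhood $W_0$ of $N$ in $\mathcal{N}(N)$ and an open interval $I$ around $0$ containing $1$, small enough that scalar multiplication $\mu_\epsilon: v \mapsto \epsilon v$ sends $W_0$ into the tubular domain for every $\epsilon \in I$, and likewise for arrows. For each $\epsilon \ne 0$ the map $\mu_\epsilon$ is a diffeomorphism onto its image, and I would define $\G_\epsilon \tto W_0$ as the pullback of the (transported) structure along $\mu_\epsilon$; in normal coordinates the structure maps read
\[
s^{\epsilon}(g) = \tfrac{1}{\epsilon}\, s(\epsilon g), \qquad m^{\epsilon}(g,h) = \tfrac{1}{\epsilon}\, m(\epsilon g, \epsilon h),
\]
and similarly for $t^{\epsilon}, u^{\epsilon}, i^{\epsilon}$. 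By construction, for every $\epsilon \ne 0$ scalar multiplication $g \mapsto \epsilon g$ is a groupoid isomorphism from $\G_\epsilon$ onto $\G|_{\mu_\epsilon(W_0)}$; in particular $\G_1 = \G|_{W_0}$, and since $N$ is pointwise fixed by every $\mu_\epsilon$, one has $\G_\epsilon|_N = \G|_N$ for all $\epsilon$.

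The heart of the argument, and the step I expect to require the most care, is smoothness of the family at $\epsilon = 0$ together with the identification $\G_0 \cong \mathcal{N}_N(\G)$. This reduces to a Hadamard-type computation: any smooth map $F$ between two vector bundles over $N$ that sends zero section to zero section has fibrewise Taylor expansion $F(\epsilon v) = \epsilon\, dF|_0(v) + \epsilon^2 R(\epsilon, v)$ with $R$ smooth, so $F_\epsilon(v) := \tfrac{1}{\epsilon} F(\epsilon v)$ extends smoothly across $\epsilon = 0$, with $F_0$ the fibrewise linearisation of $F$ along the zero section. Applied to each of $s, t, m, u, i$ — each of which preserves the appropriate zero section because $\G|_N$ is a subgroupoid of $\G$ — this shows that $s^{\epsilon}, t^{\epsilon}, m^{\epsilon}, u^{\epsilon}, i^{\epsilon}$ all extend smoothly across $\epsilon = 0$, and that the limit is precisely the linearised structure on $\mathcal{N}(\G|_N) \tto \mathcal{N}(N)$, i.e.\ $\mathcal{N}_N(\G)$.

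Properness of the total family groupoid $\tilde\G \tto W_0 \times I$ then follows from $s$-properness of $\G$: each $\G_\epsilon$ with $\epsilon \ne 0$ is proper via the isomorphism with $\G|_{\mu_\epsilon(W_0)}$, the linear model $\G_0 = \mathcal{N}_N(\G)$ is proper because $\G|_N$ is, and a routine local-compactness argument — available because $s$-proper groupoids have Hausdorff locally compact orbit spaces — lets us shrink $W_0$ and $I$ to ensure that properness holds for the total family. Setting $W := W_0$ then yields all three listed properties simultaneously.
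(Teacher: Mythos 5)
Your outline (compatible tubular neighbourhoods, the rescaling $\tfrac1\epsilon(\,\cdot\,)(\epsilon\,\cdot)$ of the structure maps, the Hadamard-type argument for smoothness at $\epsilon=0$ and the identification of the limit with the linear model) is the same as the paper's, but the two steps that carry the real content are treated as routine smallness choices, and both are precisely where $s$-properness must enter. First, for the rescaled objects to be groupoids at all --- and for the first bullet $\G_1=\G|_W$ to hold on the nose --- you need that \emph{every} arrow of $\G$ with source and target in the shrunken base lies inside the image of the arrow-level tubular neighbourhood; otherwise $\G_\epsilon$ is only an open subset of $\G|_{\mu_\epsilon(W_0)}$ which need not be closed under multiplication, and $\G_1\subsetneq\G|_W$. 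Your clause ``and likewise for arrows'' hides this: it cannot be arranged by merely shrinking $W_0$ for a general groupoid, because arrows whose source and target are arbitrarily close to $N$ may escape any prescribed neighbourhood of $\G|_N$ along non-compact $s$-fibres. This is exactly where the paper uses properness of the map $s$ (if $\pi$ is proper, any open set containing $\pi^{-1}(N)$ contains $\pi^{-1}(V)$ for some neighbourhood $V$ of $N$), and your proposal never invokes $s$-properness at this point. Relatedly, if you insist on a proper sub-neighbourhood $W_0$ of the tube, your $\G_0$ is the linear model restricted over $W_0$, not $\mathcal{N}_N(\G)$ itself, so the second bullet needs either a full-normal-bundle tube or an extra (equivariant) identification.

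Second, properness of the total family does not follow from properness of each $\G_\epsilon$ plus a ``routine local-compactness argument'': Remark \ref{rmk-imp-proper} of the paper warns against exactly this inference (there are families of compact groupoids that are not proper as families), and the fact you cite about orbit spaces of $s$-proper groupoids is beside the point. The delicate case is a sequence $(g_n,\epsilon_n)$ with $\epsilon_n\to 0$ and rescaled source and target confined to a compact set: one must show that the normal components of $g_n$, i.e.\ the normal coordinates of $\epsilon_n g_n$ divided by $\epsilon_n$, remain bounded, and this requires combining $s$-properness (to confine $\epsilon_n g_n$ to a compact subset of the tube over $\G|_N$) with a quadratic Taylor estimate for the source map along $\G|_N$ --- an actual argument, as in \cite{ivan} which the paper follows, not something obtained by shrinking $W_0$ and $I$. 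A smaller issue: your input of tubular neighbourhoods intertwining $s$, $t$, $u$ and $i$ simultaneously is stronger than needed (the paper only uses compatibility with the single submersion $s$) and is itself not free --- the exponential of an Ehresmann connection for $s$ will not respect $t$, and the metric route needs groupoid-adapted metrics --- so either justify it or weaken it and let the Hadamard argument linearise the remaining maps in the limit.
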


\begin{proof}(sketch) One proceeds exactly as in the case of fixed points which was explained in full detail in \cite{ivan}. First of all, the topological arguments from Proposition 2.2 of {\it loc.cit} go through using the following remarks: for a smooth map $\pi: P\rmap M$ and $N\subset M$ a submanifold,
\begin{itemize}
\item If $\pi$ is proper then any open $\mathcal{U}\subset P$ which contains $\pi^{-1}(N)$ also contains $\pi^{-1}(V)$ for some neighbourhood $V$ of $N$.
\item If $\pi$ is a submersion, then one can find tubular neighbourhoods of $N$ in $M$ and of $\pi^{-1}(N)$ in $P$ 
which are compatible with $\pi$ in the sense that the restriction of $\pi$ to the tubular neighbourhood corresponds to the map induced by $d\pi$ on the normal bundles. 
\end{itemize}
One then finds an embedding of type
\[ i: \G|_{\mathcal{N}(N)} \hookrightarrow \mathcal{N}_{N}(\G)= (\G|_{N})\times_{N} \mathcal{N}(N)\]
which is the identity on $\G|_{N}$, where $\mathcal{N}(N)$ is identified with a tubular neighbourhood of $N \subset M$. Then work on the left side, making use of the multiplication by scalars in the fibers of the normal bundles to 
set\[ m_{\epsilon}(e, f)= \frac{1}{\epsilon}m(\epsilon e, \epsilon f) \]
(and similarly for the other structure maps), defined on
\[ \G_{\epsilon}:= \{e\in  \mathcal{N}_{N}(\G): \epsilon e\in \G|_{\mathcal{N}(N)} \}.\]
Like in \cite{ivan}, it is not difficult to see that the limit at $\epsilon= 0$ gives rise to the linearised groupoid structure on $\G_{0}= \mathcal{N}_{N}(\G)$ and that the resulting groupoid $\tilde{\G}$ is proper. 
\end{proof}

We see that we are almost in the position of applying Theorem \ref{thm-rigid-technical}; however, for that one would first have to improve the previous topological argument to insure that the inclusion $i$ above is an isomorphism,
so that the resulting deformation is strict (the fact that the source is constant is clear). Instead, we can just apply theorem \ref{thm-rigid-technical3} and we deduce the following linearization theorem (proved first in \cite{matias_rui_metrics}):

\begin{thm} If $\G$ is an s-proper groupoid and $N\subset M$ is invariant, then $\G$ is linearizable around $N$. 
\end{thm}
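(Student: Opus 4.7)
The plan is to reduce the statement to a direct application of Theorem \ref{thm-rigid-technical3}, using the proper family constructed in the lemma immediately preceding the theorem. First, I invoke that lemma: since $\G$ is $s$-proper and $N\subset M$ is an invariant submanifold, there is an open neighbourhood $W$ of $N$ in $M$ and a smooth proper family $\{\G_{\epsilon}\}_{\epsilon\in I}$ of Lie groupoids over $W$ (for some open interval $I\supset [0,1]$) such that $\G_{1}=\G|_{W}$, $\G_{0}\cong \mathcal{N}_{N}(\G)$, and $\G_{\epsilon}|_{N}=\G|_{N}$ as groupoids for every $\epsilon \in I$.

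Next, I set $N_{\epsilon}:=N$ for all $\epsilon$, viewing $\tilde{N}=\{N_{\epsilon}\}$ as a smooth family of invariant submanifolds inside the base of the big groupoid $\tilde{\G}$. The third bullet of the lemma says that $\tilde{\G}|_{\tilde{N}}$ is literally the constant family with fiber $\G|_{N}$, so the identity maps $\psi_{\epsilon}:=\mathrm{id}:\G|_{N}\rmap\G_{\epsilon}|_{N_{\epsilon}}$ form a smooth family of groupoid isomorphisms trivializing $\tilde{\G}|_{\tilde{N}}$.

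Now the hypotheses of Theorem \ref{thm-rigid-technical3} are fulfilled (properness of the family is provided by the lemma, the invariance of each $N_{\epsilon}$ is automatic, and the trivialization $\psi_{\epsilon}$ is in place). Choosing an open interval $I_{0}\subset I$ that is the interior of a compact interval containing both $0$ and $1$, Theorem \ref{thm-rigid-technical3} produces a smooth family of open neighbourhoods $U_{\epsilon}$ of $N$ in $W$ and a smooth family of groupoid isomorphisms
\[ F_{\epsilon}: \G_{0}|_{U_{0}}\rmap \G_{\epsilon}|_{U_{\epsilon}}, \qquad \epsilon\in I_{0},\]
each extending $\psi_{\epsilon}=\mathrm{id}$ on $\G|_{N}$.

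Evaluating at $\epsilon=1$ gives an isomorphism $F_{1}:\mathcal{N}_{N}(\G)|_{U_{0}}\rmap \G|_{U_{1}}$ between an open neighbourhood of $N$ in the linear local model and an open neighbourhood of $N$ in $\G$, restricting to the identity on $\G|_{N}$; this is exactly the statement that $\G$ is linearizable around $N$. The only genuine substance in the argument lies in the lemma (whose sketch is already in the excerpt) and in the semi-local rigidity result Theorem \ref{thm-rigid-technical3} (which itself rests on the vanishing theorem and on the existence of multiplicative transverse vector fields extending a prescribed one on an invariant submanifold, Lemma \ref{exist-mult-transv2}); once those are granted, the deduction of the linearization theorem is immediate. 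The main subtlety to keep track of is that we must choose $I_{0}$ to contain both endpoints $0$ and $1$ so as to compare $\G_{0}$ with $\G_{1}$, but this is allowed because $I_{0}$ can be any interior of a compact subinterval of $I$.
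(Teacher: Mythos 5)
Your proposal is correct and is essentially the paper's own argument: the paper likewise constructs the proper family of the preceding lemma (interpolating between $\G$ near $N$ and the linear model $\mathcal{N}_N(\G)$, constant along $N$) and then applies Theorem \ref{thm-rigid-technical3} with the constant family $N_\epsilon=N$ and $\psi_\epsilon=\mathrm{id}$, evaluating the resulting isomorphisms at $\epsilon=1$. Your only added care — choosing $I_0$ as the interior of a compact subinterval containing both $0$ and $1$ — is exactly what the theorem permits, so the deduction goes through as you describe.
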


Finally, our techniques can be applied also to the study of families of Lie groupoids. As before, there are several variations (semi-local or relative versions). Here we present the most restrictive but simplest statement.

\begin{thm}
Any compact family $\G\tto M\stackrel{\pi}{\rmap} B$ of Lie groupoids is locally trivial. 
\end{thm}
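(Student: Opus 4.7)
The plan is to imitate the proof of Theorem~\ref{thm-rigid-technical2}, in multi-parameter form, using the existence of multiplicative lifts for proper families established in Remark~\ref{rmk-mult-lift-families}. Fix $b\in B$, set $n= \dim B$, and identify a neighbourhood $U$ of $b$ in $B$ with an open ball in $\mathbb{R}^{n}$ centred at $0$. Let $\partial/\partial x_1,\ldots, \partial/\partial x_n$ be the coordinate vector fields on $U$, and extend each of them (using a cutoff supported in $U$) to a vector field $W_i$ on $B$.

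By Remark~\ref{rmk-mult-lift-families}, each $W_i$ admits a multiplicative lift $\tilde{X}_i\in \mathfrak{X}(\G)$, that is, a multiplicative vector field which projects to $W_i$ under $\pi\circ s= \pi\circ t$. Since $\G$ is compact, each $\tilde{X}_i$ is complete. For $x= (x_1,\ldots, x_n)$ in a sufficiently small neighbourhood $V$ of $0$, set
\[ \Phi_x := \phi^{x_1}_{\tilde{X}_1}\circ \cdots \circ \phi^{x_n}_{\tilde{X}_n}.\]
Iterating Lemma~\ref{lemma-on-flows}, each $\phi^{x_i}_{\tilde{X}_i}$ is a morphism of groupoids between open subgroupoids of $\G$, and tracking the $\pi\circ s$-projection shows that the composition $\Phi_x$ restricts to a groupoid isomorphism $\G_b\rmap \G_x$. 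Assembling these over $x\in V$ yields a smooth map
\[ \Phi: \G_b\times V\rmap \G,\quad (g,x)\mapsto \Phi_x(g),\]
covering the analogous map $\phi: M_b\times V\rmap M$ built from the base vector fields $V_i= (ds)\tilde{X}_i$.

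It remains to show that, after shrinking $V$, the map $\Phi$ is a diffeomorphism onto $\G|_{\pi^{-1}(V)}$. The differential of $\phi$ at $(y,0)$ is the identity on $T_yM_b$ and sends $\partial/\partial x_i$ to $V_i(y)$; since each $V_i$ is $\pi$-projectable to $\partial/\partial x_i$ and $T_yM_b= \ker(d\pi)_y$, this differential is a linear isomorphism from $T_yM_b\oplus \mathbb{R}^{n}$ onto $T_yM$. By the inverse function theorem together with the compactness of $M_b$, for $V$ sufficiently small $\phi$ is a diffeomorphism from $M_b\times V$ onto an open neighbourhood of $M_b$ in $M$; the relation $\pi\circ \phi(y,x)= x$ forces that neighbourhood to be exactly $\pi^{-1}(V)$. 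Then $\Phi$ covers a diffeomorphism on bases and is a groupoid isomorphism on each fiber, hence it is itself a diffeomorphism, and by construction an isomorphism of families $\G_b\times V\cong \G|_{\pi^{-1}(V)}$.

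The only real subtlety I anticipate is bookkeeping: verifying that each intermediate flow $\phi^{x_i}_{\tilde{X}_i}$ lands in the correct fiber of $\pi$, so that successive compositions remain defined and produce a morphism $\G_b\rmap \G_x$. This is forced by the $(\pi\circ s)$-projectability of the $\tilde{X}_i$ together with the fact that the coordinate vector fields on $U$ integrate to translations, so that the intermediate points land in $\G_{(0,\ldots,0,x_i,x_{i+1},\ldots,x_n)}$ as required.
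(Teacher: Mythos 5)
Your argument is correct, and it rests on the same ingredients as the paper's proof --- multiplicative lifts of vector fields on $B$ (Remark~\ref{rmk-mult-lift-families}), flows of multiplicative vector fields as groupoid morphisms (Lemma~\ref{lemma-on-flows}), and compactness of $\G$ for completeness --- but it organizes the smooth dependence on the parameter differently. The paper fixes $b_0$ and, for each nearby $b$, lifts the single radial field $\sum_i x_i(b)\,\partial/\partial x_i$, whose time-one flow carries $b_0$ to $b$; this forces it to invoke the finer part of Remark~\ref{rmk-mult-lift-families}, namely that the multiplicative lift can be chosen smoothly as a function of the vector field $W$ (which relies on the explicit Haar-system and connection formulas). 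You instead fix once and for all $n$ multiplicative lifts of the (cut-off) coordinate fields and compose their flows with times $x_1,\ldots,x_n$, so smoothness in $x$ comes for free from the smooth dependence of flows on the time parameter; the price is the staircase bookkeeping of intermediate fibers, which you handle correctly after shrinking $V$ so that the cut-offs play no role. One small imprecision: the relation $\pi\circ\phi(y,x)=x$ does not by itself ``force'' the image of $\phi$ to be all of $\pi^{-1}(V)$ --- it only gives containment and fiber-preservation. This is not a real gap, though: since each $\Phi_x:\G_b\rmap\G_x$ is a groupoid isomorphism (its inverse being the reverse composition of flows), its base map $\phi_x:M_b\rmap M_x$ is already a diffeomorphism onto $M_x$, which gives surjectivity onto $\pi^{-1}(V)$ directly; moreover $\Phi^{-1}$ is manifestly smooth, being given by the reverse flows with times depending smoothly on $\pi(s(h))$, so the inverse-function-theorem detour on the base is not actually needed.
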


\begin{proof}

Consider a point $b_0$ in $B$, and a coordinate chart $(U,\psi)$ around $b_0$ in $B$, with $\psi=(x_1,\ldots,x_n)$, such that $\psi$ maps $U$ to an open disk $D$ around $0$ in $\R^n$, and $\psi(b_0)=0$.
For any $b\in U$, we consider the vector field \[\sum_{i=1}^nx_i(b)\frac{d}{dx_i}\] on $D$, and the corresponding (via $\psi$) vector field $W^b$ on $U$. The flow of $W^b$ at time one maps $b_0$ to $b$ and moreover, from the construction it is clear that the choice of $W^b$ is smooth in $b$.
As explained in remark \ref{rmk-mult-lift-families}, the vector fields $W^b$  on $U$ admit multiplicative lifts $X^b$ to $\G_{|U}$ and moreover the choice of $X^b$ can be made smooth in $W^b$. By taking the flow of $X^b$ at time $1$ (which is defined for all $g\in \G_{b_0}$ because of compactness), we find a family of isomorphisms
\[ \phi^1_{X^b}: \G_{b_0} \rmap \G_{b} ,\] parametrized smoothly by $b\in U$, giving the desired local trivialisation.
\end{proof}

\begin{rmk}
Using the Reeb stability theorem, it is enough that $\G_{b_0}$ be compact to ensure that there is a neighbourhood $U$ of $b_0$ in $B$ such that $\G_b$ is diffeomorphic to $\G_{b_0}$ for all $b \in U$. It then follows that the same proof above ensures that $\G$ is trivial in a (possibly smaller) neighbourhood of $b_0$.
\end{rmk}

\begin{rmk}
Del Hoyo and Fernandes have recently obtained (independently) the theorem above \cite{matias_rui_fibrations}. Their proof follows from a version of the Ehresmann stability theorem for Lie groupoids.
\end{rmk}

\section{The regular case}

When $\G$ is a regular Lie groupoid, i.e., having all leaves of the same dimension, it has natural representations on the bundle of isotropy Lie algebras of $\G$, denoted $\mathfrak{i}$ and on the normal bundle to the orbits, denoted $\nu$. An arrow $g\in \G$ acts on $\alpha\in \g_{s(g)}$ by conjugation, $g\cdot\alpha=r_{g^{-1}}\circ l_{g}\alpha$ and it acts on $[v]\in \nu_{s(g)}$ by the isotropy representation: if $g(\epsilon)$ is a curve on $\G$ with $g(0)=g$ such that $[v]=\left[\dezero s(g(\epsilon)) \right]$, then $g\cdot[v]=\left[\dezero t(g(\epsilon)) \right]$.

\begin{prop}\label{regular} The deformation cohomology of a regular Lie groupoid $\G$ fits into a long exact sequence 
\[ \cdots\rmap H^{k}(\G,\mathfrak{i})\stackrel{r}{\rmap} H^k_{\operatorname{def}}(\G)\stackrel{\pi}{\rmap} H^{k-1}(\G,\nu)\stackrel{K }{\rmap} H^{k+1}(\G,\mathfrak{i})\rmap\cdots \]
where $r$ is induced by the canonical inclusion $(\ref{inj})$, $\pi$ associates to a deformation cocycle $c$ the
class modulo $\mathrm{Im}(\rho)$ of the $s$-projection $s_{c}$ of $c$, and $K$ will be discussed below.  
\end{prop}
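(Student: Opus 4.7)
The plan is to exhibit the sought long exact sequence as the one associated to a short exact sequence of cochain complexes, generalising the low-degree analysis of Proposition \ref{the-LOS}. First, I would verify that $r$ and $\pi$ are chain maps. The map $r$ is the inclusion \eqref{inj0}, which is already known to commute with $\delta$. For $\pi$, regularity of $\G$ makes $\nu$ into a smooth vector bundle, so $c\mapsto [s_c]$ is a well-defined element of $C^{k-1}(\G,\nu)$; its compatibility with $\delta$ (up to an overall sign that can be absorbed into the definition) is a direct computation from formula \eqref{s-projection} combined with Lemma \ref{invariance-nu}, which identifies $[dt(c(g_2,\ldots,g_{k+1}))]$ modulo $\rho(A)$ with $g_2\cdot\pi(c)(g_3,\ldots,g_{k+1})$.

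Next I would check that $\pi$ is surjective on cochains, where the regularity hypothesis enters crucially. The smoothness of $\nu$ provides a subbundle $H\subset TM$ complementary to $\rho(A)$, and an Ehresmann connection for $s$ yields a splitting $\sigma\colon s^*TM\hookrightarrow T\G$ of $ds$. Given $u\in C^{k-1}(\G,\nu)$, one first lifts it to the $H$-valued cochain $\tilde{u}\in C^{k-1}(\G,TM)$ and then sets $c(g_1,\ldots,g_k):=\sigma_{g_1}(\tilde u(g_2,\ldots,g_k))$, which lies in $C^k_{\textrm{def}}(\G)$ and satisfies $\pi(c)=u$.

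Setting $K^*:=\ker\pi$ (a subcomplex, as is immediate from the chain-map property of $\pi$), the short exact sequence
\[ 0\rmap K^* \rmap C^*_{\textrm{def}}(\G) \stackrel{\pi}{\rmap} C^{*-1}(\G,\nu)\rmap 0 \]
induces a long exact sequence of the desired shape, provided that $H^*(K^*)\cong H^*(\G,\mathfrak i)$ via $r$. The main obstacle is proving this quasi-isomorphism. My plan is to filter $K^*$ as $r(C^*(\G,\mathfrak i))\subset K^*_0\subset K^*$, where $K^*_0:=\{c\in K^*: ds(c)=0\}$, and analyse the two associated graded pieces. Right-translation and the identification $A/\mathfrak i\cong\rho(A)$ give
\[ K^k_0/r(C^k(\G,\mathfrak i))\cong C^k(\G,\rho(A)), \qquad K^k/K^k_0\cong C^{k-1}(\G,\rho(A)), \]
and a careful computation using \eqref{s-projection} shows that the induced differential on the quotient $K^*/r(C^*(\G,\mathfrak i))$ is precisely the mapping cone of the identity on $C^*(\G,\rho(A))$, hence acyclic. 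This yields the required quasi-isomorphism $r\colon C^*(\G,\mathfrak i)\stackrel{\sim}{\rmap} K^*$.

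Finally, the map $K$ in the statement is, by construction, the connecting homomorphism of the above short exact sequence transported through the quasi-isomorphism $r$: given a cocycle $u\in C^{k-1}(\G,\nu)$, one lifts it to $c\in C^k_{\textrm{def}}(\G)$ with $\pi(c)=u$; then $\delta c\in K^{k+1}$ and its class corresponds under $r$ to an element $K([u])\in H^{k+1}(\G,\mathfrak i)$. For $k=1$ this recovers the cohomological curvature of subsection \ref{The first manifestation of curvature}, consistent with the notation used there.
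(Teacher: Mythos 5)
Your route is genuinely different from the paper's, and it is viable. The paper never forms the single short exact sequence $0\rmap \ker\pi\rmap C^*_{\textrm{def}}(\G)\stackrel{\pi}{\rmap} C^{*-1}(\G,\nu)\rmap 0$; instead it interpolates with an auxiliary acyclic complex $\mathcal{A}^k=C^k(\G,TM)\oplus C^{k-1}(\G,TM)$ (the deformation complex of $(s,t):\G\rmap M\times M$, i.e.\ a cone on $C^*(\G,TM)$), producing two short exact sequences $0\rmap C^*(\G,\mathfrak{i})\rmap C^*_{\textrm{def}}(\G)\stackrel{R}{\rmap}\mathcal{C}^*\rmap 0$ and $0\rmap\mathcal{C}^*\rmap\mathcal{A}^*\stackrel{S}{\rmap}C^*(\G,\nu)\rmap 0$, where $R(c)=(dt\circ c, ds\circ c)$; regularity enters in the surjectivity of $R$, and the degree shift comes from the connecting isomorphism $H^{k-1}(\G,\nu)\cong H^k(\mathcal{C})$. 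Your approach instead puts all the work into showing that $r:C^*(\G,\mathfrak{i})\rmap K^*=\ker\pi$ is a quasi-isomorphism; this is essentially the same computation packaged differently (it is the direct generalization of the proof given for foliation groupoids, which is the case $\mathfrak{i}=0$), and it has the advantage of exhibiting the connecting map explicitly as lifting-and-differentiating, which matches the degree-zero curvature construction. What the paper's version buys is that it needs no splittings of $\rho$ or of $ds$ in the identification step, and it makes the relation to $Ad_\Pi$ (Remark \ref{def-morphisms}) transparent; what yours buys is a shorter diagram and a direct description of $K$.

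There is, however, one step whose justification is not correct as written. First, $K_0^*=\{c\in K^*: s_c=0\}$ is \emph{not} a subcomplex: for $c=r(u)$ with $u\in C^k(\G,A)$ one has $s_{\delta c}(g_2,\ldots,g_{k+1})=-\rho(u(g_2,\ldots,g_{k+1}))$, which is nonzero in general, so your chain $r(C^*(\G,\mathfrak{i}))\subset K_0^*\subset K^*$ is not a filtration by subcomplexes and the ``associated graded pieces'' do not carry induced differentials. More importantly, the quotient $Q^*=K^*/r(C^*(\G,\mathfrak{i}))$ is \emph{not} ``precisely the mapping cone of the identity'' on $C^*(\G,\rho(A))$: writing $c=r_{g_1}(u)+\sigma_{g_1}(s_c)$ for an Ehresmann connection $\sigma$, the induced differential on $Q^k\cong C^k(\G,\rho(A))\oplus C^{k-1}(\G,\rho(A))$ has, besides the identity coupling $\bar u\mapsto\pm\rho(u)$ and the quasi-action terms, an extra component $C^{k-1}(\G,\rho(A))\rmap C^{k+1}(\G,\rho(A))$ coming from the basic curvature $K^{\mathrm{bas}}_\sigma$ of $\sigma$, and the diagonal quasi-action operators $\delta_\lambda$ do not square to zero by themselves, so there is no honest complex whose identity cone $Q$ could be. The conclusion you need (acyclicity of $Q$) is still true, but it requires the following repaired argument: denote the components of $\delta_Q(a,b)$ by $(d_1a+\kappa b,\ \phi a+d_2 b)$, where $\phi$ is (up to sign, and the identification $A/\mathfrak{i}\cong\rho(A)$ via $\rho$) the identity; then $\delta_Q^2=0$ forces $\phi d_1+d_2\phi=0$, and $h(a,b):=(\phi^{-1}b,0)$ is a contracting homotopy, $\delta_Q h+h\delta_Q=\mathrm{id}$. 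With this correction (and the standard remark that $\pi$ only anticommutes with $\delta$, which you already absorb into a sign), your proof goes through, and the identification of the connecting map with $K$ in degree $0$ is as you say, provided one computes it with an $(s,t)$-lift so that the representative lands in $r(C^2(\G,\mathfrak{i}))$ on the nose.
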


\begin{proof} 
We will construct two cochain complexes $\mathcal{C}$ and $\mathcal{A}$  which fit into two short exact sequences
\begin{equation*}
0\rmap C^*(\G,\mathfrak{i})\stackrel{r}{\rmap} C^*_{\operatorname{def}}(\G)\stackrel{R}{\rmap} \mathcal{C}^* \rmap 0,
\end{equation*}
\begin{equation}\label{pr-seq-2} 
0\rmap \mathcal{C}^*\rmap \mathcal{A}^* \stackrel{S}{\rmap} C^*(\G,\nu)\rmap 0
\end{equation}
and with the property that $\mathcal{A}^*$ is acyclic. For $\mathcal{A}^*$, define
\[ \mathcal{A}^k= C^{k}(\G, TM)\oplus C^{k-1}(\G, TM).\]
(see also Remark \ref{for-later-use}), with the differential given by 
\[ \delta (\phi, \psi)= (-\delta'(\phi), -\phi+ \delta'(\psi)),\]
where
\[  \delta': C^{k}(\G, TM)\rmap C^{k+1}(\G, TM),\]
\[ (\delta' \phi)(g_1,\ldots,g_{k+1})  = \sum_{i=1}^{k}(-1)^{i+1} \phi(g_1,\ldots, g_ig_{i+1},\ldots, g_{k+1}) +(-1)^{k+1} \phi(g_1,\ldots, g_{k}).\]
One can check directly that $\mathcal{A}^*$ is acyclic. More conceptually: $\mathcal{A}^{*}$ is the cylinder of the complex $(C^{*}(\G, TM), \delta')$, and $\delta'$ is clearly acyclic (with homotopy $\phi(g_1, \ldots, g_k)\mapsto \phi(1, g_1, \ldots, g_k)$). 

The map $S: \mathcal{A}^*\rmap C^*(\G;\nu)$ is defined by 
\[ S(\phi, \sigma)(g_1, \ldots, g_k)= g_1\cdot [\sigma(g_2, \ldots, g_k)] -[\phi(g_1, \ldots, g_k)]\]
where $[V]\in \nu$ denotes the class of the tangent vector $V\in TM$ and ``$g_1\cdot$'' refers to the canonical action of $\G$ on $\nu$. It is straightforward to check that $S$ is compatible with the differentials. 
Next, $\mathcal{C}^*$ is defined as the kernel of $S$ and $R$ associates to a cochain $c\in C^k_{\operatorname{def}}(\G)$ the pair
\[ R(c)= (\phi_c, \psi_c)\in C^{k}(\G, TM)\oplus C^{k-1}(\G, TM)\]
characterized by:
\[ \phi_c(g_1, \ldots, g_k)= dt(c(g_1, \ldots, g_k)), \ \ \psi_c(g_2, \ldots, g_k)= ds(c(g_1, \ldots, g_k)).\]
Lemma \ref{invariance-nu} implies that $R$ takes values in $\mathcal{C}^*$. Again, a straightforward computation shows that $R$ is compatible with the differentials. 
It is also clear that $\textrm{Ker}(R)= \textrm{Im}(r)$. Finally, we show that $R$ is surjective; let $(\phi, \psi)\in \mathcal{C}^k$. One can first find $c\in \mathcal{C}^{k}_{\textrm{def}}(\G)$ so that $\psi= \psi_c$ (e.g. use a splitting of $ds: T\G\rmap TM$
to lift the expressions $\psi(g_2, \ldots , g_k)$ to $T_{g_1}\G$). Using again Lemma \ref{invariance-nu} we see that
\[ g_1\cdot [\psi(g_2, \ldots, g_k)]= [dt(c(g_1, \ldots, g_k)] \]
hence the condition that $(\phi, \psi)\in \mathcal{C}^k$ implies that $\phi- dt\circ c$ takes values in the image of the anchor map $\rho$, hence (also using regularity) we can write
$\phi= dt\circ c+ \rho\circ \xi$ from some $\xi\in C^k(\G, A)$. Then $c'\in  C^k_{\operatorname{def}}(\G)$ defined by 
\[ c'(g_1, \ldots, g_k)= c(g_1, \ldots, g_k)+ r_{g_1}(\xi(g_2, \ldots, g_k))\]
is sent by $R$ to $(\phi, \psi)$.

Back to the original sequences, consider the long exact sequences in cohomology that they induce. The one induced by (\ref{pr-seq-2}) gives rise to isomorphisms 
\[ \partial: H^{k-1}(\G, \nu) \rmap H^{k}(\mathcal{C}) ,\]
which combined with the one induced by  (\ref{pr-seq-2}) gives rise to a long exact sequence as in the statement. 

We still have to show that, modulo the isomorphism $\partial$, 
$R$ is identified with $\pi$ i.e., in cohomology, $\partial \circ \pi= R$. For that, let $c\in C^{k}_{\textrm{def}}(\G)$ be a cocycle representing the cohomology class $[c]$. By definition of the connecting map $\partial$, the cohomology class $\partial\circ \pi [c]$ can be represented by  $\delta( a)$ (seen as a cochain in the subspace $\mathcal{C}^{k}$), where $a\in\mathcal{A}^{k-1}$ is any cochain such that $S(a)=\pi(c)$. From the definitions of $S$ and of $\delta$ it is easy to see that $S(-s_c,0)=\pi (c)$, and so $\partial\circ \pi [c]$ can be represented by the cochain $ \eta = (\delta' s_c, s_c)$.
The second component of $R(c)$ is also $s_c$, hence the second component of $R(c)-\eta$ is then equal to zero. On the other hand, since $c$ is closed, $R(c)-\eta$ is closed as well. Finally, it is an easy computation to check that for $(\phi,0)\in\mathcal{C}^k$, the second component of $\delta(\phi,0)$ is equal to $-\phi$. With this we conclude that $R(c)-\eta$ is in fact equal to zero, so $R$ and $\partial \circ \pi$ agree in cohomology.
\end{proof}

Although the previous proof may seem rather ad-hoc at first sight, it becomes very natural if one follows the intuition given by the adjoint representation interpretation of deformation cohomology (see Remark \ref{Yoneda-rmk} below). Furthermore, it reveals several new aspects (see the remarks below), including a possible route to the discovery of the full structure of the adjoint representation (next section).

\begin{rmk}\label{def-morphisms} ({\it A deformation complex for morphisms}) 
First of all, the complex $\mathcal{A}^*$ is similar to the deformation complex and this indicates the presence of a more general construction: one has a deformation complex $C^{*}_{\textrm{def}}(F)$ associated to any morphism of Lie groupoids
\[ F: \G\rmap \H\]
covering some base map $f: M\rmap N$ (cf. \cite{nr} for Lie groups). With the notation from Remark \ref{for-later-use},
\[ C^{k}_{\textrm{def}}(F)\subset C^{k}(\G, F^*T\H)\]
so that $k$-cochains $c$ are smooth maps
\[ \G^{(k)}\ni (g_1, \ldots, g_k)\mapsto c(g_1, \ldots, g_k)\in T_{F(g_1)} \H.\]
The condition that $c$ belongs to $C^{k}_{\textrm{def}}(F)$ is that $ds(c(g_1, \ldots, g_k))$ does not depend on $g_1$. Moreover, the differential $\delta$ of $C^{*}_{\textrm{def}}(F)$ is given by exactly the same formula as for 
the deformation complex, but using the division $\bar{m}$ of $\H$. Denote the resulting cohomology by 
$H^{*}_{\textrm{def}}(F)$. Of course, 
\[ H^{*}_{\textrm{def}}(\G)= H^{*}_{\textrm{def}}(\textrm{Id}_{\G}).\]

More generally, the relation between the deformation complexes of $F$, $\G$ and $\H$ is given by the maps $$C^{k}_{\textrm{def}}(\G) \stackrel{F_*}{\rmap} C^{k}_{\textrm{def}}(F) \stackrel{F^*}{\lmap} C^{k}_{\textrm{def}}(\H),$$ defined by $F_*(c)(g_1,\ldots g_k)=dF\circ c(g_1,\ldots,g_k)$ and $F^*(c')(g_1,\ldots,g_k)=c'(F(g_1),\ldots,F(g_k))$, for any $c\in C^{k}_{\textrm{def}}(\G)$ and $c'\in C^{k}_{\textrm{def}}(\H)$.

Similar to the deformation cohomology of $\G$, $H^{*}_{\textrm{def}}(F)$ can be seen as ``the differentiable cohomology of $\G$ with coefficients in $F^{*}Ad_{\H}$ - the pull-back by $F$ of the adjoint representation of $\H$''.

With this, the complex $\mathcal{A}^*$ from the previous proof is $C^{*}_{\textrm{def}}(F)$ where $F= (s, t): \G\rmap \Pi:= M\times M$ is the canonical map into the pair groupoid of $M$; hence it is related to $F^*Ad_{\Pi}$; the acyclicity of $\mathcal{A}$ is related to the fact that $Ad_{\Pi}$, i.e., the complex $TM\stackrel{\textrm{Id}}{\rmap} TM$, is acyclic. 
\end{rmk}

\begin{rmk}\label{rmk-curvature-map}({\it The curvature map }) One can also go on and compute the ``curvature map $K$'' in the sequence. In degree $0$ one finds the curvature discussed in Subsection \ref{The first manifestation of curvature}. 
A careful analysis in higher degrees shows that $K$ is the cup-product with a canonical cohomology class, still denoted by $K$,
\[ K\in H^{2}(\G, \textrm{Hom}(\nu, \mathfrak{i})).\]
Here we use the induced $\textrm{Hom}$-representation: for $g: x\rmap y$, its action of $\xi: \nu_x\rmap \mathfrak{i}_x$ is $g\cdot \xi: \nu_y\rmap \mathfrak{i}_y$ given by
\[ (g\cdot \xi)(v)= g \cdot \xi(g^{-1}\cdot v). \]
Also, the cup-product operation that we refer to is
\[ C^{k}(\G, \textrm{Hom}(\nu, \mathfrak{i}))\times C^{k'}(\G, \nu)\rmap C^{k+ k'}(\G, \mathfrak{i}),\ (\xi, v)\mapsto \xi\cdot v \]
defined by the same formula as (\ref{eq-cup-pr}) where the pointwise product of $u$ and $v$ is replaced by the evaluation of $u$ on $v$. We will explain in the next section how the point of view of representations up to homotopy can be used to describe the class $K$ (see remark \ref{rmk-K}). However, it is worth having in mind that one can proceed directly and analyse $K$ as it arises from the previous proposition; the analysis is not completely straightforward (e.g. one has to realise the relevance of connections on groupoids) but, ultimately, it reveals the full structure on $Ad= A\oplus TM$ (and the notion of representation up to homotopy). Although we do not give here the full details of such a direct approach, we hope that our comments motivate and clarify the next section. 
\end{rmk}

\begin{rmk}\label{Yoneda-rmk}({\it Yoneda extensions}) The heart of the previous proof is the exact sequence
\[ 0\rmap C^*(\G,\mathfrak{i})\stackrel{r}{\rmap} C^*_{\operatorname{def}}(\G)\stackrel{R}{\rmap}  \mathcal{A}^* \stackrel{S}{\rmap} C^*(\G,\nu)\rmap 0 .\]
With Remark \ref{def-morphisms} in mind, this sequence represents a sequence involving the adjoint and the related representations: $\mathfrak{i}$, $Ad_{\G}$, $Ad_{\Pi}$ and $\nu$; at the level of chain complexes, one simply deals with:
\begin{equation*}
\begin{tikzpicture}[description/.style={fill=white,inner sep=2pt},bij/.style={above,sloped,inner sep=.5pt}]	
	\matrix (m) [matrix of math nodes, row sep=0.3cm, column sep=2.5em, 
		     text height=1.5ex, text depth=0.25ex]
	{ 
		\ &\mathfrak{i} & A & TM & \nu \\
		\ &\underbrace{\ 0\ } & \underbrace{TM} & \underbrace{TM} & \underbrace{\ 0\ } \\
		\ &\mathfrak{i} & Ad_{\mathcal{G}} & F^*Ad_{\Pi} & \nu \\
	};
	\path[->,font=\scriptsize]
		(m-1-3) edge node[auto] {$ \rho $} (m-1-4)
		(m-1-4) edge                       (m-1-5)
		(m-2-2) edge  			            (m-2-3)
		(m-2-3) edge node[auto] {$ Id $}   (m-2-4)
		(m-2-4) edge  (m-2-5) 
		(m-3-2) edge  (m-3-3)
		(m-3-3) edge  (m-3-4)
		(m-3-4) edge  (m-3-5)
		(m-1-2) edge[draw=none] node[sloped, auto=false] {$\oplus$} (m-2-2)
		(m-1-3) edge[draw=none] node[sloped, auto=false] {$\oplus$} (m-2-3)
		(m-1-4) edge[draw=none] node[sloped, auto=false] {$\oplus$} (m-2-4)
		(m-1-5) edge[draw=none] node[sloped, auto=false] {$\oplus$} (m-2-5)
		(m-3-1) edge[draw=none] node[sloped, auto=false] {\large$\mathcal{E}$:} (m-3-2);
\path[right hook->,font=\scriptsize]
		(m-1-2) edge (m-1-3);

\end{tikzpicture}
\end{equation*}

This gives an interpretation of the curvature map from the previous proposition in terms of Yoneda extensions (in the sense of homological algebra), as the cup-product with the element in the $\textrm{Ext}$-group represented by $\mathcal{E}$; moreover, since we basically deal with vector bundles (for which $\textrm{Hom}(E, F)= E^*\otimes F$) the relevant group $\textrm{Ext}^2(\nu, \mathfrak{i})$ is simply $H^{2}(\G, \textrm{Hom}(\nu, \mathfrak{i}))$. Again, all these can be made precise within the framework of representations up to homotopy, providing another way of looking at the cohomology class $K$. 
\end{rmk}

\section{Relation with  the adjoint representation}\label{s-ad}
\label{sec-adjoint}
In this section we describe the relationship between $H^{*}_{\textrm{def}}(\G)$ and the adjoint representation of \cite{camilo}. Actually, we will explain that, once a connection $\sigma$ is fixed, $C^{*}_{\textrm{def}}(\G)$ gives rise right away to a representation up to homotopy
$Ad_{\sigma}$ and then we will identify it with the adjoint representation from \cite{camilo}.

We fix a Lie groupoid $\G\tto M$ and we start by briefly recalling the notion of representation up to homotopy. As in  Remark \ref{for-later-use}, for a vector bundle $E$ over $M$ we consider the space $C^k(\G,E)$ of $E$-valued differentiable cochains. If $E$ 
is graded, then we consider $C(\G,E)^*$ with the total grading 
\[ C(\G,E)^n= \bigoplus_{k+l= n} C^k(G, E^l).\]
As in subsection \ref{sub-Differentiable cohomology}, the cup-product makes $C(\G,E)$ into a right graded $C(\G)$-module.
By definition, a \textit{representation up to homotopy of $\G$} is a graded vector bundle $E$ together with a differential $D$ on $C(\G,E)$ (the structure operator) which makes it into a 
 differential graded $(C(\G), \delta)$-module (with respect to the total grading and the cup-products).
This is precisely what the deformation complex of $\G$ gives us once we fix a splitting of the short exact sequence:
\begin{equation}\label{ehr-seq} 
t^*A\stackrel{r}{\rmap} T\G\stackrel{ds}{\rmap} s^*TM.
\end{equation}
Such a splitting can be seen as a right inverse  $\sigma: s^*TM\rmap T\G$ of $(ds)$, i.e. an Ehresmann connection on the bundle $s: \G\rmap M$; we say that $\sigma$ is an Ehresmann connection on $\G$ if, moreover,  at the units it coincides with the canonical splitting $(du)$.

\begin{lemma}\label{lemma-iso-ad} Consider the graded vector bundle 
\[ Ad= A\oplus TM\] 
with $A$ in degree $0$ and $TM$ in degree $1$. Then any Ehresmann connection $\sigma$ induces isomorphisms
\[ I_{\sigma}: C^{k}_{\mathrm{def}}(\G) \cong C(\G, Ad)^k=  C^k(\G, A)\oplus C^{k-1}(\G, TM),\ \ c\longleftrightarrow (u, v)\]
characterized by 
\begin{equation}\label{splitting-def-cochain}
c(g_1, \ldots, g_k)= r_{g_1}(u(g_1, \ldots, g_k))- \sigma_{g_1}(v(g_2, \ldots, g_k)).
\end{equation}
Moreover, this is an is an isomorphism of right $C(\G)$-modules (see lemma \ref{def-DG-mod} for the module structure on $C_{\mathrm{def}}(\G)$). In particular, for any Ehresmann connection $\sigma$ on $\G$, there is a unique operator $D_{\sigma}$ on $C(\G, Ad)$ which makes $Ad$ into a representation up to homotopy of $\G$ and such that $I_{\sigma}$ is an isomorphism between $(C_{\mathrm{def}}(\G), \delta)$ and $(C(\G, Ad), D_{\sigma})$.
\end{lemma}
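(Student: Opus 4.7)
The plan is to construct the claimed isomorphism directly from the splitting of the short exact sequence \eqref{ehr-seq} induced by $\sigma$, verify that it intertwines the cup-product actions, and then transport the deformation differential across it. The key geometric input is that $\sigma: s^{*}TM\to T\G$ splits \eqref{ehr-seq}, giving a fibrewise direct sum decomposition
\[ T_g\G\ =\ r_g(A_{t(g)})\ \oplus\ \sigma_g(T_{s(g)}M)\qquad (g\in\G).\]
Applied pointwise to a deformation cochain $c\in C^{k}_{\mathrm{def}}(\G)$, this decomposition writes $c(g_1,\ldots,g_k)$ uniquely as $r_{g_1}(u(g_1,\ldots,g_k))-\sigma_{g_1}(v(g_1,\ldots,g_k))$ with $u(g_1,\ldots,g_k)\in A_{t(g_1)}$ and $v(g_1,\ldots,g_k)\in T_{s(g_1)}M$. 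Applying $ds$ and using $ds\circ r_{g_1}=0$ and $ds\circ\sigma_{g_1}=\mathrm{id}$ gives $v=-ds\circ c$, so that by $s$-projectability of $c$ the component $v$ depends only on $(g_2,\ldots,g_k)$ and may be regarded as an element of $C^{k-1}(\G,TM)$ (noting $T_{s(g_1)}M=T_{t(g_2)}M$). This defines $I_\sigma$ by $c\mapsto(u,v)$.

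Next I would check that $I_\sigma$ is bijective by exhibiting an inverse: given $(u,v)\in C^k(\G,A)\oplus C^{k-1}(\G,TM)$, simply set
\[ c(g_1,\ldots,g_k)\ :=\ r_{g_1}(u(g_1,\ldots,g_k))-\sigma_{g_1}(v(g_2,\ldots,g_k))\ \in\ T_{g_1}\G.\]
Smoothness is clear, and applying $ds$ gives $ds\circ c = -v$, which does not depend on $g_1$, so $c$ lies in $C^{k}_{\mathrm{def}}(\G)$. The uniqueness of the decomposition $T_g\G=r_g(A_{t(g)})\oplus\sigma_g(T_{s(g)}M)$ shows this construction is inverse to $I_\sigma$.

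Then I would verify that $I_\sigma$ intertwines the right $C(\G)$-module structures (the one from Lemma \ref{def-DG-mod} on the deformation complex, and the obvious one on $C(\G,Ad)$ coming from the cup product). For $f\in C^l(\G)$, the cup product $c\cdot f$ is defined by scalar multiplication of $c(g_1,\ldots,g_k)\in T_{g_1}\G$ by $f(g_{k+1},\ldots,g_{k+l})$; since $r_{g_1}$ and $\sigma_{g_1}$ are linear, the defining formula immediately yields $I_\sigma(c\cdot f)=(u\cdot f,\,v\cdot f)=I_\sigma(c)\cdot f$. This is a purely formal calculation.

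With all of this in hand, the last part is automatic: define $D_\sigma := I_\sigma\circ\delta\circ I_\sigma^{-1}$. Since $I_\sigma$ is an isomorphism of right $C(\G)$-modules and $(\delta,C^{*}_{\mathrm{def}}(\G))$ is a $(C(\G),\delta)$-DG-module by Lemma \ref{def-DG-mod}, $D_\sigma$ is a degree-one derivation on $C(\G,Ad)$ that squares to zero and satisfies the DG-module Leibniz rule over $(C(\G),\delta)$. Hence $(Ad,D_\sigma)$ is a representation up to homotopy of $\G$, and by construction $I_\sigma$ is a chain isomorphism. Uniqueness of $D_\sigma$ with this compatibility is immediate from the fact that $I_\sigma$ is bijective. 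There is no serious obstacle here: the only step requiring care is confirming that $v$ really is $s$-projectable so as to live in $C^{k-1}(\G,TM)$, which is exactly the content of the $s$-projectability axiom for deformation cochains.
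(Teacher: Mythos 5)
Your proposal is correct and follows exactly the intended argument (which the paper leaves essentially implicit, remarking only that the lemma "is precisely what the deformation complex gives us once we fix a splitting" of the sequence $t^*A \to T\G \to s^*TM$): you decompose $T_g\G$ via the connection, identify the $TM$-component with $-s_c$ so that $s$-projectability is what places it in $C^{k-1}(\G,TM)$, check module compatibility formally, and transport $\delta$ to get $D_\sigma$, with uniqueness immediate from bijectivity. No gaps.
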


The resulting representation up to homotopy will be denoted $Ad_{\sigma}$. To make it more explicit (and identify it with the one of \cite{camilo}), let us first be more explicit about the structure of representations up to homotopy of length $2$, i.e. of type
\[ E= E^0\oplus E^1\]
In this case, the structure operator 
\[ D: C^{k}(\G, E^0)\oplus C^{k-1}(\G, E^1)\rmap C^{k+1}(\G, E^0)\oplus C^{k}(\G, E^1)\]
is necessarily of type
\[ D(u, v)= (\delta_{\lambda}(u)+ K\cdot v, -\delta_{\lambda}(v)+ \partial(u) )\]
where:
\begin{itemize}
\item $\partial: E^0\rmap E^1$ is a vector bundle morphism and we use the same letter for
\[ \partial: C^k(\G, E^0)\rmap C^k(\G, E^1), \ \partial(c)= \partial \circ c.\] 
\item $\lambda$ is  a quasi-action of $\G$ on $E= E^{0}\oplus E^{1}$ acting componentwise and
\[ \delta_{\lambda}: C^{*}(\G, E)\rmap C^{*+1}(\G, E)\]
is the induced operator (cf. Remark \ref{for-later-use}). 
\item the ``curvature term'' $K$ is a smooth section which associates to  a pair $(g, h)$ of composable arrows a linear map $K(g, h): E^{1}_{s(h)}\rmap E^{0}_{t(g)}$
and we use the cup-product operation 
\[ C^{k-1}(\G, E^1)\rmap C^{k+1}(\G, E^0), \ c\mapsto K\cdot c\]
\[ (K\cdot c)(g_1, \ldots, g_{k+1})= K(g_1, g_2)(c(g_3, \ldots, g_{k+1})).\]
\end{itemize}
Note that the condition that $D^2= 0$ breaks into 
\begin{equation}\label{cocycle-eq-D2=0-1} \partial \circ \lambda_{g}= \lambda_{g}\circ \partial \ \ \ (\textrm{on}\ E^0),\end{equation}
\begin{equation}\label{cocycle-eq-D2=0-2}  \lambda_{g}\lambda_{h}- \lambda_{gh} + K(g, h)\circ \partial= 0  \ \ \ (\textrm{on}\ E^0),\end{equation}
\begin{equation}\label{cocycle-eq-D2=0-3} \lambda_{g}\lambda_{h}- \lambda_{gh}+ \partial \circ K(g, h)= 0 \ \ \ (\textrm{on}\ E^1),\end{equation}
\begin{equation}\label{cocycle-eq-D2=0-4} \lambda_{g}K(h, k)- K(gh, k)+ K(g, hk)-  K(g, h)\lambda_k= 0     \ \ \ (\textrm{on}\ E^1).\end{equation}

Hence, lemma \ref{lemma-iso-ad} implies that $Ad$ comes with operators $\partial,\lambda,K$ associated to $\sigma$. It is not difficult to check that $\partial$ is the anchor of $A$. We claim that $\lambda$ and $K$ coincide with the quasi-action and the basic curvature of $\sigma$, introduced in \cite{camilo}:

\begin{itemize}\item the quasi-actions associate to every $g: x\rmap y$ the maps
\[ \lambda_g: T_xM\rmap T_yM, \ \lambda_g(X)= (dt)_g(\sigma_g(X)),\]
\[ \lambda_g: A_x\rmap A_y, \ \lambda_g(\alpha)= - \omega_g(\overleftarrow{\alpha}(g)),\]
where $\overleftarrow{\alpha}$ is given by (\ref{left-inv})) and where $\omega:T\G\rmap t^*A$ is the induced left splitting of the sequence \ref{ehr-seq}, $\omega_g(X)=r_{g^{-1}}(X-\sigma_gds(X))$;

\item the basic curvature, $ K= K^{\textrm{bas}}_{\sigma}$, associates to a pair of composable arrows
\[ y\stackrel{g}{\lmap} \stackrel{h}{\lmap} x\]
and a vector $v\in T_xM$, the element
\[ K(g, h)v\in A_y.\]
characterized by 
\begin{equation}\label{def-eq-K-bas} 
r_{gh}(K(g, h)v)= \sigma_{gh}(v)- (dm)_{g, h}(\sigma_g(\lambda_h(v)), \sigma_h(v)) \in T_{gh}\G.
\end{equation}
\end{itemize}
In other words, we have:

\begin{prop} The structure operator of the  representation up to homotopy $Ad_{\sigma}= A\oplus TM$ arising from Lemma \ref{lemma-iso-ad}
is the one associated to the quasi-actions and the basic curvature of $\sigma$ and the bundle map $\partial= \rho: A\rmap TM$, i.e. 
\[ D_{\sigma}(u, v)=  (\delta_{\lambda}(u)+ K\cdot v, -\delta_{\lambda}(v)+ \rho(u) )\]
\end{prop}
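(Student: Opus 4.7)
The plan is to pull the differential $\delta$ across the isomorphism $I_\sigma$ and identify the resulting structure operator on $C(\G, Ad)^*$ with the claimed formula. I would split any $c\in C^{k}_{\mathrm{def}}(\G)$ with $I_\sigma(c) = (u,v)$ as $c = c_u + c_v$ where $c_u(g_1,\ldots,g_k) := r_{g_1}u(g_1,\ldots,g_k)$ is the vertical piece (in $\ker ds$) and $c_v(g_1,\ldots,g_k) := -\sigma_{g_1}v(g_2,\ldots,g_k)$ is the horizontal piece; by linearity of $\delta$ it suffices to compute $\delta c_u$ and $\delta c_v$ separately and read off the two components of $I_\sigma(\delta c_u + \delta c_v)$.

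For the $TM$-component I would use the $s$-projection formula (\ref{s-projection}) together with $ds\circ r_g = 0$, $ds\circ\sigma_g = \mathrm{id}$, $dt\circ r_g = \rho$, and $dt\circ\sigma_g = \lambda_g$ (this last being the very definition of $\lambda$ on $TM$). Direct bookkeeping yields $s_{\delta c} = -\rho(u) + \delta_\lambda v$. Since $ds(r_{g_1}\tilde u - \sigma_{g_1}\tilde v) = -\tilde v$ for any pair, the $TM$-component of $I_\sigma(\delta c)$ is $-s_{\delta c} = \rho(u) - \delta_\lambda v$, as claimed.

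For the $A$-component, the heart of the proof, I would establish two identities for $d\bar m$. The first, $d\bar m(r_{g_1g_2}\beta, r_{g_2}\alpha) = r_{g_1}\beta + \overleftarrow{\alpha}(g_1)$, combines $d\bar m(r_{g_1g_2}\beta, 0_{g_2}) = r_{g_1}\beta$ (obtained by differentiating $\bar m(ab,b) = a$) with $d\bar m(0_{g_1g_2}, r_{g_2}\alpha) = \overleftarrow{\alpha}(g_1)$ (obtained from the curve $\bar m(g_1g_2, \exp(t\alpha)g_2) = g_1\exp(t\alpha)^{-1}$ and the definition $\overleftarrow{\alpha}(g) = l_g\circ di(\alpha)$), added via linearity of $d\bar m$ on the composable fibre. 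The second, $d\bar m(\sigma_{g_1g_2}w, \sigma_{g_2}w) = \sigma_{g_1}\lambda_{g_2}w + r_{g_1}K^{\mathrm{bas}}_\sigma(g_1,g_2)w$, follows by applying the identity $d\bar m(dm(X,Y), Y) = X$ (from $\bar m(m(a,b),b) = a$) with $X = \sigma_{g_1}\lambda_{g_2}w$ and $Y = \sigma_{g_2}w$, substituting the defining equation (\ref{def-eq-K-bas}) for $K^{\mathrm{bas}}_\sigma$, and splitting off the $\ker ds$-summand $r_{g_1g_2}K^{\mathrm{bas}}w$ via linearity and the first identity.

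The final algebraic ingredient is $r_g\lambda_g\alpha = -\overleftarrow{\alpha}(g) + \sigma_g\rho(\alpha)$ for $\alpha\in A_{s(g)}$, immediate from the definitions of $\lambda$ and $\omega$ once one observes $ds(\overleftarrow{\alpha}(g)) = \rho(\alpha)$. Plugging the first $d\bar m$-identity into $\delta c_u$ and rewriting $\overleftarrow{u(g_2,\ldots)}(g_1)$ via this, the remaining combinatorial terms assemble into $r_{g_1}(\delta_\lambda u) - \sigma_{g_1}\rho(u)$; plugging the second identity into $\delta c_v$, the $\sigma_{g_1}\lambda_{g_2}w$ piece combines with the other $\sigma_{g_1}v$-terms to form $\sigma_{g_1}(\delta_\lambda v)$, leaving the purely vertical remainder $r_{g_1}(K\cdot v)$. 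Summing gives $I_\sigma(\delta c) = (\delta_\lambda u + K\cdot v,\, \rho(u) - \delta_\lambda v)$, as claimed. The main obstacle is sign management: $d\bar m$ is linear only on the composable subbundle of $T\G\times T\G$, so that $d\bar m(-A,-B) = -d\bar m(A,B)$ rather than $+d\bar m(A,B)$, and every decomposition used to invoke linearity must be checked to preserve composability at each step.
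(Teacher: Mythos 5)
Your proposal is correct and follows essentially the same route as the paper's proof: the same splitting of $c$ into the $r_{g_1}u$ and $\sigma_{g_1}v$ pieces, the same two key identities $d\bar m(r_{g_1g_2}\beta, r_{g_2}\alpha) = r_{g_1}\beta + \overleftarrow{\alpha}(g_1)$ and $d\bar m(\sigma_{g_1g_2}w, \sigma_{g_2}w) = \sigma_{g_1}\lambda_{g_2}w + r_{g_1}K^{\mathrm{bas}}_\sigma(g_1,g_2)w$ proved by the same arguments, and the same rewriting $r_g\lambda_g\alpha = -\overleftarrow{\alpha}(g) + \sigma_g\rho(\alpha)$, with only the cosmetic difference that you extract the $TM$-component from the $s$-projection formula while the paper reads both components off the full term-by-term computation. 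Your closing caution about $d\bar m$ being linear only on the composable subbundle is a correct and worthwhile point that the paper handles implicitly.
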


\begin{proof}
Before we do the actual computation, let us first point out three general formulas. First we note that, with respect to the decomposition induced by $\sigma$, the expressions of type $\overleftarrow{\beta}(g)$ correspond to the pair $(-\lambda_g(\beta), \rho(\beta))\in A_{t(g)}\oplus T_{s(g)}M$; indeed, 
the $A$ component is obtained by applying $\omega_g$, hence it is $-\lambda_g(\beta)$, while the $TM$-component is obtained by applying $ds$, hence it is $ds(l_g(di(\beta)))= dt(\beta)= \rho(\beta)$. In conclusion, 
\begin{equation*}
\overleftarrow{\beta}(g)= - r_g (\lambda_g(\beta))+ \sigma_g(\rho(\beta)).
\end{equation*}

Next, we claim that, for all $\alpha, \beta\in A$ and $(g, h)$ composable arrows, one has

\begin{equation} \label{aux-eq-1}
(d\bar{m})_{gh, h}(r_{gh}(\alpha), r_h(\beta))= \overrightarrow{\alpha}(g)+ \overleftarrow{\beta}(g)
= r_g(\alpha)- r_g (\lambda_g(\beta))+ \sigma_g(\rho(\beta))
\end{equation}
Due to the previous discussion, we only have to show the first equality. Since both $(r_{gh}(\alpha), 0_h)$ and $(0_g, r_h(\beta))$ are tangent to the domain $\G^{[2]}$ of $\bar{m}$, to prove this formula it suffices to consider separately the cases when $\beta= 0$ and then when $\alpha= 0$. When $\beta= 0$, the left hand side is the differential of the map $s^{-1}(x)\ni a\mapsto \bar{m}(r_{gh}(a), h)= ag= r_g(a)$ ($x= t(g)$), at the unit at $x$, applied to $\alpha$; hence it gives $r_{g}(\alpha(x))= \overrightarrow{\alpha}(g)$. Similarly, when $\alpha= 0$, we deal with the differential of the map 
$s^{-1}(y)\ni b\mapsto \bar{m}(gh, r_h(b))= g b^{-1}= l_g(i(b))$ ($y= t(h)= s(g)$), at the unit at $y$ applied to $\beta$, i.e. (see (\ref{left-inv})), $\overleftarrow{\beta}(g)$.

Finally we also need the fact that, for $(g, h)$ composable and $V\in T_{s(h)}M$, 
\begin{equation} \label{aux-eq-3}
(d\bar{m})_{gh, h}(\sigma_{gh}(V), \sigma_h(V))= \sigma_g(\lambda_h(V))+ r_{g}(K(g, h)(V))
\end{equation}
Indeed, using the formula for $\sigma_{gh}(V)$ that results from (\ref{def-eq-K-bas}), we find
\[ (d\bar{m})_{gh, h}((dm)_{g, h}(\sigma_g(\lambda_h(V)), \sigma_h(V)), \sigma_h(V)))+ (d\bar{m})_{gh, h}(r_{gh}(K(g, h)(V), 0_h).\]
The first term is just $\sigma_g(\lambda_h(V))$ because $\bar{m}(m(g, h), h)= g$, while the second term is $r_{g}(K(g, h)(V))$.

Consider now a deformation cochain $c$ written as (\ref{splitting-def-cochain}); we will compute $\delta(c)$ in terms of $u$ and $v$. \\

The first term in the formula for $\delta(c)(g_1, \ldots, g_{k+1})$ is 
\begin{align*} - (d\bar{m})(r_{g_1g_2}(u(g_1g_2, g_3, \ldots, g_{k+1}))&- \sigma_{g_1g_2}(v(g_3, \ldots, g_{k+1})), 
 r_{g_2}(u(g_2, \ldots, g_{k+1}))\\ &- \sigma_{g_2}(v(g_3, \ldots, g_{k+1})).\end{align*}
Using (\ref{aux-eq-1}) we find that $- (d\bar{m})(r_{g_1g_2}(u(g_1g_2, g_3, \ldots, g_{k+1})), 
r_{g_2}(u(g_2, \ldots, g_{k+1})))$ is 
\[ - r_{g_1}(u(g_1g_2, g_3, \ldots, g_{k+1}))+ r_{g_1}(\lambda_{g_1}(u(g_2, \ldots, g_{k+1})))- \sigma_{g_1}(\rho(u(g_2, \ldots, g_{k+1}))) \]
and using (\ref{aux-eq-3}) we find that $(d\bar{m})(\sigma_{g_1g_2}(v(g_3, \ldots, g_{k+1})), \sigma_{g_2}(v(g_3, \ldots, g_{k+1})))$ is 
\[ \sigma_{g_1}(\lambda_{g_2}(v(g_3, \ldots, g_{k+1})))+ r_{g_1}(K(g_1, g_2)(v(g_3, \ldots, g_{k+1}))) .\]
Hence the components of the first term in the formula for $\delta(c)$ are 
\[ - u(g_1g_2, g_3, \ldots, g_{k+1})+ \lambda_{g_1}(u(g_2, \ldots, g_{k+1}))+ K(g_1, g_2)(v(g_3, \ldots, g_{k+1}))\in A_{t(g_1)},\]
\[ - \rho(u(g_2, \ldots, g_{k+1}))+ \lambda_{g_2}(v(g_3, \ldots, g_{k+1}))\in T_{s(g_1)}M .\]

The next terms in the formula for $\delta(c)(g_1, \ldots, g_{k+1})$ are, for each $i\in \{2, \ldots, k\}$:
\[ (-1)^{i} (r_{g_1}(u(g_1, \ldots, g_ig_{i+1}, \ldots, g_{k+1}))- \sigma_{g_1}(v(g_2, \ldots, g_ig_{i+1}, \ldots, g_{k+1})))\]
hence give the components
\[ (-1)^i u(g_1, \ldots, g_ig_{i+1}, \ldots, g_{k+1})\in A_{t(g_1)}, (-1)^{i+1}v(g_2, \ldots, g_ig_{i+1}, \ldots, g_{k+1})\in T_{s(g_1)}M .\]
And similarly for the last term. All together, we see that the components corresponding to $\delta(c)$ are
\[ \delta_{\lambda}(u)(g_1, \ldots, g_{k+1})+ K(g_1, g_2)(v(g_3, \ldots, g_{k+1}))\in A,\]  
\[- \rho(u(g_2, \ldots, g_{k+1}))+ \delta_{\lambda}(v)(g_2, \ldots, g_{k+1}) \in TM.\]
Therefore, with respect to the splitting given by (\ref{splitting-def-cochain}) (which introduces a minus sign on the $TM$ component) we find
\[ (\delta_{\lambda}(u)+ K\cdot v, \rho(u)- \delta_{\lambda}(v)).\]
\end{proof}

\begin{rmk}\label{sign convention} When comparing with \cite{camilo}, note that one has to take care with adjusting signs: the formula in \textit{loc. cit.} for the differential $\delta u$ of a differentiable $k$-cochain (formula (\ref{eq-diff-action})) differs from ours by a factor of $(-1)^k$, and the formula for the cup product of a $k$-cochain $u$ and a $k'$-cochain $v$ (formula (\ref{eq-cup-pr})) differs by a factor of $(-1)^{kk'}$. By sending $f\in C^ l(\G)$ and  $(u,v)\in C(\G,Ad)^k$ to $(-1)^{\lfloor\frac{l}{2}\rfloor}f$ and $((-1)^{\lfloor\frac{k}{2}\rfloor}u, (-1)^{\lfloor\frac{k-1}{2}\rfloor}v)$ respectively, one obtains an isomorphism of chain complexes between the two versions, respecting the DG-module structures. Here $\lfloor x\rfloor$ denotes the largest integer that is smaller or equal to $x$.
\end{rmk}

\begin{rmk} It is straightforward to check that the isomorphism $I_{\sigma}$ restricts to an isomorphism between the normalized subcomplexes $(\widehat{C}_{\textrm{def}}(\G), \delta)$ and $(\widehat{C}(\G, Ad), D_{\sigma})$. A cochain $c\in C^k(\G,Ad)$ is said to be normalized if $c(g_1,\cdots,g_k)=0$ whenever any of the arrows $g_i$ is a unit, and $\widehat{C}^k(\G, Ad)$ denotes the subspace of normalized $k$-cochains.
\end{rmk}

\begin{rmk}\label{rmk-K} We now return to case of a regular Lie groupoid $\G$ and to the sequence from Proposition \ref{regular}, in order to give another description of the curvature map $K$. Choose splittings $\mu: \nu \rmap TM$ of the canonical projection $\pi$ and $\tau: \textrm{Im}(\rho)\rmap A$ of the anchor map, and a \textit{compatible} Ehresmann connection $\sigma$ - i.e., with the property that $\mu$ and $\tau$ are equivariant with respect to the canonical actions of $\G$ on $\nu$ and $\mathfrak{i}$, and the quasi-actions $\lambda$ induced by $\sigma$ on $A$ and $TM$.
Such a $\sigma$ can always be obtained, by starting with any Ehresmann connection $\sigma'$ on $\G$, and modifying it via a cochain $\eta\in C^1(\G,\mathrm{Hom}(TM,A))$, as explained in Lemma 6.10 of \cite{mehta}.

Then, composing $K_{\sigma}$ with $\mu$ and $\tau$, we have that any pair $(g, h)$ of composable arrows induces a map $\tau K_{\sigma}(g, h) \mu: \nu_{s(h)}\rmap \mathfrak{i}_{t(g)}$; moving from $\nu_{s(h)}$ to $\nu_{t(g)}$ using the action by $gh$, we end up with a map $K_{\sigma}^{\mu, \tau}(g, h): \nu_{t(g)}\rmap \mathfrak{i}_{t(g)}$, therefore with a differentiable cochain
\[ K_{\sigma}^{\mu, \tau}\in C^{2}(\G, \textrm{Hom}(\nu, \mathfrak{i})).\]

It can be proven that $K_{\sigma}^{\mu, \tau}$ is a cocycle, the resulting cohomology class
\[ [K_{\sigma}^{\mu, \tau}]\in  H^2(\G, \textrm{Hom}(\nu, \mathfrak{i}))\]
does not depend on the choice of $\sigma, \mu$ and $\tau$ (as long as $\sigma$ is compatible with $\mu$ and $\tau$), and the map $K$ from Proposition \ref{regular}
is given by the cup-product with this class.
\end{rmk}

\section{Relation with the infinitesimal theory  (the van Est map)}
\label{sec-Van-Est}

In this subsection we show that the deformation cohomology $H^{*}_{\textrm{def}}(\G)$ is the global (groupoid) analogue of the similar cohomology $H^{*}_{\textrm{def}}(A)$ from the deformation theory of Lie algebroids \cite{marius_ieke}.

Recall first the definition of the deformation complex $(C^*_\text{def}(A),\delta)$ (and the deformation cohomology $H^{*}_{\textrm{def}}(A)$) of a Lie algebroid $A$ over a manifold $M$. The $k$-cochains are antisymmetric multilinear maps $D:\Gamma(A)^k\rmap \Gamma(A)$ which are \textit{multiderivations}, i.e., such that there is a map $\sigma_D:\Gamma(A)^{k-1}\rmap\mathfrak{X}(M)$, called the \textit{symbol} of $D$, which is multilinear and satisfies \[D(\alpha_1,\alpha_2,\ldots,f\alpha_k)=fD(\alpha_1,\alpha_2,\ldots,\alpha_k)+\L_{\sigma_D(\alpha_1,\ldots,\alpha_{k-1})}(f)\alpha_k\]
 for any $f\in C^\infty (M)$ and $\alpha_i\in \Gamma(A)$.
 The differential $\delta:C^k_\text{def}(A)\rmap C^{k+1}_\text{def}(A)$ is given by 
\begin{align*}\delta(D)(\alpha_1,\ldots,\alpha_{k+1}) &= \sum_i(-1)^{i+1}[\alpha_i,D(\alpha_1,\ldots,\hat{\alpha}_i,\ldots, \alpha_{k+1})] \\ & + \sum_{i<j}(-1)^{i+j}D([\alpha_i,\alpha_j],\alpha_1,\ldots,\hat{\alpha}_i,\ldots,\hat{\alpha}_j,\ldots,\alpha_{k+1}).
\end{align*}

Next, we relate the deformation cohomology $H^{*}_{\textrm{def}}(\G)$ of a Lie groupoid $\G$ with the deformation cohomology $H^{*}_{\textrm{def}}(A)$ of the Lie algebroid $A$ of $\G$. This can be seen as a generalization of the more ordinary van Est map relating differentiable cohomology to Lie algebroid cohomology. Given a section $\alpha\in \Gamma(A)$ we can define a map $R_\alpha: \widehat{C}^{k+1}_\text{def}(\G)\rmap \widehat{C}^k_\text{def}(\G)$ by $R_\alpha(c)=[c,\overrightarrow{\alpha}]_{|M}$ when $k=0$ and by

 \begin{equation*}R_{\alpha}(c)(g_1,\ldots,g_k)=(-1)^k\dezero c(g_1,\ldots, g_k,\phi_\epsilon^\alpha(s(g_k))^{-1})
\end{equation*} when $k\geq 1$, where $\phi^\alpha_\epsilon$ denotes the flow of the right-invariant vector field on $\G$ associated to $\alpha$.
The van Est map is the map 
\[ \V:\widehat{C}^*_\text{def}(\G)\rmap C^*_\text{def}(A)\] 
defined by $$(\V c)(\alpha_1,\ldots, \alpha_k)=\sum_{\tau\in S_k}(-1)^{|\tau|}R_{\alpha_{\tau(k)}}\circ\ldots\circ R_{\alpha_{\tau(1)}}(c).$$

Note that the van Est map is only defined on the subcomplex of normalized deformation cochains.

In analogy to the usual van Est theorem relating Lie groupoid cohomology to Lie algebroid cohomology, we obtain the following result:

\begin{thm}\label{VE} For any Lie groupoid $\G$ the van Est map $\V$ is a chain map; hence it induces a map in cohomology
\[ \V:H^p_\mathrm{def}(\G)\rmap H^p_\mathrm{def}(A).\]
Moreover, if $\G$ has $k$-connected $s$-fibres then this map is an isomorphism in all degrees $p\leq k$.
\end{thm}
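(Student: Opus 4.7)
The plan is to handle the two statements of the theorem separately: first verify algebraically that $\V$ is a chain map, then upgrade it to an isomorphism in the stated range via a double complex / spectral sequence argument in the spirit of Crainic's van Est theorem for differentiable cohomology.

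\emph{Chain map property.} I would proceed by an essentially direct (but bookkeeping-heavy) calculation, based on understanding the operators $R_\alpha: \widehat{C}^{k+1}_\mathrm{def}(\G)\rmap \widehat{C}^k_\mathrm{def}(\G)$. The key identity to establish is a Cartan-type formula of the form
\[
R_\alpha \circ \delta + \delta \circ R_\alpha = L_\alpha,
\]
where $L_\alpha$ is a Lie derivative along the right-invariant vector field $\overrightarrow{\alpha}$, acting on deformation cochains. This identity should be provable by differentiating the definition of $\delta$ (which uses $\bar m$) along the flow $\phi^\alpha_\epsilon$ of $\overrightarrow{\alpha}$, and using right-invariance to move the differentiation inside. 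Once this is known, the compatibility
\[
R_\alpha R_\beta - R_\beta R_\alpha = R_{[\alpha,\beta]}
\]
(on the normalized complex) should follow from a similar computation at units, using that the bracket on $\Gamma(A)$ comes from the bracket of right-invariant vector fields. Alternating $R_{\alpha_1}\circ \cdots \circ R_{\alpha_k}$ over the symmetric group and applying the Koszul-type sign combinatorics will then turn the group-side coboundary $\delta$ precisely into the algebroid-side coboundary given in the definition of $C^*_\mathrm{def}(A)$, using the formula for $\delta(D)$ in terms of brackets with the $\alpha_i$ and brackets of the $\alpha_i$.

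\emph{Isomorphism in low degrees.} For the second assertion I would adapt the double-complex argument used in \cite{VanEst} for ordinary differentiable cohomology. Introduce a bicomplex $B^{p,q}$ whose $(p,q)$-cochains are smooth maps
\[
\G^{(p)} \times \Gamma(A)^q \;\rmap\; T\G,
\]
antisymmetric, multilinear and satisfying a Leibniz/symbol condition in the $\Gamma(A)$-entries, $s$-projectable and normalized in the $\G^{(p)}$-entries, and taking values in $T_{g_1}\G$ (with the obvious conventions when $p=0$). The horizontal differential is (the deformation version of) the bar differential, and the vertical differential is the Chevalley--Eilenberg/multiderivation differential. The column $q=0$ recovers $\widehat{C}^*_\mathrm{def}(\G)$, while the row $p=0$ recovers $C^*_\mathrm{def}(A)$; the van Est map $\V$ should then be recovered as the edge homomorphism of one of the two induced spectral sequences (via an explicit zig-zag around the corner of $B^{\bullet,\bullet}$).

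\emph{The hard part} will be the acyclicity statement that forces one of these spectral sequences to degenerate in the required range: namely that, for each fixed $q$, the complex $B^{\bullet,q}$ has its cohomology (in the bar direction) concentrated in bidegree $(0,q)$ up through total degree $k$, under the assumption that the $s$-fibres of $\G$ are $k$-connected. This is a relative/$s$-foliated Poincar\'e-type lemma: following \cite{VanEst}, one should be able to prove it by an explicit contraction built out of pulling back cochains along homotopies of $s$-fibres (which exist up to degree $k$ by hypothesis), using partitions of unity to make the construction smooth and globally defined. Once this acyclicity is in place, the resulting spectral sequence collapses to give $H^p(\mathrm{Tot}\,B) \cong H^p_\mathrm{def}(A)$ in degrees $p \leq k$; the other spectral sequence identifies $H^p(\mathrm{Tot}\,B)$ with $H^p_\mathrm{def}(\G)$ in all degrees (via collapse of its own Poincar\'e-type acyclicity in the algebroid direction, which holds unconditionally). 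Tracing the resulting isomorphism through the double complex yields precisely $\V$, completing the proof.
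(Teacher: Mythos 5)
Your route is genuinely different from the paper's. The paper does not re-run the double-complex argument of \cite{VanEst} for deformation cochains; instead it fixes an Ehresmann connection $\sigma$, uses the isomorphisms $I_\sigma: C^*_{\mathrm{def}}(\G)\cong C(\G,Ad_\sigma)$ and $\Psi_\nabla: C^*_{\mathrm{def}}(A)\cong C^*(A,ad_\nabla)$ (with $\nabla$ induced by $\sigma$), proves via explicit computations with the operators $R_\alpha$ (Lemmas \ref{VE-degree1} and \ref{R_ak}, plus a resummation over permutations) that under these identifications $\V$ becomes the van Est map $(\V^A,\V^{TM})$ of Arias Abad--Sch\"atz, and then quotes their van Est theorem for representations up to homotopy (\cite{camilo2}, Thm.\ 4.7) to get both the chain-map property and the isomorphism in degrees $\leq k$. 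What the paper's approach buys is that all the ``curvature'' difficulties are outsourced to the already-established theory of $Ad_\sigma$; what your approach would buy, if completed, is a direct, connection-free proof entirely inside the deformation complexes --- which is feasible in principle (the paper itself points to such a direct proof via VB-groupoids by Cabrera--Drummond \cite{ale_thiago}), but is not a routine transcription of \cite{VanEst}.

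The genuine gap is precisely at the step you label as routine: the existence of your bicomplex $B^{p,q}$ with anticommuting differentials whose edge row/column are $C^*_{\mathrm{def}}(A)$ and $\widehat{C}^*_{\mathrm{def}}(\G)$. Deformation cochains take values in $T\G$ and the coefficients form only a representation \emph{up to homotopy}: any attempt to write the vertical ``Chevalley--Eilenberg/multiderivation'' differential on $T\G$-valued, $s$-projectable cochains (via Lie derivatives along right-invariant vector fields, or via a splitting of $T\G\cong t^*A\oplus s^*TM$) produces correction terms governed by the basic curvature $K^{\mathrm{bas}}_\sigma$, and one must check both $d_v^2=0$ and $d_vd_h+d_hd_v=0$; neither is automatic, and this is exactly the obstruction that forces the paper (and \cite{camilo2}) into the representation-up-to-homotopy framework. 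The same issue shows up in your Cartan-type identities: they are plausible but unproven, and the lowest-degree case is special --- $R_\alpha$ on $\widehat{C}^1_{\mathrm{def}}(\G)$ is $c\mapsto[c,\overrightarrow{\alpha}]|_M$, not the flow formula, and matching it with the multiderivation (symbol) structure of $C^*_{\mathrm{def}}(A)$ requires a separate argument (in the paper this is the $C^\infty(\G)$-linearity trick in Lemma \ref{VE-degree1}); your uniform treatment glosses over it. Finally, note that $\V$ is only defined on the normalized subcomplex, so your identification of the $q=0$ column needs Proposition \ref{prop: normalized} to conclude anything about $H^*_{\mathrm{def}}(\G)$. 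The acyclicity-with-$k$-connected-fibres step is indeed the standard part, but only once the bicomplex actually exists.
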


In section \ref{s-ad} we have seen the construction of the adjoint representation of a Lie groupoid, and the isomorphism of $C^*_\text{def}(\G)$ with $C^*(\G,Ad_\sigma)$; the infinitesimal analogue of this construction gives rise to the adjoint representation $ad_\nabla$ of a Lie algebroid $A$ \cite{camilo3}; it is proven in \textit{loc. cit.} that $C^*_\text{def}(A)$ is isomorphic to $C^*(A;ad_\nabla)$.

We prove Theorem \ref{VE} by showing that under the isomorphisms of $C^*_\text{def}(\G)$ and $C^*_\text{def}(A)$ with $C^*(\G,Ad_\sigma)$ and $C^*(A;ad_\nabla)$, it translates to the van Est theorem of Arias Abad and Sch\"atz (Theorem 4.7 in \cite{camilo2}) for the van Est map  relating the complexes $C^*(\G,Ad_\sigma)$ and $C^*(A;ad_\nabla)$.


We use the concepts and notations from \cite{camilo3, camilo2, marius_ieke} needed for the proof.
Similarly to the setting for Lie groupoids, $\Omega(A,E)=\Gamma(\Lambda^* (A^*)\otimes E)$ is a right graded $\Omega(A)$-module; a representation up to homotopy of $A$ is a graded vector bundle $E$ together with a differential $D$ on $\Omega(A,E)$ which makes it into a differential graded $(\Omega(A), d_A)$-module.

The adjoint representation of a Lie algebroid $A$, seen as a representation up to homotopy, is given by the graded vector bundle $ad=A\oplus TM$, where $A$ has degree $0$ and $TM$ has degree $1$, so that $\Omega(A,ad)^k=\Omega^k(A;A)\oplus \Omega^{k-1}(A;TM)$, and a differential $D_\nabla$ which is defined using a connection $\nabla$ on $A$. When $A$ is the Lie algebroid of a Lie groupoid $\G$, one can take $\nabla$ to be induced by a Ehresmann connection $\sigma$ on $\G$ by letting
\begin{equation}\label{eq-induced_connection}\nabla_X \alpha = [\sigma(X),\overrightarrow{\alpha}]\big{|}_{M},
 \end{equation}
 for any vector field $X$ on $M$ and $\alpha\in \Gamma(A)$. The resulting chain complex is denoted $C^*(A,ad_\nabla)$.


We recall that the isomorphism $C_\text{def}(A)\cong C^*(A;ad_\nabla)$ induced by a  connection $\nabla$ on $A$ is given by $\Psi_\nabla: C_\text{def}(A)\rmap C^*(A;ad_\nabla), \ D\mapsto (L_D,-\sigma_D)$, where $L_D$ is defined by $$L_D(\alpha_1,\ldots,\alpha_k)=D(\alpha_1,\ldots,\alpha_k) + (-1)^{k-1}\sum_i (-1)^i\nabla_{\sigma_D(\alpha_1,\ldots,\widehat{\alpha}_i,\ldots,\alpha_k)}\alpha_i.$$


We now recall the definition of the van Est map from \cite{camilo2}, for cochains with values in representations up to homotopy of length $2$. Let $E=E^0\oplus E^1$ be a graded vector bundle over $M$, and $\alpha$ a section of $A$. Define the map $R_\alpha^l:\widehat{C}^{k+1}(\G;E^l)\rmap \widehat{C}^k(\G;E^l)$ by \[R^l_{\alpha}(c)(g_1,\ldots,g_k)=\dezero c(g_1,\ldots, g_k,\phi_\epsilon^\alpha(s(g_k))^{-1}).\] The van Est map of \cite{camilo2} is the map $\V^{E^l}:\widehat{C}^k(\G;E^l)\rmap \Omega^k(A;E^l)$ which is given by $$(\V^{E^l} c)(\alpha_1,\ldots, \alpha_k)=(-1)^{kl}\sum_{\tau\in S_k}(-1)^{|\tau|}R^{l}_{\alpha_{\tau(k)}}\circ\ldots\circ R^{l}_{\alpha_{\tau(1)}}(c),$$ when $k\geq 1$ and by the identity map when $k=0$. The van Est map for the adjoint representation is obtained by taking $E^0=A$ and $E^1=TM$.

We will show that for an Ehresmann connection $\sigma$ on $\G$, and the  connection $\nabla$ on $A$ induced by $\sigma$ (see formula \ref{eq-induced_connection}), The van Est map $\V$ of deformation cohomology corresponds to the van Est map of \cite{camilo2}, under the isomorphisms $I_\sigma:C_\text{def}(\G)\rmap C(\G,Ad)$ and $\Psi_\nabla:C_\text{def}(A)\rmap C(A,ad)$ induced by $\sigma$. Recall that $I_\sigma(c)=(\omega (c),-s_c)$, where $s_c=ds\circ c$ and $(\omega (c)) (g_1\ldots,g_k)=r_{g_1}^{-1}(c-\sigma\circ s_c)(g_1,\ldots,g_k)$. In order to accommodate for the sign convention of \cite{camilo2}, we will actually use a slightly different map (see remark \ref{sign convention}) $$\tilde{I}_\sigma(c)=((-1)^{\lfloor \frac{k}{2}\rfloor}\omega (c),(-1)^{\lfloor \frac{k-1}{2}\rfloor+1}s_c).$$ 

\begin{lemma}[($R_\alpha$ in degree 1)]\label{VE-degree1} For $c\in \widehat{C}^1_\mathrm{def}(\G)$, it holds that
\begin{equation}\label{R_a1}R_{\alpha}(c):=[c,\overrightarrow{\alpha}]_{|M}=R^0_\alpha(\omega (c))+\nabla_{s_c}\alpha.\end{equation}
\end{lemma}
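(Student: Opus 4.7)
The plan is to use the Ehresmann connection to split $c$ and handle the two pieces separately. The identity $c(g) = r_g(\omega(c)(g)) + \sigma_g(s_c(s(g)))$ decomposes $c$ as $c = X + \sigma(s_c)$, where $X$ denotes the $s$-vertical vector field on $\G$ defined by $X(g):=r_g\,\omega(c)(g)$ and $\sigma(s_c)$ denotes $g\mapsto \sigma_g(s_c(s(g)))$. Bilinearity of the Lie bracket yields
\[ [c,\overrightarrow{\alpha}]\big|_M = [X,\overrightarrow{\alpha}]\big|_M + [\sigma(s_c),\overrightarrow{\alpha}]\big|_M, \]
and the second summand is exactly $\nabla_{s_c}\alpha$ by the defining formula \eqref{eq-induced_connection} of the connection $\nabla$ on $A$ induced by $\sigma$. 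The entire content of the lemma thus reduces to verifying
\[ [X,\overrightarrow{\alpha}](1_x) = R^0_\alpha(\omega(c))(x) \qquad \text{for every } x \in M. \]

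The key input is that the normalization $c(1_x)=s_c(x)$ is equivalent, through the decomposition above, to $\omega(c)(1_x)=0$ for every $x$, which in particular forces $X$ to vanish along the units of $\G$. Consequently, the vertical derivative $d(\omega(c))_{1_x}: T_{1_x}\G \to A_x$ is well defined without any auxiliary connection, and a short computation in a local frame of $A$ identifies $dX_{1_x}(V) = d(\omega(c))_{1_x}(V)$ for every $V \in T_{1_x}\G$. Applying the standard identity $[X,Y](p) = -dX_p(Y(p))$, valid whenever $X(p)=0$, with $p=1_x$ and $Y=\overrightarrow{\alpha}$, one obtains
\[ [X,\overrightarrow{\alpha}](1_x) = -d(\omega(c))_{1_x}(\alpha_x). \]

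For the right-hand side, the curve $\epsilon\mapsto \phi_\epsilon^\alpha(1_x)^{-1}$ passes through $1_x$ at $\epsilon=0$ with tangent $di(\overrightarrow{\alpha}(1_x))=di(\alpha_x)$, so $R^0_\alpha(\omega(c))(x) = d(\omega(c))_{1_x}(di(\alpha_x))$. Differentiating the groupoid identity $m(g,i(g))=1_{t(g)}$ at $g=1_x$ in the direction $\alpha_x$ produces $di(\alpha_x) = -\alpha_x + du(\rho(\alpha_x))$, where $du$ is the differential of the unit map. Since $\omega(c)$ vanishes identically on units, its vertical derivative kills the $du(\rho(\alpha_x))$-contribution, so $R^0_\alpha(\omega(c))(x) = -d(\omega(c))_{1_x}(\alpha_x)$, matching the expression above and completing the proof.

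The main obstacle is conceptual rather than technical: one has to notice that the normalization of $c$ is precisely the hypothesis that simultaneously (i) makes $X$ vanish along the units, enabling the Lie-bracket simplification for vanishing vector fields, and (ii) annihilates the $du$-component of $di(\alpha_x)$ on the right-hand side. Once these two observations are in place the identity reduces to a short computation in a local frame of $A$.
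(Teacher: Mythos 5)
Your proof is correct, and it diverges from the paper's at the key step. The first reduction is the same in both: you split $c(g)=r_g(\omega(c)(g))+\sigma_g(s_c(s(g)))$ and use bilinearity of the bracket, with the $\sigma(s_c)$-term giving $\nabla_{s_c}\alpha$ tautologically from \eqref{eq-induced_connection} (this is the paper's ``Case 1''). For the $s$-vertical part, however, the paper argues by a module-linearity trick: it shows that the defect $[\pi(c),\overrightarrow{\alpha}]_{|M}-R^0_\alpha(\omega(c))$ is $C^\infty(\G)$-linear in $c$ and then reduces to right-invariant cochains, for which both sides vanish. You instead argue pointwise: normalization together with the unitality of the connection ($\sigma_{1_x}=du_x$, which is part of the paper's definition of an Ehresmann connection \emph{on} $\G$ and should be invoked explicitly, since it is exactly what makes ``$c$ normalized'' equivalent to ``$\omega(c)$ vanishes along units'') forces $X=r(\omega(c))$ to vanish on $u(M)$; then the linearization identity $[X,Y](p)=-dX_p(Y(p))$ at a zero of $X$, the identification of $dX_{1_x}$ with the intrinsic vertical derivative of $\omega(c)$, and the relation $di(\alpha_x)=-\alpha_x+du(\rho(\alpha_x))$ (with the $du$-contribution killed because $\omega(c)\equiv 0$ on units) yield $[X,\overrightarrow{\alpha}](1_x)=-d(\omega(c))_{1_x}(\alpha_x)=R^0_\alpha(\omega(c))(x)$. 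I checked these steps (in a local frame of $A$ they amount to $\sum_i (Vf_i)e_i(x)$ on both sides) and they are sound, including the fact that the curve $\epsilon\mapsto\phi^\alpha_\epsilon(x)^{-1}$ stays in a fixed $t$-fibre so the derivative defining $R^0_\alpha$ lives canonically in $A_x$. What each approach buys: yours is more elementary and purely local, making visible why normalization is the precise hypothesis needed; the paper's avoids local frames and any discussion of linearizations at zeros, and reuses the ``reduce to right-invariant cochains by $C^\infty(\G)$-linearity'' mechanism, which also lets it treat non-normalized cochains with $s_c=0$ via the projection $\pi(c)=c-\overrightarrow{\gamma_c}$.
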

\begin{proof} Since $c(g)=r_g\omega(c)(g)+\sigma_g s_c(s(g))$, it is enough to prove the equation in the two following cases:

Case 1: $\omega(c)=0$, or equivalently, $c=\sigma_g s_c$; in this case equation (\ref{R_a1}) holds since it is exactly the defining expression for $\nabla$.

Case 2: $s_c=0$, or equivalently, $\omega(c)(g)=r_{g^{-1}}c(g)$; in this case we want to prove that \begin{equation*}[c,\overrightarrow{\alpha}]_{|M}(x)=R^0_\alpha(\omega (c))(x):=\dezero r_{\phi_\epsilon^\alpha(x)}c(\phi_\epsilon^\alpha(x)^{-1}).\end{equation*}

Given any $1$-cochain, normalized or not, consider the section $\gamma_c=c_{|M}-s_c\in\Gamma(A)$. Define a projection $\pi:C^1_\text{def}(\G)\rmap\widehat{C}^1_\text{def}(\G)$ by $\pi(c)=c-\overrightarrow{\gamma_c}$. For a deformation cochain $c$ satisfying $s_c=0$, we will prove that $[c,\overrightarrow{\alpha}]_{|M}(x)=R^0_\alpha(\omega (c))(x)$, which in the case of a normalized cochain $c$ gives the desired result. To do so, first note that $[\pi(c),\overrightarrow{\alpha}]_{|M}(x)-R^0_\alpha(\omega (c))(x)$ is $C^\infty(\G)$-linear in $c$. Indeed,

\begin{align*}[fc,\overrightarrow{\alpha}]_{|M}(x)-[\overrightarrow{\gamma_{fc}},\overrightarrow{\alpha}]_{|M}(x)-R^0_\alpha(\omega (fc))(x)=&f[c,\overrightarrow{\alpha}]_{|M}(x)-f[\overrightarrow{\gamma_{c}},\overrightarrow{\alpha}]_{|M}(x)-fR^0_\alpha(\omega (c))(x)\\
&-\left( \L_{\overrightarrow{\alpha}}f\right)c(x)+\left( \L_{\rho(\alpha)}f\right)c(x)-\left( \L_{\overleftarrow{\alpha}}f\right)c(x),
\end{align*}
and the sum of the last three terms is zero since $\overrightarrow{\alpha}_x+\overleftarrow{\alpha}_x=\rho(\alpha_x).$ To see how the term $\left( \L_{\overleftarrow{\alpha}}f\right)c(x)$ shows up by expanding $R^0_\alpha(\omega (fc))(x)$, it is enough to notice that $\phi_\epsilon^\alpha(x)^{-1}=\phi_\epsilon^{\overleftarrow{\alpha}}(x)$ and apply the chain rule.

Any cochain $c$ with $s_c=0$ is a linear combination of right-invariant ones, with coefficients in $C^\infty(\G)$, and since $[\pi(c),\overrightarrow{\alpha}]_{|M}(x)-R^0_\alpha(\omega (c))(x)$ is $C^\infty(\G)$-linear, it is enough to check that it is zero for a right-invariant $c$. This clearly holds because in this case both $[\pi(c),\overrightarrow{\alpha}]_{|M}(x)$ and $R^0_\alpha(\omega (c))(x)$ will be zero.
\end{proof}

Note that lemma \ref{VE-degree1} says precisely that the map $\V$ on $\widehat{C}^1_\text{def}(\G)$ corresponds to $(\V^A,\V^{TM})$ on $\widehat{C}(\G,Ad)^1$, through the isomorphisms $\tilde{I}_\sigma$ and $\Psi_\nabla$.

\begin{lemma}[$R_\alpha$ in higher degrees]\label{R_ak}
For all $k\geq 1$, the maps $R_\alpha: \widehat{C}^{k+1}_\text{def}(\G)\rmap \widehat{C}^k_\text{def}(\G)$ satisfy \begin{equation*}
R_\alpha=\tilde{I}_\sigma^{-1}\circ (R_\alpha^0,-R_\alpha^1) \circ \tilde{I}_\sigma.\end{equation*}
\end{lemma}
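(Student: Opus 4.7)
The idea is to prove the identity by directly computing both sides in terms of the splitting of a deformation cochain. Take $c \in \widehat{C}^{k+1}_{\mathrm{def}}(\G)$ and write it, as in Lemma \ref{VE-degree1}, as
\[c(g_1,\ldots,g_{k+1}) = r_{g_1}\omega(c)(g_1,\ldots,g_{k+1}) + \sigma_{g_1}s_c(g_2,\ldots,g_{k+1}).\]
Substitute $g_{k+1} = \phi_\epsilon^\alpha(s(g_k))^{-1}$ in this expression and apply $(-1)^k\dezero$. Since $t(g_1)$ and $s(g_1)$ are independent of $\epsilon$, the linear maps $r_{g_1}: A_{t(g_1)} \rmap T_{g_1}\G$ and $\sigma_{g_1}: T_{s(g_1)}M \rmap T_{g_1}\G$ commute with $\dezero$. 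By the very definitions of $R_\alpha^0$ and $R_\alpha^1$, this yields
\[R_\alpha(c)(g_1,\ldots,g_k) = (-1)^k r_{g_1} R_\alpha^0(\omega(c))(g_1,\ldots,g_k) + (-1)^k \sigma_{g_1} R_\alpha^1(s_c)(g_2,\ldots,g_k).\]

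The next step is to read off the $I_\sigma$-decomposition of $R_\alpha(c)$. Applying $ds$ to the first formula (using $ds\circ r_{g_1}=0$ and $ds\circ \sigma_{g_1}=\mathrm{id}$) gives $s_{R_\alpha(c)} = (-1)^k R_\alpha^1(s_c)$, and then the second formula forces $\omega(R_\alpha(c)) = (-1)^k R_\alpha^0(\omega(c))$. Thus, up to signs, $R_\alpha$ already respects the splitting induced by $\sigma$, and the only remaining work is sign bookkeeping to compare with the renormalized isomorphism $\tilde{I}_\sigma$.

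For this, recall that $\tilde{I}_\sigma(c') = ((-1)^{\lfloor k/2\rfloor}\omega(c'), (-1)^{\lfloor (k-1)/2\rfloor+1}s_{c'})$ for $c' \in \widehat{C}^k_\textrm{def}(\G)$. Applying $(R_\alpha^0,-R_\alpha^1)$ to $\tilde{I}_\sigma(c)$ and then inverting $\tilde{I}_\sigma$ in degree $k$ shows that the identity in the statement of the lemma is equivalent to the two sign relations computed above, via the elementary identity
\[\lfloor k/2\rfloor + \lfloor (k+1)/2\rfloor = k \qquad (k\geq 0),\]
together with its shifted analogue $\lfloor (k-1)/2\rfloor + \lfloor k/2\rfloor = k-1$ for $k\geq 1$. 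These make the accumulated signs collapse to $(-1)^k$ in both the $A$- and $TM$-components, matching what we found.

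The proof is therefore essentially a direct computation; the only real point to be careful about is the sign convention. The main obstacle is not conceptual but notational: one must keep track of the different grading conventions between $C^*_\textrm{def}(\G)$ and $C^*(\G,Ad)$ (the $\tilde{I}_\sigma$ versus $I_\sigma$ issue explained in Remark \ref{sign convention}) and ensure that the $(-1)^k$ in the definition of $R_\alpha$ is exactly what is needed to align $R_\alpha$ with $(R_\alpha^0,-R_\alpha^1)$ under $\tilde{I}_\sigma$ rather than under the unrenormalized $I_\sigma$.
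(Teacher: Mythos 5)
Your proposal is correct and amounts to the paper's own argument run in the opposite direction: the paper expands the composite $\tilde{I}_\sigma^{-1}\circ (R_\alpha^0,-R_\alpha^1)\circ \tilde{I}_\sigma$ and collapses the floor-function signs to $(-1)^k$, then uses that $\omega_{g_1}$ and $(ds)_{g_1}$ commute with $\dezero$ together with $r_{g_1}\omega_{g_1}+\sigma_{g_1}ds=\mathrm{id}$ — exactly the two ingredients (splitting plus $\epsilon$-independence of the linear maps, and the identities $\lfloor k/2\rfloor+\lfloor (k+1)/2\rfloor=k$, $\lfloor (k-1)/2\rfloor+\lfloor k/2\rfloor=k-1$) that you use. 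No gaps.
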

\begin{proof} If $c\in \widehat{C}^{k+1}_\text{def}(\G)$, then 
\begin{align*}\left( \tilde{I}_\sigma^{-1}\circ (R_\alpha^0,\right.&\left.-R_\alpha^1) \circ \tilde{I}_\sigma(c)\right)(g_1,\ldots,g_k)=\\
&=(-1)^{\lfloor \frac{k}{2}\rfloor+\lfloor \frac{k-1}{2}\rfloor}r_{g_1}R_\alpha^0(\omega(c))(g_1,\ldots,g_k)\\
&+(-1)^{\lfloor \frac{k-1}{2}\rfloor+\lfloor \frac{k-2}{2}\rfloor+1}\sigma_{g_1} R_\alpha^1(s_c)(g_2,\ldots,g_k)\\
&=(-1)^k(r_{g_1}R_\alpha^0(\omega(c))(g_1,\ldots,g_k)+\sigma_{g_1} R_\alpha^1(ds\circ c)(g_1,\ldots,g_k))\\
&=(-1)^k(r_{g_1}\omega_{g_1}+\sigma_{g_1}ds)\left( \dezero c(g_1,\ldots, g_k,\phi_\epsilon^\alpha(s(g_k))^{-1}) \right),
\end{align*}
where the last equality follows from checking that, since both $\omega_{g_1}$ and $(ds)_{g_1}$ are linear and do not depend on $\epsilon$, they commute with the operation $\dezero$ in the definitions of $R_\alpha^0,R_\alpha^1$ and $R_\alpha$.
\end{proof}

\begin{proof} (Theorem 10.1)
We can compare the van Est maps $\V$ and $(\V^A,\V^{TM})$ in arbitrary degree as follows. Consider a normalized deformation cochain $c$ of degree $k$, and a permutation $\tau\in S_k$. Lemma \ref{R_ak} implies that $$R_{\alpha_{\tau(k)}}\circ\ldots\circ R_{\alpha_{\tau(1)}}(c)=R_{\alpha_{\tau(k)}}\circ \tilde{I}_\sigma^{-1}\circ ( R^0_{\alpha_{\tau(k-1)}}\circ\ldots\circ R^0_{\alpha_{\tau(1)}},(-1)^{k-1}R^1_{\alpha_{\tau(k-1)}}\circ\ldots\circ R^1_{\alpha_{\tau(1)}})\circ \tilde{I}_\sigma (c),$$
and since we are applying $R_{\alpha_{\tau(k)}}$ to a deformation $1$-cochain, using lemma \ref{VE-degree1} we see that \begin{align}\label{R-tau}
R_{\alpha_{\tau(k)}}\circ\ldots\circ R_{\alpha_{\tau(1)}}(c)
&=(-1)^{\lfloor \frac{k}{2}\rfloor}R^0_{\alpha_{\tau(k)}}\circ R^0_{\alpha_{\tau(k-1)}}\circ\ldots\circ R^0_{\alpha_{\tau(1)}}(\omega(c)) \\
&+(-1)^{\lfloor \frac{k-1}{2}\rfloor+1}\nabla_{(-1)^{k-1}R^1_{\alpha_{\tau(k-1)}}\circ\ldots\circ R^1_{\alpha_{\tau(1)}}(s_c)}\alpha_{\tau(k)} \nonumber.
\end{align}

The van Est map $\V$ is obtained by summing the previous expressions over all permutations in $S_k$. We can split this into a double sum, summing over $i=1,\dots,k$ and over the permutations $\tau\in S_k$ such that $\tau(k)=i$. Using this resummation and equation (\ref{R-tau}), we have that

\begin{align}\label{VE-aux}(\V c)(\alpha_1,\ldots, \alpha_k)
&=\sum_{i=1}^k\sum_{\stackrel{\tau\in S_k}{\tau(k)=i}}(-1)^{|\tau|}R_{\alpha_{\tau(k)}}\circ\ldots\circ R_{\alpha_{\tau(1)}}(c)\nonumber \\
&=(-1)^{\lfloor \frac{k}{2}\rfloor}\sum_{i=1}^k(-1)^{|\tau|}R^0_{\alpha_{\tau(k)}}\circ R^0_{\alpha_{\tau(k-1)}}\circ\ldots\circ R^0_{\alpha_{\tau(1)}}(\omega(c)) \\
&+ (-1)^{\lfloor \frac{k-1}{2}\rfloor+1}\sum_{i=1}^k\sum_{\stackrel{\tau\in S_k}{\tau(k)=i}}(-1)^{|\tau|}\nabla_{(-1)^{k-1}R^1_{\alpha_{\tau(k-1)}}\circ\ldots\circ R^1_{\alpha_{\tau(1)}}(s_c)}\alpha_i\nonumber
\end{align}

If $\tau\in S_k$ is a permutation with $\tau(k)=i$, by composing the cycle $r_i=(k\ \ k-1\ \ldots \ i+1\ \ i)$ with it, we obtain a permutation $\tau'=r_i\circ \tau$ that fixes $k$, so it can be seen as element of $S_{k-1}$ (and any element of $S_{k-1}$ is of this form), for which we have $(-1)^{|\tau'|}=(-1)^{|\tau|+|r_i|}=(-1)^{|\tau|+(k-(i-1))}$. From this, and the definitions of $\V^A, \V^{TM}$ we see that (\ref{VE-aux}) is equal to

$$(-1)^{\lfloor \frac{k}{2}\rfloor}\V^A\omega(c)(\alpha_1,\ldots,\alpha_k)+(-1)^{\lfloor \frac{k-1}{2}\rfloor+1}\sum_i^k (-1)^{|r_i|}\nabla_{\V^{TM}s_c(\alpha_1,\cdots,\widehat{\alpha}_i,\cdots,\alpha_k)}(\alpha_i),$$ which is precisely the expression for $\Psi_\nabla^{-1}\circ (\V^A,\V^{TM})\circ \tilde{I}_\sigma(c)(\alpha_1,\ldots,\alpha_k)$, meaning that $\V$ does indeed correspond to $(\V^A, \V^{TM}).$ To finish the proof, we apply the van Est theorem (Theorem 4.7 in \cite{camilo2}) for $(\V^A, \V^{TM}).$
\end{proof}

\begin{rmk}
A version of the theorem above, valid for more general representations up to homotopy, was proved by Arias Abad and Sch\"atz in \cite{camilo2}. The main point of our theorem is that it relates \emph{directly and canonically} the deformation complexes of groupoids and algebroids. Using the result of \cite{camilo2}, one would only obtain the desired van Est map after choosing connections and identifying the deformation complexes with the respective adjoint complex. 

An alternative proof using the VB-interpretation of the adjoint cohomology  (see \cite{mehta} or subsection \ref{poisson}) has been recently communicated to us by Cabrera and Drummond, and will appear in \cite{ale_thiago}.
\end{rmk}

\section{Morita invariance}

In this section we prove that Morita equivalent Lie groupoids have isomorphic deformation cohomologies. This is useful not only for computations but also for conceptual reasons: it shows that the deformation cohomology of a Lie groupoid $\G$ is an invariant of the 
differentiable stack presented by $\G$.

\begin{rmk} 
In this direction, it is interesting to note that the notion of invariant vector fields up to isomorphism of \cite{lerman} is related to $H^1_{\mathrm{def}}(\G)$, where $\G$ is an action groupoid.
\end{rmk}

We recall briefly some basics on Morita equivalences. Detailed expositions can be found in \cite{ieke_mrcun, mrcun}.
If $\G$ is a Lie groupoid over $M$, a \textit{(left) action of $\G$} on a manifold $P$ is given by a smooth map $\mu: P\rmap M$, called \textit{moment map}, and a smooth map $\G\times_M P\rmap P$, with $(g,p)\mapsto g\cdot p\in \mu^ {-1}(t(g))$, satisfying the usual action axioms. Here, the fibred product is taken over the source map of $\G$ and over $\mu$.

A \textit{left $\G$-bundle} is a left $\G$-space $P$ together with a $\G$-invariant surjective submersion $P\rmap B$. It is called \textit{principal} if the map $\G\times_M P\rmap P\times_B P,\ (g,p)\mapsto (gp,p)$ is a diffeomorphism. The notions of right action, and right $\G$-bundle are defined similarly.

Lie groupoids $\G$ over $M$ and $\H$ over $N$ are said to be \textit{Morita equivalent} if there is a manifold $P$ together with a left action of $\G$ on $P$ with moment map $\alpha:P\rmap M$ and a right action of $\H$ on $P$ with moment map $\beta:P\rmap N$, such that $\beta:P\rmap N$ is a principal $\G$-bundle, $\alpha:P\rmap M$ is a principal $\H$-bundle, and the two actions commute. We then say that $P$ is a \textit{bibundle} realizing this Morita equivalence between $\G$ and $\H$.

\begin{example}(Isomorphisms) If $f: \G \rmap \H$ is an isomorphism of Lie groupoids, then $\G$ and $\H$ are Morita equivalent. A bibundle can be given by the graph $Graph(f)\subset \G\times\H$, with moment maps $t\circ pr_1$ and $s\circ pr_2$, and the natural actions induced by the multiplications of $\G$ and $\H$. 
\end{example}

\begin{example}\label{pullback-morita}(Pullback groupoids) Let $\G$ be a Lie groupoid over $M$ and a $\alpha: P\rmap M$ a surjective submersion. Then we can form the \textit{pullback groupoid} $\alpha^*\G\arrows P$, that has as space of arrows $P\times_M \G\times_M P$, meaning that arrows are triples $(p,g,q)$ with $\alpha(p)=t(g)$ and $s(g)=\alpha(q)$. The structure maps are determined by $s(p,g,q)=q$, $t(p,g,q)=p$ and  $(p,g_1,q)(q,g_2,r)=(p,g_1g_2,r)$.

The groupoids $\G$ and $\alpha^*\G$ are Morita equivalent, a bibundle being given by $\G\times_M P$. The left action of $\G$ has moment map $t\circ pr_1:\G\times_M P \rmap M$ is given by $g\cdot(h,p)=(gh,p)$ and the right action of $\alpha^*\G$  has moment map $pr_2:\G\times_M P\rmap P$ and is given by $(h,p)\cdot(p,k,q)=(hk,q)$.

\end{example}

\begin{example}\label{cech}(\v{C}ech groupoids)
The following special case of the example above arises in Mayer-Vietoris type arguments concerning Lie groupoids (see proposition \ref{prop: MV} and the proof of theorem \ref{thm: morita} bellow).

Let $ \mathcal{U} = \{ U_i\}_{i \in J}$ be an open cover of $M$ and let $\pi: \amalg U_i \rmap M$ denote the obvious surjective submersion. The \textit{\v{C}ech groupoid of $\G$ w.r.t $\mathcal{U}$} is defined to be the groupoid 
\[\check{\mathcal{\G}}(\mathcal{U}) = \pi^*\G \tto \amalg U_i,\]
and it is Morita equivalent to $\G$.  Note that an arrow $(x, g, y)$ of $\check{\mathcal{\G}}(\mathcal{U})$ can be unambiguously denoted by $(i, g, j)$ where $s(i,g,j) = y \in U_j$, and $t(i,g,j) = x \in U_i$   
\end{example}

\begin{rmk}\label{simplemorita}

Let $\G$ and $\H$ be Morita equivalent, with bibundle $P$ as above. Using that $P$ is a principal bibundle, it is easy to check that $$\alpha^* \G=P\times_M \G\times_M P\cong P\times_M P\times_N P\cong P\times_N \H\times_N P =\beta^*\H,$$ as Lie groupoids over $P$.

This means that we can break a Morita equivalence between $\G$ and $\H$, using a bibundle $P$, into a chain of simpler Morita equivalences: $\G$ is Morita equivalent to $\alpha^*\G\cong\beta^*\H$, which is Morita equivalent to $\H$. Therefore, in order to check invariance under Morita equivalences of a property or construction associated to a Lie groupoid, it is enough to check invariance under isomorphisms and under Morita equivalences between a Lie groupoid and its pullback by a surjective submersion.

\end{rmk}

\begin{thm}[(Morita invariance)]\label{thm: morita}
If two Lie groupoids are Morita equivalent, then their deformation cohomologies are isomorphic.
\end{thm}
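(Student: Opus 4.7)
The plan is to use Remark \ref{simplemorita} to reduce the problem to showing that, for a surjective submersion $\alpha: P \rmap M$, the projection morphism $\pi: \alpha^*\G \rmap \G$ induces an isomorphism $H^*_{\mathrm{def}}(\G) \cong H^*_{\mathrm{def}}(\alpha^*\G)$. Invariance under Lie groupoid isomorphisms is immediate from the naturality of the whole construction ($C^*_{\mathrm{def}}$ depends only on the structure maps $s$ and $\bar{m}$, which are preserved by isomorphisms), so the pullback case is the essential content.

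To treat the pullback case I would first reduce to a Čech groupoid: by choosing a cover $\mathcal{U}=\{U_i\}$ of $M$ together with local sections $\alpha_i: U_i \rmap P$ of $\alpha$, the induced submersion $\coprod U_i \rmap M$ factors through $P$, so it suffices to compare $H^*_{\mathrm{def}}(\G)$ with $H^*_{\mathrm{def}}(\check\G(\mathcal{U}))$ of Example \ref{cech}. For this I would set up a Čech-type double complex, whose $(p,q)$-piece consists of deformation $q$-cochains $c$ on $\G$ indexed also by $(p+1)$-tuples of open sets prescribing the sources/targets of the string of arrows; the total complex should be identified with a normalized deformation complex of $\check\G(\mathcal{U})$, while the $E_2$-page of the spectral sequence in the Čech direction should collapse to $H^*_{\mathrm{def}}(\G)$. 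This requires two inputs established en route: (i) Proposition \ref{prop: normalized}, identifying the normalized complex $\widehat{C}^{*}_{\mathrm{def}}(\G)$ with $C^{*}_{\mathrm{def}}(\G)$ in cohomology, since normalized cochains are what extends-by-zero across the cover; and (ii) the Mayer--Vietoris Proposition \ref{prop: MV}, which would furnish the chain-level contracting homotopy in the Čech direction.

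The heart of the argument, and the main obstacle, is the partition-of-unity / contracting-homotopy step in the Čech direction. In the classical Morita-invariance proof for differentiable cohomology $H^*(\G, E)$, a cochain with values in $E$ can be multiplied by a bump function $\rho_i$ with no issue, because $E$ is a genuine representation on $M$. Here, however, a deformation cochain $c$ takes values in the fibres $T_{g_1}\G$, and multiplying by a function depending on the base points of the arrows involved is subtle: one must verify that the resulting cochain is still $s$-projectable (in the sense of Definition \ref{dfn-diff-coh}), and that the homotopy relation holds modulo terms controlled by $\delta$. The trick should be to use $\rho_i\circ s$ (or $\rho_i \circ t$) for the smash with the appropriate partition of unity, so that the projection properties survive, and to exploit the normalization to kill the boundary contributions that would otherwise appear when strings acquire a trivial arrow from the Čech indexing.

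Once this homotopy is in place, the double complex argument is essentially formal: one spectral sequence converges to $H^{*}_{\mathrm{def}}(\check\G(\mathcal{U}))$, the other to $H^{*}_{\mathrm{def}}(\G)$, and collapse in the Čech direction yields the claimed isomorphism. Tracing back through the reduction gives the theorem for a general bibundle $P$, and as a byproduct one obtains Proposition \ref{prop: normalized} and the Mayer--Vietoris statement \ref{prop: MV} that were promised earlier in the paper.
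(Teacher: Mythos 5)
There is a genuine gap at the very first reduction step, and it is located exactly where the real work of the theorem lies. You claim that, for a surjective submersion $\alpha: P\rmap M$, choosing a cover $\mathcal{U}=\{U_i\}$ of $M$ with local sections $\alpha_i:U_i\rmap P$ and noting that $\coprod U_i\rmap M$ factors through $P$ reduces the problem to comparing $H^*_{\mathrm{def}}(\G)$ with $H^*_{\mathrm{def}}(\check\G(\mathcal{U}))$. It does not. Knowing $H^*_{\mathrm{def}}(\G)\cong H^*_{\mathrm{def}}(\check\G(\mathcal{U}))$ says nothing yet about $H^*_{\mathrm{def}}(\alpha^*\G)$: to transfer the isomorphism you would need to compare $\alpha^*\G$ with $\check\G(\mathcal{U})$, i.e.\ invariance under pullback along the section map $\coprod U_i\rmap P$, which is neither surjective nor a submersion onto $P$ — and invoking ``they are Morita equivalent, hence have the same cohomology'' at that point is circular, since that is the statement being proved. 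Concretely, the fibre directions $\ker d\alpha$ of $P\rmap M$ contribute to $C^*_{\mathrm{def}}(\alpha^*\G)$ (deformation cochains on $\alpha^*\G$ have components along $\ker d\alpha$ in both the target and source slots), and your Čech double complex, which only indexes strings of arrows of $\G$ by members of an open cover of $M$, never sees these directions. Showing that they do not contribute to cohomology is precisely the analytic heart of the paper's argument: one passes through the mapping complex $C^*_{\mathrm{def}}(F)$ of $F:\alpha^*\G\rmap\G$, proves that $\ker F_*$ is acyclic (this kills the fibre directions), and then shows $F^*$ is a quasi-isomorphism by a filtration of $C^*_{\mathrm{def}}(F)$ by strongly normalized subcomplexes, using explicit homotopies built from a global section of $\alpha$. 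The Čech/Mayer--Vietoris step in the paper is used in the opposite direction from yours: covers of both $M$ and $P$ are used to replace $\alpha$ by the induced submersion $\coprod \alpha^{-1}U_i\rmap\coprod U_i$, which \emph{admits a global section}, after which the fibre-direction argument is still carried out in full; it is not a reduction of the submersion case to the Čech case.

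The part of your plan that is sound is the Mayer--Vietoris comparison $H^*_{\mathrm{def}}(\G)\cong H^*_{\mathrm{def}}(\check\G(\mathcal{U}))$ itself: a partition-of-unity left inverse to the pullback map, normalized cochains (Proposition \ref{prop: normalized}), and care with $s$-projectability are indeed the ingredients, much as in Proposition \ref{prop: MV}. But this is only the localization step. To complete the proof along these lines you must add an argument handling a pullback along a submersion with (possibly positive-dimensional) fibres — for instance the paper's two-step scheme: acyclicity of the kernel of $F_*:C^*_{\mathrm{def}}(\alpha^*\G)\rmap C^*_{\mathrm{def}}(F)$, followed by the section-based normalization argument showing $F^*:C^*_{\mathrm{def}}(\G)\rmap C^*_{\mathrm{def}}(F)$ is a quasi-isomorphism. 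Without such a step, the proposal proves only the Čech-groupoid special case of Morita invariance, not the theorem.
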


\begin{proof} By remark \ref{simplemorita}, it is enough to prove invariance under isomorphisms and under pullback by a surjective submersion $f:P\rmap M$.

For both cases we use that given a morphism of Lie groupoids $F:\G\rmap \H$, we have maps $$C^{k}_{\textrm{def}}(\G) \stackrel{F_*}{\rmap} C^{k}_{\textrm{def}}(F) \stackrel{F^*}{\lmap} C^{k}_{\textrm{def}}(\H)$$ relating the deformation cohomologies of $\G$, $F$ and $\H$ (remark \ref{def-morphisms}). When $F$ is an isomorphism, $F_*$ and $F^*$ will be isomorphisms of chain complexes, so that takes care of invariance under isomorphims. We will now focus on the case where $f:P\rmap M$ is a surjective submersion, and $F$ is the induced map $F:f^*\G\rmap \G,\ \tilde{g}=(p,g,q)\mapsto g$. We will prove that $C^*_{\textrm{def}}(f^*\G)$ is quasi-isomorphic to $C^*_{\textrm{def}}(\G)$ by showing that both are quasi-isomorphic to $C^*_{\textrm{def}}(F)$.

\begin{claim} $F_*$ is a quasi-isomorphism between $C^*_{\textrm{def}}(f^*\G)$ and $C^*_{\textrm{def}}(F)$.
\end{claim}
\begin{proof}[Proof of Claim 1:] Recall that $F_* c=dF\circ c$. We first note that $F_*$ is surjective. In fact, if we  chose a connection on $TP$, i.e., a right splitting $\Gamma$ on the sequence $$0\rmap\ker(f)\rmap TP\rmap f^*TM\rmap 0,$$ then given $c\in C^*_{\textrm{def}}(F)$, if we set $\tilde{c}=(\Gamma(dt(c)),c,\Gamma(ds(c)))$, we obtain that $F_*\tilde{c}=c$.

We thus obtain a short exact sequence
\[0 \longrightarrow \ker F_* \longrightarrow C^*_{\textrm{def}}(f^*\G) \longrightarrow C^*_{\textrm{def}}(F) \longrightarrow 0,\]
so to prove the claim it is enough to show that $\ker F_*$ is acyclic. 

Note that $k$-cochains in $\ker F_*$ may be identified with pairs of maps
\[u: (f^\ast \G)^k \rmap \ker df, \quad v: (f^\ast \G)^{k-1} \rmap \ker df, \]
where $u(\tilde{g_1},\ldots,\tilde{g_k})\in \ker(df)_{p_1}$, $v(\tilde{g_2},\ldots,\tilde{g_k})\in \ker(df)_{q_1}$, and $\tilde{g_i}$ denotes the arrow $(p_i,g_i,q_i)$.

A simple computation shows that under this identification, the differential of the deformation complex satisfies 
$$\delta (u,0) = (w,u),$$ for some $w:  (f^\ast \G)^{k+1} \rmap \ker df$ as above. Then, if $(u,v)$ is a cocycle, it must actually be exact, with $(u,v)=\delta(v,0)$. Indeed, $(u,v)-\delta (v,0)=(w,0)$, and $(w',w)=\delta(w,0)=\delta(u,v)-\delta^2(v,0)=0$, so $w=0$. 
\end{proof}

Next, we reduce our problem of showing that $H^*_{\mathrm{def}}(f^*\G) \simeq H^*_{\mathrm{def}}(\G)$ to the case where $f:P \rmap M$ admits a global section $\sigma: M \rmap P$. For this we will need the following Mayer-Vietoris argument, where we use the \v{C}ech groupoid $\check{\mathcal{\G}}(\mathcal{U})$ associated to an open cover $\mathcal{U} = \{U_i\}$ of $M$ (example \ref{cech}).

\begin{claim}[(Mayer-Vietoris Argument)]\label{MV}
If $\mathcal{U}$ is an open cover of $M$, then $H^*_{\mathrm{def}}(\G) \simeq H^*_{\mathrm{def}}(\check{\G}(\mathcal{U}))$.
\end{claim} 

Since the proof of this claim is formally identical to the proofs of claims \ref{aux-morita} and \ref{induction} below (with the role of the section $\sigma$ to be played by a partition of unity), we will postpone it until Proposition \ref{prop: MV}.

\begin{claim}\label{global section}
We may assume without loss of generality that the map $f:P \rmap M$ admits a global section $\sigma: M \rmap P$.
\end{claim}

\begin{proof}[Proof of Claim \ref{global section}:]
Take a cover $\mathcal{U} = \{U_i\}$ of $M$ by open sets which admit local sections $\sigma_i: U_i \rmap f^{-1}U_i$, and denote by $\mathcal{V}$ the open cover of $P$ by the open sets $V_i = f^{-1}U_i$. Note that $f$ induces a surjective submersion with a global section
\[\check{f}: \amalg V_i \longrightarrow \amalg U_i.\]

Moreover, the \v{C}ech groupoid $\check{(f^*\G)}(\mathcal{V})$ of $f^*\G$ with respect to the cover $\mathcal{V}$ is isomorphic to the pullback groupoid $\check{f}^*(\check{\G}(\mathcal{U}))$ of the \v{C}ech groupoid $\check{\G}(\mathcal{U})$ by the surjective submersion $\check{f}$. Thus, by Claim \ref{MV}, and invariance under isomorphism, we have
\[H^*_{\mathrm{def}}(\check{f}^*(\check{\G}(\mathcal{U}))) \simeq H^*_{\mathrm{def}}(f^*\G) \text{ and } H^*_{\mathrm{def}}(\check{\G}(\mathcal{U})) \simeq H^*_{\mathrm{def}}(\G).\]

Thus, $H^*_{\mathrm{def}}(\G) \simeq H^*_{\mathrm{def}}(f^*\G)$ if and only if $H^*_{\mathrm{def}}(\check{\G}(\mathcal{U})) \simeq H^*_{\mathrm{def}}(\check{f}^*(\check{\G}(\mathcal{U})))$.
\end{proof}

From now until the end of the proof of the theorem we will assume that $f: P \rmap M$ admits a global section $\sigma: M \rmap P$. We then obtain a left inverse to $F^*$:
\[\Psi_\sigma: C^*_{\mathrm{def}}(F) \rmap C^*_{\mathrm{def}}(\G), \quad \Psi_\sigma(c)(g_1,\cdots,g_k) = c(\sigma(t(g_1)), g_1, \sigma(s(g_1)),\ldots, \sigma(t(g_k)), g_k, \sigma(s(g_k))). \]
It follows that the map induced by $F^*$ in cohomology is injective, and it is our task to show that it is surjective. For this we consider the following decreasing sequence of subcomplexes of $C^*_{\mathrm{def}}(F)$.

We will call a cochain $c\in C^k_{\textrm{def}}(F)$ \emph{strongly normalized in the $j-th$ position}, if it satisfies the following two conditions:
\begin{equation}\label{eq: l-basic}
c(\tilde{g_1},\ldots,\tilde{g}_j, \ldots,\tilde{g_k}) = c(\tilde{g_1},\ldots, \tilde{g}'_j,\ldots,\tilde{g_k}) \text{ whenever } F(\tilde{g}_j) = F(\tilde{g}'_j)\end{equation}
and if $F(\tilde{g}_j)$ is a unit in $\G$, then 
\begin{equation}\label{eq: l-normalized}
\begin{array}
c c(\tilde{g_1},\ldots,\tilde{g}_j, \ldots,\tilde{g_k}) = 0\ \text{ if } 1<j \leq k\\
c(\tilde{g_1},\ldots,\tilde{g_k}) \text{ is a unit of $T\G$\ if } j = 1.
\end{array}
\end{equation}
We denote by $\mathcal{N}^*\subset C^*_{\textrm{def}}(F)$ the complex of \emph{strongly normalized cochains}, i.e., those that are strongly normalized in all positions.

We obtain a decreasing sequence of subcomplexes 
\[C^*_{\textrm{def}}(F) = \mathcal{N}^*_0 \supseteq \cdots \supseteq \mathcal{N}^*_\ell \supseteq \cdots\supseteq \mathcal{N}^*,\] where we set
\[\mathcal{N}^k_\ell = \{c \in C^k_{\textrm{def}}(F) \text{ such that } c\text{ is strongly normalized in position } j \text{ for all } k-\ell <  j \}. \] 

\begin{claim}\label{N=CdefG}
The complex $\mathcal{N}^*$ is isomorphic to the complex $\widehat{C}^*_{\mathrm{def}}(\G)$ of normalized cochains of $\G$.
\end{claim}

\begin{proof}[Proof of Claim \ref{N=CdefG}:]
The map $F^*:C^k_{\textrm{def}}(\G)\rmap C^k_{\textrm{def}}(F)$ is given by \[(F^*c)((p_1,g_1,q_1),\ldots,(p_k,g_k,q_k))=c(g_1,\ldots,g_k),\] so restricting it to normalized cochains on $\G$, we obtain a map $F^*:\widehat{C}^k_\text{def}(\G)\rmap \mathcal{N}^k$. This restriction is injective because of surjectivity of $f$, and is surjective because of condition \eqref{eq: l-basic} which holds for all $1 \leq \ell \leq k$. (Alternatively, the restriction of $\Psi_\sigma$ to $\mathcal{N}^*$ is the inverse of $F^*$). 
\end{proof}

In order to show that $F^*$ is surjective in cohomology, we will prove that every cocycle in $\mathcal{N}^k_\ell$ is cohomologous to a cocycle in $\mathcal{N}^k_{\ell + 1}$. We break this into two steps (the next two claims).

\begin{claim}
\label{aux-morita} If a cocycle $c\in C^k_{\textrm{def}}(F)$ satisfies condition \eqref{eq: l-normalized} for all $\ell \leq j \leq  k$, then it also satisfies condition \eqref{eq: l-basic} for all $\ell \leq j \leq  k$.
\end{claim}

\begin{proof}[Proof of Claim \ref{aux-morita}:] For $j\geq \ell$, non-dependence on $q_j$ follows from spelling out the cocycle equation $$\delta c((p_1,g_1,q_1),\ldots,(p_j,g_j,q_j),(q_j,1,q'_j),(p_{j+1},g_{j+1},q_{j+1}),\ldots,(p_k,g_k,q_k))=0.$$ If $\ell>1$, since $c$ satisfies condition \eqref{eq: l-normalized} in positions $\ell,\ldots,k$, only two terms survive, and the result is
$$c(\tilde{g}_1,\ldots,(q_j,g_{j+1},q_{j+1}),\ldots,\tilde{g}_k) = c((\tilde{g}_1,\ldots,(q'_j,g_{j+1},q_{j+1}),\ldots,\tilde{g}_k),$$
where $\tilde{g}_i$ denotes the arrow $(p_i, g_i, q_i)$.

If $\ell=1$ and $j = 1$, the only non-zero terms in the cocycle condition are the first two, and the result is that \begin{align*}-d\bar{m}(c((p_1,g_1,q'_1),(q_1',g_2, q_2),\ldots,\tilde{g}_k )&, c((q_1,1,q'_1),(q_1',g_2, q_2),\ldots\tilde{g}_k ))\\
&+ c((p_1,g_1,q_1), (q_1,g_2, q_2),\ldots, \tilde{g}_k )=0.
\end{align*} Since $c$ satisfies condition \eqref{eq: l-normalized} in position $1$, it follows that $c(q_1,1,q'_1,\ldots)$ is a unit, and thus
\[c((p_1,g_1,q'_1),(q_1',g_2, q_2),\ldots,\tilde{g}_k ) =  c((p_1,g_1,q_1),(q_1,g_2, q_2),\ldots,\tilde{g}_k ).\]

Non-dependence on $p_1$ follows simillarly from the cocycle equation $$\delta c((p_1,1,p'_1),(p'_1,g_1,q_1),\ldots,(p_k,g_k,q_k))=0.$$ Spelling it out, we obtain $$-d\bar{m}(c((p_1,g_1,q_1),\ldots,\tilde{g}_k), c((p'_1,g_1,q_1),\ldots,\tilde{g}_k))= \sum_i U_i,$$ where condition \eqref{eq: l-normalized} implies that each $U_i$ is a unit of $T\G$. The result then follows by multiplying (in $T\G$) both sides of the equation by $c((p'_1,g_1,q_1),\ldots,\tilde{g}_k)$ on the right.
\end{proof}

The next claim concludes the proof of the theorem.

\begin{claim}\label{induction}
Every cocycle $c \in \mathcal{N}^k_{\ell}$ is cohomologous to a cocycle in $\mathcal{N}^k_{\ell+1}$.
\end{claim}

\begin{proof}[Proof of Claim \ref{induction}:]
The notation becomes quite heavy in the following computations, so we make some simplifications, which we now explain, and which will not be used outside of this proof.
For arrows $\tilde{g_i}=(p_i,g_i,q_i)$, we use the notation $(\tilde{g_1},\ldots,\tilde{g_k})=(p_1,g_1,q_1,g_2,\ldots,g_{k-1},q_{k-1},g_k,q_k)$, which is not ambiguous since $p_{i+1} = q_i$. We also abbreviate the expression by inserting $\tilde{g_i}=(p_i,g_i,q_i)$ in parts of expressions where the arrow is clear from its context. Moreover, we will implicitly assume when applying a cochain to a string of arrows, that they are composable, and omit the points of $M$ where we apply the unit map and the section $\sigma$ if they are uniquely determined by this requirement (composability). So, for example, we would simplify the expression $c((p,g,\sigma(s(g)),(\sigma(s(g)),1_{s(g)},q),(q,h,r))$ to $c(p,g,\sigma,1,q,h,r)$.

We consider first the case $\ell \geq 2$. Let $c \in \mathcal{N}^k_{k-\ell} \subset C^k_{\mathrm{def}}(F)$ be a cocycle. Consider $\varphi^\ell_\sigma(c) \in  C^{k-1}_{\mathrm{def}}(F)$ given by
$$\varphi^\ell_\sigma(c)(p_1,g_1,q_1,\ldots,g_{k-1},q_{k-1})=(-1)^{\ell+1}c(p_1,g_1,q_1,\ldots,g_{\ell-1},q_{\ell-1},1,\sigma,g_\ell,q_\ell,\ldots,g_{k-1},q_{k-1}).$$

We claim that $c + \delta \varphi^\ell_\sigma(c)$ belongs to $\mathcal{N}^k_{k - \ell + 1}$. In fact, let us compute 
$(c + \delta \varphi^\ell_\sigma(c))(\tilde{g}_1, \ldots \tilde{g}_k)$ when $F(\tilde{g}_i) = 1$. If $i > \ell$, then since $c \in \mathcal{N}_{k-\ell}^k$, most terms cancel and the only non-zero terms are the ones resulting from the strings containing $\tilde{g}_{i-1}\tilde{g}_i$, and $\tilde{g}_{i}\tilde{g}_{i+1}$, both with opposite signs, i.e., up to a sign we obtain
\begin{align*}c(p_1, g_1, \ldots,q_{\ell-1}, 1 , \sigma, &g_{\ell}, \ldots,  q_{i-2}, g_{i-1}, q_{i}, \ldots g_k, q_k) - \\ &c(p_1,g_1, \ldots,q_{\ell-1}, 1 , \sigma, g_{\ell}, \ldots,  q_{i-2}, g_{i-1}, q_{i-1}, \ldots g_k, q_k) = 0\end{align*}
which vanishes because $c$ satisfies condition \eqref{eq: l-basic} in position $i > \ell$. This show that $c + \delta \varphi^\ell_\sigma(c) \in \mathcal{N}_{k-\ell}^k$. 

Now let $i = \ell$. We will show that if $F(\tilde{g}_\ell) = 0$, then $(c + \delta \varphi^\ell_\sigma(c))(\tilde{g}_1, \ldots, \tilde{g}_k) = 0$, i.e., $c + \delta \varphi^\ell_\sigma(c)$ satisfies condition \eqref{eq: l-normalized} in position $\ell$. It then follows from Claim \ref{aux-morita} that $c + \delta \varphi^\ell_\sigma(c) \in \mathcal{N}^k_{k-\ell +1}$.

After a straightforward compution 
we obtain that
\begin{align*}(c + \delta \varphi^\ell_\sigma(c))(\tilde{g}_1&, \ldots, g_{\ell -1}, q_{\ell-1}, 1, q_\ell , \ldots, \tilde{g}_k) =\\
&=(-1)^{\ell+1}\delta c (\tilde{g}_1, \ldots, g_{\ell -1}, q_{\ell-1}, 1, q_\ell, 1, \sigma, g_{\ell+1} \ldots, \tilde{g}_k) = 0
\end{align*}
which vanishes because $c$ is a cocycle.

We are left with showing the cases $\ell = 1$. Let $c \in \mathcal{N}^k_{k-1}$ be a cocycle which is strongly normalized in positions $2, \ldots, k$. In this case we use the canonical splitting $T_{1_x}\G\cong A_x\oplus T_xM$, and denote by $X^A$ the $A$-component of a vector $X\in T_{1_x}\G$, to define  $\varphi_\sigma^1(c) \in C^{k-1}_{\mathrm{def}}(F)$ by $$\varphi_\sigma^1(c)(p_1,g_1,q_1,\ldots,g_{k-1},q_{k-1})=r_{g_1}c(p_1,1,\sigma,g_1,q_1,\ldots,g_{k-1},q_{k-1})^A.$$ Similarly, if $c$ has degree $1$, define $\varphi_\sigma^1(c)$ by $\varphi_\sigma^1(c)(p)=c(p,1,\sigma)^A$.

We wish to show that $(c+\delta \varphi_\sigma^1(c))(p_1,1,q_1,g_2,\ldots,g_k)$ is a unit of $T\G$. Most terms of this expression are zero and we are left with 
\begin{align*}(c+\delta \varphi_\sigma^1(c))(p_1,1&,q_1,g_2,\ldots,g_k)= \\ &=c(p_1,1,q_1,g_2,\ldots,g_k)\\
&-d\bar{m}(r_{g_2}c(p_1,1,\sigma,g_2,\ldots,g_k)^A,(r_{g_2}c(q_1,1,\sigma,g_2,\ldots,g_k)^A)
\end{align*} 
If we decompose $c(p_1,1,q_1,g_2,\ldots,g_k) = ds\circ c(p_1,1,q_1,g_2,\ldots,g_k)+c(p_1,1,q_1,g_2,\ldots,g_k)^A$ and use the cocycle identity
\begin{align*}
- d\bar{m}(r_{g_2}c(p_1,1,\sigma,g_2,\ldots,g_k)^A,(r_{g_2}c(q_1,1,\sigma&,g_2,\ldots,g_k)^A) + c(p_1,1,q_1,g_2,\ldots,g_k)^A \\ &= (\delta c(p_1,1,q_1,1, \sigma g_2,\ldots,g_k))^A = 0,
\end{align*}
we obtain that
\[(c+\delta \varphi_\sigma^1(c))(p_1,1,q_1,g_2,\ldots,g_k)= ds\circ c(p_1,1,q_1,g_2,\ldots,g_k) \text{ is a unit.}\]

The case $k=1$ works exactly in the same way: $(c+\delta \varphi_\sigma^1(c))(p,1,q)$ is the unit $ds\circ c(p,1,q)$, plus the $A$-part of $\delta c(p,1,q,1,\sigma)$.
\end{proof}

It follows from the previous two claims that $F^*: H^k_{\mathrm{def}}(\G) \rmap H^k_{\mathrm{def}}(F)$ is surjective, and thus an isomorphism. This concludes the proof of the theorem.
\end{proof}

\begin{rmk}\label{rmk: normalized}
In the course of the proofs of claims \ref{aux-morita} and \ref{induction}, we actually showed that every cocycle in $C^*_{\mathrm{def}}(F)$ is cohomologous to one in $\mathcal{N}^*$, which we identify with $\widehat{C}^*_{\mathrm{def}}(\G)$ (and not simply $C^*_{\mathrm{def}}(\G)$). Thus, applying the proof above to the identity map yields the following proposition.
\end{rmk}
\begin{prop}\label{prop: normalized}
The inclusion $\iota: \widehat{C}^*_{\mathrm{def}}(\G) \hookrightarrow C^*_{\mathrm{def}}(\G)$ is a quasi-isomorphism. 
\end{prop}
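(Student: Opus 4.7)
The plan is to derive this proposition by specializing the machinery developed in the proof of Theorem \ref{thm: morita} to the identity morphism $F = \mathrm{Id}_{\G}: \G \rmap \G$, viewed as the pullback by the identity submersion $f = \mathrm{Id}_M$ with canonical global section $\sigma = \mathrm{Id}_M$. Under this identification the complex $C^{*}_{\mathrm{def}}(F)$ is literally $C^{*}_{\mathrm{def}}(\G)$, since an arrow $(p,g,q)$ of $f^{*}\G$ collapses to $g$ (with $p = t(g)$, $q = s(g)$ forced).

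First I would verify that the strongly normalized subcomplex $\mathcal{N}^{*}\subset C^{*}_{\mathrm{def}}(F)$ coincides with $\widehat{C}^{*}_{\mathrm{def}}(\G)$. The basicness condition \eqref{eq: l-basic} becomes vacuous (there is nothing to be independent of, since $p_j$ and $q_j$ are already determined by $g_j$). The normalization condition \eqref{eq: l-normalized} in positions $j>1$ reads $c(g_1,\ldots,1_x,\ldots,g_k) = 0$, matching the defining condition of $\widehat{C}^{*}_{\mathrm{def}}(\G)$. In position $j=1$ it requires $c(1_x,g_2,\ldots,g_k)$ to be a unit of $T\G$ over $1_x$; since units of $T\G$ are exactly vectors of the form $du(V)$ with $V\in T_xM$, and then automatically $ds(du(V)) = V$, this is equivalent to $c(1_x,g_2,\ldots,g_k) = du(s_c(g_2,\ldots,g_k))$, which is precisely the remaining defining condition of $\widehat{C}^{*}_{\mathrm{def}}(\G)$.

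Next I would invoke Claims \ref{aux-morita} and \ref{induction} directly: in our simplified setting the homotopy operators from Claim \ref{induction} read
\[
\varphi^{\ell}(c)(g_1,\ldots,g_{k-1}) = (-1)^{\ell+1}\, c(g_1,\ldots,g_{\ell-1},1_{s(g_{\ell-1})},g_\ell,\ldots,g_{k-1}) \quad (\ell\geq 2),
\]
with the analogous $A$-component formula for $\ell = 1$. The claims then show by induction on $\ell$ that any cocycle $c\in \mathcal{N}^{k}_{k-\ell}\subset C^{k}_{\mathrm{def}}(\G)$ is cohomologous to one in $\mathcal{N}^{k}_{k-\ell+1}$, so iterating all the way to $\mathcal{N}^{*} = \widehat{C}^{*}_{\mathrm{def}}(\G)$ gives surjectivity of $\iota_{*}:H^{*}(\widehat{C}_{\mathrm{def}}(\G))\rmap H^{*}(C_{\mathrm{def}}(\G))$.

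Finally, injectivity will follow from the observation that the operators $\varphi^{\ell}$ vanish identically on $\widehat{C}^{*}_{\mathrm{def}}(\G)$ (each summand plugs a unit into a position where a normalized cochain vanishes, and the $\ell=1$ case similarly preserves the subcomplex), so the normalization procedure $c\mapsto c - \delta\varphi(c)$ restricts to the identity on the subcomplex. Consequently, if a normalized cocycle $c$ equals $\delta b$ for some $b\in C^{*-1}_{\mathrm{def}}(\G)$, normalizing $b$ to $b'\in \widehat{C}^{*-1}_{\mathrm{def}}(\G)$ gives $b = b' + \delta\psi$ and hence $c = \delta b'$ in the subcomplex. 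The only substantive point to check carefully is the identification in the first step (matching \textquotedblleft unit of $T\G$\textquotedblright\ with the defining equation $c(1_x,g_2,\ldots,g_k) = s_c(g_2,\ldots,g_k)$); the rest is an unpacking of the Morita argument in the simplest possible case.
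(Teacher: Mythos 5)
Your overall route is the same as the paper's: Proposition \ref{prop: normalized} is obtained there precisely by specializing the proof of Theorem \ref{thm: morita} (Claims \ref{aux-morita} and \ref{induction}) to the identity map, and your identification of the strongly normalized subcomplex $\mathcal{N}^*$ with $\widehat{C}^*_{\mathrm{def}}(\G)$ --- including the observation that a unit of $T\G$ over $1_x$ is exactly $du(V)$ with $V\in T_xM$, which forces $c(1_x,g_2,\ldots,g_k)=du(s_c(g_2,\ldots,g_k))$ --- is correct, as is the surjectivity part of the argument.

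The injectivity step, however, has a gap as written. Claims \ref{aux-morita} and \ref{induction} are stated and proved only for \emph{cocycles}: the verification that $c+\delta\varphi^{\ell}_{\sigma}(c)$ is strongly normalized in position $\ell$ ends with an expression of the form $\pm\,\delta c(\ldots,1,1,\ldots)$ (respectively $(\delta c(\ldots,1,1,g_2,\ldots))^A$ when $\ell=1$), which is killed by $\delta c=0$. Your $b$ is merely a primitive of the normalized cocycle $c$, not a cocycle, so ``normalizing $b$ to $b'$ with $b=b'+\delta\psi$'' is not licensed by the claims you cite; and the remark that $\varphi^{\ell}$ vanishes on $\widehat{C}^*_{\mathrm{def}}(\G)$ does not substitute for it, since the normalization procedure is not shown to be (part of) a chain homotopy on all of $C^*_{\mathrm{def}}(\G)$. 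The gap is repairable: run the induction of Claim \ref{induction} on $b$ itself and note that the cocycle condition enters only through $\delta b$ evaluated on strings having a unit in some position $\geq 2$ (or the $A$-component of $\delta b$ at a doubled unit in the $\ell=1$ step), and these terms vanish because $\delta b=c$ lies in $\widehat{C}^{k}_{\mathrm{def}}(\G)$; moreover each correction $b\mapsto b+\delta\varphi^{\ell}(b)$ leaves $\delta b$ unchanged, so this hypothesis persists along the induction and one ends with a normalized $b'$ satisfying $\delta b'=c$. With that addendum your proof is complete, and it in fact makes explicit the injectivity that the paper's own Remark \ref{rmk: normalized} leaves implicit.
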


We now prove the Mayer-Vietoris argument used in the proof above (Claim \ref{MV})

\begin{prop}[(Mayer-Vietoris Argument)]\label{prop: MV}
If $\mathcal{U}$ is an open cover of $M$, then $H^*_{\mathrm{def}}(\G) \simeq H^*_{\mathrm{def}}(\check{\G}(\mathcal{U}))$.
\end{prop}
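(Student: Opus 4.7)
The plan is to adapt the proof strategy of Theorem \ref{thm: morita}, for pullbacks by surjective submersions, to the present case but with the global section $\sigma$ replaced everywhere by averaging against a smooth partition of unity $\{\chi_i\}$ subordinate to $\mathcal{U}$. Let $F: \check{\G}(\mathcal{U}) \rmap \G$ be the canonical morphism sending an arrow $(i,g,j)$ to $g$, and consider the triangle
\[
C^*_{\mathrm{def}}(\check{\G}(\mathcal{U})) \stackrel{F_*}{\rmap} C^*_{\mathrm{def}}(F) \stackrel{F^*}{\lmap} C^*_{\mathrm{def}}(\G).
\]
The goal is to show that both $F_*$ and $F^*$ are quasi-isomorphisms. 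For $F_*$, the argument is verbatim that of Claim~1 in the proof of Theorem \ref{thm: morita}: any Ehresmann connection on the submersion $\pi: \amalg U_i \rmap M$ provides a right inverse of $F_*$, and the kernel of $F_*$ has exactly the same acyclic structure as before (this step never used a global section).

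For $F^*$, one first constructs a left inverse on cochains by averaging against $\{\chi_i\}$:
\[
\Psi_\chi(c)(g_1, \ldots, g_k) = \sum_{i_0, \ldots, i_k} \chi_{i_0}(t(g_1)) \chi_{i_1}(s(g_1)) \cdots \chi_{i_k}(s(g_k))\, c\bigl((i_0,g_1,i_1), \ldots, (i_{k-1},g_k,i_k)\bigr).
\]
The sum is locally finite, produces a smooth deformation cochain on $\G$, commutes with the differential (the differential depends linearly on the ``choice'' of indices, and $\sum_i \chi_i = 1$ makes the extra index contributions cancel), and plainly satisfies $\Psi_\chi \circ F^* = \mathrm{id}$. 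This already gives injectivity of $F^*$ in cohomology.

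To obtain surjectivity, the plan is to mimic Claims~5--6 of the proof of Theorem \ref{thm: morita}: introduce the decreasing filtration $C^*_{\mathrm{def}}(F) = \mathcal{N}^*_0 \supset \mathcal{N}^*_1 \supset \cdots \supset \mathcal{N}^*$ by strong normalization, note that $\mathcal{N}^*$ is again isomorphic to $\widehat{C}^*_{\mathrm{def}}(\G)$ via $F^*$ (with inverse the restriction of $\Psi_\chi$, which is automatically $\chi$-independent on $\mathcal{N}^*$), and define contracting homotopies
\[
\varphi^\ell_\chi(c)(\tilde g_1,\ldots,\tilde g_{k-1}) = (-1)^{\ell+1} \sum_i \chi_i(s(g_{\ell-1}))\, c\bigl(\tilde g_1,\ldots,\tilde g_{\ell-1},(j_{\ell-1},1,i),(i,g_\ell,j_\ell),\ldots,\tilde g_{k-1}\bigr)
\]
for $\ell \geq 2$, together with an analogous $\varphi^1_\chi$ using the canonical splitting $T_{1_x}\G \cong A_x \oplus T_xM$, exactly as in the proof of Claim~6. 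One then checks that every cocycle in $\mathcal{N}^*_\ell$ is cohomologous via $\delta \varphi^\ell_\chi$ to a cocycle in $\mathcal{N}^*_{\ell+1}$, iterating down to $\mathcal{N}^* \cong \widehat{C}^*_{\mathrm{def}}(\G)$.

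The main technical point, and the only step that requires real verification rather than translation, is that all the algebraic identities used in Claims~4 and~6 of Theorem \ref{thm: morita} still close up under the convex combinations introduced by the partition of unity. This works because those identities are $\mathbb{R}$-linear in the inserted ``section'' data, and the identity $\sum_i \chi_i = 1$ plays precisely the role played there by $\sigma$ being a genuine section of $f$: every expression of the form $\sum_i \chi_i(x) \cdot [\text{identity valid at } \sigma(x)]$ reduces to the same identity by linearity. With this observation, the inductive step from $\mathcal{N}^*_\ell$ to $\mathcal{N}^*_{\ell+1}$, and hence the surjectivity of $F^*$ in cohomology, follows by the same computation as in the proof of Theorem \ref{thm: morita}.
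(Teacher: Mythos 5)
Your proposal is correct and follows essentially the same route as the paper, which likewise proves the Mayer--Vietoris statement by rerunning the filtration-and-homotopy argument of Theorem \ref{thm: morita} with the section $\sigma$ replaced by a partition of unity (the averaged left inverse $\Psi$, the strongly normalized subcomplex $\mathcal{N}^*\cong\widehat{C}^*_{\mathrm{def}}(\G)$, and the averaged homotopies $\varphi^\ell$ are exactly the paper's ingredients, and your key observation that the identities close under convex combinations because $\sum_i\chi_i=1$ is precisely what makes the transcription work). The only cosmetic difference is your detour through $C^*_{\mathrm{def}}(F)$ and Claim~1: since $\pi:\amalg U_i\rmap M$ is étale, $\ker d\pi=0$ and $F_*$ is already an isomorphism of complexes, so the paper simply works directly with $C^*_{\mathrm{def}}(\check{\G}(\mathcal{U}))$.
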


\begin{proof}
The proof is formally identical to the proofs of theorem \ref{thm: morita} above (Claims \ref{aux-morita} and \ref{induction}) with the role of the section $\sigma$ replaced by a partition of unity $\{\rho_j\}$ subordinate to the open cover $\mathcal{U} = \{U_j\}$. Here are the main ingredients: 

\begin{itemize}
\item $\check{\G}(\mathcal{U})$ is the pullback of $\G$ by a submersion, so there is an obvious chain map 
\[\pi^*: C^k_{\mathrm{def}}(\G) \rmap C^k_{\mathrm{def}}(\check{\G}(\mathcal{U})), \qquad \pi^*(c)((i_1, g_1, j_1), \ldots , (j_{k-1},g_k,j_k)) = c(g_1, \ldots, g_k).\]

\item Using the partition of unity we obtain a left inverse
$\Psi: C^k_{\mathrm{def}}(\check{\G}(\mathcal{U})) \rmap C^k_{\mathrm{def}}(\G)$, \[\Psi(c)(g_1, \ldots, g_k) = \sum\rho_{i_1}\rho_{j_1}\cdots\rho_{j_k}c((i_1, g_1, j_1), \ldots , (j_{k-1},g_k,j_k)), \]
where the sum is taken over all indexes $i_1, j_r$ such that $t(g_1) \in U_{i_1}$ and $s(g_r) \in U_{j_r}$, with $r = 1,\ldots, k$. 

\item It follows that $\pi^*$ induces an injection in cohomology and all that is left to prove is that it is also surjective.

\item The normalized complex $\widehat{C}^*_{\mathrm{def}}(\G)$ can be identified via $\Psi$ with a subcomplex $\mathcal{N}^*$ of strongly normalized cochains of $C^k_{\mathrm{def}}(\check{\G}(\mathcal{U}))$. (Defined by conditions analogous to \eqref{eq: l-basic} and \eqref{eq: l-normalized}). In order to check that the map induced by $\pi^*$ in cohomology is surjective, it is enough to show that every cocycle $c \in  C^k_{\mathrm{def}}(\check{\G}(\mathcal{U}))$ is cohomologous to a cocycle in $\mathcal{N}^k$.

\item We consider the descending sequence 
\[C^*_{\textrm{def}}(\check{\G}(\mathcal{U})) = \mathcal{N}^*_0 \supseteq \cdots \supseteq \mathcal{N}^*_\ell \supseteq \cdots\supseteq \mathcal{N}^*,\] 
defined as in the proof of theorem \ref{thm: morita}.
\item The statement and proof of Claim \ref{aux-morita} in Theorem \ref{thm: morita} follow identically with $p_r$ replaced by $i_r$, and $q_r$ replaced by $j_r$.
\item The statement and proof of Claim \ref{induction} in Theorem \ref{thm: morita} follow identically with $\varphi^{\ell}_{\sigma}$ replaced by
\[\varphi^{\ell}(c)(i_1, g_1, j_1, \ldots, g_k, j_k) = (-1)^{\ell+1}\sum_{j}\rho_j c(i_1,g_1,j_1,\ldots,g_{\ell-1},j_{\ell-1},1,j,g_\ell,j_\ell,\ldots,g_{k-1},j_{k-1})\] for $\ell>1$, where the sum is taken over all indices $j$ such that $t(g_{\ell}) \in U_j$; similarly, $\varphi_\sigma^1(c)$ is replaced by  \[\varphi^1(c)(i_1,g_1,j_1,\ldots,g_{k},j_{k})=\sum_{j}\rho_j r_{g_1}c(i_1,1,j,g_1,j_1,\ldots,g_{k},j_{k})^A.\]

\end{itemize}
\end{proof}

\begin{rmk}
We have recently learned that theorem \ref{thm: morita} can be obtained also by using an appropriate notion of Morita equivalence for VB-groupoids (and using the VB-groupoid interpretation of the deformation cohomology - see subsection \ref{poisson}). This is being worked out in an ongoing project of del Hoyo and Ortiz \cite{matias_cristian}.
\end{rmk}


\section*{Appendix: Another way of looking at groupoids}\label{a-note}


Since the main motivation (and applications) for the  deformation cohomology comes from the study of deformations of Lie groupoids, in order to gain some insight into its definition it is worth contemplating a bit on the meaning of deformations (and of Lie groupoids). Given a groupoid $\G$, we want to allow deformations 
smoothly parametrized by some real parameter $\epsilon$ of all the structure maps: $s_{\epsilon}$, $t_{\epsilon}$, $m_{\epsilon}$ etc.  The cohomology theory that controls deformations should incorporate the variation of the structure maps ($\frac{d}{d\epsilon} s_{\epsilon}$, etc); the cocycle conditions should be first order consequences 
of (i.e. obtained by applying $\frac{d}{d\epsilon}$ to) the various equations that the structure maps satisfy. 
There are two relevant points to be addressed right from the start:
\begin{itemize}
\item In the set of Lie groupoid axioms there is a certain redundancy (e.g. the target map is determined by the source and the inversion: $t= s\circ i$). In order to study deformations, it is natural to look for a minimal set of axioms.
\item Variations of type ``$\frac{d}{d\epsilon} m_{\epsilon}(g, h)$'' are problematic because they make sense only under very restrictive conditions (e.g. when $s_{\epsilon}$ and $t_{\epsilon}$ do not depend on $\epsilon$, so that the condition that $g$ and $h$ are composable does not depend on on $\epsilon$). 
\end{itemize}
Both points are answered by a very simple remark: it is better to use the 
division map $\bar{m}$ of $\G$ instead of the multiplication; moreover, all the structure maps of $\G$ can be recovered from only $\bar{m}$ and $s$ - themselves satisfying some simple axioms.

\begin{prop}\label{prop: m bar}
Given the manifolds $\G$ and $M$, giving structure maps $(m, s, t, u, i)$ making $\G$ into a Lie groupoid over $M$ is equivalent to giving pairs $(s, \bar{m})$ consisting of surjective submersions $s: \G\rmap M$ and $\bar{m}: \G\times_{s} \G\rmap \G$ (defined on the space of pairs $(g, h)$ with $s(g)= s(h)$) satisfying:
            \begin{enumerate}
            \item[(i)] For all $g, h\in \G$ with $s(g)= s(h)$ one has
            \[ s(\bar{m}(g, h))= s(\bar{m}(h, h)).\]
            (i.e. the expressions of type $s(\bar{m}(g, h))$ only depend on $h$). 
            \item[(ii)] For all $g, h, k\in \G$ with $s(g)= s(h)= s(k)$ one has
            \[ \bar{m}(\bar{m}(g, k), \bar{m}(h, k))= \bar{m}(g, h) \]
            (note: the first expression makes sense because of (i)).
            \item[(iii)] The restriction of $s$ to $\bar{m}(\Delta)= \{\bar{m}(g, g): g\in \G\}$ is injective. 
            \end{enumerate}  
\end{prop}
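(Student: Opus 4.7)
The plan is to establish the equivalence by constructing maps in both directions and checking they are mutually inverse. The forward direction (groupoid $\Rightarrow (s,\bar m)$) is routine: starting from a Lie groupoid, I would set $\bar{m}(g,h):=m(g,i(h))$, defined whenever $s(g)=s(h)$. Axiom (i) is just $s(gh^{-1})=t(h)$, axiom (ii) is the algebraic identity $(gk^{-1})(hk^{-1})^{-1}=gh^{-1}$, and (iii) follows from $\bar m(\Delta)=u(M)$ together with $s\circ u=\mathrm{id}_M$.

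For the reverse direction, I would reconstruct the structure maps stepwise. First define the target by $t(g):=s(\bar m(g,g))$; by axiom (i) this coincides with $s(\bar m(g,h))$ for every composable pair $(g,h)$, so $t$ is smooth. Next, let $U:=\bar m(\Delta)$. Applying (ii) with $g=h=k$ shows $\bar m(u,u)=u$ for every $u\in U$, and conversely any such fixed point lies in $U$ tautologically, so $U=\{v\in\G:\bar m(v,v)=v\}$. By (iii), $s|_U$ is injective; I would then argue that $s|_U:U\to M$ is in fact a diffeomorphism and define the unit section $u:M\to\G$ as its inverse. Because $s(u(s(g)))=s(g)$, the expression
\[ i(g):=\bar m(u(s(g)),g) \]
makes sense and encodes $1_{s(g)}g^{-1}=g^{-1}$; finally set $m(g,h):=\bar m(g,i(h))$, well-defined exactly when $s(g)=t(h)$.

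The main obstacle will be twofold. Algebraically, I need to check that the reconstructed maps satisfy the groupoid axioms (two-sided unit, $i\circ i=\mathrm{id}$, associativity of $m$); I expect each to unfold, after translating back through the definitions, into an instance of (ii) applied to a suitable triple — for example associativity should collapse to the identity $\bar m(\bar m(a,c),\bar m(b,c))=\bar m(a,b)$ after substitution. This is bookkeeping but not conceptually hard. The more delicate point is the smooth structure: showing that $U\subset\G$ is a smooth submanifold and that $s|_U$ is a diffeomorphism (hence $u$ is smooth, and therefore $i$ and $m$ are smooth). This relies on the fact that $\bar m|_\Delta:g\mapsto\bar m(g,g)$ factors smoothly through $t$, together with the submersion hypothesis on $s$ and (iii), which via the implicit function theorem promote the set-theoretic bijection $s|_U$ to a diffeomorphism.

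Finally, I would close the loop by verifying that starting from a Lie groupoid, passing to $(s,\bar m)$, and then applying the reconstruction recovers the original $t,u,i,m$, and conversely that starting with $(s,\bar m)$ satisfying (i)--(iii) and applying the reconstruction, the division map of the resulting groupoid is the original $\bar m$; both checks are immediate from the defining formulas.
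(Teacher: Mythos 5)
Your proposal is correct and follows essentially the same route as the paper: the identical reconstruction formulas $t(g)=s(\bar{m}(g,g))$, $u=(s|_{\bar{m}(\Delta)})^{-1}$, $i(g)=\bar{m}(u(s(g)),g)$, $m(g,h)=\bar{m}(g,i(h))$, the groupoid axioms checked by repeated use of (ii), and smoothness of the unit section obtained from the factorization $\bar{m}\circ\Delta=u\circ t$ through $t$, which is a surjective submersion. The only point you leave implicit is the surjectivity of $s|_{\bar{m}(\Delta)}$ onto $M$ (needed so that $u$ is defined everywhere), which, as in the paper, follows from the surjectivity of $s$ and $\bar{m}$ together with axiom (i).
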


\begin{proof} We have to see how we can recover all the structure maps and their axioms given a pair $(s, \bar{m})$ as in 2. Note first that any $x\in M$ can be written as $s(\bar{m}(g, g))$ for some $g\in \G$. This follows by using the surjectivity of $s$ and $\bar{m}$ and the fact that $s(\bar{m}(g, h))= s(\bar{m}(h, h))$ (by (i)). We deduce that the map from (iii) is a bijection, and we denote by $u: M\rmap \bar{m}(\Delta)\subset \G$ its inverse. By construction, 
\[ u(s(\bar{m}(g, h))= \bar{m}(h, h)\]
for all $(g, h)\in \G\times_{s}\G$. The other structure maps are defined by
\begin{align*}
t(g) & = s(\bar{m}(g,g))\\
i(g) &= \bar{m}(u\circ s(g), g)\\
m(g,h)& = \bar{m}(g, i(h)).
\end{align*}
Next, we check the groupoid axioms.

First, note that $t$ is a surjective submersion since both $s$ and $\bar{m}$ are. Note also that, by definition, $s(u(s(g)))=s(g)$. On the other hand, $s(i(h))=s(\bar{m}(u(s(h)),h))$, so by the first axiom, $s(i(h))=s(\bar{m}(h,h))$ which is $t(h)$ by definition, thus implying that $m$ is well defined and $s\circ i =t$.

\begin{enumerate}
\item ($t\circ i (g) = s(g)$):

Using the second axiom we can compute
\begin{align*}
(t\circ i)(g) & = s(\bar{m}(i(g),i(g)))\\
              & = s(\bar{m}(\bar{m}(u(s(g)),g),\bar{m}(u(s(g)),g))\\
              & = s(\bar{m}(u(s(g)),u(s(g)))\\
              & = s(\bar{m}(\bar{m}(k,k),\bar{m}(k,k)))\\
              & = s(\bar{m}(k,k)),         
\end{align*}
where $u(s(g))=\bar{m}(k,k)$ and so $s(\bar{m}(k,k))=s(g)$ as desired. 

\item ($s(m(g,h)) = s(h)$, and $t(m(g,h)) = t(g)$):

For $(g,h)\in \G^{(2)}$, we have
\begin{align*}
s(m(g,h))& =  s(\bar{m}(g,i(h)))\\
         & =  s(\bar{m}(i(h),i(h))\\
         & =  t(i(h))\\
         & =  s(h).         
\end{align*} 
Here, the second equality is consequence of the first axiom, and the third is the definition of $t$.

Also, we have that
\begin{align*}
t(m(g,h))& =  t(\bar{m}(g,i(h)))\\
         & =  s(\bar{m}(\bar{m}(g,i(h)),\bar{m}(g,i(h)))\\
         & =  s(\bar{m}(g,g))\\
         & =  t(g).         
\end{align*} 
This time, the third equality is due to the second axiom.

\item ($s\circ u = id_M$, and $t\circ u = id_M$):

$s\circ u=id_{M}$ is just by definition. For the other, let $u(x)=\bar{m}(k,k)$. Then
\begin{align*}
(t\circ u)(x)& =  t(\bar{m}(k,k)))\\
             & =  s(\bar{m}(\bar{m}(k,k),\bar{m}(k,k)))\\
             & =  s(\bar{m}(k,k))\\
             & =  s(u(x)) \\
             & =id_{M}(x).         
\end{align*} 

\item ($m(i(g),g) = u(s(g)$,  $m(g,i(g)) = u(t(g)$, and $i^2 = id_{\G}$):

Again, let $u(s(g))=\bar{m}(k,k)$
\begin{align*}
m(i(g),g)& =  \bar{m}(i(g),i(g)))\\
         & =  \bar{m}(\bar{m}(u(s(g)),g),\bar{m}(u(s(g)),g))\\
         & =  \bar{m}(u(s(g)),u(s(g)))\\
         & =  \bar{m}(\bar{m}(k,k),\bar{m}(k,k)))\\
         & =  \bar{m}(k,k)=u(s(g)).         
\end{align*}

Next, suppose that $g=\bar{m}(h,k)$. Then $$i(g)=\bar{m}(u(s(\bar{m}(h,k))),\bar{m}(h,k)) = \bar{m}(\bar{m}(k,k))),\bar{m}(h,k))=\bar{m}(k,h).$$ From this, it follows that $i^2 = id_{\G}$, (and consequently that $i$ is a bijection).

Moreover,
\begin{align*}
m(g,i(g))& =  s(\bar{m}(i(i(g)),i(g)))\\
         & =  u(s(i(g))\\
         & =u(t(g)).       
\end{align*}

\item ($m(g,u(s(g)))=m(u(t(g)),g) = g$):

First we are to prove that $m(g,u(s(g)))=m(u(t(g)),g)$.
\begin{align*}
m(u(t(g)),g)& =  \bar{m}(u(t(g)),i(g)))\\
            & =  \bar{m}(u(s(\bar{m}(g,g))),\bar{m}(u(s(g)),g))\\
            & =  \bar{m}(\bar{m}(g,g),\bar{m}(u(s(g)),g))\\
            & =  \bar{m}(g,u(s(g))),         
\end{align*}
but $i(u(s(g)))=\bar{m}(u(s(g)),u(s(g)))$ and we saw the latter was equal to $u(s(g))$ thus allowing us to remove the bar in the last equation.

Finally, let $g=\bar{m}(h,k)$, where $$t(g)=s(\bar{m}(\bar{m}(h,k),\bar{m}(h,k))=s(\bar{m}(h,h)).$$ Then 
\begin{align*}
m(u(t(g)),g)& =  \bar{m}(u(t(g)),i(g)))\\
            & =  \bar{m}(u(s(\bar{m}(h,h))),\bar{m}(k,h))\\
            & =  \bar{m}(\bar{m}(h,h),\bar{m}(k,h))\\
            & =  \bar{m}(h,k) = g.         
\end{align*}

\item ($m$ is associative):

Let $(g,h),(h,k)\in\G^{(2)}$. We compute
\begin{align*}
m(g,m(h,k))& =  \bar{m}(g,i(\bar{m}(h,i(k))))\\
           & =  \bar{m}(g,\bar{m}(i(k),h))\\
           & =  \bar{m}(\bar{m}(g,i(h)),\bar{m}(\bar{m}(i(k),h)),i(h))\\
           & =  \bar{m}(m(g,h),\bar{m}(\bar{m}(i(k),h)),m(u(t(h),i(h)))\\
           & =  \bar{m}(m(g,h),\bar{m}(\bar{m}(i(k),h)),\bar{m}(u(t(h),h))\\     
           & =  \bar{m}(m(g,h),\bar{m}(i(k),u(t(h))\\  
           & =  \bar{m}(m(g,h),\bar{m}(i(k),i(u(t(h)))\\
           & =  \bar{m}(m(g,h),m(i(k),u(t(h)))\\  
           & =  \bar{m}(m(g,h),i(k)))\\
           & =  m(m(g,h),k)).
\end{align*}
\end{enumerate}

We turn now to the problem of whether the maps are smooth. We already pointed out that $t$ is a surjective submersion, in particular it is smooth. Notice that $u$ fits in the following diagram,
\begin{eqnarray*}
\xymatrix{
 \G \ar[r]^\Delta \ar[dr]_t & \G\times_s\G \ar[r]^{\bar{m}} & \G \\
 & M \ar[ur]_u & 
}
\end{eqnarray*}
and is therefore smooth. Also, since $s\circ u=id_M$, $d(s\circ u)_x=Id_x$. Let $X\in T_xM$, if $du_x(X)=0$,
\begin{eqnarray*}
ds_{u(x)}(du_x(X))& = & ds_{u(x)}(0)\\
d(s\circ u)_x (X) & = & 0\\
         Id_x (X) & = & 0\\
               X  & = & 0.         
\end{eqnarray*} 
Summing up, $u$ is an injective smooth immersion onto $u(M)=\bar{m}(\Delta)$, and again since $s$ is a left inverse and $s$ is continuous $u$ is a homeomorphism onto its image i.e. an embedding.\\
From the smoothness of $u$ and $\bar{m}$, the smoothness of $i$ and $m$ follow.\\
Finally, $i$ is its own inverse, and it is a diffeomorphism.
\end{proof}

\begin{crl}\label{crl: m bar} Given two Lie groupoids $\G$ over $M$ and $\H$ over $N$, and two smooth maps $F: \G\rmap \H$, $f: M\rmap N$, 
 then $(F, f)$ is a groupoid morphism if and only if 
\[ s(F(g))= f(s(g)), \ \bar{m}(F(g), F(h))= F(\bar{m}(g, h)\]
for all $g, h\in \G$ with $s(g)= s(h)$. 
\end{crl}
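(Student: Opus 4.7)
The plan is to handle the two directions separately, and to reduce the non-trivial direction to Proposition~\ref{prop: m bar}, which reconstructs all the groupoid structure from the pair $(s,\bar{m})$.

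The forward implication is immediate: if $(F,f)$ is a morphism of groupoids in the usual sense, then it intertwines $s$ with $s$ by definition, and it intertwines $\bar{m}(g,h)=m(g,i(h))$ with $\bar{m}$ because it intertwines both $m$ and $i$. So I would dispatch this direction in one line and focus on the converse.

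For the converse, assume $s\circ F=f\circ s$ and $F\circ\bar{m}=\bar{m}\circ(F\times F)$ on $\G\times_s\G$. I would then follow exactly the reconstruction recipe used in the proof of Proposition~\ref{prop: m bar}, showing in order that $F$ intertwines $t$, $u$, $i$, and $m$. For $t$: using $t(g)=s(\bar{m}(g,g))$ one computes
\[
t(F(g))=s(\bar{m}(F(g),F(g)))=s(F(\bar{m}(g,g)))=f(s(\bar{m}(g,g)))=f(t(g)).
\]
For $i$ and $m$ the analogous calculations are direct once $u$ is handled, since $i(g)=\bar{m}(u(s(g)),g)$ and $m(g,h)=\bar{m}(g,i(h))$, and each structure map is built using only $s$, $\bar{m}$ and $u$.

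The only step that requires a small argument is showing $F(u(x))=u(f(x))$ for all $x\in M$, since $u$ is characterized only implicitly as the inverse of $s$ restricted to $\bar{m}(\Delta)$. The idea is: any $x\in M$ can be written $x=s(\bar{m}(g,g))$ for some $g\in\G$, so $u(x)=\bar{m}(g,g)$, and then
\[
F(u(x))=F(\bar{m}(g,g))=\bar{m}(F(g),F(g))\in \bar{m}(\Delta_{\H}).
\]
On the other hand $s(F(u(x)))=f(s(u(x)))=f(x)$, and since $s|_{\bar{m}(\Delta_{\H})}$ is injective with inverse $u$, this forces $F(u(x))=u(f(x))$. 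This is the main (very mild) obstacle in the proof; once it is in place, the remaining identities $F\circ i = i\circ F$ and $F\circ m = m\circ (F\times F)$ follow by directly substituting into the reconstruction formulas and applying the hypotheses, exactly as in the verification of the groupoid axioms carried out inside the proof of Proposition~\ref{prop: m bar}.
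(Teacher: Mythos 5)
Your proposal is correct and takes essentially the approach the paper intends: the corollary is left as an immediate consequence of Proposition \ref{prop: m bar}, and you make this explicit by using its reconstruction formulas $t(g)=s(\bar{m}(g,g))$, $i(g)=\bar{m}(u(s(g)),g)$, $m(g,h)=\bar{m}(g,i(h))$, with the only point needing an argument being $F(u(x))=u(f(x))$, which you correctly settle via the injectivity of $s$ on $\bar{m}(\Delta)$.
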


Continuing our previous motivating comments coming from deformations, here is one more explanatory remark. Assume that we have a deformation $s_{\epsilon}, t_{\epsilon}, m_{\epsilon}, u_{\epsilon}, i_{\epsilon}$ with $s= s_{\epsilon}$ not depending on $\epsilon$ ($s$-constant deformation). Proposition \ref{prop: m bar} reveals that, in order to study the variation associated to the deformation, it is enough to concentrate on the variation of $\bar{m}_{\epsilon}$ (since $s_{\epsilon}$ is constant in $\epsilon$). This will induce a cocycle $\xi_0\in C^{2}_{\textrm{def}}(\G)$ (discussed in detail in Section \ref{degree2}). The fact that $\xi_0$ is a cocycle incorporates, as we wanted, precisely the first order consequences of the axioms for $(s_{\epsilon}, \bar{m}_{\epsilon})$ mentioned in Proposition \ref{prop: m bar}:
\begin{enumerate}
\item The equation from (i) will imply that $ds(\xi_0(g, h))= ds(\xi(1, h))$, i.e., precisely the condition of $s$-projectability defining the complex $C^{*}_{\textrm{def}}(\G)$.
\item The associativity equation from (ii) will imply $\delta(\xi_0)= 0$.
\end{enumerate}

\begin{rmk}[(more natural variables)] 
The previous proposition and remark indicate that there is a more natural way of (re-)writing the deformation complex, so that it only makes use of $s$ and $\bar{m}$ in its definition; we will denote it by 
$\bar{C}^{*}_{\textrm{def}}(\G)$ (a more suggestive notation would be $C^{*}_{\textrm{def}}(\G, s, \bar{m})$). It arises by the standard change of variables 
\[ \G^{(k)} \stackrel{\sim}{\longleftrightarrow} \G^{[k]}, \ \ 
(g_1, \ldots , g_k) \longleftrightarrow (a_1, \ldots , a_k) \]
which relates strings of composable arrows to strings of arrows with the same source:
\begin{equation} 
\label{change-of-variables}
 a_{i}= g_{i}g_{i+1}\ldots g_{k}, \ \ g_{i}= \left\{
\begin{array}{ll}
 a_{i} a_{i+1}^{-1} & \ \textrm{for}\ i\leq k-1\\
 a_{k} & \ \textrm{for}\ i= k
\end{array}
\right.
\end{equation}

Explicitly, the $k$-cochains $u\in \bar{C}^{k}_{\textrm{def}}(\G)$ are the smooth maps 
\[ u:\G^{[k]} \longrightarrow T\G, (a_1,\ldots, a_k)\mapsto u(a_1,\ldots,a_k)\in T_{\bar{m}(a_1, a_2)}\G= T_{a_1a_{2}^{-1}}\G\]
defined on the strings of arrows with the same source, with the property that
\[ s(u(a_1, \ldots, a_k)) \in T_{s(\bar{m}(a_1, a_2))}\G\]
does not depend on $a_1$ (note: ``the first axiom'' for $s$ and $\bar{m}$, i.e. (i) of Proposition \ref{prop: m bar}, ensures that $s(\bar{m}(a_1, a_2))$ does not depend on $a_1$). 

The differential of $u\in \bar{C}^{k}_{\textrm{def}}(\G)$ is
\begin{align*}(\bar{\delta} u)(a_1,\ldots, a_{k+1}) & =- d\bar{m}(u(a_1, a_3, \ldots , a_{k+1}), u(a_2, a_3, \ldots , a_{k+1})  \\
&+\sum_{i=3}^{k+1} (-1)^{i+1}u(a_1, \ldots , \widehat{a_{i}}, \ldots , a_{k+1}) + (-1)^{k+1}u(a_1 a_{k+1}^{-1}, \ldots, a_ka_{k+1}^{-1}).
\end{align*}
(for $k= 0$, one keeps the same definition as for $C_{\text{def}}^{*}(\G)$). The second axiom for $s$ and $\bar{m}$, i.e. (ii) of Proposition \ref{prop: m bar}, ensures that $\delta$ is well-defined and squares to zero.
Of course, the change of variables (\ref{change-of-variables}) induces an isomorphism between $(\CG,\delta)$ and $(\bar{C}^{*}_{\textrm{def}}(\G), \bar{\delta})$.

Finally, let us mention that the reason we choose to use $C_{\text{def}}^{*}(\G)$ instead of $\bar{C}^{*}_{\textrm{def}}(\G)$  is that, searching in the existing literature involving cohomology of groupoids (and even of groups), we see that one always uses the $\G^{(k)}$'s (and the corresponding formulas) for the domains of the cochains. It is clear however that $\bar{C}^{*}_{\textrm{def}}(\G)$ and the entire view-point that arises from Proposition \ref{prop: m bar} is much more conceptual and we expect it to be useful in various related problems (e.g. for finding a non-linear analogue of the Gerstenhaber bracket that makes the deformation complex of a Lie algebroid into a DG Lie algebra structure and allows one to interpret Lie algebroid structures as Maurer-Cartan elements \cite{marius_ieke}; or in the study of higher groupoids, etc).
\end{rmk}

\bibliography{mybiblio}{}
\bibliographystyle{plain}

\end{document}